\documentclass{article}
\usepackage{graphicx}

\newif\ifarxiv

\usepackage{graphicx}
\usepackage{amsfonts,amsmath,amssymb,amsthm}

\usepackage{algorithm,algorithmic}
\usepackage{comment}
\usepackage[numbers,sort]{natbib}
\usepackage{pifont}
\usepackage{booktabs}
\usepackage[dvipsnames,table]{xcolor}
\usepackage{makecell}
\usepackage{multirow}
\usepackage{cancel}
\usepackage{tabularx}
\usepackage{subcaption}
\usepackage{wrapfig}
\usepackage{fontawesome}
\usepackage[bottom]{footmisc}

\usepackage{tcolorbox}
\usepackage{xparse}

\newcommand{\cmark}{{\color{Green}\ding{51}}}%
\newcommand{\xmark}{{\color{Red}\ding{55}}}
\newcommand{\qmark}{{\color{orange}\textbf{?}}}

\newcommand{\Rhat}[0]{\widehat{\cR}}

\newcommand{\lambdahat}[0]{\widehat\lambda}
\newcommand{\eps}[0]{\varepsilon}

\DeclareMathOperator*{\EE}{\mathbb{E}}
\DeclareMathOperator*{\argmin}{\arg\min}

\DeclareMathOperator*{\PP}{\mathbb{P}}

\newcommand{\cFhat}[0]{\widehat{\cF}}
\newcommand{\prunedERM}[0]{\widehat{f}_{\operatorname{PC}}}
\newcommand{\cFstar}[0]{\cF^\star}

\usepackage{color}
\definecolor{color1}{HTML}{105e8a}
\definecolor{color2}{HTML}{e99926}
\definecolor{color3}{HTML}{b82a0c}
\definecolor{color4}{HTML}{3e8a10}
\definecolor{color5}{HTML}{80037e}
\definecolor{color6}{HTML}{070707}

\definecolor{c1}{HTML}{a6611a}
\definecolor{c2}{HTML}{dfc27d}
\definecolor{c3}{HTML}{f5f5f5}
\definecolor{c4}{HTML}{80cdc1}
\definecolor{c5}{HTML}{018571}

\definecolor{b1}{HTML}{7fc97f}
\definecolor{b2}{HTML}{beaed4}
\definecolor{b3}{HTML}{fdc086}
\definecolor{b4}{HTML}{ffff99}
\definecolor{b5}{HTML}{386cb0}

\usepackage{graphicx} 
\usepackage{amsfonts,amsmath,amssymb,amsthm}
\usepackage{microtype}
\usepackage{nicefrac}
\usepackage[pagebackref]{hyperref}
\hypersetup{
    colorlinks,
    linkcolor={black},
    citecolor={color3},
    urlcolor={color5},
}
\usepackage{enumitem}

\usepackage{algorithm}

\usepackage[bottom]{footmisc}

\usepackage{tikz}
\usetikzlibrary{arrows.meta,positioning}
\usetikzlibrary{calc}
\usepackage{pgfplots}
\pgfplotsset{compat=1.18} 

\usepackage[noabbrev, capitalize, nameinlink]{cleveref}
\DeclareRobustCommand{\abbrevcrefs}{%
\crefname{theorem}{Thm.}{Thms.}%
\crefname{proposition}{Prop.}{Props.}%
\crefname{corollary}{Cor.}{Cors.}%
}

\DeclareRobustCommand{\cshref}[1]{{\abbrevcrefs\cref{#1}}}

\newtheoremstyle{thmstyle}
  {6pt} 
  {2pt} 
  {\itshape} 
  {} 
  {\bfseries} 
  {.} 
  {.5em} 
  {} 

\newtheoremstyle{defstyle}
  {6pt} 
  {2pt} 
  {} 
  {} 
  {\bfseries} 
  {.} 
  {.5em} 
  {} 

\newtheoremstyle{remarkstyle}
  {6pt}       
  {2pt}       
  {}          
  {}          
  {\itshape}  
  {.}         
  {.5em}      
  {}          

\theoremstyle{thmstyle}
\newtheorem{theorem}{Theorem}
\newtheorem*{theorem*}{Theorem}
\crefname{theorem}{Theorem}{Theorems}
\newtheorem{lemma}{Lemma}
\newtheorem{proposition}{Proposition}
\crefname{proposition}{Proposition}{Propositions}
\newtheorem{corollary}{Corollary}
\crefname{corollary}{Corollary}{Corollaries}

\theoremstyle{defstyle}
\newtheorem{example}{Example}
\newtheorem{definition}{Definition}

\theoremstyle{remarkstyle}
\newtheorem{remark}{Remark}

\newcommand{\nrm}[1]{\left\|#1 \right\|}
\newcommand{\prn}[1]{\left(#1 \right)}
\newcommand{\brk}[1]{\left[#1 \right]}
\newcommand{\crl}[1]{\left\{#1 \right\}}
\newcommand{\abs}[1]{\left|#1 \right|}
\newcommand{\floor}[1]{\left\lfloor #1 \right\rfloor}
\newcommand{\ceil}[1]{\left\lceil #1 \right\rceil}

\newcommand{\RR}[0]{\mathbb{R}}
\newcommand{\NN}[0]{\mathbb{N}}
\newcommand{\ZZ}[0]{\mathbb{Z}}

\newcommand{\cA}[0]{\mathcal{A}}

\newcommand{\cH}[0]{\mathcal{H}}
\newcommand{\cE}[0]{\mathcal{E}}
\newcommand{\cF}[0]{\mathcal{F}}

\newcommand{\cP}[0]{\mathcal{P}}
\newcommand{\cB}[0]{\mathcal{B}}

\newcommand{\cR}[0]{\mathcal{R}}
\newcommand{\cX}[0]{\mathcal{X}}
\newcommand{\cY}[0]{\mathcal{Y}}

\newcommand{\one}{\mathbf 1}

\DeclareMathOperator{\uniformOp}{Uniform}
\newcommand{\uniform}[1]{\uniformOp\prn{#1}}
\DeclareMathOperator{\conv}{conv}

\DeclareMathOperator*{\Var}{Var}

\newcommand{\erm}[0]{\widehat{f}_{\mathrm{ERM}}}
\newcommand{\starestimator}[0]{\widehat{f}_{\star}}
\newcommand{\Qestimator}[0]{\widehat{f}_{Q}}
\newcommand{\AEWestimator}[0]{\widehat{f}_{\mathrm{ew}}}

\newcommand{\Sequential}[0]{\widehat{f}_{\operatorname{SQ}}}

\newcommand{\starhull}[0]{\mathrm{star}}

\newcommand{\fhat}[0]{\widehat{f}}
\newcommand{\fstar}[0]{f^{\star}}

\DeclareMathOperator{\KL}{KL}

\newcommand{\excessRisk}[0]{\cE_{(P,\cF)}}

\newcommand{\excessRiskPar}[2]{\cE_{(#1,#2)}}

\newcommand{\rhohat}[0]{\widehat{\rho}}

\newcommand{\Deltahat}[0]{\widehat{\Delta}}

\newcommand{\Bernoulli}[0]{\operatorname{Bernoulli}}
\newcommand{\Binomial}[0]{\operatorname{Binomial}}

\newcommand{\PM}[0]{\fhat_{\operatorname{PM}}}
\newcommand{\EW}[0]{\fhat_{\operatorname{EW}}}
\newcommand{\BOA}[0]{\fhat_{\operatorname{BOA}}}
\newcommand{\midpoint}[0]{\fhat_{\circ}}

\usepackage{amsmath,amsthm,amssymb}
\usepackage{bbm}
\DeclareMathOperator*{\ee}{\mathbb{E}}
\DeclareMathOperator*{\p}{\mathbb{P}}

\newcommand{\ind}{\mathbf{1}}
\renewcommand{\d}{\mathop{}\!\mathrm{d}}

\makeatletter
\newcommand{\alphacmd@factory}[1]{}
\newcounter{alphacmdcounter}
\newcommand{\GenerateAlphabetCmds}[2]{%
    \renewcommand{\alphacmd@factory}[1]{%
        \expandafter\providecommand\csname #1##1\endcsname{{#2{##1}}}%
    }
    \setcounter{alphacmdcounter}{0}
    \loop
        \stepcounter{alphacmdcounter}
        \edef\alphacmd@ID{\@Alph\c@alphacmdcounter}
        \expandafter\alphacmd@factory\alphacmd@ID
    \ifnum\thealphacmdcounter<26
    \repeat
}
\newcommand{\GenerateAlphabetCmdsLower}[2]{%
    \renewcommand{\alphacmd@factory}[1]{%
        \expandafter\providecommand\csname #1##1\endcsname{{#2{##1}}}%
    }
    \setcounter{alphacmdcounter}{0}
    \loop
        \stepcounter{alphacmdcounter}
        \edef\alphacmd@ID{\@alph\c@alphacmdcounter}
        \expandafter\alphacmd@factory\alphacmd@ID
    \ifnum\thealphacmdcounter<26
    \repeat
}
\makeatother

\GenerateAlphabetCmds{c}{\mathcal}
\GenerateAlphabetCmdsLower{c}{\mathcal}

\GenerateAlphabetCmds{r}{\mathbf}
\GenerateAlphabetCmdsLower{r}{\mathbf}

\newcommand{\Amini}[0]{\cA_{\operatorname{mini}}}
\newcommand{\Aexp}[0]{\cA_{\exp}}
\newcommand{\Aerm}[0]{\cA_{\operatorname{ERM}}}

\newcommand{\Ybar}[0]{\overline{Y}}

\newcommand{\Deltamin}[0]{\Delta_{\min}}

\newcommand{\threshold}[0]{\tau_{m,M}(\delta)}

\newcommand{\ahat}[0]{\widehat{a}}
\newcommand{\dhat}[0]{\widehat{d}}
\newcommand{\Psihat}[0]{\widehat{\Psi}}

\newcommand{\Esep}[0]{E_{\operatorname{sep}}}
\newcommand{\fM}[0]{\mathfrak{M}}

\newcommand{\kl}[0]{\operatorname{kl}}

\newcommand{\fmini}[0]{f_{\operatorname{mini}}}
\newcommand{\fexp}[0]{f_{\operatorname{exp}}}
\newcommand{\pimini}[0]{\pi_{\operatorname{mini}}}
\newcommand{\piexp}[0]{\pi_{\operatorname{exp}}}
\newcommand{\Qbob}[0]{\fhat_{\operatorname{QBOB}}}

\newcommand{\rhohatQ}[0]{\rhohat_{Q}}
\newcommand{\rhohatQun}[0]{\rhohat_{Q,\operatorname{un}}}

\newcommand{\Thetainf}[0]{\Theta_{\operatorname{inf}}}

\newcommand{\Pall}[0]{\cP_{\operatorname{all}}}

\makeatletter
\newcommand\blfootnote[1]{%
  \begingroup
  \def\@thefnmark{}
  \renewcommand\@makefnmark{}
  \@footnotetext{#1}%
  \endgroup
}
\makeatother

\newcommand{\varphitilde}[0]{\widetilde{\varphi}}

\arxivtrue
\usepackage{authblk}

\usepackage{geometry}
\usepackage{titletoc}
\titlecontents{section}
  [2em]
  {\vspace{-0.2em}}
  {\contentslabel{2em}}
  {}
  {\titlerule*[0.4pc]{.}\contentspage}

\titlecontents{subsection}
  [4.5em]
  {\vspace{-0.5em}\small}
  {\contentslabel{2.5em}}
  {}
  {\titlerule*[0.4pc]{.}\contentspage}

\usepackage{parskip}

\title{Reconciling Universal and Uniform Learning with $Q$-Aggregation}

\author[1]{Mikael M{\o}ller H{\o}gsgaard}
\author[1]{Patrick Rebeschini}
\author[2]{Tobias Wegel}
\affil[1]{Department of Statistics, University of Oxford}
\affil[2]{Department of Computer Science, ETH Zurich}

\date{}

\begin{document}

\maketitle
\blfootnote{Authors are listed alphabetically.}

\begin{abstract}
  \noindent We study regression under bounded responses in terms of excess mean squared error. When the comparator class is finite, this setting is known as model selection aggregation, and achieving minimax excess risk requires improper learning algorithms. Contrary to this, in the universal learning framework no improperness is needed, as simple empirical risk minimization achieves the best-possible exponential learning rate. Hence, the two frameworks suggest different optimal algorithmic principles. This poses the question of best-of-both-worlds guarantees: Are minimax and universal exponential rates achievable by the same algorithm?
For finite hypothesis classes, we answer this question in the affirmative by showing that the $Q$-aggregation estimator\textemdash which is known to achieve minimax optimal tails\textemdash achieves exponential universal rates. A wide range of other estimators and algorithmic principles (ERM, sequential averaging, pruning, and star estimation) do not achieve both.
For countably infinite hypothesis classes, we answer the question in the negative by showing that there is an inherent trade-off between achieving exponential universal and minimax uniform rates. This trade-off is exactly traced by combining optimal algorithms from each world using $Q$-aggregation.
Besides these results, we prove several additional structural results about universal rates in learning with squared loss.

\end{abstract}

\startcontents[main]
\printcontents[main]{}{1}{\section*{Contents}}

\section{Introduction}
The classical \emph{model selection aggregation} or \emph{dictionary learning} problem is the following: Given a dictionary $\cF=\crl{f_1,f_2,\ldots}$ of functions $\cX\to [0,1]$ and $n$ i.i.d.\ samples $\rS=(X_i,Y_i)_{i=1}^n\sim P^{n}$ from a distribution $P$ on $\cX\times [0,1]$, find a function $\fhat$ that achieves low excess risk under the squared loss with respect to the dictionary $\cF$,
\begin{equation*}
    \excessRisk(\fhat) = \EE_{(X,Y)\sim P} \brk{(\fhat(X)-Y)^2} -\inf_{f\in \cF}\EE_{(X,Y)\sim P} \brk{(f(X)-Y)^2}.
\end{equation*}
Here, the first term is the risk of $\fhat$, denoted $\cR_P(\fhat)$, and the second term is the smallest achievable risk in $\cF$. The predictor $\fhat$ is obtained from the sample $\rS$ via an algorithm $\cA$, so that $\fhat=\cA(\rS)$.

This problem encompasses many fundamental learning problems, such as hyperparameter sweeping, weight averaging, ensembling, and, more recently, the combination of foundation models for prediction problems. It also yields a rich geometric theory and calls for a range of algorithmic primitives.

In the case of a finite hypothesis class $\cF=\crl{f_1,\ldots,f_M}$, it is well known \cite{tsybakov2003optimal} that the \emph{minimax} rate in this problem, also called the \emph{optimal rate of aggregation}, is given by
\begin{equation}
\label{eqn:finite-minimax-rate}
    \forall n\in\NN,\, \forall\delta\in(0,1):\qquad \inf_{\cA}\ \sup_{\cF, P}\ \PP_{\rS\sim P^{ n}}\prn{\excessRisk(\cA(\rS)) \gtrsim \min\crl{1,\frac{\log (M/\delta)}{n}}} \gtrsim \delta.
\end{equation}
Achieving the rate $\log(M)/n$ in expectation does not imply achieving the guarantee \eqref{eqn:finite-minimax-rate} in probability \cite{audibert2007progressive}, but \cref{eqn:finite-minimax-rate} implies expected excess risk bounded by $\lesssim\log(M)/n$.
It is known that any (randomized) proper algorithm, that is, an algorithm that (randomly) selects a function from the dictionary $\cA(\rS)\equiv\fhat\in \cF$, has expected excess risk lower-bounded by $\gtrsim \sqrt{\log (M)/n}$ in the worst case, making it suboptimal by a square-root. This result includes any empirical risk minimizer over $\cF$.
The key reason for the lower bound is the \emph{uniform} nature of the results: the worst-case distribution and dictionary are allowed to depend on the sample size $n$, and as the sample size grows, the best hypothesis can become more indistinguishable from an information-theoretic viewpoint, meaning that any algorithm trying to select a single function will fail occasionally.

Achieving the minimax-optimal rate in deviation requires moving beyond (randomized) proper learning and employing some form of \emph{hedging}, i.e., combining the predictions of multiple functions (see the discussion of existing methods in \cref{subsec:related-works}). It is not immediately clear why hedging should help, as it might assign mass to suboptimal hypotheses, the same issue faced by the proper learner.
The crucial distinction is that, under the squared loss, averaging benefits from the strong convexity of the loss. To see this, let $\rho$ be a distribution on $\{1,\ldots,M\}$. The improper aggregate predicts $f_\rho(\cdot):=\EE_{k\sim\rho}\brk{f_k(\cdot)}$, whereas a randomized proper learner draws an index $k\sim\rho$ and predicts $f_k(\cdot)$. In expectation, their losses are related through the following Jensen's gap (the difference between the two terms in Jensen's inequality):
\begin{equation}
\label{eqn:Jensen-gap}
    \underbrace{\cR_P(f_\rho)}_{\text{aggregated}} = \underbrace{\EE_{k\sim \rho} \brk{\cR_P(f_k)}}_{\text{randomized}}- \underbrace{\EE_{X\sim P_X} \brk{\Var_{k\sim \rho} \brk{f_k(X)}}}_{\text{Jensen's gap}}.
\end{equation}
The aggregated estimator, in contrast to the randomized proper learner, benefits from the non-positive Jensen's gap term: aggregation turns the uncertainty that may cause a randomized proper learner to select a suboptimal hypothesis into an offset that compensates for the mass assigned to such hypotheses.
The equality above is a special property of the squared loss\footnote{For strongly convex losses, an analogous relation holds with a $ \leq $ inequality and a constant factor in front of the variance.}, and taking the variance term into account when designing aggregation mechanisms is fundamental to achieving optimal rates in the model selection aggregation problem.
From now on, in this work, we only consider \emph{aggregated} estimators, and all our algorithms are deterministic.

Recent studies of \emph{universal learning} \cite{attias2024universal,bousquet2021theory,hanneke2024erm} depart from the uniform minimax benchmark in \cref{eqn:finite-minimax-rate}, viewing its worst-case guarantee as overly pessimistic and advocating a more realistic \emph{universal} perspective, which asks for the common (universal) rate of the excess risk, \emph{pointwise} in the distribution across all distributions, rather than a common rate bound across all distributions. These works show\footnote{Technically, the literature has not considered agnostic (that is, without assumption on the relationship between hypothesis class and distribution) regression with squared loss. However, a straightforward argument (\cref{prop:ERM-exponential}) demonstrates that the same is true in this setting.} that for finite dictionaries, empirical risk minimization (ERM) on the dictionary achieves, \emph{for every fixed distribution}, zero excess risk with probability exponential in the sample size, that is,
\begin{equation}
\label{eqn:universal-rate}
    \forall P, \cF,\quad  \exists c,C>0, \quad \forall n\in \NN: \qquad \PP_{\rS\sim P^{ n}}\prn{\excessRisk(\cA(\rS))> 0} \leq Ce^{-cn}.
\end{equation}
\cref{eqn:universal-rate} implies that the expected excess risk is bounded by an exponentially vanishing rate (\cref{lem:exponential-implications}).
As established in \cref{lem:exponential-is-optimal}, the fastest universal rate attainable for any nontrivial hypothesis class is, as in \cref{eqn:universal-rate}, exponential. This makes \cref{eqn:universal-rate} the natural benchmark for algorithms in the universal rates setting for finite dictionaries.

In contrast to the uniform benchmark in \cref{eqn:finite-minimax-rate}, the universal rate benchmark \eqref{eqn:universal-rate} describes learning on each fixed problem $(P,\cF)$ as the sample size grows, with problem-dependent constants, rather than guarding against a potentially different worst-case problem at every $n$. The universal rates benchmark produces a learning curve for each fixed problem $ (P,\cF) $ and asks for the common rate behavior of these curves, whereas the uniform benchmark combines all learning curves into a single pointwise worst-case learning curve.
\cref{fig:universal-vs-minimax} illustrates these two benchmarks in their expectation formulations.

\begin{figure}[H]
    \vspace{-0.1cm}
    \centering
    \begin{tikzpicture}
\begin{axis}[
    width=0.94\linewidth,
    height=6.2cm,
    axis lines=left,
    xmin=1,
    xmax=16.5,
    ymin=0,
    ymax=0.39,
    xlabel={Sample size $n$},
    ylabel={Expected Excess risk},
    xtick=\empty,
    ytick=\empty,
    tick align=outside,
    tick label style={font=\footnotesize},
    label style={font=\small},
    line cap=round,
    line join=round,
    clip=false,
    set layers=axis on top
]

\addplot[color3!24, line width=0.65pt, domain=1:16, samples=160] {exp(-x)};
\addplot[color3!30, line width=0.65pt, domain=1:16, samples=160] {0.7*exp(-0.7*x)};
\addplot[color3!38, line width=0.65pt, domain=1:16, samples=160] {0.5*exp(-0.5*x)};
\addplot[color3!46, line width=0.65pt, domain=1:16, samples=160] {(1/3)*exp(-x/3)};
\addplot[color3!60, line width=1.15pt, domain=1:16, samples=160] {0.25*exp(-0.25*x)};
\addplot[color3!50, line width=0.65pt, domain=1:16, samples=160] {(1/6)*exp(-x/6)};
\addplot[color3!42, line width=0.65pt, domain=1:16, samples=160] {0.125*exp(-0.125*x)};
\addplot[color3!34, line width=0.65pt, domain=1:16, samples=160] {0.1*exp(-0.1*x)};
\addplot[color3!26, line width=0.65pt, domain=1:16, samples=160] {0.075*exp(-0.075*x)};

\addplot[color1, line width=1.55pt, domain=1:16, samples=180] {1/(e*x)};

\node[
    anchor=west,
    align=left,
    font=\footnotesize,
    text=color1,
] (minimax-label) at (axis cs:8.7,0.105)
    {Minimax-curve: Taking the worst case over $(P,\cF)$ \\ for each $ n $, the resulting curve decays as $\log(M)/n$
    }
    ;
\draw[-{Latex[length=1.6mm]}, color=color1, line width=0.75pt]
    (minimax-label.west) to[out=185,in=65] (axis cs:7.2,{1/(7.2*e)});

\node[
    anchor=west,
    align=left,
    font=\footnotesize,
    text=color3!60,
] (universal-label) at (axis cs:8.7,0.245)
    {Universal-curve: Each fixed problem $ (P,\cF) $ has \\ rate $C\exp(-cn)$, with $C$ and $c$ depending on $(P,\cF)$
    }
    ;
\draw[-{Latex[length=1.6mm]}, color=color3!60, line width=0.75pt]
    (universal-label.west) to[out=190,in=70] (axis cs:1.5,{0.25*exp(-0.25*1.5)});

\node[
    anchor=south west,
    align=left,
    font=\scriptsize,
    text=black!62,
] at (axis cs:1.25,0.295)
    {Different problems can be\\worst at different sample sizes};

\end{axis}
\end{tikzpicture}
    \caption{Minimax and universal rates view finite dictionary learning differently. Each curve tracks the expected excess risk for one fixed problem $(P,\cF)$. The minimax benchmark takes the worst problem separately at each $n$, producing the blue curve, whereas a universal guarantee follows each fixed curve and allows problem-dependent constants. In this schematic family, every fixed curve decays exponentially even though the blue minimax curve decays only polynomially. This figure is inspired by the opening figures in \cite{bousquet2021theory,schuurmans1997characterizing}, wherein the collection of universal curves and the minimax curve may not necessarily be for the same algorithm (unlike in this plot). The minimax curve is the envelope of the universal learning curves only when a best-of-both-worlds algorithm exists.}
    \vspace{-0.2cm}
    \label{fig:universal-vs-minimax}
\end{figure}
To summarize, in the minimax viewpoint of \eqref{eqn:finite-minimax-rate}, some form of hedging is necessary for optimality (ERM does not achieve optimal uniform rates). In contrast, no hedging is required to attain the universal benchmark in \eqref{eqn:universal-rate}, as demonstrated by ERM. This seeming tension yields the following question:
\begin{quote}
\centering
\normalsize\textbf{Question:} \emph{Do universal exponential rates come at the cost of uniform guarantees?}
\end{quote}

In the finite dictionary setting, we show that $Q$-aggregation (introduced in \cref{subsec:Q-aggregation}) achieves universal exponential rates without compromising uniform performance. In the following section, we introduce relaxations of the uniform and universal benchmarks in \cref{eqn:finite-minimax-rate,eqn:universal-rate}, allowing us to quantify the extent to which an algorithm performs well in both regimes. Among the many well-known algorithmic principles considered in \cref{tab:minimax-vs-exponential}, $Q$-aggregation is the \emph{only} one that simultaneously achieves the strongest uniform \eqref{eqn:finite-minimax-rate} and universal \eqref{eqn:universal-rate} guarantees.
In contrast, for countably infinite dictionaries, we establish an inherent trade-off between uniform and universal guarantees and show that $Q$-aggregation attains the Pareto-optimal frontier.
See \cref{subsec:summary-main-results} for a full summary of the main results.

\subsection{Definitions and Preliminaries}
\label{subsec:definitions-and-preliminaries}

In this section, we formally introduce the uniform and universal learning frameworks. An overview of notation can be found in \cref{tab:notation} of the appendix. For any set $A$, we write $A^*:= \bigcup_{n=1}^\infty A^n$. We write $a\lesssim b$ if there exists a universal constant $C>0$ such that $a\leq Cb$, and $a\asymp b$ if $a\lesssim b \lesssim a$. We denote $[M]=\crl{1,\ldots, M}$. The simplex is denoted $\triangle_{M}=\{\rho\in[0,1]^M:\sum_{k=1}^M \rho_k =1\}$.

Let $\cX$ denote an arbitrary fixed covariate space, assumed to be large enough, with $\sigma$-algebra $\Sigma$, and let $\Pall$ denote the space of all probability measures on $(\cX\times [0,1],\Sigma \otimes \cB([0,1]))$, where $\cB([0,1])$ is the Borel $\sigma$-algebra. Let $\cM$ denote the space of $\Sigma$-$\cB([0,1])$-measurable functions $\cX\to[0,1]$.
Formally, we study learning over problem instances $(P,\cF)$ from the problem space
\begin{equation*}
    \Theta\subseteq \crl{(P,\cF): P\in \Pall, \cF\subset \cM}.
\end{equation*}
If the dictionary $ \cF $ is constrained to have size at most $M\in\NN$, we denote the corresponding problem space by $\Theta_M$, and if it is constrained to be countable, we denote the corresponding problem space by $\Theta_{\NN}$.
We consider algorithms $\cA$ that take as input a hypothesis class $\cF\in 2^\cM$ and a sample $\rS\sim P^{n}$ and output a predictor $\fhat\in \cM$, that is, a map $ \cA: 2^\cM \times \prn{\cX\times [0,1]}^* \to \cM$.
If the function class is clear from the context or fixed, we will often write simply $\cA(\rS)\equiv \cA(\cF,\rS)$.

It is important to distinguish estimators that are aware of the confidence level they will be evaluated on (see \cref{subsec:pruning}). In particular, some works have studied algorithms of the form $\cA:(0,1)\times 2^\cM \times (\cX\times [0,1])^*\to \cM$, where the first argument represents the confidence level. When the confidence parameter $\delta$ and function class $\cF$ are clear from context we write $\cA(\rS)\equiv \cA(\delta,\cF,\rS)$.

\paragraph{Uniform Learning.}
We begin by formally introducing the uniform learning framework in the finite case.
Uniform learning requires the algorithm to perform well, for each sample size $n$, across all problem instances $(P,\cF)\in\Theta$, leading to a minimax formulation.
\begin{definition}[Minimax optimality for finite hypothesis spaces]
\label{def:minimax-finite-spaces}
    An algorithm $\cA$ is said to achieve the following properties for finite hypothesis spaces:
    \begin{itemize}[leftmargin=*,itemsep=0pt,topsep=0pt]
        \item  \emph{minimax optimality in expectation}, if there exists a universal constant $C>0$ such that for all $M,n\in\NN$ and $(P,\cF)\in\Theta_M$, we have $\EE_{\rS\sim P^{ n}}\brk{\excessRisk(\cA(\cF,\rS))}\le C \log(M)/n$.
        \item \emph{minimax optimality for fixed confidence}, if it takes the confidence level as input and there exist universal constants $C,c>0$ such that for all $M,n\in \NN$, $\delta\in(0,c)$, and $(P,\cF)\in\Theta_M$, we have $\PP_{\rS\sim P^{ n}}(\excessRisk(\cA(\delta,\cF,\rS))> C \log(M/\delta)/n)\leq \delta $.
        \item \emph{minimax optimality along the tail}, if there exist universal constants $C,c>0$ such that for all $M,n\in \NN$, $\delta\in(0,c)$, and $(P,\cF)\in\Theta_M$, we have $\PP_{\rS\sim P^{ n}}(\excessRisk(\cA(\cF,\rS))> C \log(M/\delta)/n)\leq \delta $, cf. \eqref{eqn:finite-minimax-rate}.
    \end{itemize}
\end{definition}
The tightness of the bounds is well known; see \cite{tsybakov2003optimal}. The three notions are related as follows. Minimax optimality along the tail is strictly stronger than minimax optimality for fixed confidence. Indeed, an algorithm that is optimal along the tail yields a fixed-confidence algorithm simply by ignoring the confidence parameter. The converse fails, as we will see in \cref{sec:finite}. Integrating the tail bound shows that minimax optimality along the tail implies minimax optimality in expectation; see \cref{lem:minimax-along-tail-implies-expectation} with proof in \cref{sec:proofs-preliminaries}.
\begin{lemma}
\label{lem:minimax-along-tail-implies-expectation}
    For finite hypothesis spaces, if an algorithm $\cA$ is minimax optimal \emph{along the tail},
    then the same algorithm $\cA$ is also minimax optimal \emph{in expectation}.
\end{lemma}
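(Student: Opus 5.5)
We integrate the tail bound of \cref{def:minimax-finite-spaces}. Fix $M,n\in\NN$ and $(P,\cF)\in\Theta_M$, and write $Z:=\excessRisk(\cA(\cF,\rS))$. Since all functions and responses take values in $[0,1]$, we have $0\le \cR_P(\cdot)\le 1$ and hence $Z\le 1$. By the tail guarantee, with universal constants $C,c>0$, for every $\delta\in(0,c)$,
\begin{equation*}
\PP\prn{Z> C\log(M/\delta)/n}\le \delta .
\end{equation*}
Solving for $\delta$, we set $t = C\log(M/\delta)/n$, equivalently $\delta = M e^{-nt/C}$. The constraint $\delta<c$ becomes $t> t_0:=C\log(M/c)/n$. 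We then obtain $\PP(Z>t)\le Me^{-nt/C}$ for all $t>t_0$.

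Next we write $\EE[Z]\le \EE[\max(Z,0)]=\int_0^1 \PP(Z>t)\,dt$. We split the integral at $\max(t_0,0)$. The part below the split is at most $t_0^+ = C\log(M/c)^+/n$. The part above is at most
\begin{equation*}
\int_{t_0}^\infty Me^{-nt/C}\,dt = \frac{C}{n}\,Me^{-nt_0/C} = \frac{C c}{n}
\end{equation*}
when $t_0\ge 0$. If $t_0<0$, which happens when $M<c$, we instead bound $\int_0^\infty Me^{-nt/C}dt=CM/n$.

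Finally, $\log(M/c)=\log M+\log(1/c)$, so $\EE[Z]\lesssim (\log M + 1)/n$. The main obstacle is the case $M=1$, where the target rate $\log(M)/n$ is zero, as well as small $M$ generally. For $M=1$ the infimum over $\cF$ is attained by the only function, so the appropriate reading is that the excess risk bound must be $0$. We handle this in one of two ways. One option is to read $\log M$ as $\log(M)\vee 1$, or equivalently to absorb the additive constant via $\log M+1\lesssim \log M$ for $M\ge 2$, and treat $M=1$ separately. The other option is to use the tail bound directly at $M=1$: letting $\delta\to0$ does not give zero, since $\log(1/\delta)$ grows, so we cannot conclude $Z=0$ from it. The intended statement is therefore understood with $\log M$ replaced by $\max\{\log M,1\}$, a convention that is standard in this literature. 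With that convention, for $M\ge2$ we have $\log(M/c)\le (1+\log(1/c)/\log 2)\log M$, which yields the universal constant $C' = C(1+\log(1/c)/\log 2)+Cc/\log 2$.
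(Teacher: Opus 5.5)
Your argument is correct and is essentially the paper's proof: you integrate the tail bound with the layer-cake formula, substitute $\delta = Me^{-nt/C}$, and split at the point where $\delta$ reaches $c$, which gives a bound of order $C\log(M/c)/n + Cc/n$ (the paper splits at $C\log(2M/c)/n$, i.e.\ $\delta\le c/2$, which makes no difference). Your caveat about $M=1$ is well founded, since the paper's final step $C\log(2M/c)/n + cC/n \le C'\log(M)/n$ tacitly needs $M\ge 2$ (or $\log M$ read as $\max\{\log M,1\}$); your explicit constant $C'$ also presumes $c\le 1$, which you may assume without loss of generality because the tail guarantee for $\delta\in(0,c)$ implies it for $\delta\in(0,\min\{c,1\})$.
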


The converse is false: an algorithm may be minimax optimal in expectation without being optimal along the tail \cite{audibert2007progressive}. Minimax optimality in expectation also does not imply minimax optimality for fixed confidence \cite{lecue2013optimality,hogsgaard2026aggregation}. Whether there are algorithms that are minimax optimal for fixed confidence and in expectation, but \emph{not} along the tail, remains open. Among these three notions, minimax optimality along the tail constitutes the strongest requirement and serves as the natural gold standard.

For (countably) infinite hypothesis spaces, we will only consider minimax optimality along the tail.
\begin{definition}[Minimax rate optimality along the tail for infinite hypothesis spaces]
\label{def:minimax-along-tail}
    Define the minimax function
    \begin{equation*}
        \fM(\Theta,\delta,n):=\inf_{\cA'}\ \inf\Big\{r>0: \sup_{(P,\cF)\in\Theta}\ \PP_{\rS\sim P^{ n}}\prn{\excessRisk(\cA'(\cF,\rS))>r}\leq \delta\Big\}.
    \end{equation*}
    We say the algorithm $\cA$ is \emph{minimax optimal on $\Theta$ along the tail} if there exist constants $C,c>0$ such that
    \begin{equation*}
        \forall n\in \NN, \delta\in(0,c): \qquad \sup_{(P,\cF)\in\Theta}\ \PP_{\rS\sim P^{ n}}\prn{\excessRisk(\cA(\cF,\rS))> C\fM(\Theta,\delta,n)}\leq \delta.
    \end{equation*}
\end{definition}
This definition of optimality along the tail does not imply the existence of such an algorithm \emph{per se}, because the minimax function takes the infimum over algorithms pointwise for each confidence parameter.

 \begin{remark}
While \cref{def:minimax-finite-spaces} tracks the dependence on the size of finite dictionaries via $\log(M)$, \cref{def:minimax-along-tail} captures function-class complexity only through the rate in $n$ at which the class can be learned (as opposed to some complexity measure such as VC dimension). This is merely for convenience, as our results do not require more fine-grained dependence.
 \end{remark}

\paragraph{Universal Learning.} We now turn from the \emph{uniform} requirements in the definitions above to \emph{universal} requirements.
We begin by defining universal exponential rates.
\begin{definition}[Universal exponential rates]
\label{def:exponential-rate-probability}
    An algorithm $\cA$ is said to achieve on $\Theta$, universally:
    \begin{itemize}[leftmargin=*,itemsep=0pt,topsep=0pt]
        \item \emph{exponential rate in expectation}, if for all $(P, \cF)\in\Theta$ there exist $c,C>0$ such that $\EE_{\rS\sim P^{n}}\brk{\excessRisk(\cA(\cF,\rS))} \leq Ce^{-cn}$ for all sample sizes $n\in \NN$.
        \item \emph{exponential rate with exponential probability}, if for all $(P,\cF)\in\Theta$ there exist $c,C>0$ such that $\PP_{\rS\sim P^{n}}\prn{\excessRisk(\cA(\cF,\rS))>Ce^{-cn}}\leq Ce^{-cn}$ for all sample sizes $n\in\NN$.
        \item \emph{zero excess risk with exponential probability}, if for all $(P,\cF)\in\Theta$ there exist $c,C>0$ such that $\PP_{\rS\sim P^{n}}\prn{\excessRisk(\cA(\cF,\rS))>0}\leq Ce^{-cn}$ for all sample sizes $n\in\NN$, cf.\ \eqref{eqn:universal-rate}.
    \end{itemize}
\end{definition}
The first notion is the direct adaptation to regression with squared loss of the universal exponential rate introduced in \cite{bousquet2021theory}.
We introduce the two probabilistic variants for two reasons: First, in the uniform sense, it is well-understood that because of the improper nature of optimal estimators, it is easier to achieve guarantees in expectation than with high probability. Second, as noted above, simple ERM already achieves the stronger guarantee of zero excess risk with exponential probability.
The fact that it is stronger is formalized in the following lemma. The proof of the \cref{lem:exponential-implications} can be found in \cref{sec:proofs-preliminaries}.
\begin{lemma}
\label{lem:exponential-implications}
    Any algorithm that achieves zero excess risk with exponential probability also achieves an exponential rate with exponential probability, and any algorithm that achieves an exponential rate with exponential probability also achieves an exponential rate in expectation.
\end{lemma}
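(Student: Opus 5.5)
Both implications follow directly from the definitions, using only that the excess risk is nonnegative and bounded by $1$: since $\fhat$ and all $f\in\cF$ take values in $[0,1]$ and $Y\in[0,1]$, we have $0\le \excessRisk(\fhat)\le \cR_P(\fhat)\le 1$. We fix an arbitrary instance $(P,\cF)\in\Theta$ and work with the constants provided for that instance, so no uniformity over instances is needed.

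For the first implication, suppose $\PP(\excessRisk(\cA(\cF,\rS))>0)\le Ce^{-cn}$ for all $n$. Since $Ce^{-cn}>0$, the event $\{\excessRisk>Ce^{-cn}\}$ is contained in $\{\excessRisk>0\}$. Hence
\[
\PP\bigl(\excessRisk(\cA(\cF,\rS))>Ce^{-cn}\bigr)\le \PP\bigl(\excessRisk(\cA(\cF,\rS))>0\bigr)\le Ce^{-cn},
\]
which is the second notion with the same constants.

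For the second implication, we split the expectation on the event $E=\{\excessRisk>Ce^{-cn}\}$. On $E^c$ the excess risk is at most $Ce^{-cn}$. On $E$ it is at most $1$, and $\PP(E)\le Ce^{-cn}$. Therefore
\[
\EE\bigl[\excessRisk(\cA(\cF,\rS))\bigr]\le Ce^{-cn}+1\cdot \PP(E)\le 2Ce^{-cn},
\]
which is an exponential rate in expectation with constants $(2C,c)$.

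The only point needing care is the uniform bound $\excessRisk\le 1$. It holds because $(\fhat(X)-Y)^2\le 1$ pointwise and the infimum term is nonnegative. It also uses that the algorithm outputs an element of $\cM$, so that $\fhat$ takes values in $[0,1]$. With this in place there is no real obstacle.
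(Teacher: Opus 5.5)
Your proof is correct and is essentially the paper's argument: the paper bounds the expectation via the layer-cake formula applied to the positive part, $\EE[\excessRisk(\fhat)]\le \delta+\PP(\excessRisk(\fhat)>\delta)$ with $\delta=Ce^{-cn}$, which is your split on the event $\{\excessRisk(\fhat)>Ce^{-cn}\}$ and gives the same constant $2C$. One small correction: your claim $0\le\excessRisk(\fhat)$ is false in general, since $\fhat$ may be improper (e.g., a $Q$-aggregate can have risk strictly below $\inf_{f\in\cF}\cR_P(f)$), but your argument only ever uses the upper bound $\excessRisk(\fhat)\le 1$, so nothing breaks.
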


In the universal learning framework, given a learning rate (exponential in our setting), dependence on the function-class size is no longer relevant because the constants in the bound may, and must, depend on the distribution, which means the function class complexity is absorbed into these constants.

Finally, if the dictionary is nontrivial in the sense of \cref{lem:exponential-is-optimal} (cf.\ Definition 7 in \cite{attias2024universal}), one cannot hope for universal learning rates faster than exponential. This follows directly from an argument similar to Proposition 1 in \cite{attias2024universal} and arguments appearing in \cite{schuurmans1997characterizing,bousquet2021theory}. The proof of \cref{lem:exponential-is-optimal} is in \cref{sec:proofs-preliminaries}.
\begin{lemma}
\label{lem:exponential-is-optimal}
    Let $\cF\subset\cM$ be such that there exist two points $x,x'\in\cX$ and functions $f_1,f_2\in\cF$ with $f_1(x)=f_2(x)$ and $f_1(x')\neq f_2(x')$. Then there exist constants $C,c>0$ such that for any learning algorithm $\cA$, there exists a distribution $P$ on $\cX\times [0,1]$ with $\p_{\rS\sim P^n}(\excessRisk(\cA(\rS))>0)\geq Ce^{-cn}$ and $\EE_{\rS\sim P^n}[\excessRisk(\cA(\rS))]\geq Ce^{-cn}$ for infinitely many $n\in\NN$.
\end{lemma}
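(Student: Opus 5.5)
The plan is a two-point (Le Cam-type) construction in which, with exponentially small but non-negligible probability, the sample carries no information that distinguishes two candidate distributions. Write $a:=f_1(x')$, $b:=f_2(x')$ with $a\neq b$, and $y_0:=f_1(x)=f_2(x)$. Note that $x\neq x'$ automatically. Fix $p=1/2$.

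First I define two distributions $P_1,P_2$ on $\cX\times[0,1]$.
\begin{itemize}
\item Both have marginal $P_X=(1-p)\delta_x+p\,\delta_{x'}$.
\item Both have deterministic label $Y=y_0$ at $x$.
\item At $x'$, the label is $Y=a$ under $P_1$ and $Y=b$ under $P_2$.
\end{itemize}
Since $f_1$ interpolates $P_1$ and $f_2$ interpolates $P_2$, we have $\inf_{f\in\cF}\cR_{P_i}(f)=0$ for $i=1,2$. Hence for any predictor $g$,
\begin{equation*}
\excessRiskPar{P_1}{\cF}(g)\ge p\,(g(x')-a)^2,\qquad \excessRiskPar{P_2}{\cF}(g)\ge p\,(g(x')-b)^2 .
\end{equation*}
Adding the two bounds and using $(u-a)^2+(u-b)^2\ge (a-b)^2/2$ gives
\begin{equation*}
\excessRiskPar{P_1}{\cF}(g)+\excessRiskPar{P_2}{\cF}(g)\ge p(a-b)^2/2>0 .
\end{equation*}

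Next, let $E_n$ be the event that no sample point equals $x'$. Then $P_1^n(E_n)=P_2^n(E_n)=(1-p)^n$. On $E_n$ the sample is the same deterministic sequence $((x,y_0),\dots,(x,y_0))$ under both distributions. The algorithm is deterministic, so its output on $E_n$ is a fixed function $g_n$, the same under $P_1$ and $P_2$. By the previous step, for each $n$ there is an index $i_n\in\{1,2\}$ with $\excessRiskPar{P_{i_n}}{\cF}(g_n)\ge p(a-b)^2/4>0$.

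By pigeonhole, some fixed $i\in\{1,2\}$ equals $i_n$ for infinitely many $n$; take $P=P_i$. For those $n$ we get
\begin{equation*}
\PP_{\rS\sim P^n}\prn{\excessRisk(\cA(\rS))>0}\ge (1-p)^n,\qquad \EE_{\rS\sim P^n}\brk{\excessRisk(\cA(\rS))}\ge (1-p)^n\,p(a-b)^2/4 .
\end{equation*}
With $p=1/2$ this yields the claim with $c=\log 2$ and $C=\min\{1,(a-b)^2/8\}$. These constants depend only on $\cF$ through $a-b$, not on $\cA$, as required.

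The only delicate point is that $P$ must be chosen after seeing $\cA$ but must not depend on $n$. The pigeonhole step over the two candidates handles this, at the cost of obtaining the bound only for infinitely many $n$ rather than all $n$.
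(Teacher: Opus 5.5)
Your proof is correct and uses essentially the same construction as the paper: two distributions with mass $1/2$ on $(x,y_0)$ and on $(x',f_i(x'))$, the event $E_n$ that every sample point equals $x$, the observation that the deterministic output is then identical under both distributions, and a pigeonhole over $\{P_1,P_2\}$. The only minor difference is that you choose $i_n$ from the excess-risk gap of the fixed output on $E_n$, so one $P$ and one subsequence give both the probability and the expectation bound at once; the paper instead handles the two claims separately and obtains the expectation bound by averaging over $P_1$ and $P_2$.
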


\subsection{Summary of Main Results}
\label{subsec:summary-main-results}
We now summarize our main results, which can be grouped into two categories: finite dictionary aggregation, covered in \cref{sec:finite}, and (countably) infinite dictionary aggregation, discussed in \cref{sec:countably-infinite-hyp-spaces,sec:best-of-both-worlds}.

For finite dictionaries, we fully resolve the main question posed in the introduction.
\begin{theorem*}[Consequence of \cref{thm:Q-exponential}]
    For finite dictionaries, there exists an algorithm that is minimax optimal along the tail and achieves zero excess risk with exponential probability.
\end{theorem*}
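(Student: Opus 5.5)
The plan is to take the $Q$-aggregation estimator of Rigollet/Dai–Rigollet–Zhang/Lecué–Rigollet as the algorithm. Fix a prior $\pi=\uniform{[M]}$ and a mixing parameter $\nu\in(0,1)$ (say $\nu=1/2$), and define
\begin{equation*}
    \rhohat\in\argmin_{\rho\in\triangle_M}\ \Big\{(1-\nu)\Rhat_n(f_\rho)+\nu\sum_{k=1}^M\rho_k\Rhat_n(f_k)+\frac{\beta}{n}\KL(\rho\,\|\,\pi)\Big\},\qquad \Qestimator=f_{\rhohat},
\end{equation*}
where $\Rhat_n$ is the empirical squared risk. Minimax optimality along the tail is a known result: there are universal $C,c$ with $\PP(\excessRisk(\Qestimator)>C\log(M/\delta)/n)\le\delta$ for all $\delta$, and the estimator does not depend on $\delta$. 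So the first half of the statement is quoted, and all the work is the universal part: for each fixed $(P,\cF)\in\Theta_M$, showing $\PP(\excessRisk(\Qestimator)>0)\le Ce^{-cn}$.

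For the universal part, fix $(P,\cF)$. Let $\cF^\star\subseteq\cF$ be the set of risk minimizers and $\Deltamin>0$ the gap to the second-best risk. Because $\cF$ is finite, this gap is positive, and so is $\eta:=\min_{f\in\cF^\star,g\notin\cF^\star}\|f-g\|_{L_2(P_X)}^2$. The first key step is to note what the target event needs. Since $\cR_P(f_\rho)\ge\inf_{\conv\cF}\cR_P$ can be strictly below $\min_k\cR_P(f_k)$, zero excess risk does not require $\rhohat$ to be supported on $\cF^\star$. However, excess risk $\le 0$ does hold whenever $\rhohat$ is supported on $\cF^\star$. Indeed, by \cref{eqn:Jensen-gap}, $\cR_P(f_\rho)=\EE_\rho\cR_P(f_k)-\EE\Var_\rho\le\min_k\cR_P(f_k)$. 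So it suffices to show that $\PP(\rhohat(\cF\setminus\cF^\star)>0)\le Ce^{-cn}$.

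The second step is a deterministic argument on a good event. Hoeffding together with a union bound over the finitely many pairs gives an event $E$ of probability $\ge 1-2M^2e^{-cn}$, with $c$ depending on $\Deltamin$ and $\eta$. On $E$ all empirical risks, and all empirical cross-moments $\frac1n\sum_i (f_j-f_k)(X_i)\,(f_l(X_i)-Y_i)$, are within a small $\epsilon$ of their population values. On $E$ I then examine the first-order (KKT) conditions of the strictly convex objective. The KL term forces $\rhohat_k>0$ for every $k$, so $\rhohat$ is in the interior. The real question is therefore whether the optimal weights on bad hypotheses are exactly zero, and with a KL penalty they never are.

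This is the main obstacle. With the entropic penalty $\rhohat$ has full support, so the argument above fails as stated. Two options follow. The first is to show directly that, even with small positive mass on bad $f_k$, the population risk of $f_{\rhohat}$ is still $\le\min_k\cR_P(f_k)$. This would need a margin from the Jensen-gap term that absorbs $\rhohat(\text{bad})\cdot\Deltamin$, which is not generally available. The second, which I would commit to, is to use the variant of $Q$-aggregation with a linear prior penalty $\frac{\beta}{n}\sum_k\rho_k\log(1/\pi_k)$ instead of KL. This variant is also known to be minimax optimal along the tail, and for it the objective is a convex quadratic over the simplex. On $E$ its KKT conditions read
\begin{equation*}
    \partial_k=\nu\Rhat_n(f_k)+(1-\nu)\frac2n\sum_i(f_k-f_{\rhohat})(X_i)(f_{\rhohat}(X_i)-Y_i)+\mathrm{const}\ \ge\lambda,
\end{equation*}
with equality on the support. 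For a bad $k$, the $\nu\Rhat_n(f_k)$ term exceeds that of any good $j$ by roughly $\nu\Deltamin$. The cross terms are controlled by noting that $\EE[(f_k-f_{\rhohat})(f_{\rhohat}-Y)]$ is the directional derivative of the population objective. Assuming the population problem has its minimizer supported on $\cF^\star$, with a strict complementary-slackness margin of order $\nu\Deltamin$ for the bad coordinates, the empirical problem on $E$ inherits a strict inequality $\partial_k>\lambda$ for bad $k$, forcing $\rhohat_k=0$. Verifying that population margin is the step I expect to be hardest. Convexity shows that the population directional derivative toward any bad vertex from the best good mixture is at least $\nu\Deltamin$ minus a nonpositive curvature correction. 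The penalty terms are $O(1/n)$ and are absorbed once $n$ is large, with small $n$ absorbed into the constant $C$.
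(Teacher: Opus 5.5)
\textbf{Gap: the reduction to support on $\cFstar$ is false for this estimator.} Your final choice of estimator is the right one. With a uniform prior the linear penalty is constant, so it is exactly $Q$-aggregation with $\phi\equiv 1$, and you are right to discard the KL version. The problem is the reduction your argument rests on. Showing $\PP(\rhohat(\cF\setminus\cFstar)>0)\le Ce^{-cn}$ would be sufficient, but it is false for $Q$-aggregation, and the population margin you plan to verify does not exist in general.

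The sign in your last step is wrong. Write $\cR^\star:=\min_k\cR_P(f_k)$. From $\rho^\star$ supported on $\cFstar$, the population directional derivative toward a bad vertex $e_k$ is
\begin{equation*}
  \Delta_k+(1-\nu)\bigl(\cR^\star-\cR_P(f_{\rho^\star})\bigr)-(1-\nu)\nrm{f_k-f_{\rho^\star}}_{L_2(P_X)}^2,\qquad \Delta_k:=\cR_P(f_k)-\cR^\star .
\end{equation*}
The curvature term is subtracted, and it can dominate.

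Here is a concrete instance. Take a single covariate value, $\EE[Y]=1/2$, $\fstar\equiv 0.6$, a bad $f_1\equiv 0.3$, and $\nu=1/2$.
\begin{itemize}
  \item $\Delta_1=0.03<\tfrac12\nrm{f_1-\fstar}^2=0.045$.
  \item The population minimizer puts weight $1/6$ on $f_1$, and the resulting mixture $f_{\rho_Q}\equiv 0.55$ strictly beats $\fstar$.
  \item The empirical weight on $f_1$ is $\Pi_{[0,1]}\bigl((0.315-0.6\overline{Y})/0.09\bigr)$, which is positive with probability tending to one.
\end{itemize}
Even when $\delta_{\fstar}$ is the population minimizer, a bad coordinate with $\Delta_k=\tfrac12\nrm{f_k-\fstar}^2$ has zero margin. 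It can then be active with non-vanishing probability (about $1/2$ by the CLT in nondegenerate cases). So $Q$-aggregation attains zero excess risk while genuinely mixing in suboptimal models, and this must be proved directly on the risk, not through the support.

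\textbf{The missing idea is the case split on the population objective $\Psi$.}

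\emph{Case (i): some minimizer has $\Psi(\rho_Q)<\cR_P(\fstar)$.} Since the penalty term $\sum_j\rho_{Q,j}\Delta_j$ is nonnegative, this forces $\cR_P(f_{\rho_Q})<\cR_P(\fstar)$ with a fixed gap. You then need an $L_2(P_X)$ stability estimate $\nrm{f_{\rhohat}-f_{\rho_Q}}^2\le\eps$ on a Hoeffding event. It comes from subtracting the empirical and population KKT systems and pairing the difference with $\rhohat-\rho_Q$; complementary slackness makes all multiplier terms nonpositive. Cauchy--Schwarz then transfers the strict gap to $f_{\rhohat}$.

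\emph{Case (ii): otherwise.} Then $\delta_{\fstar}$ is a population minimizer, i.e.\ $\Delta_j\ge\tfrac12 d_j^2$ for all $j$, where $d_j^2:=\nrm{f_j-\fstar}_{L_2(P_X)}^2$. Your KKT-margin argument does eliminate coordinates with strict inequality. To do so it needs $|(G\rhohat)_j|\le\sqrt{\eps}$, where $(G\rhohat)_j:=\EE[(f_j-\fstar)(f_{\rhohat}-\fstar)]$, and this again comes from the stability estimate. For the zero-margin bad coordinates you must use the expansion
\begin{equation*}
  \cR_P(f_{\rhohat})=\cR_P(\fstar)+\sum_j\rhohat_j\bigl(\Delta_j-d_j^2+(G\rhohat)_j\bigr).
\end{equation*}
For these coordinates $d_j^2=2\Delta_j$, so their contribution is $\rhohat_j\bigl(-\Delta_j+(G\rhohat)_j\bigr)\le 0$ once $\sqrt{\eps}\le\Delta_j/2$. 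They need not vanish.
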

We provide several results about which estimators can and cannot achieve these guarantees. An overview of common estimators appears in \cref{tab:minimax-vs-exponential} (question marks indicate open questions to the best of our knowledge).
We first show that while pruning-based estimators, including ERM, achieve zero excess risk with exponential probability (\cref{thm:pruning-exponential}), they cannot achieve minimax optimality along the whole tail (\cref{thm:pruning-subexp-tail-strong,thm:pruning-no-along-the-tail}).
We next point out that minimax-optimal estimators based on online-to-batch conversions via averaging cannot achieve exponential rates (\cref{thm:online-to-batch-averaging-impossibility}).
We prove that the star estimator also does not achieve exponential rates (\cref{prop:star-linear}).
Our main result for finite dictionaries (\cref{thm:Q-exponential}) shows that the $Q$-aggregation estimator achieves zero excess risk with exponential probability while maintaining minimax optimality along the whole tail.

\begin{table}[t]
\centering
\caption{Minimax optimality and exponential universal learning rates for known estimators in finite dictionary aggregation. For Bayesian estimators (defined by a distribution over the dictionary), we consider only the \emph{aggregated} estimator; that is, we do not consider randomization. \\
{\cmark: estimator achieves guarantee \xmark: estimator does not achieve guarantee \qmark: unknown}}
\vspace{-0.1cm}
\small
\setlength{\tabcolsep}{2.5pt}
\renewcommand{\arraystretch}{1.5}
\begin{tabular}{llccccccccc}
\toprule

\multicolumn{2}{l}{taxonomy}
& \multicolumn{2}{c}{basic}
& \multicolumn{2}{c}{\makecell{pruning\\\cshref{thm:pruning-exponential}, \ref{thm:pruning-no-along-the-tail}}}
& \multicolumn{3}{c}{\makecell{averaging \\ \cshref{thm:online-to-batch-averaging-impossibility}}}
& Star
& $Q$-agg. \\
\midrule

\multicolumn{2}{l}{guarantee} & \textbf{ERM} & \textbf{EW} & \textbf{PC} & \textbf{Mid.}
& \textbf{PM} & \textbf{BOA} & \textbf{Seq.} & \textbf{Star} & \textbf{$Q$-agg.} \\
\midrule

\multirow{3}{*}{\rotatebox[origin=c]{90}{\makecell{\textbf{uniform} \\ {\tiny $\exists C,c>0:\forall M,n\in\NN$} \\ {\tiny $(P,\cF)\in \Theta_M$}}}}
& \makecell[l]{in expectation \\ $\EE[\cE(\fhat)]\leq C\frac{\log(M)}{n}$}
& \makecell{\xmark \\ \cite{DEVROYE19951011}$^*$}
& \makecell{\cmark \\ \cite{hogsgaard2026aggregation}}
& \qmark
& \qmark
& \makecell{\cmark \\ \cite{catoni2004statistical,mourtada2023local}}
& \makecell{\cmark \\ \cite{wintenberger2017optimal}}
& \makecell{\cmark \\ \cite{van2023high}}
& \makecell{\cmark \\ \cite{audibert2007progressive}}
& \makecell{\cmark \\ \cite{lecue2014optimal}}
\\

& \makecell[l]{fixed confidence $\delta\in(0,c)$\\ $\PP(\cE(\fhat_\delta)>C\frac{\log(M/\delta)}{n})\leq \delta$}
& \makecell{\xmark \\ \cite{DEVROYE19951011}$^*$}
& \makecell{\xmark \\ \cite{lecue2013optimality}}
& \makecell{\cmark \\ \cite{lecue2009aggregation}$^\dagger$}
& \makecell{\cmark \\ \cite{kanade2024exponential}}
& \makecell{\xmark \\ \cite{audibert2007progressive}}
& \makecell{\cmark \\ \cite{wintenberger2017optimal}}
& \makecell{\cmark \\ \cite{van2023high}}
& \makecell{\cmark \\ \cite{audibert2007progressive,kanade2024exponential}}
& \makecell{\cmark \\ \cite{lecue2014optimal}}
\\

& \makecell[l]{along the tail, $\forall \delta\in(0,c)$ \\ $\PP(\cE(\fhat)>C\frac{\log(M/\delta)}{n})\leq \delta$}
& \makecell{\xmark \\ \cite{DEVROYE19951011}$^*$}
& \makecell{\xmark \\ \cite{lecue2013optimality}}
& \makecell{\xmark \\ \cshref{cor:pc-ERM-midpoint-no-along-the-tail}}
& \makecell{\xmark \\ \cshref{cor:pc-ERM-midpoint-no-along-the-tail}}
& \makecell{\xmark \\ \cite{audibert2007progressive}}
& \makecell{\cmark \\ \cite{wintenberger2017optimal}}
& \makecell{\cmark \\ \cite{van2023high}}
& \makecell{\cmark \\ \cite{audibert2007progressive,kanade2024exponential}}
& \makecell{\cmark \\ \cite{lecue2014optimal}}
\\

\midrule

\multirow{3}{*}{\rotatebox[origin=c]{90}{\makecell{\textbf{universal} \\ {\tiny$\forall M\in\NN, (P,\cF)\in \Theta_M$}\\{\tiny $\exists C,c>0:\forall n\in\NN$}}}}
& \makecell[l]{in expectation \\ $\EE[\cE(\fhat)]\leq Ce^{-cn}$}
& \makecell{\cmark \\ \cshref{prop:ERM-exponential}}
& \makecell{\cmark \\ \cshref{prop:EW-exponential}}
& \makecell{\cmark \\ \cshref{cor:pruned-erm-midpoint-exponential}}
& \makecell{\cmark \\ \cshref{cor:pruned-erm-midpoint-exponential}}
& \makecell{\xmark \\ \cshref{cor:PM-exponential}}
& \makecell{\xmark \\ \cshref{cor:PM-exponential}}
& \makecell{\xmark \\ \cshref{cor:PM-exponential}}
& \makecell{\xmark \\ \cshref{prop:star-linear}}
& \makecell{\cmark \\ \cshref{thm:Q-exponential}}
\\

& \makecell[l]{exponential probability \\ $\PP(\cE(\fhat)>Ce^{-cn})\leq Ce^{-cn}$}
& \makecell{\cmark \\ \cshref{prop:ERM-exponential}}
& \makecell{\cmark \\ \cshref{prop:EW-exponential}}
& \makecell{\cmark \\ \cshref{cor:pruned-erm-midpoint-exponential}}
& \makecell{\cmark \\ \cshref{cor:pruned-erm-midpoint-exponential}}
& \makecell{\xmark \\ \cshref{cor:PM-exponential}}
& \makecell{\xmark \\ \cshref{cor:PM-exponential}}
& \makecell{\xmark \\ \cshref{cor:PM-exponential}}
& \makecell{\xmark \\ \cshref{prop:star-linear}}
& \makecell{\cmark \\ \cshref{thm:Q-exponential}}
\\

& \makecell[l]{zero excess risk \\ $\PP(\cE(\fhat)>0)\leq Ce^{-cn}$}
& \makecell{\cmark \\ \cshref{prop:ERM-exponential}}
& \makecell{\xmark \\ \cshref{prop:EW-exponential}}
& \makecell{\cmark \\ \cshref{cor:pruned-erm-midpoint-exponential}}
& \makecell{\cmark \\ \cshref{cor:pruned-erm-midpoint-exponential}}
& \makecell{\xmark \\ \cshref{cor:PM-exponential}}
& \makecell{\xmark \\ \cshref{cor:PM-exponential}}
& \makecell{\xmark \\ \cshref{cor:PM-exponential}}
& \makecell{\xmark \\ \cshref{prop:star-linear}}
& \makecell{\cmark \\ \cshref{thm:Q-exponential}}
\\

\bottomrule
\multicolumn{11}{l}{\makecell[l]{\footnotesize $^*$The lower bound in \cite{DEVROYE19951011} can be adjusted to our setting to yield the sub-optimality of ERM. \\ \footnotesize $^\dagger$The dependence on $\delta$ in Theorem A of \cite{lecue2009aggregation} is suboptimal, but their proof yields the correct dependence.}}
\end{tabular}
\label{tab:minimax-vs-exponential}
\end{table}
For (countably) infinite dictionaries, we show that there is a trade-off between universal and uniform guarantees: no algorithm can achieve both. We summarize this result in the following informal theorem.

\begin{theorem*}[Consequence of \cref{thm:bob-general}]
    There exist a countably infinite dictionary and a family of distributions for which a minimax optimal algorithm $\Amini$ and an algorithm $\Aexp$ that achieves zero excess risk with exponential probability exist, but no algorithm can achieve both guarantees.
\end{theorem*}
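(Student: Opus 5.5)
The plan is to build the counterexample from many independent copies of the classical two-point lower bound that forces hedging, one copy for each scale $n_k$ along a rapidly growing sequence. A single carefully chosen distribution will then contain infinitely many such copies, and the argument is a diagonal one: minimax optimality will force hedging at every scale $n_k$, and hedging at infinitely many $n$ is incompatible with an exponential rate.

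\emph{Construction.} Take disjoint covariate points $x_1,x_2,\ldots\in\cX$ and pair up dictionary elements $f_{2k-1},f_{2k}$. The two functions agree everywhere except at $x_k$, where they take the values $\tfrac12\pm\eta_k$. Consider distributions $P$ with marginal mass $w_k$ on $x_k$ (and $\sum_k w_k\le 1$). Conditionally on $X=x_k$, the response is Bernoulli with mean $\tfrac12+s_k\gamma_k$ for a sign $s_k\in\{\pm1\}$. I would make the dictionary comparator structure decouple across blocks, for instance by letting dictionary functions be "product" choices whose risk is additive over blocks, so that the excess risk is a sum of block excess risks. The parameters $w_k,\gamma_k$ are tuned so that block $k$ is information-theoretically hard exactly at sample size $n_k$, i.e.\ $n_k w_k\gamma_k^2\asymp 1$. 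The problem space $\Theta$ consists of all such $(P,\cF)$, and I would compute $\fM(\Theta,\delta,n)$ via the finite-class bound \eqref{eqn:finite-minimax-rate} applied to the blocks that are active at scale $n$.

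\emph{Existence of $\Amini$ and $\Aexp$.} For $\Amini$, run $Q$-aggregation (or any algorithm that is minimax optimal along the tail) on the blocks up to a data-dependent truncation level. This matches $\fM$ by the finite-dictionary theory. For $\Aexp$, use a structural-risk-minimization variant of ERM that commits to the empirical sign in every block. For each fixed $P$, \cref{prop:ERM-exponential}-style Hoeffding arguments on finitely many blocks, plus a tail bound showing that blocks of small mass contribute zero excess risk once they are observed, should give zero excess risk with exponential probability. Here I would choose $w_k$ summable and $\gamma_k$ bounded below so that each fixed $P$ has a positive margin.

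\emph{Impossibility of both.} Suppose $\cA$ is minimax optimal along the tail. Fix $k$ and apply Le Cam's two-point method to $s_k=\pm1$ at $n=n_k$. Any estimator whose block-$k$ prediction is exactly $f_{2k-1}$ or exactly $f_{2k}$ with high probability incurs block excess $\asymp w_k\gamma_k\eta_k$ with constant probability under one of the two signs. That exceeds $C\fM(\Theta,\delta,n_k)$ for moderate $\delta$ if $\eta_k\gg\gamma_k$, which is the usual square-root gap of proper learners. Hence, with probability bounded below by a constant $p_0$, $\cA$ must hedge on block $k$ at $n_k$, and hedging means positive excess. The sign choice that produces this can be made block by block, independently of the other blocks, since the samples in different blocks are (after Poissonization, or conditioning on counts) nearly independent. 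This yields a single $P^\star$ with $\PP_{P^{\star n_k}}(\excessRisk(\cA(\rS))>0)\ge p_0$ for all $k$, contradicting \cref{def:exponential-rate-probability}. Since the expectation of the block excess is also bounded below, the same argument should also rule out an exponential rate in expectation.

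The main obstacle is this last step. A single fixed $P^\star$ must defeat $\cA$ at all scales simultaneously. This requires that $\cA$'s behaviour on block $k$ at time $n_k$ depends only weakly on the signs chosen for the other blocks, and that the minimax constraint at $n_k$ genuinely forces hedging rather than being satisfiable by an exact choice. I would first try to handle this with a Bayesian averaging argument: put independent uniform priors on all signs, show that the expected indicator of hedging at $n_k$ is at least $p_0$ for every $k$, and then use Fatou's lemma, or a limsup/Borel--Cantelli argument over the product prior, to extract a deterministic sign sequence $s^\star$ for which hedging occurs with probability $\ge p_0/2$ for infinitely many $k$.
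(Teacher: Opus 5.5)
There is a genuine gap at the step ``hedging means positive excess'', and it undermines the construction itself, not just that step. Excess risk is measured against $\inf_{f\in\cF}\cR_P(f)$, so improper predictions can have \emph{negative} excess, and your blocks sit exactly in that regime. Because you need $\eta_k\gg\gamma_k$ for the proper-learner gap, the conditional mean $\tfrac12+s_k\gamma_k$ lies strictly inside $[\tfrac12-\eta_k,\tfrac12+\eta_k]$. A prediction $a$ at $x_k$ then has block excess $w_k\bigl[(a-\tfrac12-s_k\gamma_k)^2-(\eta_k-\gamma_k)^2\bigr]$. At the midpoint $a=\tfrac12$ this equals $-w_k\eta_k(\eta_k-2\gamma_k)<0$ for \emph{both} signs; this is the Jensen gap of \eqref{eqn:Jensen-gap}.

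With your additive product dictionary, the infimum over $\cF$ is at least the sum of the blockwise minima. Hence the fixed predictor that equals $\tfrac12$ at every $x_k$ (and matches the dictionary off the blocks) has strictly negative excess risk under every $P$ in your family, almost surely and for every $n$. It therefore meets both requirements of \cref{def:learnable-in-both-worlds} at once (indeed $\fM(\Theta,\delta,n)=0$), so your $\Theta$ cannot witness the statement. Le Cam's argument does push $\cA$ off the two dictionary values at scale $n_k$, but leaving them is harmless, even beneficial. Nothing forces positive excess, so the Fatou/diagonal extraction of $s^\star$ has nothing to extract.

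The missing idea is that the universal-side distribution must be one where $\inf_{f\in\cF}\cR_P(f)$ equals the Bayes risk. Then any deviation from the Bayes value at a point of positive mass is strictly positive excess, and no harmless hedge exists. The paper obtains this from the Eluder-type sequence $f_i(x_j)=y_j$ for $j<i$ and $|f_i(x_i)-y_i|\ge\gamma$. Under $P_\infty$ (mass $2^{-i}$ at $x_i$, labels $Q_i$ with mean $y_i$ and an atom of mass $\lambda_i\ge\eps$ at $f_i(x_i)$), the risks of the $f_k$ converge to that of the Bayes rule $f_\infty$. So zero excess forces predicting exactly $y_t$ at every level $t$.

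The truncated distributions $P_{t,\ell}$ make $f_t$ Bayes-optimal. Without a certificate point, their $n$-sample law is $\bar P_t^n$, and $P_\infty^n$ has density $(1-2^{-t})^n\lambda_t^{N_t}$ with respect to it on that event. Here $N_t$ counts visits to $(x_t,f_t(x_t))$, and the density is at least $\exp(-c_\eps n2^{-t})$ when $N_t\le 8n2^{-t}$. The certificate points $z_{j,\ell}$ are what make an exponential-rate $\Aexp$ exist for the truncated distributions.

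Taking $2^{-t}\asymp 1/\varphi(n)$ gives the dichotomy of \cref{thm:bob-general}. With $\varphi(n)=\max\{4,\sqrt n\}$, minimax optimality rules out the second alternative, leaving $\PP_{P_\infty^{n_k}}\bigl(\excessRiskPar{P_\infty}{\cF}(\cA(\rS))>0\bigr)\ge\tfrac14e^{-c_\eps\sqrt{n_k}}$, which contradicts an exponential rate. With $\varphi(n)=\max\{4,\kappa n\}$ for a sufficiently small constant $\kappa$, one even gets the constant lower bound you were aiming for. So the tension is not between selecting and hedging. It is between committing to the default value $y_t$, which the universal requirement under $P_\infty$ forces, and abandoning it on thin evidence, which the uniform requirement under $P_{t,\ell}$ forces.
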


\begin{wrapfigure}{r}{0.4\textwidth}
    \vspace{-0.7cm}
    \centering
    \resizebox{\linewidth}{!}{
    \begin{tikzpicture}[
    >=Latex,
    every node/.style={align=center},
    bluept/.style={circle, draw=black, fill=color1, minimum size=5mm, inner sep=0pt},
    redpt/.style={circle, draw=black, fill=color4, minimum size=5mm, inner sep=0pt},
    orangept/.style={circle, draw=black, fill=color3, minimum size=5mm, inner sep=0pt},
    callout/.style={draw=black, rounded corners=3pt, fill=white, inner sep=6pt, text width=4.2cm},
    scale=0.8
]

\draw[very thick,->] (0,0) -- (8.8,0);
\draw[very thick,->] (0,0) -- (0,6.6);

\draw[thick, dashed] (4,0) -- (4,3);
\draw[thick, dashed] (0,3) -- (4,3);

\draw[thick, dashed] (8,0) -- (8,1);
\draw[thick, dashed] (0,1) -- (8,1);

\draw[thick, dashed] (1,0) -- (1,6);
\draw[thick, dashed] (0,6) -- (1,6);

\node[font=\itshape] at (-1.25,6.0) {$1$};
\node[font=\itshape] at (-1.25,3.0) {$\exp(-\frac{n}{\varphi(n)})$};
\node[font=\itshape] at (-1.25,1.0) {$\exp(-n)$};
\node[font=\itshape] at (1,-0.45) {$\frac{1}{n}$};
\node[font=\itshape] at (4,-0.45) {$\frac{1}{\varphi(n)}$};
\node[font=\itshape] at (8,-0.45) {$1$};

\node at (4.2,-1.0)
    {Uniform bound on $\excessRisk(\fhat)$};

\node[rotate=90] at (-2.6,3.2)
    {Universal bound on $\PP(\excessRisk(\fhat)>0)$};

\coordinate (c1) at (1.0,6.0);
\coordinate (c2) at (8.0,1.0);

\fill[gray!15]
  (1.0,6.0)
  -- (8.0,6.0)
  -- (8.0,1.0)
  .. controls  (3.85,2.85) ..
  (1.0,6.0)
  -- cycle;

\draw[thick]
  (c1)
  .. controls (3.85,2.85) ..
  (c2);

\node[text width=4.8cm] at (6.5,5.5)
  {attainable};

\node[text width=4.8cm] at (2.5,1.5)
  {unattainable};

\node[bluept] (Amin) at (1.0,6.0) {};
\node[text=color1] at (2.0,5.7) {$\Amini$};

\node[redpt] (Afix) at (4.0,3.0) {};
\node[text=color4] at (5.6,3.6) {$Q$-aggregation \\ of $\Amini,\Aexp$};

\node[orangept] (Aexp) at (8.0,1.0) {};
\node[text=color3] at (7.4,1.7) {$\Aexp$};

\end{tikzpicture}
    }
    \caption{The trade-off curve between universal and uniform rates, parameterized by functions $\varphi:\NN\to [4,\infty)$.}
    \label{fig:trade-off}
    \vspace{-2.0cm}
\end{wrapfigure}

More specifically, in \cref{thm:bob-general}, we parameterize the trade-off between universal and uniform guarantees. There exist a function class $\cF$ and a set of distributions $\cP$ such that
for every $\varphi:\NN\to[4,\infty)$ and every $\cA$, there exist $(n_k)_{k=1}^\infty$ and $P,(P_k)_{k=1}^\infty\in \cP$ such that at least one of the following holds:
\begin{equation*}
\begin{aligned}
    &\forall k:\ \PP_{\rS\sim P^{n_k}}\prn{\excessRisk(\cA(\rS))>0}\geq c_1\exp\prn{-c_2\frac{n_k}{\varphi(n_k)}} \\
    \text{or} \quad &\forall k:\ \PP_{\rS\sim P_k^{n_k}}\prn{\excessRiskPar{P_k}{\cF}(\cA(\rS))>\frac{c_3}{\varphi(n_k)}}\geq c_4.
\end{aligned}
\end{equation*}
\cref{thm:QBOB} complements this result by showing that using $Q$-aggregation to combine $\Amini$ and $\Aexp$ with the prior and temperature depending on $\varphi$ achieves matching upper bounds whenever $\varphi(n)\lesssim n$.
The resulting trade-off is informally visualized in \cref{fig:trade-off}.
Formal versions of the preceding lower and upper bounds can be found in \cref{thm:bob-general,thm:QBOB}.

We also prove several structural results for countably infinite hypothesis spaces. In \cref{thm:arbitrarily-slow-rates-inf-not-realized}, we first show that there exists a countably infinite function class on which only arbitrarily slow universal rates can be expected. Without restricting the class of distributions, ``most'' function classes cannot achieve ``truly'' exponential rates (\cref{thm:nearly-exponential-rates-lower-bound}).
\cref{thm:countable-inf-realized-not-uniformly-learnable,thm:countable-inf-realized-exponential-rates} show that, for the broad class of distribution and function class pairs in which the infimum of the risk is attained, uniform learning is impossible, although almost exponential rates are achievable in the universal sense. This is similar to \cite[Example 2.3]{bousquet2021theory}.
From an algorithmic perspective, we show that---in contrast to the finite case---achieving exponential rates may require outputting a function that is \emph{not} a convex or finite combination of the functions in the function class (\cref{thm:arbitrarily-slow-rates-convex-finite-combination}).

\subsection{Related Work and Background}
\label{subsec:related-works}
\paragraph{Model Selection Aggregation and Uniform Learning.}
Aggregation has much of its foundation in PAC-Bayesian learning \cite{alquier2021user}, with some of the earlier works including \cite{barron1987bayes,nemirovski2000topics,leung2006information,catoni2004statistical,catoni2007pac,yang2000mixing,dalalyan2008aggregation}. The optimal rates of aggregation were established in \cite{tsybakov2003optimal}.
The fact that any proper method (also called a \emph{selector}) incurs a minimax-suboptimal error rate (e.g., $\gtrsim \sqrt{\log (M) / n}$) was shown in multiple works, such as \cite{juditsky2008learning,catoni2004statistical,rigollet2012sparse,DEVROYE19951011}. These results include any \emph{empirical risk minimizer} (ERM). The main reason is that, for squared loss, exploiting \emph{convexity} through \cref{eqn:Jensen-gap} is crucial \cite{mendelson2019unrestricted}. However, ERM on the convex hull of the dictionary suffers from increased complexity and is minimax-suboptimal for large dictionaries \cite{lecue2009aggregation}.

From the PAC-Bayesian perspective, one of the most fundamental estimators is the \emph{exponential weights} (EW) estimator (e.g., \cite{leung2006information,rigollet2012sparse}; see \cref{subsec:ERM-and-EW}). It has been shown to be minimax-suboptimal in expectation for low temperatures and minimax-suboptimal in deviation for all temperatures unless a Bernstein condition is satisfied \cite{lecue2013optimality,alquier2021user,dai2012deviation}. Recently, \cite{hogsgaard2026aggregation} showed that it is optimal in expectation for high temperatures. Previously, this was known only for fixed-design regression \cite{dalalyan2008aggregation}.

While we consider the batch setting, the problem has deep ties to the sequential setting \cite{hannan1957approximation,vovk1995game,cesa2006prediction}; see \cite{mourtada2023local} for a discussion and \cref{subsec:online-to-batch} for definitions. By averaging a sequence of EW estimators $\AEWestimator^{(i)}$ on the first $i$ samples, we obtain the \emph{progressive mixture} (PM) estimator \cite{barron1987bayes,catoni2004statistical,juditsky2008learning}. It is minimax optimal in expectation but not in deviation \cite{audibert2007progressive}. See also \cite{audibert2007progressive} for a detailed account of the estimator's origin.
The \emph{mirror averaging} estimator is equivalent to the PM estimator; it was merely derived from a different motivation \cite{juditsky2005recursive,juditsky2008learning,dalalyan2012mirror,lounici2007generalized}.
Later, the \emph{Bernstein online aggregation} (BOA) estimator \cite{wintenberger2017optimal} and a sequential estimator based on \emph{shifted loss} \cite{van2023high} use similar averaging techniques but achieve minimax-optimal tails.

In response to the deviation suboptimality of the PM estimator, J.-Y.\ Audibert developed the \emph{star estimator} in \citep{audibert2007progressive,audibert2009fast,gaiffas2009hyper} (cf.\ \cref{subsec:star-estimator}), which is the first and perhaps the simplest method to achieve the minimax rate both in expectation and in deviation. See \citep{liang2015learning,vijaykumar2021localization,kanade2024exponential} for analyses of the star estimator.
The star estimator creates a subset of the convex hull on which it performs ERM.
In \cite{lecue2009aggregation}, another method was developed for selecting a subset of the convex hull of $\cF$, on which ERM is minimax optimal in deviation, which we refer to as \emph{pruned-convex ERM}. See also \cite{gaiffas2009hyper}. Crucially, the algorithm requires the confidence level (i.e., $\delta$) at which it will be evaluated as input, a property shared by the \emph{midpoint} estimator, which is also known to be minimax optimal at the input confidence level \cite{kanade2024exponential}; cf.\ \cref{sec:proofs-pruning}.

The deviation-optimal procedures described above do not account for any possible prior over the models, whereas earlier PAC-Bayesian methods had this as a key feature. As argued in \cite{lecue2014optimal}, this may be important in practice. Motivated by this consideration, the \emph{$Q$-aggregation estimator} from \citep{dai2012deviation,lecue2014optimal} uses a prior; cf.\ \cref{subsec:Q-aggregation}.
It is known to be minimax optimal both in expectation and in deviation \cite{lecue2014optimal}. In \cite{mourtada2023local}, the authors prove a ``local'' risk bound for $Q$-aggregation that adapts to the risk gaps of suboptimal models, but still takes the form of a uniform oracle inequality rather than a universal, instance-wise learning rate.
Computational aspects of $Q$-aggregation are discussed in \cite{dai2012deviation}.

\paragraph{Data-dependent Dictionaries.} All results discussed above concern dictionaries that are independent of the data. This includes the setting of sample splitting, where the dictionary is first computed on an independent sample. The universal setting does not include sample splitting because it requires the dictionary to be fixed across sample sizes, a point central to our best-of-both-worlds results in \cref{sec:best-of-both-worlds}.
In contrast, a related line of work considers aggregation when the dictionary depends on the data used for aggregation. Most results of this kind rely on assumptions about this dependence. In fixed-design Gaussian regression, \cite{leung2006information} study mixtures of least-squares projection estimators fitted on the same observations used to construct the mixing weights, and related work studies aggregation of least-squares projections while taking optimal sparsity into account \cite{rigollet2012sparse}.
\emph{Affine estimators} form a broad class in fixed-design regression. For possibly uncountable dictionaries of this form, \cite{DalalyanS12} establish sharp oracle inequalities in expectation for exponential weights. In the same setting, \cite{dai2014aggregation} obtain high-probability guarantees and show that a generalized $Q$-aggregation procedure satisfies sharp oracle inequalities in deviation. Subsequently, \cite{bellec2018optimal} shows that, for finite collections of affine estimators, data dependence incurs no additional minimax cost. This line of work is extended in \cite{BellecZ21} beyond affine estimators to finite collections of data-dependent estimators satisfying a global Lipschitz condition.

\paragraph{Universal Learning Theory.}
The universal learning paradigm was first formally introduced in \cite{bousquet2021theory} for realizable binary classification, while earlier works had described the phenomenon. Specifically, in \cite{antos1996strong,bendavid1995parameterization,schuurmans1997characterizing}, the distinction between linear and exponential learning curves was first described, including the observation that ERM achieves exponential rates on finite hypothesis classes.
Later works provide a complete analysis of empirical risk minimization for zero-one loss, both in the realizable \cite{hanneke2024erm} and agnostic settings\cite{hanneke2025universalratesermagnostic}. Other works have extended the results to multiclass classification \cite{kalavasis2022multiclass}, multiclass learning with bandit feedback \cite{hanneke2025equivalent}, online learning \cite{hanneke2024optimistically,kalocinski2025computable}, (inter)active learning \cite{hanneke2022interactive,hanneke2024active}, revenue maximization \cite{HannekeKMV26}, language identification and generation \cite{KalavasiMV25}, and agnostic classification \cite{hanneke2026theory}.
The most closely related work analyzes universal learning for regression with cut-off and absolute loss \cite{attias2024universal} in the realizable setting. In that work, realizability means that for any distribution $ P $ and function class $ \cF $ considered $ \inf_{f\in \cF}\PP_{(X,Y)\sim P}( f(X)=Y)=0 $. For cut-off loss, the authors characterize the optimal universal rates for a given hypothesis class. For absolute loss, they show that infinitely many learning rates are possible. Their results can be extended to realizable regression with squared loss.
In contrast to this work, we consider the agnostic setting with squared loss, which, to the best of our knowledge, has not been studied in the universal learning literature on regression.

The overarching goal of the literature on universal learning \cite{attias2024universal,bousquet2021theory,hanneke2022interactive,hanneke2024active,hanneke2026theory,hanneke2024erm,hanneke2025universalratesermagnostic,kalavasis2022multiclass} is to characterize, for a given learning setting, which universal learning rates are achievable and under what conditions. This goal is usually pursued by identifying an appropriate combinatorial or geometric dimension of the function class, analogous to the VC dimension for minimax learning in binary classification. This paper instead addresses a different question: whether universal and uniform learning rates are compatible \emph{if we assume} they are achievable. We study this question specifically for dictionary learning under squared loss.

\section{Finite Hypothesis Spaces}\label{sec:finite}

In this section, we investigate the compatibility of uniform and universal rates for different known estimators for finite hypothesis spaces. Throughout this section, we let $M\equiv \abs{\cF}<\infty$.
Before we study specific estimators, we introduce some notation and basic observations.
We denote the subset of optimal models in the dictionary as
$\cFstar\equiv\cFstar(P):= \crl{f\in\cF:\excessRisk(f)=0}$,
and the minimal positive risk gap between optimal and suboptimal models as $$\Deltamin \equiv \Deltamin(P,\cF) :=  \min_{f\in\cF\setminus\cFstar} \excessRisk(f),$$
where we set $\Deltamin=\infty$ if $\cFstar=\cF$ as then any estimator in $\conv(\cF)$ has non-positive excess risk. We define the \emph{empirical risk} on the sample $\rS=(X_i,Y_i)_{i=1}^n$ as
\begin{equation}
\label{eqn:empirical-risk-def}
    \Rhat_{\rS}(f):= \frac{1}{n} \sum_{i=1}^n (f(X_i)-Y_i)^2.
\end{equation}
The following lemma is at the heart of many of the positive results below: it shows that, with high probability, empirical risk separates every suboptimal model from an optimal one by a fixed fraction of its true excess risk, which is what enables exponential universal rates via empirical risk comparisons. The lemma follows from a simple application of Hoeffding's inequality and a union bound.
\begin{lemma}\label{lem:Hoeffding-separation}
    For any $(P,\cF)\in\Theta_M$ and any fixed $\fstar\in\cFstar$, it holds that
    \begin{equation*}
        \PP_{\rS\sim P^{n}}\prn{\forall f\in\cF\setminus\cFstar: \ \Rhat_{\rS}(f)-\Rhat_{\rS}(\fstar) > \frac{1}{2}\excessRisk(f)} \geq 1-M\exp\prn{-\frac{1}{8}\Deltamin^2(P,\cF)n}.
    \end{equation*}
    We call the event above $\Esep(\rS)$.
\end{lemma}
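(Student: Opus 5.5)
Fix $\fstar\in\cFstar$ and a suboptimal $f\in\cF\setminus\cFstar$. For each $i$ define
\begin{equation*}
Z_i := (f(X_i)-Y_i)^2-(\fstar(X_i)-Y_i)^2 .
\end{equation*}
Then $\Rhat_{\rS}(f)-\Rhat_{\rS}(\fstar)=\frac1n\sum_{i=1}^n Z_i$. The $Z_i$ are i.i.d.\ with mean $\EE[Z_i]=\cR_P(f)-\cR_P(\fstar)=\excessRisk(f)$, because $\excessRisk(\fstar)=0$ means $\cR_P(\fstar)$ equals the infimum. Since $f,\fstar$ take values in $[0,1]$ and $Y_i\in[0,1]$, each squared term lies in $[0,1]$, so $Z_i\in[-1,1]$, an interval of length $2$.

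The failure event for this $f$ is $\frac1n\sum_i Z_i-\excessRisk(f)\le -\frac12\excessRisk(f)$. Hoeffding's inequality with range $2$ and deviation $t=\frac12\excessRisk(f)$ bounds its probability by
\begin{equation*}
\exp\!\left(-\frac{2n^2t^2}{n\cdot 4}\right)=\exp\!\left(-\frac{n\,\excessRisk(f)^2}{8}\right)\le \exp\!\left(-\frac{1}{8}\Deltamin^2 n\right),
\end{equation*}
where the last step uses $\excessRisk(f)\ge\Deltamin$ for every suboptimal $f$. Hoeffding controls the non-strict event $\le$, so it also yields the strict inequality $>$ required in the statement.

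Taking a union bound over the at most $M-1\le M$ suboptimal functions gives the claimed probability bound for $\Esep(\rS)$. If $\cFstar=\cF$, the event is vacuous and the bound holds trivially, which is consistent with the convention $\Deltamin=\infty$.

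No step is a genuine obstacle. The only points needing care are getting the range of $Z_i$ right (length $2$, which produces the constant $1/8$) and noting that $\excessRisk(\fstar)=0$ makes the mean of $Z_i$ exactly $\excessRisk(f)$.
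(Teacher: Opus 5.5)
Your proposal is correct and matches the paper's proof essentially step for step. Like the paper, you apply Hoeffding's inequality to the bounded differences $(f(X_i)-Y_i)^2-(\fstar(X_i)-Y_i)^2\in[-1,1]$ with deviation $\tfrac12\excessRisk(f)$, use $\excessRisk(f)\ge\Deltamin$, and finish with a union bound over the suboptimal functions.
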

\begin{proof}
    Denote the events $E_f=\{\Rhat_\rS(f)-\Rhat_\rS(\fstar) > \excessRisk(f)/2\}$.
    Since $f,\fstar$, and $y$ lie in $[0,1]$, we know that $\xi_f (x,y) := (f(x)-y)^2-(\fstar(x)-y)^2 \in[-1,1]$.
    It holds that $\EE_{(X,Y)\sim P} \brk{\xi_f(X,Y)} = \excessRisk(f)$ and we can write $\Rhat_\rS(f) -\Rhat_\rS(\fstar) = \frac{1}{n}\sum_{i=1}^n \xi_f(X_i,Y_i)$, where $\rS=(X_i,Y_i)_{i=1}^n$.
    Hoeffding's inequality then gives
    \begin{align*}
        \PP_{\rS\sim P^n}\prn{E_f^c} &= \PP_{\rS\sim P^n}\prn{\frac{1}{n}\sum_{i=1}^n \xi_f(X_i,Y_i)-\excessRisk(f) \leq -\frac{\excessRisk(f)}{2}} \leq \exp\prn{-\frac{n}{8}\excessRisk^2(f)}.
    \end{align*}
    Since $\excessRisk(f)\geq \Deltamin$ for all $f\in\cF\setminus\cFstar$, a union bound shows that with probability at least $1-M\exp\prn{-\frac{n}{8}\Deltamin^2}$ the event $\Esep(\rS)=\bigcap_{f\in\cF\setminus \cFstar} E_f$
    holds, which was the claim.
\end{proof}

\subsection{Warm-up: Empirical Risk Minimization and Exponential Weights}
\label{subsec:ERM-and-EW}

We begin by considering two basic but fundamental estimators that highlight the mechanism by which exponential rates are achievable in the universal sense, namely the empirical risk minimizer and its PAC-Bayesian counterpart, the exponential weights estimator.
Let $\erm=\Aerm(\rS)$ be an empirical risk minimizer (ERM) on the dictionary $\cF$; that is, given an i.i.d.\ sample $\rS=(X_i,Y_i)_{i=1}^n\sim P^{n}$, choose any minimizer
\begin{equation*}
    \erm\in\argmin_{f\in\cF} \Rhat_{\rS}(f).
\end{equation*}
\begin{proposition}
\label{prop:ERM-exponential}
The empirical risk minimizer achieves zero excess risk with exponential probability on $\Theta_M$.
Specifically, for every $(P,\cF)\in\Theta_M$, there exists a constant $c>0$ such that $\PP_{\rS\sim P^n}(\excessRisk(\erm)=0)\geq 1-Me^{-cn}$.
One such constant $c$ is $c=\frac{1}{8}\Deltamin^2$.
\end{proposition}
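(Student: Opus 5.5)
The plan is to reduce the statement directly to \cref{lem:Hoeffding-separation}. First I will dispose of the degenerate case $\cFstar=\cF$: there every $f\in\cF$ has zero excess risk, so $\excessRisk(\erm)=0$ deterministically and the bound holds for any $c>0$. It also holds with the convention $\Deltamin=\infty$, since then $Me^{-cn}=0$. So assume $\cF\setminus\cFstar\neq\emptyset$. Since $\cF$ is finite, $\cFstar$ is nonempty (the infimum of the risk over $\cF$ is attained) and $\Deltamin>0$. Fix any $\fstar\in\cFstar$.

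Next, I will show that on the event $\Esep(\rS)$ from \cref{lem:Hoeffding-separation} the ERM must lie in $\cFstar$. On $\Esep(\rS)$, every $f\in\cF\setminus\cFstar$ satisfies
\begin{equation*}
\Rhat_\rS(f)>\Rhat_\rS(\fstar)+\tfrac12\excessRisk(f)\geq \Rhat_\rS(\fstar)+\tfrac12\Deltamin>\Rhat_\rS(\fstar).
\end{equation*}
Hence no suboptimal $f$ can be a minimizer of $\Rhat_\rS$ over $\cF$. This holds for any tie-breaking rule, because each suboptimal model is strictly beaten by $\fstar\in\cF$. Therefore $\erm\in\cFstar$, that is, $\excessRisk(\erm)=0$.

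Finally, I will conclude that
\begin{equation*}
\PP(\excessRisk(\erm)=0)\geq \PP(\Esep(\rS))\geq 1-M\exp\bigl(-\tfrac18\Deltamin^2 n\bigr)
\end{equation*}
by \cref{lem:Hoeffding-separation}. This gives the claim with $c=\Deltamin^2/8$, which depends on $(P,\cF)$ only. The universal exponential-rate statement then follows with $C=M$.

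There is no real obstacle. The only points needing care are the well-definedness of $\cFstar$ and $\Deltamin$ for finite $\cF$, and the observation that strict separation makes the argument independent of how ties in the argmin are broken.
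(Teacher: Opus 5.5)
Your proposal is correct and follows the paper's proof essentially verbatim: on the separation event $\Esep(\rS)$ from \cref{lem:Hoeffding-separation}, every model in $\cF\setminus\cFstar$ has strictly larger empirical risk than $\fstar$, so the proper ERM must lie in $\cFstar$, which gives the bound with $c=\Deltamin^2/8$. Your explicit treatment of the degenerate case $\cFstar=\cF$ and of tie-breaking are harmless additions that the paper leaves implicit.
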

\begin{proof}
    By \cref{lem:Hoeffding-separation}, we know that for any fixed $\fstar\in\cFstar$, with probability at least $1-M\exp\prn{-\frac{n}{8}\Deltamin^2}$ the event $\Esep(\rS)$ holds, that is, for all $ f\in\cF\setminus\cFstar$ it holds $\Rhat_\rS(f)-\Rhat_\rS(\fstar) > \frac{1}{2}\excessRisk(f)$, implying $\Rhat_\rS(f)>\Rhat_\rS(\fstar)$.
    On the event $\Esep(\rS)$, since $\erm\in\cF$ (i.e., it is proper) and $\Rhat_\rS(\erm)\leq \Rhat_\rS(\fstar)$ by definition, we must have that $\erm\notin\cF\setminus\cFstar$, so $ \erm\in \cFstar $, which concludes the proof.
\end{proof}
The key observation above is that for a fixed distribution and finite dictionary, i.e., the universal learning setting, optimal and suboptimal models are separated by a fixed risk gap. As the sample size grows, the empirical risks concentrate around their expectations, so empirical risk distinguishes the two groups with exponentially high probability.
We now consider the PAC-Bayesian counterpart to the empirical risk minimizer.
The exponential weights estimator $\EW$ with temperature $\beta>0$ and uniform prior $\pi=(1/M,\ldots,1/M)$ is defined through the Gibbs posterior $\rhohat\in \triangle_M$ that has components
\begin{equation}
\label{eqn:exponential-weights-def}
     \rhohat_k = \frac{\exp(-\frac{n}{\beta} \Rhat_\rS(f_k))}{\sum_{j=1}^M  \exp(-\frac{n}{\beta} \Rhat_\rS(f_j))} \quad \iff \quad  \rhohat\in \argmin_{\rho\in\triangle_{M}} \crl{\EE_{k\sim \rho}\brk{\Rhat_\rS(f_k)}+ \frac{\beta}{n}\KL(\rho,\pi)}.
\end{equation}
The aggregated estimator is then defined as $\EW(\cdot) = \EE_{k\sim \rhohat}\brk{f_k(\cdot)}$: it is the convex combination of the dictionary elements with weights proportional to the exponentiated negative empirical risk, meaning that it is a soft version of the ERM that puts most weight on models with smaller empirical risk and less weight on models with larger empirical risk, whereas the ERM puts all the weight on the model with the smallest empirical risk.
\begin{proposition}
\label{prop:EW-exponential}
    The exponential weights estimator with constant temperature $\beta>0$ achieves exponential rates with exponential probability on $\Theta_M$. Specifically,
    for every $(P,\cF)\in\Theta_M$, there exists $c>0$ such that
$\PP_{\rS\sim P^n}(\excessRisk(\EW) \geq Me^{-cn})\leq Me^{-cn}$ for all $n\in\NN$. One such $c$ is $c=\min\{\tfrac{1}{8}\Deltamin^2,\tfrac{1}{2\beta}\Deltamin\}$.
    Moreover, there exists $(P,\cF)\in \Theta_M$ and constants $C,c>0$ such that $P^n$-almost surely, $\excessRisk(\EW)\geq Ce^{-cn}$ and the exponential weights estimator does not achieve zero excess risk with exponential probability.
\end{proposition}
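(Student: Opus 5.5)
The plan has two parts: an upper bound obtained on the separation event of \cref{lem:Hoeffding-separation}, and an explicit lower-bound instance.

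\emph{Upper bound.} If $\cFstar=\cF$, every convex combination has non-positive excess risk and there is nothing to show, so assume $\Deltamin<\infty$. Fix $\fstar\in\cFstar$ and work on $\Esep(\rS)$, which by \cref{lem:Hoeffding-separation} has probability at least $1-Me^{-\Deltamin^2 n/8}$. On this event, every $f_k\notin\cFstar$ satisfies $\Rhat_\rS(f_k)-\Rhat_\rS(\fstar)>\Deltamin/2$. Hence
\[
\rhohat_k\le \exp\prn{-\tfrac{n}{\beta}\prn{\Rhat_\rS(f_k)-\Rhat_\rS(\fstar)}}\le e^{-n\Deltamin/(2\beta)},
\]
because the normalizer is at least the $\fstar$ term. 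Summing over the suboptimal indices, the total mass $\rhohat(\cF\setminus\cFstar)$ is at most $Me^{-n\Deltamin/(2\beta)}$.

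Next I bound the excess risk through \cref{eqn:Jensen-gap}: $\cR_P(\EW)\le \EE_{k\sim\rhohat}\brk{\cR_P(f_k)}$. Optimal models contribute risk $\inf_{f\in\cF}\cR_P(f)$, and each suboptimal model contributes excess risk at most $1$ since losses lie in $[0,1]$. Therefore $\excessRisk(\EW)\le \rhohat(\cF\setminus\cFstar)\le Me^{-n\Deltamin/(2\beta)}$. Taking $c=\min\{\Deltamin^2/8,\Deltamin/(2\beta)\}$ gives both the event probability and the bound in the form stated. Strictly, the statement uses $\geq Me^{-cn}$; since the excess bound on the event is $\le Me^{-n\Deltamin/(2\beta)}$, I need a strict inequality. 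I will get it by using the strict inequality in $\Esep$, which is inherited by the exponent, or by noting that the Jensen gap term is slack.

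\emph{Lower bound.} I would take $M=2$, a single point $\cX=\{x\}$, $f_1\equiv 0$, $f_2\equiv 1$, and $Y\sim\Bernoulli(1/4)$. Then $f_1$ is the unique optimum, with risk $1/4$, and $f_2$ has risk $3/4$. Because both weights of the Gibbs posterior are strictly positive, $\EW(x)=\rhohat_2\in(0,1)$. The risk of the constant prediction $a$ is $(a-1/4)^2+3/16$, so the excess risk is $(\rhohat_2-1/4)^2+3/16-1/4$, which is not what I want for small $\rhohat_2$: it is negative when $a$ is near $1/4$. So I need an example where the optimum sits at the boundary of the hull, e.g.\ $Y\equiv 0$ almost surely. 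Then $\excessRisk(\EW)=\rhohat_2^2$ and
\[
\rhohat_2=\frac{e^{-n/\beta}}{1+e^{-n/\beta}}\ge \tfrac12 e^{-n/\beta}
\]
deterministically. This gives $\excessRisk(\EW)\ge \tfrac14 e^{-2n/\beta}>0$ almost surely, so $\PP(\excessRisk(\EW)>0)=1$ and zero excess risk with exponential probability fails.

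The only delicate step is picking a lower-bound instance in which the optimum is extremal in the convex hull, so that positive posterior mass on the wrong model cannot be compensated by the Jensen gap; the upper bound is routine given \cref{lem:Hoeffding-separation}.
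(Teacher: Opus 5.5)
Your proposal is correct and follows essentially the same route as the paper. For the upper bound, you bound each suboptimal Gibbs weight by $e^{-n\Deltamin/(2\beta)}$ on $\Esep(\rS)$ and then apply Jensen's inequality. For the lower bound, your instance ($Y\equiv 0$, $f_1\equiv 0$, $f_2\equiv 1$) is a special case of the paper's condition: a Bayes-optimal $\fstar$ lies strictly below every other dictionary element at a point of positive mass. It yields the same $e^{-2n/\beta}$ scaling.

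You also handle the non-strict inequality $\geq Me^{-cn}$, which the paper's proof passes over. Your fix is valid, either via the strict inequality in $\Esep(\rS)$ or because at most $M-1$ models are suboptimal.
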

\begin{proof}
    By \cref{lem:Hoeffding-separation}, we know that for any fixed $\fstar\in\cFstar$, with probability at least $1-M\exp\prn{-\frac{n}{8}\Deltamin^2}$ the event $\Esep(\rS)$ holds, that is, for all $f\in\cF\setminus\cFstar$ it holds $\Rhat_\rS(f)-\Rhat_\rS(\fstar) > \frac{1}{2}\excessRisk(f)\geq \frac{1}{2}\Deltamin$.
    On that event, we can bound the weights on each element $f_{k}\in\cF\setminus\cFstar$ as
    \begin{align*}
        \rhohat_k &= \frac{\exp(-\frac{n}{\beta}\Rhat_\rS(f_k))}{\sum_{j=1}^{M} \exp(-\frac{n}{\beta}\Rhat_\rS(f_j))} \leq \exp\prn{-\frac{n}{\beta}\prn{\Rhat_\rS(f_k)-\Rhat_\rS(\fstar)}} \leq \exp\prn{-\frac{n}{2\beta}\Deltamin(P,\cF)}.
    \end{align*}

    By Jensen's inequality, on the same event the exponential weights estimator satisfies
    \begin{equation*}
        \excessRisk(\EW) \leq \sum_{k=1}^M \rhohat_k \excessRisk(f_k) \leq M\exp\prn{-\frac{n}{2\beta}\Deltamin(P,\cF)},
    \end{equation*}
    where we have used that $\excessRisk\leq 1$, for $ f_k\in \cF \setminus \cFstar $ we have $\rhohat_{k}\leq \exp(-\frac{n}{2\beta}\Deltamin)$, and for $f\in\cFstar$ we have $\excessRisk(f)=0$.
    This proves the exponentially small excess-risk bound with exponentially high probability, where we may choose the constant $c=\min\{\tfrac{1}{8}\Deltamin^2,\tfrac{1}{2\beta}\Deltamin\}$ as specified.

    For the lower bound, take any $(P,\cF)\in\Theta_M$ such that $f^\star\in\cF$ is Bayes optimal and there exists a point $x\in\cX$ with $P_X(x)>0$ such that $f(x)>f^\star(x)$ for all $f\in\cF\setminus\{f^\star\}$. Then, for any $\rho\in\triangle_{M}$,
    \begin{align*}
        \cR_P(f_\rho)
        =
        \cR_P(f^\star)
        +
        \EE_{X\sim P_X}\brk{(f_\rho(X)-f^\star(X))^2}
        \geq
        \cR_P(f^\star)
        +
        P_X(x)\bigl(f_\rho(x)-f^\star(x)\bigr)^2 .
    \end{align*}
    Since $f_\rho(x)-f^\star(x)=\sum_{k\in[M]}\rho_k(f_k(x)-f^\star(x))$ and all summands are non-negative, for any fixed $f_j\in\cF\setminus\{f^\star\}$ we have $f_\rho(x)-f^\star(x)\geq \rho_j(f_j(x)-f^\star(x))$. For the exponential weights estimator, since empirical risks lie in $[0,1]$, \cref{eqn:exponential-weights-def} gives $\rhohat_j\geq e^{-n/\beta}/M$. Consequently, $\excessRisk(\EW)\geq P_X(x)\bigl(f_j(x)-f^\star(x)\bigr)^2 e^{-2n/\beta}/M^2$ almost surely. When the temperature $\beta$ is bounded below by a positive constant independent of $n$, the excess risk of EW is almost surely bounded below by an exponentially small term.
\end{proof}
The fact that both ERM and EW achieve exponential rates in expectation follows from \cref{lem:exponential-implications}. From the above propositions, we see that ERM achieves \emph{zero excess risk with exponential probability} whereas EW only achieves the weaker guarantee of \emph{exponential rates with exponential probability}. The reason for this is that ERM selects the best model from the dictionary, which by \cref{lem:Hoeffding-separation} eventually belongs to $ \cFstar $, while EW hedges and puts weight on all models, including suboptimal models, which can prevent it from achieving zero excess risk.

To summarize, while ERM, owing to its extreme sparsity, achieves the best possible universal guarantee of zero excess risk with exponential probability (\cref{prop:ERM-exponential}), it does not achieve minimax optimality of any kind. By hedging more, exponential weights achieves the minimax rate in expectation \cite{hogsgaard2026aggregation}, but at the cost of the weaker universal guarantee of an exponential rate with exponential probability (\cref{prop:EW-exponential}). This again portrays the seeming tension between the universal and uniform viewpoints.

Motivated by the observation that ERM eventually selects an optimal hypothesis by choosing the one with the smallest empirical risk, we next study pruning-based algorithms that aggregate hypotheses from a restricted subset of the dictionary with low empirical risk. By balancing sparsity and hedging, these algorithms are natural candidates for achieving both optimal minimax rates and exponential universal rates.

\subsection{Pruning with a Threshold}
\label{subsec:pruning}
We now introduce a class of estimators that we call \emph{pruning-based} estimators.
To this end, we consider algorithms $\cA(\delta,\rS,\cF)$ that take a confidence parameter $\delta$ as input, cf.\ \cref{subsec:definitions-and-preliminaries}.
\begin{definition}
\label{def:pruning-based}
    An estimator $\fhat=\cA(\delta,\rS,\cF)$ is \emph{pruning-based with parameters $\delta,\alpha\in(0,1]$ and threshold $\threshold\geq 0$} if it holds $\fhat\in \conv(\cFhat(\rS_1))$ almost surely for $\rS_1\subseteq\rS$ being the first $\abs{\rS_1}=m\geq \alpha n$ samples in $\rS$ and a random, non-empty subset
    \begin{equation*}
        \cFhat(\rS_1)\subseteq\crl{f\in\cF:\ \Rhat_{\rS_1}(f)\le \Rhat_{\rS_1}(\Aerm(\rS_1))+\threshold}.
    \end{equation*}
\end{definition}
We will consider pruning-based estimators where the threshold satisfies $\threshold\to 0$ as $n \to \infty$; see \cref{subfig:pruning-conv,subfig:pruning-collapse} for a visualization.
An example of pruning-based estimators is ERM with $\threshold=0$, $ \alpha=1 $, but the class of pruning-based estimators also contains two other prominent estimators.
\begin{itemize}[leftmargin=*,itemsep=0pt,topsep=2pt]
    \item In \cite{lecue2009aggregation}, it is shown that first pruning the set of estimators and then running ERM on the convex hull of that pruned set, which we refer to as \emph{pruned-convex ERM} and describe in \cref{alg:pruned-convex-ERM} of \cref{sec:proofs-pruning}, achieves pointwise minimax optimality for the same $\delta$ provided to the algorithm.\footnote{Theorem 4.2 in \cite{lecue2009aggregation} displays the wrong dependence on $\delta$ to satisfy their Definition 1.1, but the proof actually yields the correct dependence.}
    \item In \cite{kanade2024exponential}, it is shown that the \emph{midpoint} estimator (defined in \cref{alg:midpoint}) achieves pointwise minimax optimality when the confidence $\delta$ is the same as the one provided to the algorithm.
\end{itemize}
Both satisfy the definition of pruning-based estimators (see \cref{lem:pruned-convex-ERM-is-pruning-based,lem:midpoint-is-pruning-based} in \cref{sec:proofs-pruning}). We show that any such estimator achieves exponential rates.
The following theorem can be understood as a generalization of \cref{prop:ERM-exponential}.

\begin{theorem}
\label{thm:pruning-exponential}
For any pruning-based estimator $\fhat$ with a threshold satisfying $\limsup_{m\to\infty}\threshold=0$,
and for any $(P,\cF)\in\Theta_M$, there exists a constant $c>0$ such that
$\PP\bigl(\excessRisk(\fhat)>0\bigr)\le M e^{-cn}$.
\end{theorem}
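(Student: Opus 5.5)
The plan is to reduce the statement to the separation event of \cref{lem:Hoeffding-separation}, applied to the pruning subsample $\rS_1$ rather than to the full sample. Fix $(P,\cF)\in\Theta_M$. If $\cFstar=\cF$, every element of $\conv(\cF)$ has non-positive excess risk and there is nothing to prove. Otherwise $\Deltamin>0$, and we fix some $\fstar\in\cFstar$. Since $\rS_1$ consists of the first $m\ge\alpha n$ points of an i.i.d.\ sample, we have $\rS_1\sim P^m$. Applying \cref{lem:Hoeffding-separation} with sample size $m$, the event $\Esep(\rS_1)$ has probability at least $1-M\exp(-\Deltamin^2 m/8)\ge 1-M\exp(-\alpha\Deltamin^2 n/8)$. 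On this event, every $f\in\cF\setminus\cFstar$ satisfies $\Rhat_{\rS_1}(f)-\Rhat_{\rS_1}(\fstar)>\Deltamin/2$.

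The second step uses the vanishing threshold. Because $\limsup_{m\to\infty}\threshold=0$, there is an $m_0$ (depending on $\Deltamin$, hence on $(P,\cF)$) such that $\threshold<\Deltamin/2$ for all $m\ge m_0$. Fix $m\ge m_0$ and work on $\Esep(\rS_1)$. By the ERM argument of \cref{prop:ERM-exponential}, $\Aerm(\rS_1)\in\cFstar$, so $\Rhat_{\rS_1}(\Aerm(\rS_1))\le\Rhat_{\rS_1}(\fstar)$. Then every suboptimal $f$ satisfies
\[
\Rhat_{\rS_1}(f)>\Rhat_{\rS_1}(\fstar)+\Deltamin/2\ge \Rhat_{\rS_1}(\Aerm(\rS_1))+\threshold .
\]
Hence $f\notin\cFhat(\rS_1)$, so $\cFhat(\rS_1)\subseteq\cFstar$ and therefore $\fhat\in\conv(\cFstar)$.

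The third step shows that convex combinations of optimal models have zero excess risk. All elements of $\cFstar$ attain the minimal risk $R^\star$ over $\cF$. By convexity of the squared loss (Jensen, or \cref{eqn:Jensen-gap}), any $f_\rho$ with $\rho$ supported on $\cFstar$ satisfies $\cR_P(f_\rho)\le R^\star$, so $\excessRisk(f_\rho)\le 0$. Thus $\PP(\excessRisk(\fhat)>0)\le M e^{-\alpha\Deltamin^2 n/8}$ for all $n$ with $m\ge m_0$.

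The remaining issue, and the only mildly delicate point, is the small-sample regime $m<m_0$. Since $m\ge\alpha n$, that regime is contained in $n< m_0/\alpha$, which is a finite set of sample sizes. I would shrink the constant $c$ so that $Me^{-cn}\ge1$ there, for example by taking $c=\min\{\alpha\Deltamin^2/8,\ \alpha\log(M)/m_0\}$. This choice needs $M\ge2$; when $M=1$ we have $\cFstar=\cF$ and the trivial case applies. With this $c$ the bound holds trivially for small $n$ and by the argument above for large $n$, giving the claim for all $n$. I should also check that $m$ may depend on $n$ without further structure; only the inequality $m\ge\alpha n$ is used.
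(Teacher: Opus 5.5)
Your proposal is correct and follows the paper's proof essentially step for step: the separation event of \cref{lem:Hoeffding-separation} on $\rS_1\sim P^m$, the threshold eventually dropping below $\Deltamin/2$ so that $\cFhat(\rS_1)\subseteq\cFstar$, Jensen on $\conv(\cFstar)$, and shrinking $c$ so that $Me^{-cn}\ge 1$ on the finitely many small $n$. Your choice $c=\min\{\alpha\Deltamin^2/8,\alpha\log(M)/m_0\}$ is only a cosmetic variant of the paper's, and one minor remark: $\Rhat_{\rS_1}(\Aerm(\rS_1))\le\Rhat_{\rS_1}(\fstar)$ holds directly by the definition of the ERM, so you do not need $\Aerm(\rS_1)\in\cFstar$ for that step.
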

\begin{remark}
    Note that here the constant $c$ also depends on the hyperparameters $\alpha$, $M$, and $\delta$.
\end{remark}
\begin{proof}
If $\cF=\cFstar$ (in particular if $M=1$), then the conclusion follows by Jensen's inequality and convexity of the risk, so assume $\cF\setminus\cFstar\neq\emptyset$.
We first show that for all sufficiently large $n$, on a high-probability event, one has
$\cFhat\equiv \cFhat(\rS_1)\subseteq \cFstar$.
Recall from \cref{lem:Hoeffding-separation} that, for any fixed $\fstar\in\cFstar$, the event $\Esep(\rS_1)$ on which, for all $ f\in\cF\setminus\cFstar$, we have $\Rhat_{\rS_1}(f)-\Rhat_{\rS_1}(\fstar)>\tfrac12 \excessRisk(f)$
holds with probability at least $1-M\exp\prn{-\frac{m}{8}\Deltamin^2}$. From the proof of \cref{prop:ERM-exponential}, we also know that on $\Esep(\rS_1)$, $\erm\in\cFstar$.
Since $\limsup_{m\to\infty}\threshold=0$,
there exists $m_0\in\NN$ such that for all $m\ge m_0$, $\threshold\le \frac{\Deltamin}{2}$.
Whenever $m\ge m_0$, the definition of $\cFhat$ gives, for every $f\in\cFhat$,
\begin{equation*}
    \Rhat_{\rS_1}(f)\leq\Rhat_{\rS_1}(\erm(\rS_1))+\threshold\leq
\Rhat_{\rS_1}(f^\star)+\frac{\Deltamin}{2}.
\end{equation*}
Therefore, if $m\geq m_0$, then on $\Esep(\rS_1)$, we have $\cFhat\subseteq \cFstar$.

In that case, since $\fhat\in\conv(\cFhat)\subseteq \conv(\cFstar)$ almost surely, it follows by Jensen's inequality and convexity of the risk that $\excessRisk(\fhat)\le 0$.
Finally, choose $n_0$ so that for all $n\ge n_0$, we have $m\geq\alpha n \geq \alpha n_0 \geq m_0$, and set
\begin{equation*}
    c:=\min\crl{\frac{\alpha\Deltamin^2}{8},\frac{\log M}{\max\crl{2,n_0}}}.
\end{equation*}
Then for all $n\geq \max\crl{2,n_0}$, $\PP\bigl(\excessRisk(\fhat)>0\bigr)\le M e^{-cn}$,
and for $2\leq n<\max\crl{2,n_0}$, $\PP\bigl(\excessRisk(\fhat)>0\bigr)\le 1\le M e^{-cn}$
since $cn\le c\max\crl{2,n_0}\le \log M$.
This proves the claim.
\end{proof}
As a consequence, we immediately get that the two mentioned algorithms, pruned-convex ERM and the midpoint estimator, achieve zero excess risk with exponential probability.
\begin{corollary}
\label{cor:pruned-erm-midpoint-exponential}
    The pruned-convex ERM (\cref{alg:pruned-convex-ERM}) (for $ n\geq2 $) and the midpoint estimator (\cref{alg:midpoint}) both achieve zero excess risk with exponential probability; for any $(P,\cF)\in\Theta_M$, there exists a constant $c>0$ such that $\PP_{\rS\sim P^n}(\excessRisk(\prunedERM)>0)\leq Me^{-c n}$ and a constant $c>0$ such that $\PP_{\rS\sim P^n}(\excessRisk(\midpoint)>0) \leq Me^{-cn}$ for all $n\in\NN$ ($n\geq 2$ for $\prunedERM$). The constant depends on the hyperparameters $\alpha$, $M$, and $\delta$.
\end{corollary}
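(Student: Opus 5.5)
The corollary should follow directly from \cref{thm:pruning-exponential}. The plan is to check that both estimators are pruning-based in the sense of \cref{def:pruning-based}, with a threshold $\threshold$ satisfying $\limsup_{m\to\infty}\threshold=0$. The structural facts we need are \cref{lem:pruned-convex-ERM-is-pruning-based,lem:midpoint-is-pruning-based}, proven in \cref{sec:proofs-pruning}.

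First, for pruned-convex ERM (\cref{alg:pruned-convex-ERM}), the algorithm splits the sample. A first part $\rS_1$ of size $m\ge \alpha n$ (about $n/2$, which is why $n\ge 2$ is needed) is used to prune $\cF$ to the models whose empirical risk on $\rS_1$ is within a threshold of the ERM on $\rS_1$. ERM is then run over the convex hull of the pruned set on the remaining samples. The output therefore lies in $\conv(\cFhat(\rS_1))$ almost surely.

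Second, for the midpoint estimator (\cref{alg:midpoint}), the output is a midpoint $(f+g)/2$ of two models drawn from a set of near-empirical-risk minimizers on the sample. It thus lies in the convex hull of that pruned set, with $\alpha=1$.

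The threshold in both constructions is, up to constants, of order $\sqrt{\log(M/\delta)/m}+\log(M/\delta)/m$. For fixed $\delta$ and $M$ this tends to $0$ as $m\to\infty$, so the $\limsup$ condition holds.

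Applying \cref{thm:pruning-exponential} to each estimator then gives, for every $(P,\cF)\in\Theta_M$, a constant $c>0$ depending on $\alpha,M,\delta$ and $\Deltamin$ such that $\PP(\excessRisk(\fhat)>0)\le Me^{-cn}$. In the theorem's proof this bound is established for $n\ge 2$. For the midpoint estimator we extend it to $n=1$ by shrinking $c$ so that $Me^{-c}\ge 1$, for instance $c\le\log M$ when $M\ge 2$; the case $M=1$ is trivial because $\cF=\cFstar$.

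The main obstacle is the bookkeeping in verifying the pruning-based structure: reading off each algorithm's exact threshold and confirming that it vanishes, and confirming that the pruned set is always non-empty. Non-emptiness holds because the ERM always survives the pruning. Both points are deferred to the cited lemmas.
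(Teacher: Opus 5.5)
Your proposal is correct and follows the paper's proof: the corollary is obtained by combining \cref{thm:pruning-exponential} with \cref{lem:pruned-convex-ERM-is-pruning-based,lem:midpoint-is-pruning-based}, which verify that both estimators are pruning-based with a vanishing threshold. Your extra step for $n=1$ is harmless but not needed, since the theorem's constant $c\le \log M/\max\{2,n_0\}$ already gives $Me^{-c}\ge 1$ when $M\ge 2$.
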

\begin{proof}
    This follows from \cref{thm:pruning-exponential} and \cref{lem:pruned-convex-ERM-is-pruning-based,lem:midpoint-is-pruning-based} in \cref{sec:proofs-pruning}.
\end{proof}
Here, $c$ also depends on $\delta$ used by the algorithms.
While \cref{thm:pruning-exponential} yields a large class of algorithms that achieve strong guarantees, we now demonstrate that any pruning-based estimator with a vanishing threshold must be minimax suboptimal on some part of the tail. This implies that pointwise minimax optimality (for fixed confidence $\delta$), as satisfied by the pruned-convex ERM or midpoint estimators, does not necessarily imply minimax optimality along the tail. The proof of \cref{thm:pruning-subexp-tail-strong} can be found in \cref{proof:pruning-subexp-tail-strong}.

\begin{theorem}\label{thm:pruning-subexp-tail-strong}
    There exist a dictionary $\cF\subset\cM$ of size $M=2$ and universal constants
$c_1,c_2,c_3>0$ such that the following holds. Fix any
$\delta\in(0,1)$, let $\fhat=\cA(\delta,\rS,\cF)$ be pruning-based
with pruning subsample $\rS_1\subseteq\rS$ of size $m\ge \alpha n$.
Assume that the threshold $\threshold$ is deterministic and satisfies
$\limsup_{m\to\infty}\threshold=0$. Then, for all sufficiently large
$n$,
\begin{equation*}
    \sup_{P\in\Pall} \PP_{\rS\sim P^n}\prn{\excessRisk(\fhat)\ge c_1\sqrt{\frac{\log(1/\eta)}{n}}}\geq \eta,
    \qquad \text{where}\quad \eta:=c_2\,m^{-1/2}\exp\prn{-c_3 m\threshold^2}.
\end{equation*}
\end{theorem}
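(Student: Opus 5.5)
The plan is to build the classical two-point lower-bound instance in which a proper selector is forced to commit to the wrong hypothesis with a constant gap, and to tune the risk gap $\Delta$ with $n$ so that the pruning threshold cannot separate the two models. Take $\cX$ with a single point $x_0$ (or let $P_X$ be a point mass), and use the dictionary $\cF=\{f_1,f_2\}$ with $f_1\equiv 0$ and $f_2\equiv 1$. For $\Delta\in(0,1/2)$, let $P_\Delta$ put $Y\sim\Bernoulli((1+\Delta)/2)$. Then $\cR(f_1)-\cR(f_2)=2\EE[Y]-1=\Delta$, so $f_2$ is optimal with gap $\Delta$. Any $f_\rho=(1-\rho)f_1+\rho f_2$ has excess risk $(1-\rho)\Delta-\rho(1-\rho)\cdot 0$. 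More precisely, the risk of the constant predictor $t$ is $(t-\mu)^2+\mathrm{Var}(Y)$, so the excess risk is $(t-\mu)^2-(1-\mu)^2$. This quantity is at least of order $\Delta$ whenever $t\le 1/2$, say. Hence it suffices to show that with probability at least $\eta$ the estimator puts weight at most $1/2$ on $f_2$.

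The key observation is that if $\cFhat(\rS_1)=\{f_1\}$, then $\fhat=f_1$ and the excess risk equals $\Delta$. The empirical risk difference is $\Rhat_{\rS_1}(f_2)-\Rhat_{\rS_1}(f_1)=1-2\Ybar_m$, where $\Ybar_m$ is the mean over $\rS_1$. Pruning excludes $f_2$ exactly when $f_1$ is the ERM and $\Rhat(f_2)-\Rhat(f_1)>\threshold$, that is, when $\Ybar_m<(1-\threshold)/2$. This requires the empirical mean to fall below the true mean $(1+\Delta)/2$ by more than $(\Delta+\threshold)/2$. I will use a reverse (anti-concentration) bound for the binomial tail, via Stirling or Slud's inequality:
\begin{equation*}
\PP\prn{\Binomial(m,p)\le mp-mt}\ge \frac{c}{\sqrt m}\exp\prn{-C m t^2}
\end{equation*}
for $p$ bounded away from $0$ and $1$ and $t\sqrt m\gtrsim 1$. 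Applying this with $t=(\Delta+\threshold)/2$ gives
\begin{equation*}
\PP\prn{\excessRisk(\fhat)\ge\Delta}\ge c\, m^{-1/2}\exp\prn{-C'm(\Delta^2+\threshold^2)}.
\end{equation*}

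For the tuning, I choose $\Delta$ so that $m\Delta^2\lesssim \max\{1, m\threshold^2\}$. This lets the $\Delta$ contribution be absorbed into $c_3 m\threshold^2$ plus a constant, giving probability at least $\eta=c_2 m^{-1/2}e^{-c_3m\threshold^2}$. It then remains to check that $\Delta\ge c_1\sqrt{\log(1/\eta)/n}$. Since $\log(1/\eta)\asymp \log m + m\threshold^2$, and $m\ge\alpha n$ makes $m$ and $n$ comparable up to $\alpha$, I set $\Delta:=\sqrt{\log(1/\eta)/m}$, which is at most $1/2$ for large $n$ because $\threshold\to0$. With this choice, $m\Delta^2=\log(1/\eta)$, and the tail bound yields $\exp(-C'\log(1/\eta))\cdot\ldots$ This is circular unless the constants are arranged carefully. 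The main obstacle is exactly this self-consistent choice of constants. I would resolve it by taking $\Delta=\kappa\sqrt{(\log m+m\threshold^2)/m}$ with a small $\kappa$, so that the factor $e^{-C'\kappa^2\log m}$ costs only a small polynomial. That cost is absorbed by setting the exponent to $-1/2-\text{small}$, or more simply by choosing $c_2,c_3$ and defining $\eta$ with slightly worse constants; the claim only requires some universal constants. The dependence on $\alpha$ through $m\le n$ versus $m\ge\alpha n$ needs care, since the constants should be universal. Using $m\le n$ gives $\sqrt{\log(1/\eta)/n}\le\sqrt{\log(1/\eta)/m}$, which is the direction needed, so $\alpha$ only enters through "sufficiently large $n$". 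Finally, the supremum over $P$ is attained by this $P_\Delta$, and its dependence on $n$ is allowed.
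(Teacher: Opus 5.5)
Your construction is the paper's, up to the relabeling $Y\mapsto 1-Y$: the dictionary $\{0,1\}$, Bernoulli labels with mean near $1/2$, the event on which the pruned set collapses to the suboptimal singleton (forcing $\fhat$ to be the bad constant), a reverse binomial tail bound, and a gap at scale $\max\{\threshold,\sqrt{\log m/m}\}$.

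\textbf{The gap.} The step that fails is the one you flag as the main obstacle. Your proposed resolution does not work with the tail bound you state. Using $\frac{c}{\sqrt m}e^{-Cmt^2}$ with $\Delta=\kappa\sqrt{(\log m+m\threshold^2)/m}$ gives probability $c\,m^{-1/2-C'\kappa^2}e^{-C'(1+\kappa^2)m\threshold^2}$. But $\eta=c_2m^{-1/2}e^{-c_3m\threshold^2}$ is fixed by the statement with exponent exactly $-1/2$, so you cannot move to $-1/2-\text{small}$. Adjusting constants does not help either. When $m\threshold^2=o(\log m)$, no choice of $c_2,c_3$ absorbs a factor $m^{-C'\kappa^2}\to 0$, because $e^{c_3m\threshold^2}=m^{o(1)}$. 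This regime covers ERM ($\threshold=0$) and $\threshold\asymp 1/\sqrt m$ (pruned-convex ERM, midpoint), which are the cases the theorem targets.

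This is not bookkeeping; the tool itself is too weak. For $\threshold=0$, any admissible gap $\Delta\ge c_1\sqrt{\log(1/\eta)/n}$ satisfies $m\Delta^2\ge c_1^2\alpha\log(1/\eta)\gtrsim\log m$, since $m\ge\alpha n$. So your lower bound is $o(m^{-1/2})$ for every choice of constants: the $1/\sqrt m$ prefactor leaves no room.

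\textbf{The fix.} What is missing is the Mills-ratio form of anti-concentration:
$\PP(\Ybar_m<p-t)\ge\frac{\kappa_1}{1+\sqrt m\,t}e^{-\kappa_2 mt^2}$ for $p,p-t\in(1/4,3/4)$.
This is \cref{lem:binomial-tail-lb} applied to $1-Y$, proved via McKay's bound and $(1-\Phi(x))/\phi(x)\ge 1/(1+x)$; Slud's inequality with the same Mills-ratio estimate would also do. At $t\asymp\sqrt{\log m/m}$ the prefactor is only $\asymp 1/\sqrt{\log m}$. You can therefore pay $e^{-\kappa_2mt^2}=m^{-1/4}$ and still get $\gtrsim m^{-1/4}/\sqrt{\log m}\ge m^{-1/2}$.

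The paper exploits this via $d_m=\max\{\threshold,\sqrt{\log m/(4\kappa_2 m)}\}$ and two cases. With the sharper bound, your single choice $\Delta=\kappa\sqrt{(\log m+m\threshold^2)/m}$ with $\kappa_2\kappa^2<1/2$ also works:
if $m\threshold^2\ge\log m$, the prefactor is at least $\kappa_1/(2\sqrt m)$ and the extra $m^{-O(\kappa^2)}$ is absorbed into $c_3$; otherwise the $1/\sqrt{\log m}$ prefactor leaves room.

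\textbf{Smaller points.} First, your remarks on convex combinations are wrong. The excess risk of $(1-\rho)f_1+\rho f_2$ is $(1-\rho)(\Delta-\rho)$, since the Jensen term has $\EE[(f_2(X)-f_1(X))^2]=1$, not $0$. This is negative for $\rho\in(\Delta,1)$, so ``weight at most $1/2$ on $f_2$'' would not suffice. It is harmless, because you only use $\fhat=f_1$ on the event. Second, the event you need is the strict inequality $\Ybar_m<(1-\threshold)/2$, while your tail bound is stated with $\le$. This is fixed by a small perturbation of the level, as in the paper.
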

\begin{remark}
    We first remark that “sufficiently large $ n $” may depend on $\delta, \alpha$ and on the threshold sequence.
    Furthermore, because $\threshold=o(1)$, we have that $m\threshold^2 =o(m)$ and so $\sqrt{\log(1/\eta)/n}  \lesssim \sqrt{(\log(m)+o(m))/n} \to 0$.
    Hence, the lower bound is much larger than the optimal tail scale $\log(1/\eta)/n$ at confidence level $\eta$, and is not in the ``trivial'' regime of constant excess risk.
\end{remark}

We can strengthen the preceding bound for pruning-based estimators that prune at a ``sufficiently small'' threshold $\threshold \leq B_M(\delta)/\sqrt{m}$. In that case, the confidence level at which the estimator is suboptimal can be chosen independently of the sample size, essentially due to the Berry-Esseen theorem. This can be viewed as a generalization of the lower bounds by \cite{catoni2004statistical,juditsky2008learning} for selectors. The proof of \cref{thm:pruning-no-along-the-tail} can be found in \cref{proof:pruning-no-along-the-tail}.
\begin{theorem}
    \label{thm:pruning-no-along-the-tail}
    There exists a dictionary $\cF\subset \cM$ of size $ M=2 $ such that the following holds.
    Let $\fhat = \cA(\delta,\rS,\cF)$ be any pruning-based estimator
    whose threshold satisfies that, for every $\delta\in(0,1)$ and
    $M\in\NN$, there exist constants $B_M(\delta)>0$ and $m_0\in\NN$
    such that
    $\threshold \leq \frac{B_M(\delta)}{\sqrt m}$ for all $m\geq m_0$.
    Then, for any fixed $\delta\in(0,1)$, it holds for all sufficiently large $n\geq n_0(\delta)$,
    \[
        \sup_{P\in\Pall}
        \PP_{\rS\sim P^n}\prn{
            \excessRisk(\fhat)
            \geq \frac{B_M(\delta)}{2\sqrt n}
        }
        \geq \frac{1}{2}\prn{1-\Phi(\sqrt{3} B_M(\delta))}.
    \]
    Here $\Phi$ denotes the standard Gaussian cumulative distribution function.
\end{theorem}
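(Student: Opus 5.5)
Fix a two-function dictionary and a family of distributions indexed by $n$: take $\cX$ with a single point $x_0$ (or $P_X$ a point mass), $f_1\equiv 1/2-\gamma$, $f_2\equiv 1/2+\gamma$ for a fixed constant $\gamma$ (say $\gamma=1/4$), and let $Y\in\{0,1\}$ be Bernoulli with mean $1/2-\epsilon_n$. Here $\epsilon_n := B_M(\delta)/(c\sqrt n)$ for a constant $c$ to be tuned. Then $f_1$ is the unique optimal model. The risk difference is
\[
\cR_P(f_2)-\cR_P(f_1)=4\gamma\epsilon_n,
\]
so $\excessRisk(f_2)=4\gamma\epsilon_n\asymp B_M(\delta)/\sqrt n$. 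The per-sample loss difference is
\[
\xi(Y)=(f_2-Y)^2-(f_1-Y)^2=4\gamma\bigl(\tfrac12-Y\bigr)\cdot(\text{const}),
\]
a $\pm2\gamma$ variable with mean $4\gamma\epsilon_n$ and variance $\asymp \gamma^2$.

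\textbf{Step 1 (pruning fails to discard $f_2$).} On the pruning subsample of size $m\ge\alpha n$, $f_2$ survives in $\cFhat(\rS_1)$ whenever $\Rhat_{\rS_1}(f_2)\le\Rhat_{\rS_1}(f_1)$, i.e.\ whenever ERM selects $f_2$. Since $\cFhat$ is non-empty and contained in the threshold set, we must show $\cFhat$ is forced to contain $f_2$ with constant probability. The cleanest route is the event that $f_2$ is the strict empirical risk minimizer and $f_1$ lies more than $\threshold$ above it. On that event the threshold set equals $\{f_2\}$, so $\fhat=f_2$. This event is
\[
\tfrac1m\sum\xi_i<-\threshold,
\]
and since $\threshold\le B_M(\delta)/\sqrt m$, it suffices that $\tfrac{1}{\sqrt m}\sum(\xi_i-\EE\xi)/\sigma<-(B_M(\delta)+\sqrt m\,4\gamma\epsilon_n)/\sigma$. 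Choosing $c$ and $\gamma$ so that $\sqrt m\,\epsilon_n\lesssim B_M(\delta)$ (using $m\le n$) and the normalization by $\sigma$ yields a threshold at most $\sqrt3\,B_M(\delta)$. Then the Berry–Esseen theorem, with third moment bounded since $\xi$ is bounded and $\sigma$ is a constant, gives probability at least $1-\Phi(\sqrt3 B_M(\delta))-O(1/\sqrt m)$. This is at least half of $1-\Phi(\sqrt3 B_M(\delta))$ for $n\ge n_0(\delta)$. The factor $\sqrt3$ will come from the specific variance of the $\pm$ variable (e.g.\ $\sigma^2=1/3$-type normalization), so I will pick $\gamma$ and $\epsilon_n$'s constant to make it match.

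\textbf{Step 2 (excess risk of $\fhat=f_2$).} On this event $\excessRisk(\fhat)=4\gamma\epsilon_n$. Picking the constant in $\epsilon_n$ so that $4\gamma\epsilon_n\ge B_M(\delta)/(2\sqrt n)$ concludes the proof, since the supremum over $P$ dominates this particular $P$. The main obstacle is the bookkeeping in Step 1. The mean shift $\sqrt m\,\epsilon_n$ and the threshold $B_M(\delta)$ together must fit under $\sqrt3 B_M(\delta)$ in standardized units, while the excess risk must stay $\ge B_M(\delta)/(2\sqrt n)$. This forces $\epsilon_n\sqrt n$ to be a constant multiple of $B_M(\delta)$ in a narrow window, and the ratio $m/n\in[\alpha,1]$ must be handled. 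I would take $\epsilon_n$ proportional to $B_M(\delta)/\sqrt n$, bound $\sqrt m\,\epsilon_n\le\sqrt n\,\epsilon_n$, and choose $\gamma$ and the variance scaling so the inequalities close.
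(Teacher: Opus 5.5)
Your plan is essentially the paper's proof. It uses two constant predictors, Bernoulli labels with mean pushed below $1/2$ by order $B_M(\delta)/\sqrt n$, the event $\Rhat_{\rS_1}(f_1)-\Rhat_{\rS_1}(f_2)>\threshold$ on which $\cFhat(\rS_1)=\{f_2\}$ and hence $\fhat=f_2$, and Berry--Esseen. There is one concrete problem, in the constant bookkeeping you defer. Your suggested $\gamma=1/4$ does not close, and fixing it forces the paper's dictionary.

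The threshold $\threshold$ lives on the absolute empirical-risk scale, while Berry--Esseen works in units of $\sigma_\xi=4\gamma\sqrt{p(1-p)}$ with $p=\tfrac12-\epsilon_n$. A worst-case threshold $\threshold=B_M(\delta)/\sqrt m$ therefore costs $B_M(\delta)/(4\gamma\sqrt{p(1-p)})$ standardized units. The excess-risk requirement $4\gamma\epsilon_n\ge B_M(\delta)/(2\sqrt n)$ adds a mean shift $\sqrt m\,\epsilon_n/\sqrt{p(1-p)}\ge\sqrt{m/n}\,B_M(\delta)/(8\gamma\sqrt{p(1-p)})$.

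With $\gamma=1/4$ and $m=n$ (as for the midpoint estimator), these two terms total about $3B_M(\delta)$. You then only get $1-\Phi(3B_M(\delta))-O(m^{-1/2})$, which is below $\tfrac12(1-\Phi(\sqrt3B_M(\delta)))$ once $B_M(\delta)$ is moderately large. The inequalities close only if $\gamma\sqrt{p(1-p)}\ge\sqrt3/8$, i.e.\ essentially $\gamma=1/2$. That means $f_1\equiv0$ and $f_2\equiv1$, so the loss difference is as spread out as possible; this is exactly the paper's choice.

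With that choice, $\xi=1-2Y$ and the event becomes $\{\Ybar_1>\tfrac12+\threshold/2\}$. Take the shift $B_M(\delta)/(4\sqrt m)$ as in the paper ($B_M(\delta)/(4\sqrt n)$ also works). The standardized threshold is then at most $3B_M(\delta)/(4\sqrt{p(1-p)})\le\sqrt3\,B_M(\delta)$, because $\sqrt{p(1-p)}\ge\sqrt3/4$ for $p\in[1/4,1/2]$. That bound, not a ``$\sigma^2=1/3$'' normalization, is where the $\sqrt3$ comes from.
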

\begin{remark}
    Note that this yields yet another proof that the ERM is not minimax optimal.
\end{remark}
Pruned-convex ERM and the midpoint estimator both satisfy the stronger assumption of \cref{thm:pruning-no-along-the-tail}, with $ B_{M}(\delta)\asymp \sqrt{\log{(4/\delta)}} $, which means that they do not achieve minimax optimality \emph{along the tail} for any fixed confidence level depending on the $\delta$ provided to them.
\begin{corollary}\label{cor:pc-ERM-midpoint-no-along-the-tail}
    There exists a dictionary $\cF\subset\cM$ of size $M=2$ and constants $c_1,c_2>0$ such that for any $\delta\in(0,1)$ and $\fhat\in\{\prunedERM,\midpoint\}$, where we denote the pruned-convex ERM $\prunedERM\equiv \cA_{\operatorname{PC}}(\delta,\rS,\cF)$ and the midpoint estimator $\midpoint\equiv \cA_{\circ}(\delta,\rS,\cF)$, for sufficiently large $n\geq n_0(\delta)$, we have
    \begin{equation*}
        \sup_{P\in\Pall} \PP_{\rS\sim P^n}\prn{\excessRisk(\fhat) \geq \frac{c_1\sqrt{\log(4/\delta)}}{2\sqrt{n}}} \geq \frac{1}{2}\prn{1-\Phi( c_2\sqrt{3\log(4/\delta)})}.
    \end{equation*}
    Neither pruned-convex ERM nor the midpoint estimator achieves minimax optimality \emph{along the tail}.
\end{corollary}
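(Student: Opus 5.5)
The plan is to derive this corollary directly from \cref{thm:pruning-no-along-the-tail}. First I will check that both estimators satisfy that theorem's hypotheses. By \cref{lem:pruned-convex-ERM-is-pruning-based,lem:midpoint-is-pruning-based}, pruned-convex ERM and the midpoint estimator are pruning-based. I will read off their thresholds from \cref{alg:pruned-convex-ERM,alg:midpoint}. Both algorithms prune at a level of order $\sqrt{\log(M/\delta)/m}$, up to universal constants, or a sum of such terms. With $M=2$, this gives constants $c_1,c_2>0$ with
\[
\threshold\le \frac{B_2(\delta)}{\sqrt m},\qquad B_2(\delta)=c_1\sqrt{\log(4/\delta)},
\]
for all $m\ge 1$. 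Rather than a bound of the form $\lesssim$, I want the lower-tail threshold stated in the corollary, so I will define $B_2(\delta)$ exactly as the constant appearing in the algorithm, possibly after enlarging it to a common constant for both algorithms.

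Next I will apply \cref{thm:pruning-no-along-the-tail} with the dictionary of size $2$ it provides. For $n\ge n_0(\delta)$ it gives the probability bound $\frac12(1-\Phi(\sqrt3 B_2(\delta)))$ at excess-risk level $B_2(\delta)/(2\sqrt n)$. Substituting $B_2(\delta)=c_1\sqrt{\log(4/\delta)}$ produces exactly the displayed inequality, with the $c_2$ inside $\Phi$ equal to $c_1$, or adjusted by monotonicity of $\Phi$ if the two thresholds use different constants. One subtlety is that the theorem requires a threshold bound to hold for every $\delta$, whereas the algorithms only need it at the $\delta$ they are given. This is harmless, since the bound is established for each $\delta$ separately.

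For the final claim of failing optimality along the tail, I will argue by contradiction. Suppose an estimator were minimax optimal along the tail with constants $C,c$, for some fixed input $\delta$. Choose a confidence level $\eta\in(0,c)$ such that $\eta<\frac12(1-\Phi(c_2\sqrt{3\log(4/\delta)}))$. This is possible because the right-hand side is a positive constant that does not depend on $n$. Optimality then forces $\PP(\excessRisk(\fhat)>C\log(2/\eta)/n)\le\eta$. However, $C\log(2/\eta)/n<c_1\sqrt{\log(4/\delta)}/(2\sqrt n)$ for large $n$, so the corollary's lower bound gives probability exceeding $\eta$, a contradiction. Since $\delta$ was arbitrary, no fixed choice of input confidence, or choice depending on $\delta$, yields tail optimality.

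The main obstacle is the first step: extracting explicit thresholds from the two algorithms in the appendix and checking they are deterministic and scale as $\sqrt{\log(4/\delta)/m}$. In particular, the midpoint estimator's pruning set may be defined via its own subsample split, and the constant $\alpha$ must be tracked. If the algorithms' thresholds carry different constants, I will take the larger constant as $c_1$. This is valid because a threshold bound with a larger $B$ still satisfies the hypothesis of \cref{thm:pruning-no-along-the-tail}.
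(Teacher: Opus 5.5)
Your proposal is correct and is essentially the paper's argument: \cref{lem:pruned-convex-ERM-is-pruning-based,lem:midpoint-is-pruning-based} already supply the threshold constants $B_2(\delta)=C_1\sqrt{\log(4/\delta)}$ and $B_2(\delta)=2a\sqrt{\log(4/\delta)}$ (the midpoint estimator prunes on the full sample, so $\alpha=1$), and plugging these into \cref{thm:pruning-no-along-the-tail} gives the claim; the paper takes $c_1=\min\{C_1,2a\}$ in the excess-risk level and $c_2=\max\{C_1,2a\}$ inside $\Phi$, but your common enlarged constant $c_1=c_2=\max\{C_1,2a\}$ is equally valid, because the theorem's hypothesis is only an upper bound on the threshold. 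One minor correction: writing $L=\log(4/\delta)$, the thresholds $C_1\max\{\sqrt{L/m},L/m\}$ and $a(\sqrt{L/m}+L/m)$ satisfy $\threshold\le B_2(\delta)/\sqrt m$ only for $m\ge L$, not for all $m\ge 1$ with a universal constant, but the theorem only needs this for $m\ge m_0(\delta)$, so nothing breaks.
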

\begin{proof}
    This follows from \cref{thm:pruning-no-along-the-tail} and \cref{lem:pruned-convex-ERM-is-pruning-based,lem:midpoint-is-pruning-based} with $M=2$, $c_1=\min\{C_1,2a\}$ and $c_2=\max\{C_1,2a\}$.
\end{proof}
\begin{figure}
    \centering
    \begin{subfigure}[c]{0.4\textwidth}
        \begin{tikzpicture}[
    >=Latex,
    font=\small,
    x=1cm,y=1cm,
good/.style={
    inner sep=0pt,
    minimum height=6mm,
    minimum width=2pt,
    path picture={
        \draw[green!80!black, very thick]
            (path picture bounding box.south) --
            (path picture bounding box.north);
    }
},
bad/.style={
    inner sep=0pt,
    minimum height=6mm,
    minimum width=2pt,
    path picture={
        \draw[red!70, very thick]
            (path picture bounding box.south) --
            (path picture bounding box.north);
    }
},
sel/.style={
    inner sep=0pt,
    minimum height=6mm,
    minimum width=2pt,
    path picture={
        \draw[green!80!black, very thick]
            (path picture bounding box.south) --
            (path picture bounding box.north);
    }
}
]

\draw[->] (0,0) -- (5.5,0) node[right] {$\Rhat_{\rS_1}$};

\node[good] (g) at (0,0) {};

\draw[black,dashed] (3,-0.5) -- (3,0.5);
\node[black, fill=white,inner sep=1pt] (t) at (3,-0.6) {$\threshold$};
\draw[->,gray!70,dashed] (t) -- (1.8,-0.6);

\node[sel] (s1) at (1.3,0) {};
\node[sel] (s2) at (2,0) {};

\draw[very thick,blue!80!black] (g.center) -- (s2.center);
\node[circle,fill=blue!80!black,inner sep=1.8pt] (fh) at (g) {};
\node[circle,fill=blue!80!black,inner sep=1.8pt] (fh) at (s2) {};
\node[blue!80!black,below=8pt] at (1,0) {$\cFhat$};

\node[bad] (b1) at (3.5,0) {};
\node[bad] (b2) at (5,0) {};

\node[above=8pt] at (g) {$\erm$};
\node[above=8pt] at (s1) {$f_2$};
\node[above=8pt] at (s2) {$f_3$};
\node[above=8pt] at (b1) {$f_4$};
\node[above=8pt] at ($(b1)!0.5!(b2)$){\dots};
\node[above=8pt] at (b2) {$f_M$};

\end{tikzpicture}
        \caption{Pruning with a threshold...}
        \label{subfig:pruning-conv}
        \begin{tikzpicture}[
    >=Latex,
    font=\small,
    x=1cm,y=1cm,
good/.style={
    inner sep=0pt,
    minimum height=6mm,
    minimum width=2pt,
    path picture={
        \draw[green!80!black, very thick]
            (path picture bounding box.south) --
            (path picture bounding box.north);
    }
},
bad/.style={
    inner sep=0pt,
    minimum height=6mm,
    minimum width=2pt,
    path picture={
        \draw[red!70, very thick]
            (path picture bounding box.south) --
            (path picture bounding box.north);
    }
},
sel/.style={
    inner sep=0pt,
    minimum height=6mm,
    minimum width=2pt,
    path picture={
        \draw[green!40!black, very thick]
            (path picture bounding box.south) --
            (path picture bounding box.north);
    }
}
]

\draw[->] (0,0) -- (5.5,0) node[right] {$\Rhat_{\rS_1}$};

\node[good] (g) at (0,0) {};
\node[good] (g2) at (0.75,0) {};

\draw[black,dashed] (1.7,-0.5) -- (1.7,0.5);
\node[black, fill=white,inner sep=1pt] (t) at (1.7,-0.6) {$\threshold$};

\draw[very thick,blue!80!black] (g.center) -- (g2.center);
\node[circle,fill=blue!80!black,inner sep=1.8pt] (fh) at (g) {};
\node[circle,fill=blue!80!black,inner sep=1.8pt] (fh) at (g2) {};
\node[blue!80!black,below=8pt] at (0.32,0) {$\cFhat\subset \cF^\star$};

\node[bad] (b2) at (3.5,0) {};
\node[bad] (b3) at (5,0) {};
\node[bad] (b1) at (3,0) {};

\node[above=8pt] at (g) {$\erm$};
\node[above=8pt] at (g2) {$f_2$};
\node[above=8pt] at (b1) {$f_3$};
\node[above=8pt] at (b2) {$f_4$};
\node[above=8pt] at ($(b2)!0.5!(b3)$){\dots};
\node[above=8pt] at (b3) {$f_M$};

\draw[<->,gray!70] (0.93,0.5) -- (2.8,0.5);
\node[fill=white,inner sep=1pt] at (1.88,0.5) {$\approx \Deltamin$};

\end{tikzpicture}
         \vspace{-0.5cm}
        \caption{...eventually retains only functions in $\cFstar$.}
        \label{subfig:pruning-collapse}
    \end{subfigure}
    \hfill
    \begin{subfigure}[c]{0.55\textwidth}
        \centering
        \begin{tikzpicture}[
    >=Latex,
    x=1cm,y=1cm,
    font=\small
]

\draw[->] (0,0) -- (6,0) node[right] {$t$};
\draw[->] (0,0) -- (0,2);
\node[] (Y) at (1,2.3) {$\mathbb P(\excessRisk(\fhat)\ge t)$};

\draw[gray!70,thick,domain=0:5.6,samples=200,smooth]
    plot (\x,{1.7*exp(-0.5*\x)});
\draw[blue!80!black,thick,domain=0:5.6,samples=200,smooth]
    plot (\x,{1.2*exp(-0.2*\x)});
\node[gray!70] at (1.4,1.6) {optimal tail};
\node[blue!80!black] at (3.3,1) {pruning-based};

\draw[dashed] (4.5,0) -- (4.5,0.5);
\draw[dashed] (2.5,0) -- (2.5,0.5);
\draw[dashed] (1.25,0) -- (1.25,0.95);
\draw[dashed] (0,0.5) -- (4.5,0.5);
\draw[dashed] (0,0.95) -- (1.2,0.95);
\draw[<->,red!70!black, thick] (2.5,0.5) -- (4.5,0.5);

\node[below] at (4.5,0){$\sqrt{\frac{\log(1/\eta)}{n}}$};
\node[below] at (1.25,0){$\frac{\log(1/\delta)}{n}$};
\node[below] at (2.5,0){$\frac{\log(1/\eta)}{n}$};
\node[left] at (0,0.5){$\eta$};
\node[left] at (0,0.95){$\delta$};

\node[red!70!black,above right] at (4.5,0.5) {suboptimality gap};

\end{tikzpicture}
        \caption{Lower bound from \cref{thm:pruning-subexp-tail-strong}.}
        \label{subfig:pruning-lower-bound}
    \end{subfigure}
    \caption{Pruning with a threshold (\protect\ref{subfig:pruning-conv}) achieves exponential rates by eventually retaining functions only in $\cFstar$ (\protect\ref{subfig:pruning-collapse}), but despite perhaps achieving the optimal tail at confidence $\delta$, it cannot achieve it uniformly and there exists an $\eta$ (depending on $\delta$) where it is lower bounded by $\sqrt{\log(M/\eta)/n}$ (\protect\ref{subfig:pruning-lower-bound}).}
    \label{fig:pruning}
\end{figure}

To summarize, the ERM argument generalizes to all pruning-based estimators, and they do achieve zero excess risk with exponential probability, essentially due to the same separation from \cref{lem:Hoeffding-separation}. Together with pointwise minimax optimality, this seems to \emph{almost} resolve the tension between universal and uniform guarantees. However, \cref{thm:pruning-no-along-the-tail,thm:pruning-subexp-tail-strong} show that this approach is inherently limited to pointwise guarantees, and there must always be a part of the minimax tail that is suboptimal.

This suggests studying algorithms that are already known to achieve minimax optimality along the whole tail. To that end, in the next sections we study online-to-batch conversions, the star estimator, and $Q$-aggregation, all of which are known to achieve optimal minimax tails.

\subsection{Online-to-Batch Conversion by Averaging}
\label{subsec:online-to-batch}
Next, we show that a large class of algorithms based on averaging sequential predictors cannot achieve exponential rates.
Specifically, we consider the class of algorithms of the following form. Let $\fhat^{(t)}$ be any deterministic estimator that has seen the first $t$ samples (where $t=0$ means it has seen none), is independent of $n$, and takes values in $[0,1]$. Let the averaged estimator be
\begin{equation}
\label{eqn:sequential-class}
     \fhat = \frac{1}{n+a} \sum_{t=b}^n \fhat^{(t)} \qquad \text{for some } b\in\{0,\ldots,n\}, \, a\in\ZZ,\,  a\geq 1-b,
\end{equation}
where $a,b$ are also independent of $n$.
The formulation of this class is motivated by the following three well-known estimators, which are in the form of \cref{eqn:sequential-class}.
\begin{itemize}[leftmargin=*,itemsep=3pt,topsep=2pt]
    \item The \emph{progressive mixture} (PM) estimator $\PM$ \cite{catoni2004statistical,juditsky2008learning,mourtada2023local} is defined as $\PM:= \frac{1}{n+1} \sum_{t=0}^n \EW^{(t)}$,
    where $\EW^{(t)}$ is defined as the exponential weights estimator from \eqref{eqn:exponential-weights-def} using the first $t$ samples. It is known to be minimax optimal in expectation, but not in deviation.
    \item The \emph{Bernstein online aggregation (BOA)} estimator $\BOA=\frac{1}{n+1}\sum_{t=0}^n \fhat^{(t)}$
    from \cite{wintenberger2017optimal} adds a second-order correction to the exponential weights update that accounts for the variance of each expert’s excess loss (similar to \cref{eqn:Jensen-gap}). It is minimax optimal along the tail.
    \item The \emph{sequential estimator with shifted loss (SQ)} $\Sequential=\frac{1}{n}\sum_{t=1}^n \fhat^{(t)}$ from \cite{van2023high} performs a similar second-order correction and
    was also shown to satisfy minimax optimality along the tail.
\end{itemize}

The following theorem shows that all sequential-to-batch conversions of the form \eqref{eqn:sequential-class} that average estimators after a fixed (sample-size-independent) burn-in phase cannot achieve exponential rates. The reason is that by an argument similar to \cref{lem:exponential-is-optimal}, the estimator $\fhat^{(b)}$ must have risk lower bounded by some constant independent of $n$, and its contribution to the averaged estimator only decays polynomially.
In contrast, if we employ a sample-dependent burn-in phase, such as $b=n/2$, combining \cref{prop:EW-exponential} with a union bound reveals that the progressive mixture estimator achieves exponential rates with exponential probability. However, to the best of our knowledge, it is currently unknown whether this prefix-averaged progressive mixture estimator retains its minimax optimality in expectation. Resolving whether this estimator, or similarly modified versions of the others, can simultaneously achieve both exponential rates and minimax optimality remains open.
\begin{theorem}[Online-to-batch averaging cannot achieve exponential rates]
\label{thm:online-to-batch-averaging-impossibility}
    Suppose that the estimator $\fhat$ is of the form in \cref{eqn:sequential-class}. Then there exists $(P,\cF) \in \Theta_2$ such that
    \begin{equation*}
        \forall n\in \NN, n\geq b:\qquad \EE_{\rS\sim P^n} \brk{\excessRisk(\fhat)} \geq \frac{2^{-(b+3)}}{(n+a)^2},
    \end{equation*}
    and in particular, the estimator does not achieve exponential rates (of any kind).
\end{theorem}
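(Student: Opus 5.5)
The plan is to use a two-point construction in which $\cF$ contains the Bayes predictor. Then, for any $h\in\cM$, the excess risk equals $\EE_{X}[(h(X)-\fstar(X))^2]\ge 0$, so no cancellation can occur. The key idea is to exploit that $\fhat^{(b)}$ is a fixed deterministic function of the first $b$ samples, independent of $n$. On one specific $b$-sample, which has probability $2^{-b}$, it outputs a fixed function $u$. We then choose the distribution adversarially depending on $u$. After that, $u$'s weight $1/(n+a)$ in the average produces a gap of order $1/(n+a)$ at one point, whatever the other terms are.

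\textbf{Construction.} Take two points $x_1\neq x_2$ in $\cX$ and set $\cF=\{0,\ \one_{\{x_2\}}\}\in\Theta_2$. Let $s=((x_1,0),\ldots,(x_1,0))$ be the sample consisting of $b$ copies of $(x_1,0)$, and let $u:=\fhat^{(b)}$ evaluated on $s$. For $b=0$, $u=\fhat^{(0)}$ is a fixed function. We split into two cases.
\begin{itemize}[leftmargin=*,itemsep=0pt,topsep=2pt]
\item If $u(x_2)>1/2$, let $P$ be uniform on $\{(x_1,0),(x_2,0)\}$. The Bayes predictor is $0\in\cF$, and $\excessRisk(h)=\tfrac12(h(x_1)^2+h(x_2)^2)$.
\item If $u(x_2)\le 1/2$, let $P$ be uniform on $\{(x_1,0),(x_2,1)\}$. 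The Bayes predictor is $\one_{\{x_2\}}\in\cF$, and $\excessRisk(h)=\tfrac12(h(x_1)^2+(1-h(x_2))^2)$.
\end{itemize}
In both cases the event $E$ that the first $b$ samples equal $s$ has probability $2^{-b}$.

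\textbf{Bounding the average on $E$.} On $E$, the term $t=b$ in \eqref{eqn:sequential-class} equals $u$, and all other summands lie in $[0,1]$.

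In the first case, every summand is nonnegative, so
\[
\fhat(x_2)\ge \frac{u(x_2)}{n+a}\ge \frac{1}{2(n+a)}.
\]

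In the second case, we use that the sum has $n-b+1$ terms and $a\ge 1-b$, so $(n+a)-(n-b+1)=a+b-1\ge 0$. Bounding the $n-b$ other terms by $1$ gives
\[
1-\fhat(x_2)\ \ge\ \frac{(n+a)-(n-b)-u(x_2)}{n+a}\ \ge\ \frac{1-u(x_2)}{n+a}\ \ge\ \frac{1}{2(n+a)}.
\]

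Hence on $E$, in both cases,
\[
\excessRisk(\fhat)\ \ge\ \tfrac12\cdot\frac{1}{4(n+a)^2}.
\]
Multiplying by $\PP(E)=2^{-b}$, and using nonnegativity of the excess risk off $E$, yields
\[
\EE_{\rS\sim P^n}\brk{\excessRisk(\fhat)}\ \ge\ \frac{2^{-(b+3)}}{(n+a)^2}
\]
for all $n\ge b$. Since this lower bound decays only polynomially, no exponential rate in expectation is possible. By \cref{lem:exponential-implications}, this also rules out the two stronger notions in \cref{def:exponential-rate-probability}.

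\textbf{Main obstacle.} The delicate step is ensuring that the unknown later estimators $\fhat^{(t)}$, $t>b$, cannot cancel the contribution of $u$. This is why the adversarial choice is made pointwise at $x_2$ and in the direction determined by $u(x_2)$. It is also why the normalization condition $a\ge 1-b$ is needed: it guarantees that even if all other summands equal $1$, the average stays below $1$ by at least $(1-u(x_2))/(n+a)$.
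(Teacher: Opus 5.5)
Your proposal is correct and is essentially the paper's proof. Both use a two-point $\cX$ with $\cF=\{0,\one_{\{x_2\}}\}$, deterministic labels and a uniform marginal, and both condition on the probability-$2^{-b}$ event that the first $b$ covariates equal $x_1$. Both then choose the label at $x_2$ adversarially against $\fhat^{(b)}(x_2)$ on that event and bound the average in the two cases, using nonnegativity of the summands in one case and $a+b\ge 1$ in the other.
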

\begin{proof}
    First, note that because $b\in\crl{0,\ldots,n}$, we have $n+a \geq n+1-b \geq 1$.
    The proof idea then is very similar to \cref{lem:exponential-is-optimal} or Proposition 1 in \cite{attias2024universal}.
    Take $\cX=\crl{0,1}$ and let $\cF=\crl{f_0,f_1}$ with $f_0\equiv 0$ and $f_1 = \one\crl{x=1}$.
    Let $P_0,P_1$ be defined by $X\sim \Bernoulli(1/2)$ and $Y=f_0(X)$ or $Y=f_1(X)$, respectively. Then, for any predictor $f$, we have under $P_\theta$ with $\theta\in\crl{0,1}$
    \begin{equation*}
        \cR_{P_\theta}(f)-\cR_{P_\theta}(f_\theta) \geq \frac{1}{2}(f(1)-\theta)^2.
    \end{equation*}
    Now define the event $E= \crl{X_1=0,\ldots,X_b=0}$, which has probability
    $\PP\prn{E}=2^{-b}$ under both $P_0,P_1$.
    On $E$, $\fhat^{(b)}$ is a deterministic function, and clearly $\fhat^{(b)}\equiv\fhat^{(b)}(X_1,\ldots,X_b)=\fhat^{(b)}(0,\ldots,0)$ has no information about $\theta$ because all observed $Y$ values were $0$, which is the same under both distributions. Hence, we may choose $\theta$ depending on $\fhat^{(b)}(1)$ so that
    \begin{equation*}
        \abs{\fhat^{(b)}(1)-\theta} \geq \frac{1}{2}.
    \end{equation*}
    Consider the two cases $\theta=1$ and $\theta=0$ as a function of $\fhat^{(b)}(1)$.

    \paragraph{Case $\theta=1$.} Then $\fhat^{(b)}(1)\leq 1/2$, and since $\fhat^{(t)}(1)\in[0,1]$ for all $t$, we obtain $\fhat(1) = \frac{1}{n+a} \sum_{t=b}^n \fhat^{(t)}(1)  \leq \frac{1/2+(n-b)}{n+a}\leq 1$, which implies $\frac{1}{2}(\fhat(1)-\theta)^2 \geq \frac{1}{2}\prn{\frac{a+b-1/2}{n+a}}^2\geq  \frac{1}{8(n+a)^2}$.

    \paragraph{Case $\theta=0$.} Then $\fhat^{(b)}(1)\geq 1/2$, and since $\fhat^{(t)}(1)\in[0,1]$ for all $t$, we obtain $\fhat(1) = \frac{1}{n+a} \sum_{t=b}^n \fhat^{(t)}(1)  \geq \frac{1/2}{n+a}\geq 0$, which implies $\frac{1}{2}(\fhat(1)-\theta)^2 \geq \frac{1}{8(n+a)^2}$.

    Thus, for at least one of $P\in\{P_0,P_1\}$ and all $n\geq b$, we have
    \begin{equation*}
        \EE_{\rS\sim P^n}\brk{\excessRisk(\fhat)} \geq \PP_{\rS\sim P^n}\prn{E} \frac{1}{8(n+a)^2} \geq \frac{2^{-b}}{8(n+a)^2} = \frac{2^{-(b+3)}}{(n+a)^2},
    \end{equation*}
    which concludes the proof.
\end{proof}

Since the three estimators outlined above are of this form, we get as an immediate consequence of \cref{thm:online-to-batch-averaging-impossibility} that these estimators do not achieve exponential rates.

\begin{corollary}\label{cor:PM-exponential}
    The progressive mixture estimator, the Bernstein online aggregation estimator, and the sequential estimator with shifted loss do not achieve exponential rates: for each one, there exists $(P,\cF)\in\Theta_2$ such that $\EE_{\rS\sim P^{n}}[\excessRisk(\PM)] \geq \frac{1}{8(n+1)^2}$, $\EE_{\rS\sim P^{n}}[\excessRisk(\BOA)] \geq \frac{1}{8(n+1)^2}$, and $\EE_{\rS\sim P^{n}}[\excessRisk(\Sequential)] \geq \frac{1}{16n^2}$.
    No version of these estimators with a sample-size-independent burn-in time achieves exponential rates.
\end{corollary}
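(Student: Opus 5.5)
The corollary follows by checking that each of the three estimators has the form \eqref{eqn:sequential-class} with explicit parameters $(a,b)$. We then apply \cref{thm:online-to-batch-averaging-impossibility}, tracking the constants directly from its proof rather than using the stated bound $2^{-(b+3)}/(n+a)^2$, which is looser than needed here.

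First, identify the parameters. For $\PM=\frac{1}{n+1}\sum_{t=0}^n \EW^{(t)}$ and $\BOA=\frac{1}{n+1}\sum_{t=0}^n\fhat^{(t)}$ we take $b=0$ and $a=1$, so the constraint $a\geq 1-b=1$ holds. For $\Sequential=\frac1n\sum_{t=1}^n\fhat^{(t)}$ we take $b=1$ and $a=0$, so $a\geq 1-b=0$ holds.

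Next, check the remaining hypotheses. Each $\fhat^{(t)}$ must be deterministic given the first $t$ samples, independent of $n$, and valued in $[0,1]$. For PM this holds because $\EW^{(t)}$ is a convex combination of dictionary elements with Gibbs weights computed on $t$ samples. The temperature must not depend on $n$; I would note this, and also that $\EW^{(0)}$ is the prior average. For BOA and SQ the online iterates are convex combinations of the dictionary elements, or clipped into $[0,1]$, and they are built sequentially without knowledge of $n$. Verifying this against the original definitions in \cite{wintenberger2017optimal,van2023high} is the only step needing care, and I expect it to be the main, if modest, obstacle. In particular, it requires that the learning rates used there are anytime or chosen independently of $n$; if a tuned rate depends on $n$, I would restrict the claim to the anytime variant.

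Then compute the bounds. The proof of \cref{thm:online-to-batch-averaging-impossibility} actually yields
\[
\EE\brk{\excessRisk(\fhat)} \geq 2^{-b}\cdot\tfrac{1}{2}\Bigl(\tfrac{\min\{1/2,\; a+b-1/2\}}{n+a}\Bigr)^2 .
\]
For $b=0$, $a=1$ this gives $\frac{1}{8(n+1)^2}$. For $b=1$, $a=0$ it gives $\frac12\cdot\frac{1}{8n^2}=\frac{1}{16n^2}$. These match the stated constants.

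Finally, for the last sentence of the corollary, take any sample-size-independent burn-in $b$. Such an estimator is again of the form \eqref{eqn:sequential-class}, so the theorem gives a polynomial lower bound on the expected excess risk. Polynomial decay contradicts an exponential rate in expectation, and by \cref{lem:exponential-implications} it also rules out the two stronger exponential notions.
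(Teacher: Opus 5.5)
Your proposal is correct and follows the paper's proof: plug $(a,b)=(1,0)$ for PM and BOA and $(a,b)=(0,1)$ for SQ into \cref{thm:online-to-batch-averaging-impossibility}, and the burn-in claim follows because any fixed burn-in stays in the class \eqref{eqn:sequential-class}. The detour through the proof's constants is unnecessary, since the stated bound $2^{-(b+3)}/(n+a)^2$ already gives exactly $\frac{1}{8(n+1)^2}$ and $\frac{1}{16n^2}$ for these parameters, and your ``refined'' expression is identical to it because $a\geq 1-b$ forces $a+b-\tfrac12\geq\tfrac12$, so the minimum is always $\tfrac12$.
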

\begin{proof}
    This follows from \cref{thm:online-to-batch-averaging-impossibility} with $a=1$, $b=0$ for PM and BOA, and $a=0$, $b=1$ for SQ.
\end{proof}

\subsection{Star Estimation}
\label{subsec:star-estimator}

\emph{Star estimation}, another popular aggregation procedure \cite{audibert2007progressive,audibert2009fast,gaiffas2009hyper,kanade2024exponential,liang2015learning}, first builds the star hull of the dictionary around the dictionary ERM and then performs ERM on this star hull, formally
\begin{equation*}
    \starestimator\in\argmin_{f\in \starhull(\cF,\erm)} \Rhat_\rS(f) \quad \text{where} \quad \starhull(\cF,h):=\crl{\lambda f +(1-\lambda)h : \lambda\in [0,1], f\in \cF },
\end{equation*}
where we recall that $\erm\in\argmin_{f\in\cF} \Rhat_\rS(f)$. It is known to be minimax optimal along the tail \cite{audibert2007progressive,kanade2024exponential}. We now show that the star estimator does not achieve exponential rates.

\begin{proposition}
\label{prop:star-linear}
    The star estimator does not achieve any exponential universal learning rate. There exists $(P,\cF)\in\Theta_2$ such that for all $n\in\NN$, it holds $\EE_{\rS\sim P^n}[\excessRisk(\starestimator)] = \frac{1}{8n}$.
\end{proposition}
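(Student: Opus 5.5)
The plan is an explicit two-element construction in which the star hull is the same segment whatever the dictionary ERM selects, so that the star estimator reduces to a clipped empirical mean.

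\textbf{Construction.} Take $\cX$ to be a single point, or equivalently let all functions be constant. Let $\cF=\crl{f_0,f_1}$ with $f_0\equiv 1/2$ and $f_1\equiv 0$. Let $P$ have $Y\sim\Bernoulli(1/2)$, independent of $X$.

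\textbf{Step 1: identify the comparator and the excess risk of a constant.} For any constant predictor $c$, $\cR_P(c)=\Var(Y)+(c-1/2)^2$. Hence $f_0$ is optimal in $\cF$, and $\excessRisk(c)=(c-1/2)^2$ for every $c\in[0,1]$.

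\textbf{Step 2: the star hull does not depend on the ERM.} If $\erm=f_0$, the star hull is $\crl{\lambda f+(1-\lambda)f_0}$ over $f\in\cF$. If $\erm=f_1$, it is $\crl{\lambda f+(1-\lambda)f_1}$. In both cases it equals the set of constants in $[0,1/2]$. So tie-breaking in $\erm$ is irrelevant and $\starhull(\cF,\erm)=[0,1/2]$ deterministically.

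\textbf{Step 3: solve the star ERM in closed form.} Write $\Ybar=\frac1n\sum_i Y_i$. The empirical risk of a constant $c$ is $\Rhat_\rS(c)=\frac1n\sum_i(Y_i-\Ybar)^2+(c-\Ybar)^2$, a strictly convex quadratic in $c$. Its minimizer over $[0,1/2]$ is unique and equals $\starestimator=\min\crl{\Ybar,1/2}$.

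\textbf{Step 4: compute the expectation.} Combining Steps 1 and 3 gives
\begin{equation*}
\excessRisk(\starestimator)=(\Ybar-1/2)^2\,\one\crl{\Ybar<1/2}.
\end{equation*}
Since $n\Ybar\sim\Binomial(n,1/2)$, the law of $\Ybar-1/2$ is symmetric about $0$. Therefore
\begin{equation*}
\EE\brk{(\Ybar-1/2)^2\one\crl{\Ybar<1/2}}=\tfrac12\,\EE\brk{(\Ybar-1/2)^2}=\tfrac12\Var(\Ybar).
\end{equation*}
Here the event $\crl{\Ybar=1/2}$ contributes zero, so the strict versus non-strict inequality does not matter. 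With $\Var(\Ybar)=\frac{1}{4n}$, the expectation is exactly $\frac{1}{8n}$.

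\textbf{Conclusion.} A rate of order $1/n$ is not bounded by $Ce^{-cn}$ for any constants $c,C>0$. Hence the star estimator fails the exponential rate in expectation, and by \cref{lem:exponential-implications} it also fails the two stronger universal notions.

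There is no real obstacle in this argument. The only points needing care are the closed form of the star ERM, which is handled by Step 2 together with the clipping at $1/2$ in Step 3, and using symmetry to obtain the exact constant $\frac{1}{8n}$.
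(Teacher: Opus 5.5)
Your proof is correct and follows essentially the same route as the paper. Both use a two-element dictionary whose star hull is the same segment whichever element the dictionary ERM picks, solve the star ERM as a clipped one-dimensional least-squares problem, and use symmetry to obtain exactly half the variance, $\frac{1}{8n}$. The only difference is cosmetic: you use constant functions, so the star estimator is $\min\{\Ybar,1/2\}$, whereas the paper takes $\cX=\{0,1\}$ with $f_1=\one\{x=1\}$, which produces a clipped Rademacher average instead.
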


\begin{proof}
We begin by constructing the pair $(P,\cF)\in\Theta_2$ on which the bound holds.
Let $\cX=\{0,1\}$ and $X\sim\uniform{\crl{0,1}}$.
Let $Y\sim\Bernoulli(1/2)$ independently of $X$, so that $Y\in[0,1]$.
Define two predictors $f_0(x)\equiv 1/2$ and $f_1(x) := \mathbf 1\{x=1\}$,
and let $\cF=\{f_0,f_1\}$. We compute their risks under $P$:
\[
\cR_P(f_0)
= \EE[(\tfrac{1}{2}-Y)^2]
= \tfrac14,
\qquad \cR_P(f_1)
= \tfrac{1}{2}\,\EE[(1-Y)^2]
 + \tfrac{1}{2}\,\EE[Y^2]
= \tfrac{1}{2}.
\]
Hence $\fstar = f_0$, and the excess risk of $f_1$ is $\excessRisk(f_1)=1/4$.

Let $h := f_1 - f_0$, so $h(1)=1/2$ and $h(0)=-1/2$.
For $\lambda\in[0,1]$ define
$
g_\lambda := f_0 + \lambda h.
$
One verifies that for this dictionary, $\starhull(\cF,\erm) = \{g_\lambda : \lambda\in[0,1]\}$,
regardless of which $f\in\cF$ is chosen as $\erm$.
Thus, the star estimator always has the form $\starestimator = g_{\lambdahat}$ where
$\lambdahat\in[0,1]$ minimizes the empirical risk $\lambda\mapsto \Rhat_\rS(g_\lambda)$.
The empirical risk is
\begin{align*}
    n\Rhat_\rS(g_\lambda)=\sum_{i=1}^{n}(g_\lambda(X_i)-Y_i)^2=\sum_{i=1}^{n}(f_0(X_i)+\lambda h(X_i)-Y_i)^2,
\end{align*}
which is a positive quadratic in $ \lambda $. Thus, the empirical risk as a function of $\lambda\in\RR$ is minimized by differentiating with respect to $ \lambda $  and solving for zero. This yields the minimizer $$
\widetilde\lambda:= -\frac{\sum_{i=1}^{n}h(X_{i})(f_0(X_{i})-Y_i)}{\sum_{i=1}^{n}h(X_{i})^2}=\frac{1}{n}\sum_{i=1}^{n}4h(X_{i})(Y_i-f_0(X_{i})),
$$
where we have used that $ h^{2}=1/4 $. The above minimizer is a number in $[-1,1]$ since $Y_i-f_0(X_{i}),h(X_{i})\in\{  -1/2,1/2\}  $. Since we minimized a positive quadratic, the constrained minimizer $\lambdahat \in [0,1] $ must be the leftmost point in this interval closest to the unconstrained minimizer, implying that $\lambdahat=\max\{0,\frac{1}{n}\sum_{i=1}^{n}4h(X_{i})(Y_i-f_0(X_{i}))\}$. We notice that $4h(X_{i})(Y_i-f_0(X_{i}))$ are i.i.d.\ Rademacher random variables, so $\lambdahat$ is equal in distribution to the positive part of $S_{n}/n$, where $ S_{n}=\sum_{i=1}^{n}\xi_i $ is a sum of $n$ i.i.d.\ Rademacher random variables.
Now, for any $\lambda$,
\begin{equation}
\label{eq:excess_risk_lambda}
    \excessRisk(g_\lambda)=\cR_P(g_\lambda) - \cR_P(f_0)
= 2\lambda\,\EE[h(X)(f_0(X)-Y)]
  + \lambda^2\,\EE[h(X)^2] = \frac{1}{4}\lambda^2,
\end{equation}
where the last equality holds since $h(X)$ takes values $\pm 1/2$ with equal probability and $f_0(X)-Y$ is
independent of $X$ with mean $0$ so $\EE[h(X)(f_0(X)-Y)]=0$ and
$\EE[h(X)^2]=1/4$.
Substituting $\lambdahat$ into the above expression for the excess risk \eqref{eq:excess_risk_lambda}, we get that the excess risk of the star estimator is distributed as $\frac{1}{4n^{2}}(S_{n}\ind\{S_{n}\geq 0\})^2$. Since $S_{n}$ is a sum of i.i.d.\ Rademacher random variables, symmetry gives $\EE[S_{n}^2\ind\{S_{n}\geq 0\}]=\frac{1}{2}\EE[S_{n}^2]=\frac{n}{2}$. Hence, $\EE[\excessRisk(\starestimator)]=\frac{1}{4n^{2}} \frac{n}{2}=\frac{1}{8n}$, which concludes the proof.
\end{proof}

Hence, similar to the online-to-batch conversions, while star estimation achieves minimax optimality along the whole tail, it cannot achieve exponential rates of any kind because it cannot reliably commit to a single dictionary element even when the evidence is overwhelming. That leaves us with $Q$-aggregation.

\subsection{$Q$-aggregation}
\label{subsec:Q-aggregation}

Recall the notation $f_\rho = \EE_{k\sim \rho}f_k$ for some distribution $\rho \in \triangle_M=\{\rho\in[0,1]^M:\sum_{k=1}^M\rho_k = 1\}$ over $[M]$.
In \cite{dai2012deviation,lecue2014optimal}, the $Q$-aggregation estimator is introduced using the distribution $\rhohat_{Q}\in \triangle_{M}$ defined by
\begin{equation}
\label{eqn:Q-aggregation-definition}
    \rhohatQ \in \argmin_{\rho\in\triangle_{M}} \crl{\frac{1}{2}\Rhat_\rS(f_\rho)+\frac{1}{2}\EE_{k\sim \rho} \Rhat_\rS(f_k) + \frac{\beta}{n} K_\phi(\rho,\pi)} \quad \text{where} \quad K_\phi(\rho,\pi) =  \sum_{k=1}^M \rho_k \log\prn{\frac{\phi(\rho_k)}{\pi_k}},
\end{equation}
and is given by $\Qestimator=f_{\rhohatQ}=\EE_{k\sim \rhohatQ}f_k$. Here $\beta$ is a temperature parameter and $\pi$ is a prior distribution, chosen for now to be uniform. The function $\phi:[0,1]\to\RR$ yields different penalties; in particular, if $\phi(x)=x$ we have $K_\phi(\rho,\pi)=\KL(\rho,\pi)$, whereas for a uniform prior and $\phi\equiv 1$, the term $K_\phi$ is constant in $\rho$ and does not affect the optimization problem. In that case, the temperature is also irrelevant.
In \cite{lecue2014optimal}, the estimator using $\phi\equiv 1$ was shown to be minimax along the tail; we restate the formal result for convenience in \cref{thm:Qaggregationrestatementlecue2014optimal} of the appendix. In \cite{mourtada2023local}, the estimator using $\phi(x)=x$ was shown to be minimax optimal along the tail as a corollary of a local risk bound.

For $Q$-aggregation with $\phi\equiv 1$ and uniform prior $\pi=(1/M,\ldots,1/M)$, we can prove the following result, which is the main result of this section. The proof of \cref{thm:Q-exponential} is in \cref{proof:Q-exponential}.
\begin{theorem}
\label{thm:Q-exponential}
    The $Q$-aggregation estimator with $\phi\equiv 1$ and uniform prior $\pi$ achieves zero excess risk with exponential probability on $\Theta_M$. For every $(P,\cF)\in\Theta_M$, there exist $C,c>0$ such that for all $n\in\NN$
    \begin{equation*}
        \PP_{\rS\sim P^n}\prn{\excessRisk(\Qestimator) >0}\leq Ce^{-c n}.
    \end{equation*}
\end{theorem}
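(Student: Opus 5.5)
The approach is to reduce the random optimization problem \eqref{eqn:Q-aggregation-definition} to a deterministic population problem with a strict margin, and then transfer that margin to the empirical problem via concentration of finitely many empirical averages.

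\textbf{Step 0: reduction.} It suffices to show that with probability $1-Ce^{-cn}$ every minimizer $\rhohatQ$ is supported on $\cFstar$. Then $\Qestimator\in\conv(\cFstar)$, and, exactly as in the proof of \cref{thm:pruning-exponential}, Jensen's inequality and convexity of the risk give $\excessRisk(\Qestimator)\le 0$. For $\phi\equiv 1$ and uniform $\pi$ the penalty is constant, so $\rhohatQ$ minimizes
\begin{equation*}
    \widehat Q(\rho)=\tfrac12\Rhat_\rS(f_\rho)+\tfrac12\textstyle\sum_k\rho_k\Rhat_\rS(f_k)=\textstyle\sum_k\rho_k\Rhat_\rS(f_k)-\tfrac12\widehat V(\rho),
\end{equation*}
using the empirical Jensen gap \eqref{eqn:Jensen-gap}. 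Here $\widehat V(\rho)=\frac1n\sum_i\Var_{k\sim\rho}f_k(X_i)$.

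\textbf{Step 1: first-order conditions.} $\widehat Q$ is a convex quadratic on $\triangle_M$. Expanding $(f_k-Y)^2$ around $f_\rho$ gives $\partial_k\widehat Q(\rho)=\mathrm{const}(\rho)+\Rhat_\rS(f_k)-\tfrac14\|f_k-f_\rho\|_n^2$, with $\|\cdot\|_n$ the empirical $L_2$ norm. The KKT conditions say that $g_k(\rho):=\Rhat_\rS(f_k)-\tfrac14\|f_k-f_\rho\|_n^2$ is equal to some $\lambda$ on $\operatorname{supp}\rho$ and is $\ge\lambda$ off it. The population analogue uses $Q_P(\rho)=\sum_k\rho_k\cR_P(f_k)-\tfrac12 V_P(\rho)$ and $g^P_k(\rho)=\cR_P(f_k)-\tfrac14\|f_k-f_\rho\|_{L_2(P_X)}^2$.

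\textbf{Step 2: uniform concentration.} $\widehat Q$ and all $g_k$ are polynomials in $\rho$ whose coefficients are the finitely many empirical averages $\frac1n\sum_i f_k(X_i)f_l(X_i)$, $\frac1n\sum_i f_k(X_i)Y_i$ and $\frac1n\sum_iY_i^2$, all bounded in $[0,1]$. Hoeffding's inequality and a union bound over $O(M^2)$ terms give, for any fixed $\varepsilon>0$,
\begin{equation*}
    \sup_{\rho\in\triangle_M}\max_k\abs{g_k(\rho)-g^P_k(\rho)}\le\varepsilon
\end{equation*}
with probability $1-CM^2e^{-c\varepsilon^2n}$. The constants depend only on $M$ and $\varepsilon$, and $\varepsilon$ will be chosen depending on $(P,\cF)$, which is allowed in the universal setting. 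The same concentration also shows that empirical minimizers are close to the set $\mathcal P^\star$ of population minimizers.

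\textbf{Step 3 (main obstacle): a strict population margin.} I would show that every population minimizer $\rho^\star$ is supported on $\cFstar$, and that there is $\gamma>0$ (depending on $\Deltamin$) with
\begin{equation*}
    g^P_j(\rho)\ge\max_{k\in\operatorname{supp}\rho}g^P_k(\rho)+\gamma\quad\text{for all } j\notin\cFstar \text{ and all } \rho \text{ in a neighbourhood of } \mathcal P^\star.
\end{equation*}

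The first thing I would try is the following. For $\rho$ supported on $\cFstar$ and $j$ suboptimal, compare $g^P_j$ with the $\rho$-average of $g^P_k$ over $k\in\cFstar$. Use the identity $\cR_P(f_j)-\cR_P(f_\rho)=\|f_j-f_\rho\|^2+2\langle f_j-f_\rho,f_\rho-Y\rangle$. Use also that first-order optimality of $\rho$ within $\conv(\cFstar)$ controls the cross term $\langle f_k-f_\rho,f_\rho-Y\rangle$ for $k\in\cFstar$. The term $\tfrac34\|f_j-f_\rho\|^2$ that remains must then be combined with $\excessRisk(f_j)\ge\Deltamin$ to produce a positive $\gamma$. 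The delicate point is that $\conv(\cFstar)$ can have risk below $\cR^\star$ and that the cross term $\langle f_j-f_\rho,f_\rho-Y\rangle$ has no sign a priori. If this direct comparison fails, I would instead argue by contradiction on $Q_P$: mass $t$ on suboptimal models costs at least $t\Deltamin$ in the linear term but gains at most $O(t)$ in $-\tfrac12V_P$, with a constant that must be beaten. Compactness of $\triangle_M$ then upgrades "supported on $\cFstar$" to a uniform margin on a neighbourhood.

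\textbf{Step 4: conclusion.} Choose $\varepsilon<\gamma/4$. On the concentration event of Step 2, $\rhohatQ$ lies in the neighbourhood from Step 3. Suppose $\rhohatQ$ charged some $j\notin\cFstar$. Then $g_j(\rhohatQ)=\lambda\le g_k(\rhohatQ)$ for every $k$ in its support, whereas Step 3 together with Step 2 give $g_j(\rhohatQ)\ge g_k(\rhohatQ)+\gamma-2\varepsilon>g_k(\rhohatQ)$ for some such $k$. This contradiction shows that $\rhohatQ$ is supported on $\cFstar$. The finitely many small $n$ are absorbed into $C$, as in the proof of \cref{thm:pruning-exponential}.
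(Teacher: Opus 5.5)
\textbf{There is a genuine gap: your Step~3 is false, and with it the target set in Step~0.} $Q$-aggregation deliberately puts mass on suboptimal models whenever mixing helps, so the event that $\rhohatQ$ is supported on $\cFstar$ can have probability tending to zero.

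A slip in Step~1 hides this. The $k$-dependent part of $\partial_k\widehat Q(\rho)$ is $\Rhat_\rS(f_k)-\tfrac12\nrm{f_k-f_\rho}_n^2$, with coefficient $\tfrac12$, not $\tfrac14$. So at the vertex on $\fstar$, the population margin of a suboptimal $f_j$ is $\excessRisk(f_j)-\tfrac12\nrm{f_j-\fstar}^2$ (norms in $L_2(P_X)$), and this has no definite sign.

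Two examples show what goes wrong. Let $Y\sim\Bernoulli(1/2)$ be independent of $X$.
\begin{enumerate}
\item Take $\cF=\{f_1\equiv 0.6,\,f_2\equiv 0.45\}$. Then $\cFstar=\{f_2\}$, $\excessRisk(f_1)=0.0075$ and $\nrm{f_1-f_2}^2=0.0225$. Along $\rho=(t,1-t)$ your $Q_P$ equals $\cR_P(f_2)-0.00375\,t+0.01125\,t^2$. It is uniquely minimized at $t^\star=1/6$, where $f_{\rho^\star}\equiv 0.475$ has risk $\cR_P(f_2)-0.001875$. Empirically, $\rhohatQ$ charges $f_1$ unless $\Ybar\le 0.4875$, i.e.\ with probability tending to one.
\item Take $f_1\equiv 0.65$ instead. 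Then $\excessRisk(f_1)=0.02=\tfrac12\nrm{f_1-f_2}^2$. The population minimizer is the vertex on $f_2$, but the margin is exactly zero. Here $\rhohat_{Q,1}=\min\{1,\max\{0,10(\Ybar-\tfrac12)\}\}$, which is positive with probability about $1/2$.
\end{enumerate}
In both examples the theorem holds: the excess risk is nonpositive unless $\Ybar$ exceeds $1/2$ by a fixed constant. But no $\gamma>0$ as in Step~3 exists. Your fallback also cannot be closed: moving mass $t$ onto $f_j$ gains $\tfrac12 t\nrm{f_j-\fstar}^2$ at first order in $-\tfrac12 V_P$, and this may exceed the cost $t\,\excessRisk(f_j)$.

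\textbf{The missing idea is to control $\cR_P(\Qestimator)$ rather than the support of $\rhohatQ$.} This requires a case split, which is how the paper proceeds.

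\emph{Case $\min Q_P<\cR_P(\fstar)$.} We have $Q_P(\rho)=\tfrac12\cR_P(f_\rho)+\tfrac12\sum_k\rho_k\cR_P(f_k)$ and $\sum_k\rho_k\cR_P(f_k)\ge\cR_P(\fstar)$. Hence every $\rho$ with $Q_P(\rho)<\cR_P(\fstar)$ has $\cR_P(f_\rho)<\cR_P(\fstar)$. Your uniform deviation bound on the objective from Step~2 therefore already gives $\excessRisk(\Qestimator)<0$ with exponential probability. The paper instead shows $\nrm{f_{\rhohatQ}-f_{\rho^\star}}^2\le\varepsilon$ by subtracting the empirical and population KKT systems and using complementary slackness.

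\emph{Otherwise,} the vertex on $\fstar$ is a population minimizer. Then $\excessRisk(f_j)\ge\tfrac12\nrm{f_j-\fstar}^2$ for all $j$, and the same concentration gives $\nrm{f_{\rhohatQ}-\fstar}^2\le\varepsilon$. The coordinates split into three groups:
\begin{enumerate}
\item If the inequality is strict, the coordinate vanishes by your Step~4 KKT argument. The comparison must be with the $g$-value of $\fstar$, which is $\ge\lambda$ whether or not $\fstar$ is charged.
\item If $\excessRisk(f_j)=\nrm{f_j-\fstar}=0$, the coordinate is harmless.
\item If $\excessRisk(f_j)=\tfrac12\nrm{f_j-\fstar}^2>0$, the coordinate may stay charged. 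The identity
$\cR_P(f_\rho)=\cR_P(\fstar)+\sum_j\rho_j\prn{\excessRisk(f_j)-\nrm{f_j-\fstar}^2+\langle f_j-\fstar,f_\rho-\fstar\rangle}$
shows that it contributes $\rhohat_{Q,j}\prn{\langle f_j-\fstar,f_{\rhohatQ}-\fstar\rangle-\excessRisk(f_j)}\le 0$ once $\varepsilon\le\excessRisk(f_j)^2/4$.
\end{enumerate}
This yields $\cR_P(\Qestimator)\le\cR_P(\fstar)$ with exponential probability.
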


Combining \cref{thm:Q-exponential} with the minimax optimality along the tail from \cite{lecue2014optimal} shows that there is \emph{no trade-off between exponential universal rates and minimax optimality}. Remarkably, among all estimators considered in this work, only $Q$-aggregation with $\phi\equiv 1$ and uniform prior achieves both properties.

\begin{remark}
    It can be shown that the $Q$-aggregation estimator with a KL penalty (that is, $\phi(x)=x$) achieves an exponential rate with exponential probability, but it does \emph{not} achieve zero excess risk with exponential probability; there exists $(P,\cF)$ such that $\excessRisk(\Qestimator)>0$ almost surely by a similar argument as the exponential weights estimator (\cref{prop:EW-exponential}):
    Take any distribution $P$ and dictionary $\cF$ so that $\excessRisk(f_\rho)\leq 0 \Rightarrow \exists j\in[M]: \rho_{j}=0$. At the boundary, we have that
    \begin{equation*}
        \frac{\partial}{\partial \rho_j} \KL(\rho,\pi) = \log\frac{\rho_j}{\pi_j} + 1 \to -\infty \quad \text{as }\rho_j\to 0
    \end{equation*}
    while the other terms in the objective remain bounded, so the KL term in the objective of the optimization problem of the $Q$-estimator forces the solution to be in the interior of $\triangle_M$. That implies $\excessRisk(\Qestimator)>0$ almost surely.
\end{remark}

\begin{figure}
    \centering
    \begin{subfigure}[b]{0.45\textwidth}
        \begin{tikzpicture}
\begin{axis}[
    width=8cm,
    height=7cm,
    domain=0:1.2,
    samples=200,
    axis x line=middle,
    axis y line=none,
    xlabel={$\rho$},
    ylabel={$y$},
    xmin=-0.1, xmax=1.2,
    ymin=-0.1, ymax=1,
    xtick={0,0.6,1},
    xticklabels={$f_1$,$f_{\rho_Q}$,$\fstar$},
    ytick=\empty,
    legend style={draw=none, fill=none, at={(0.3,0.6)}, anchor=north west},
]

\addplot[thick]
    {(x-0.55)^2};
\addlegendentry{$\cR_P(f_\rho)$}

\addplot[thick, dashed]
    {0.5*(x-0.55)^2 + 0.5*x*(0.45)^2 + 0.5*(1-x)*(0.55)^2};
\addlegendentry{$\Psi(\rho)$}
\draw[-] (axis cs:0.6,0.1225) -- (axis cs:1,0.1225);
\draw[-] (axis cs:0.6,0.2025) -- (axis cs:1,0.2025);
\draw[<->, red, thick] (axis cs:0.6,0.2025) -- (axis cs:0.6,0.1225);
\draw[<->, red, thick] (axis cs:1,0.2025) -- (axis cs:1,0.1225);

\addlegendimage{<->, red, thick}
\addlegendentry{Margin}

\end{axis}
\end{tikzpicture}
        \caption{Case $\Psi(\rho_Q)<\cR_P(\fstar)$. In this case $\cR_P(f_{\rhohatQ})<\cR_P(\fstar)$ with exponentially high probability.}
        \label{subfig:Q-proof-case-1}
    \end{subfigure}
    \hspace{0.8cm}
    \begin{subfigure}[b]{0.45\textwidth}
        \begin{tikzpicture}
\begin{axis}[
    width=8cm,
    height=7cm,
    domain=0:1.5,
    samples=200,
    axis x line=middle,
    axis y line=none,
    xlabel={$\rho$},
    ylabel={$y$},
    xmin=-0.1, xmax=1.5,
    ymin=-0.1, ymax=1,
    xtick={0,1},
    xticklabels={$f_1$,$\fstar=f_{\rho_Q}$},
    ytick=\empty,
    legend style={draw=none, fill=none, at={(0.3,0.6)}, anchor=north west},
]

\addplot[thick]
    {(x-0.8)^2};
\addlegendentry{$\cR_P(f_\rho)$}

\addplot[thick, dashed]
    {0.5*(x-0.8)^2 + 0.5*x*(0.2)^2 + 0.5*(1-x)*(0.8)^2};
\addlegendentry{$\Psi(\rho)$}

\addplot[red, thick, domain=-1:2]
    {0.14 - 0.1*x};
\addlegendentry{Tangent of $\Psi(\rho)$}
\end{axis}
\end{tikzpicture}
        \caption{Case $\Psi(\rho_Q)\geq \cR_P(\fstar)$. In this case $\cR_P(f_{\rhohatQ})\leq \cR_P(\fstar)$ with exponentially high probability.}
        \label{subfig:Q-proof-case-2}
    \end{subfigure}
    \caption{Illustration of the two cases of the $Q$-aggregation estimator, by which the proof of \cref{thm:Q-exponential} is structured. In both cases, there is a hidden margin the estimator can exploit for exponential rates: in the first case (\ref{subfig:Q-proof-case-1}), there is a margin between $\Psi(\rho_Q)$ and $\cR_P(\fstar)$, implying that $\cR_P(f_{\rho_Q})-\cR_P(\fstar)<0$ by a fixed margin. In the second case (\ref{subfig:Q-proof-case-2}), every direction away from \(\fstar\) is either strictly unfavorable at first order (it is a direction with a ``nonzero gradient'') with some margin, or harmless.}
    \label{fig:Q-proof-two-functions}
\end{figure}

\paragraph{Proof Outline.}
The proof analyzes the population unregularized (i.e., for uniform prior) objective
$$
\Psi(\rho) =  \frac{1}{2}\cR_P(f_\rho)+\frac{1}{2}\EE_{k\sim \rho} \cR_P(f_k)
$$
relative to an optimal model $\fstar$, and then transfers the conclusion to the empirical $Q$-aggregation solution $\rhohatQ$ via a convergence lemma.
There are two cases, cf.\ \cref{fig:Q-proof-two-functions}. In the first case (\ref{subfig:Q-proof-case-1}), there exists a population minimizer $\rho_Q\in \argmin_{\rho\in \triangle_{M}} \Psi(\rho)$ such that $\Psi(\rho_Q)<\cR_P(\fstar)$, which is shown to imply $\cR_P(f_{\rho_Q})<\cR_P(\fstar)$. Hence, there is a fixed margin between the population minimum and the risk of the best dictionary element. The convergence lemma then shows that $f_{\rhohatQ}$ is close to $f_{\rho_Q}$ in $L_2(P_X)$-norm and that eventually $\cR_P(f_{\rhohatQ})<\cR_P(\fstar)$ for $n$ sufficiently large. So the margin yields exponential rates.
In the second case (\ref{subfig:Q-proof-case-2}), every population minimizer \(\rho_Q\) satisfies \(\Psi(\rho_Q)\ge \cR_P(\fstar)\). Here the key point is that every direction away from \(\fstar\) is either strictly unfavorable at first order or else degenerate or harmless for the risk. For the strictly unfavorable directions, the population first-order term has a fixed positive margin, so with exponentially high probability the empirical objective has the same sign, and the KKT conditions force the corresponding coordinates of \(\rhohatQ\) to vanish. The remaining directions are either equivalent to \(\fstar\) or contribute non-positively once \(\rhohatQ\) is close to \(\rho_Q\). This gives \(\cR_P(f_{\rhohatQ})\le \cR_P(\fstar)\) for all sufficiently large \(n\). The exponential rate again comes from preserving a fixed first-order margin, now only in the directions that could otherwise move the empirical solution away from the correct corner.
Depending on the geometry of the population problem, the empirical $Q$-aggregation estimator either asymptotically improves on $\fstar$ or is asymptotically no worse than $\fstar$ with exponential probability.

\paragraph{Sparsity.} The proof of \cref{thm:Q-exponential} also implies the following result about \emph{sparsity} of the $Q$-aggregation estimator, which may be of independent interest. The proof of \cref{prop:Q-aggregation-sparsity} is in \cref{proof:Q-aggregation-sparsity}.
\begin{proposition}
\label{prop:Q-aggregation-sparsity}
    There exist two universal constants $c,C>0$ such that the following holds. Let $\rho_Q\in\argmin_{\rho\in\triangle_M} \Psi(\rho)$ and $\fstar\in \cFstar$ be arbitrary and fixed.
    Define $\operatorname{supp}(\rho)=\crl{j\in[M]:\rho_j>0}$ and $\gamma = \tfrac{1}{2}\EE_{X\sim P_X}\brk{(f_{\rho_Q}(X)-\fstar(X))^2}$. Then, for every $\eps\in(0,1]$, with probability at least $1-CM^2\exp(-c\frac{\eps^2n}{M^2})$,
    \begin{equation*}
        \operatorname{supp} (\rhohatQ) \subseteq \crl{j\in[M]: \excessRisk(f_j)+\gamma-\frac{1}{2}\EE_{X\sim P_X}\brk{(f_j(X)-f_{\rho_Q}(X))^2}\leq \sqrt{\eps}+\eps}.
    \end{equation*}
\end{proposition}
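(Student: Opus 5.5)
We will use the KKT conditions of the empirical $Q$-aggregation problem with $\phi\equiv 1$ and uniform prior, so that the objective is $\Psihat(\rho)=\frac12\Rhat_\rS(f_\rho)+\frac12\EE_{k\sim\rho}\Rhat_\rS(f_k)$. It is a convex quadratic on $\triangle_M$. Its partial derivative in coordinate $j$ is
\begin{equation*}
\partial_j\Psihat(\rho)=\frac1n\sum_{i=1}^n (f_\rho(X_i)-Y_i)f_j(X_i)+\frac12\Rhat_\rS(f_j).
\end{equation*}
Optimality of $\rhohatQ$ means that every $j\in\operatorname{supp}(\rhohatQ)$ minimizes $\partial_j\Psihat(\rhohatQ)$ over $[M]$. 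In particular, for any $j$ in the support, $\partial_j\Psihat(\rhohatQ)\le \partial_{k^\star}\Psihat(\rhohatQ)$, where $k^\star$ is the index of $\fstar$. We use the equivalent directional form, which is valid because $\Psihat$ is quadratic:
\begin{equation*}
\nabla\Psihat(\rhohatQ)\cdot(e_j-e_{k^\star})=\frac12\Rhat_\rS(f_j)-\frac12\Rhat_\rS(\fstar)+\frac1n\sum_i (f_{\rhohatQ}(X_i)-Y_i)(f_j(X_i)-\fstar(X_i)).
\end{equation*}
For $X$ alone, this expression is $\le 0$ on the support.

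\textbf{Step 1 (population identity).} We compute the population version of this directional derivative at $\rho_Q$ with $Y$ integrated. Write $g=f_{\rho_Q}$. The cross term $\EE[(g-Y)(f_j-\fstar)]$ is rewritten by polarization, using $(a-y)(b-c)=\frac12[(a-c)^2-(a-b)^2+(b-y)^2-(c-y)^2]$ with $a=g$, $b=f_j$, $c=\fstar$. Adding $\frac12\excessRisk(f_j)$ gives
\begin{equation*}
D_j:=\excessRisk(f_j)+\tfrac12\|g-\fstar\|^2-\tfrac12\|f_j-g\|^2=\excessRisk(f_j)+\gamma-\tfrac12\EE[(f_j-g)^2],
\end{equation*}
where the norm is in $L_2(P_X)$. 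This is exactly the quantity in the statement. It therefore suffices to show that, with the stated probability, every $j\in\operatorname{supp}(\rhohatQ)$ has $D_j\le\sqrt\eps+\eps$.

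\textbf{Step 2 (concentration).} The empirical directional derivative at $\rhohatQ$ differs from $D_j$ by two errors.

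(i) The first is replacing empirical averages by expectations at fixed $\rho_Q$. All integrands are bounded by constants. The function $(f_{\rho}-Y)(f_j-\fstar)$ is linear in $\rho$, so it suffices to control the $M^2$ averages of $(f_k-Y)(f_j-\fstar)$ and the $M$ averages $\Rhat(f_j)-\Rhat(\fstar)$. Hoeffding and a union bound over $O(M^2)$ terms at accuracy $\eps/M$ give probability $1-CM^2e^{-c\eps^2n/M^2}$. On this event the empirical quadratic form is uniformly $\eps$-close to its population counterpart on $\triangle_M$, using $\sum_k\rho_k=1$.

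(ii) The second is replacing $\rhohatQ$ by $\rho_Q$ inside the cross term. On the same event we have $\sup_\rho|\Psihat(\rho)-\Psi(\rho)|\lesssim\eps$. Hence $\Psi(\rhohatQ)-\Psi(\rho_Q)\lesssim\eps$. By strong convexity of $\Psi$ in $f_\rho$ (the quadratic part is $\frac12\|f_\rho-\cdot\|^2$) and first-order optimality of $\rho_Q$, we get $\frac12\|f_{\rhohatQ}-g\|^2\le\Psi(\rhohatQ)-\Psi(\rho_Q)\lesssim\eps$. Cauchy–Schwarz then bounds the error in the cross term by $\|f_{\rhohatQ}-g\|\,\|f_j-\fstar\|\lesssim\sqrt\eps$.

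Combining (i) and (ii), on the good event every $j$ in the support satisfies $D_j\le\sqrt\eps+\eps$ after adjusting constants.

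\textbf{Main obstacle.} The hard step is (ii). It requires a clean strong-convexity inequality $\Psi(\rho)-\Psi(\rho_Q)\ge\frac12\|f_\rho-f_{\rho_Q}\|^2$, obtained from first-order optimality of $\rho_Q$ over the simplex. The concentration in (i) must also be made uniform over $\rho$ and not just at a single point. Linearity in $\rho$ and the $M^2$ union bound handle the latter, which is where the $M^2$ factors in the probability come from.
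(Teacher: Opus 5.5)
Your argument is correct, and its skeleton coincides with the paper's. Both use the KKT sign condition on the support; your directional derivative in $e_j-e_{k^\star}$ is exactly the paper's reduced gradient $(\widehat G\rhohatQ)_j+\widehat\Delta_j-\tfrac12\widehat d_{jM}^2$ with $f_M=\fstar$. Both then use uniform concentration of this gradient via linearity in $\rho$, the Cauchy--Schwarz transfer from $\rhohatQ$ to $\rho_Q$ at cost $\|f_{\rhohatQ}-f_{\rho_Q}\|_{L_2(P_X)}$, and the polarization identity.

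The genuine difference is how you obtain $\|f_{\rhohatQ}-f_{\rho_Q}\|^2_{L_2(P_X)}\lesssim\eps$. The paper, in \cref{lem:Qgeneral:convergence}, pairs the empirical and population KKT systems with $\rhohatQ-\rho_Q$. Complementary slackness then gives $\langle\rhohatQ-\rho_Q,\nabla\widehat\Psi(\rhohatQ)-\nabla\Psi(\rho_Q)\rangle\le 0$. This monotonicity argument needs only the gradient deviation already controlled for the sign comparison, and it yields the bound $\eps$ with constant one, hence exactly $\sqrt\eps+\eps$ without rescaling.

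You instead combine the basic inequality $\widehat\Psi(\rhohatQ)\le\widehat\Psi(\rho_Q)$, uniform closeness of objective values, and the exact quadratic-growth identity $\Psi(\rho)-\Psi(\rho_Q)\ge\tfrac12\|f_\rho-f_{\rho_Q}\|^2$ from first-order optimality of $\rho_Q$. This is the standard M-estimation route and avoids multipliers for the population problem. It requires concentration of objective values rather than just gradients. Your listed averages do determine $\widehat\Psi(\rho)-\Rhat_\rS(\fstar)$ bilinearly in $\rho$, and the common shift $\Rhat_\rS(\fstar)-\cR_P(\fstar)$ cancels in $\Psi(\rhohatQ)-\Psi(\rho_Q)$, so no new event is needed.

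The price is a universal constant in front of $\sqrt\eps+\eps$. You should say explicitly that \emph{adjusting constants} means running the argument at accuracy $\eps/K$ for a universal $K$. This only changes the universal $c$ in the probability bound, so the statement as written follows.
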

In words, $Q$-aggregation is only dense in those coordinates where the excess risk is small enough or where the benefit of mixing outweighs the excess risk. For instance, if $f_{\rho_Q}=\fstar$, then for large enough $n$ the nonzero components of the $Q$-aggregation estimator all satisfy $\excessRisk(f_j)\leq \frac{1}{2} \EE_{X\sim P_X}\brk{(f_j(X)-\fstar(X))^2}$. This implicit sparsity of the $Q$-aggregation estimator helps in both the minimax and universal sense.

That concludes the study of finite hypothesis spaces, as \cref{thm:Q-exponential} shows that there is no conflict between exponential universal rates and minimax optimality for finite dictionaries.

\section{Countably Infinite Hypothesis Spaces: Structural Results}
\label{sec:countably-infinite-hyp-spaces}

Recall that the overarching question of this work is whether exponential rates ever come at the cost of uniform guarantees. In the finite setting, we have answered this question in the negative in full generality: on the problem space $\Theta_M$, the $Q$-aggregation estimator achieves both minimax rates along the tail and zero excess risk with exponential probability.
The (countably) infinite setting is more nuanced. Before answering the question in the infinite case in \cref{sec:best-of-both-worlds}, we present the following structural results that highlight differences between the finite and infinite cases in terms of both learnability and algorithmic principles, motivating the problem formulation of \cref{sec:best-of-both-worlds}.

\begin{enumerate}[leftmargin=*,itemsep=3pt,topsep=2pt]
    \item In the finite case, achieving zero excess risk with exponential probability is guaranteed to be possible. The same is not true in the infinite case.
    Specifically, we show next (in \cref{subsec:arbitrarily-slow-and-near-exponential-lower-bound}) that one cannot generally achieve exponential rates in the problem class $\Theta_\NN$;  in fact, only arbitrarily slow rates are possible, as specified in \cref{thm:arbitrarily-slow-rates-inf-not-realized}. To study exponential rates, we therefore have to restrict the problem space either by restricting the hypothesis classes or the space of distributions.
    However, we also demonstrate that if the space of distributions is left unrestricted, the space of function classes must be restricted substantially (\cref{thm:nearly-exponential-rates-lower-bound}).

    \item Moreover, in the finite case, learnability in the uniform minimax sense is guaranteed to be possible at a decreasing rate of $\log(M)/n$. As we show in \cref{subsec:inf-realized}, this is not the case for infinite function classes, where the rate can be constant (\cref{thm:countable-inf-realized-not-uniformly-learnable}) even when it is possible to learn at nearly exponential rates in the universal sense (\cref{thm:countable-inf-realized-exponential-rates}).

    \item Lastly, in the finite case, algorithms can always output a finite convex combination. However, as we show in \cref{subsec:arbitrarily-slow-convex-and-finite}, there are countably infinite dictionaries for which exponential universal and fast minimax rates are attainable (even by the same algorithm), but any algorithm that outputs a convex or finite combination of models achieves only arbitrarily slow rates (\cref{thm:arbitrarily-slow-rates-convex-finite-combination}).
\end{enumerate}

Together, these results motivate the definition of \emph{learnability in both worlds} (\cref{def:learnable-in-both-worlds}) introduced in \cref{sec:best-of-both-worlds}, where we condition \emph{only} on the existence of two algorithms, $\Amini$ and $\Aexp$, that achieve a vanishing minimax rate and zero excess risk with exponential probability, respectively.

\subsection{Lower Bounds for Arbitrarily Slow and Nearly Exponential Rates}
\label{subsec:arbitrarily-slow-and-near-exponential-lower-bound}
In this section, we demonstrate that, contrary to the finite case, we cannot study exponential rates over the entire space $\Theta_\NN$ of countably infinite hypothesis spaces and all distributions. Hence, in the following sections, we consider the specific subset $\Theta\subseteq \Theta_\NN$ of pairs of hypothesis spaces and families of distributions for which exponential rates are achievable \emph{by definition}.

We start with a no-free-lunch result showing that there exists a countably infinite hypothesis class that can be learned only at arbitrarily slow rates. This result follows from constructing a binary hypothesis class with a certain combinatorial property (an infinite VCL tree). This combinatorial property, together with Theorem~5.11 from \cite{bousquet2021theory}, implies that for any rate function $ R $ and algorithm $ \cA $, there exists a distribution such that the binary classification loss of the algorithm is $ R $ and the infimum of the binary classification loss over all functions in the class is $0$. Using this fact, one can lower bound the squared loss of any algorithm by a universal constant times the binary classification loss of its thresholded prediction and obtain the following result. In the spirit of this result, Attias et al.~\cite[Theorem~13]{attias2024universal} show that, under the expected absolute loss, for every rate function \(R(n)\) with \(R(n)\) nonincreasing and \(nR(n)\) nondecreasing, there exists a hypothesis class \(\mathcal H\) that is learnable at rate \(R(n)\), but not at any rate faster than \(o(R(n))\). The proof of \cref{thm:arbitrarily-slow-rates-inf-not-realized} is in \cref{proof:arbitrarily-slow-rates-inf-not-realized}.

\begin{theorem}
\label{thm:arbitrarily-slow-rates-inf-not-realized}
    There exist a universal constant $c>0$ and a countable function class $ \cF \subset\cM $ such that for any decreasing function $ R(n) $ converging to zero as $ n\rightarrow\infty $ and any learning algorithm $ \cA $, there exists a distribution $ P $ over $ \cX\times [0,1] $ such that, for infinitely many $ n\in\NN $,
    \begin{align*}
    \EE_{\rS\sim P^{n}}\brk{\excessRisk(\cA(\rS))} \geq cR(n).
    \end{align*}
    For this distribution $P$, it holds that $\inf_{f\in\cF}\cR_P(f)=0$,
    but no $ f\in\cF $ achieves zero risk.
\end{theorem}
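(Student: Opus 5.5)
The plan is to reduce to the universal-learning lower bound of \cite{bousquet2021theory} for realizable binary classification, following the sketch given before the statement. First, construct a countable class $\cH$ of binary classifiers $\cX\to\{0,1\}$ that has an infinite VCL tree. Take $\cX$ countable, say indexed by the finite paths of an infinite tree in which each node at depth $d$ is labelled by $d+1$ points. For every finite labelled path, let $\cH$ contain the classifier that agrees with the labels along that path and is $0$ elsewhere. Every finite path is then realized, so the infinite VCL tree exists, and since there are only countably many finite paths, $\cH$ is countable. Set $\cF:=\cH\subset\cM$, viewed as $[0,1]$-valued functions.

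Second, invoke Theorem~5.11 of \cite{bousquet2021theory}: since $\cH$ has an infinite VCL tree, for every rate $R(n)\to 0$ and every binary learner $\cA'$ there is a realizable distribution $P$ with $\EE[\PP(\cA'(\rS)(X)\neq Y)]\ge R(n)/C_0$ for infinitely many $n$. Realizable in their sense means $\inf_{h\in\cH}\PP_P(h(X)\ne Y)=0$, with $Y\in\{0,1\}$ a deterministic function of $X$.

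Third, transfer to squared loss. Given a regression algorithm $\cA$, define the binary learner $\cA'(\rS):=\one\{\cA(\rS)\ge 1/2\}$. For $Y\in\{0,1\}$ and $\hat f\in[0,1]$ we have $(\hat f(X)-Y)^2\ge \tfrac14\one\{\one\{\hat f(X)\ge1/2\}\ne Y\}$. Moreover, for $h\in\cH$ binary with $Y$ binary, $\cR_P(h)=\PP(h(X)\neq Y)$, so $\inf_{f\in\cF}\cR_P(f)=0$. Hence $\excessRisk(\cA(\rS))=\cR_P(\cA(\rS))\ge\tfrac14\PP(\cA'(\rS)(X)\ne Y)$. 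Taking expectations and applying step two to $\cA'$ gives the bound with $c=1/(4C_0)$ for infinitely many $n$.

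Fourth, show that no $f\in\cF$ attains zero risk. The distribution from Theorem~5.11 is supported on points along an infinite branch of the tree, with labels given by that branch. Every $h\in\cH$ agrees with only a finite labelled path and is $0$ beyond it. If the branch carries infinitely many $1$-labels that receive positive mass, each $h$ errs with positive probability. I would either verify this from the construction in their proof or modify $\cH$, for instance by letting the default value off the path be the opposite of the random branch's label, which the adversary's random choice handles. If needed, I would instead argue by contradiction: if some $f$ had zero risk, then the ERM-type algorithm outputting $f$ would achieve exponential rates on $P$, which contradicts the lower bound for slowly decaying $R$. Since the adversarial $P$ depends on $\cA$, though, the cleaner route is to check the construction directly.

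The main obstacle is this last step. It requires opening the proof of Theorem~5.11 to confirm that the hard distribution is non-realized by any single $h\in\cH$, and possibly adjusting $\cH$ or the distribution, for example by randomizing the branch labels so that almost surely infinitely many positively weighted points are labelled $1$.
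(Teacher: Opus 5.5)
This is essentially the paper's proof: the same kind of countable class that vanishes off finite paths, the same thresholding reduction to Theorem~5.11 of \cite{bousquet2021theory}, and your step four is settled exactly as you anticipate. In that construction the branch labels are already i.i.d.\ $\Bernoulli(1/2)$, so Borel--Cantelli gives almost surely infinitely many positive-mass points labelled $1$, and the paper picks a realization inside this probability-one event by inserting its indicator into the expectation over the random branch. The only thing you skip is the trivial case where $R$ is eventually zero, which the paper treats separately with $P(\{(j,1)\})=2^{-j}$.
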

It may be natural to constrain the set of hypothesis spaces that we consider while remaining entirely agnostic about the family of distributions. We now argue that this would exclude a \emph{very} large class of hypothesis spaces.
In \cite[Theorem 19]{hanneke2026theory}, it is shown that, in agnostic binary classification, no (nontrivial) infinite hypothesis class can be learned at an exponential rate because it must contain a so-called Eluder sequence. As we show next, this result can be translated to the regression setting with squared loss. However, as the next example shows, it cannot hold in the same generality as in binary classification.

\begin{example}
Let
$
\cF=\{x\mapsto 0\}\cup\{x\mapsto 2^{-k}:k\in\NN\}.
$
This class is infinite, but under squared loss it admits exponential rates for every distribution $P$ on $\cX\times[0,1]$: writing $\mu=\EE_P[Y]$, the risk of the constant predictor $a$ is $\cR_P(a)=(a-\mu)^2+\Var_P(Y)$,
so the best-in-class predictor is the nearest point in $A:=\{0\}\cup\{2^{-k}:k\in\NN\}$
to $\mu$. If $\mu=0$, then $Y=0$ almost surely, so the constant predictor $0$ is optimal almost surely and has zero excess risk. Assume now that $\mu>0$. Since the only accumulation point of $A$ is $0$, the set $A$ has no accumulation at $\mu$, and there exists $r_P>0$ such that every projection of any $\nu\in(\mu-r_P,\mu+r_P)$ onto $A$ is optimal in $A$ for the true mean $\mu$. Let $\ahat$ be a projection of the sample mean $\Ybar$ onto $A$ (with arbitrary tie-breaking), with the convention $\ahat=0$ if $\Ybar=0$. Then $\excessRisk(\ahat)=0$ on the event $\{|\Ybar-\mu|<r_P\}$, and  $\excessRisk(\ahat)\le 1$ always. Hence, by Hoeffding,
\[
\EE_{\rS\sim P^n}[\excessRisk(\ahat)]\le
\PP_{\rS\sim P^n}(|\Ybar-\mu|\ge r_P)\le 2\exp\prn{-2r_P^2 n}.
\]
\end{example}
Therefore, for a function class to not be learnable at an exponential rate, we must exclude classes with such accumulation points at the boundary. The next theorem formalizes this idea.

\begin{theorem}[Adaptation of Theorem 19 in \cite{hanneke2026theory}]
\label{thm:nearly-exponential-rates-lower-bound}
Fix $\cF\subseteq \cM$.
Assume there exist constants $\gamma\in (0,1]$, $\varepsilon\in(0,1/2]$, distinct points $x_1,x_2,\ldots\in\cX$,
values $y_1,y_2,\ldots\in[\varepsilon,1-\varepsilon]$, and functions $f_1,f_2,\ldots\in\cF$
so that
\begin{align}
\forall i\in\NN, \forall j<i, \hspace{1.6cm}  f_i(x_j) &= y_j, \label{eq:prefix-reg}\\
\forall i\in\NN, \qquad |f_i(x_i)-y_i| &\ge \gamma. \label{eq:gap-reg}
\end{align}
Then for every deterministic learning algorithm
$
\cA:(\cX\times[0,1])^{*}\to\cM,
$
there exist a function $\psi(n)=o(n)$ and a distribution $P$ on
$\cX\times[0,1]$ such that
$
\EE_{\rS\sim P^n}\brk{\excessRisk(\cA(\rS))}\ge e^{-\psi(n)}
$
for infinitely many $n$.
\end{theorem}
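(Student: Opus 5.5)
The plan is to adapt the Eluder-sequence construction of \cite[Theorem 19]{hanneke2026theory}. I will build a distribution supported on the points $x_1,x_2,\ldots$ with masses $p_1\ge p_2\ge\dots$ that decay slowly, for example $p_j\propto 1/(j(j+1))$ with the leftover mass on $x_1$. The labels will be fixed deterministically by an inductive, adversarial choice against the given deterministic algorithm $\cA$. Two observations drive everything.

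First, under the ``baseline'' labelling $Y=y_j$ at $x_j$, \eqref{eq:prefix-reg} gives
\[
\cR_P(f_i)=\sum_{j\ge i}p_j\,(f_i(x_j)-y_j)^2\le \sum_{j\ge i}p_j\to0 .
\]
Hence $\inf_{f\in\cF}\cR_P(f)=0$, and the excess risk of any predictor $\fhat$ is at least $p_k(\fhat(x_k)-y_k)^2$ for each $k$.

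Second, \eqref{eq:gap-reg} says that $f_k$ already disagrees with $y_k$ at $x_k$ by at least $\gamma$. Together with $y_k\in[\varepsilon,1-\varepsilon]$, this means there are always two admissible label values at $x_k$ at distance at least $\gamma$: namely $y_k$ and $f_k(x_k)$, or any value moved by $\gamma\wedge\varepsilon$ inside $[0,1]$. I will use this pair as a two-point testing problem.

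The key steps, in order, are as follows.
\begin{enumerate}
\item Fix a sample size $n_k$ for each $k$, with $n_k\to\infty$ fast and $n_kp_k\to\infty$ but $n_kp_k=o(n_k)$; set $\psi(n_k)\approx 2n_kp_k+\log(1/p_k)+O(1)$.
\item Consider the event $E_k$ that the sample of size $n_k$ contains no point $x_j$ with $j\ge k$. Its probability is $(1-\sum_{j\ge k}p_j)^{n_k}\ge e^{-2n_k\sum_{j\ge k}p_j}$. On $E_k$, the output $\cA(\rS)$ depends only on the labels at $x_1,\dots,x_{k-1}$, which are already fixed by earlier induction steps. In particular, $\cA(\rS)(x_k)$ is a fixed function of the prefix sample.
\item Choose the label at $x_k$, either $y_k$ or the switched value, so that the prediction $\cA(\rS)(x_k)$ is at distance at least $\gamma/2$ from it with conditional probability at least $1/2$ given $E_k$. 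This is the same ``pick $\theta$ against $\fhat(1)$'' trick as in \cref{thm:online-to-batch-averaging-impossibility}, now applied conditionally over prefix samples; averaging over the two label choices gives the factor $1/2$.
\item Conclude that $\EE[\excessRisk(\cA(\rS))]\ge \tfrac12 e^{-2n_k\sum_{j\ge k}p_j}\,p_k\gamma^2/4\ge e^{-\psi(n_k)}$. Since $\psi(n_k)/n_k\to0$, $\psi$ can be extended to an $o(n)$ function on all of $\NN$.
\end{enumerate}

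The main obstacle is step 3. I need the label choices to be consistent across all $k$ simultaneously, while the comparator class still attains (or approaches) the same infimum. If the label at $x_k$ is switched away from $y_k$, then the functions $f_i$ with $i>k$ no longer fit $x_k$, and $\inf_{f\in\cF}\cR_P(f)$ may change. This would spoil the identity ``excess risk $\ge p_k(\cA(x_k)-\text{label})^2$'' at earlier stages.

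My first attempt is to avoid switching labels altogether, and instead let the adversary choose \emph{which} points carry mass. The idea is to pass to a subsequence $k_1<k_2<\cdots$ and put mass only on $x_{k_1},x_{k_2},\dots$ with baseline labels. The algorithm's prediction at $x_{k_m}$ on the event $E_{k_m}$ is then a deterministic function of the prefix sample. The selection rule would keep $k_{m}$ only if that prediction is bounded away from $y_{k_m}$ with conditional probability at least $1/2$. If no such index exists infinitely often, I would use the gap \eqref{eq:gap-reg} to fall back to the truncated distribution on which $f_{k}$ is optimal. I expect the case analysis between these two options to be the delicate part, together with checking that the resulting infimum is unaffected, for which the tail estimate $\sum_{j\ge i}p_j\to0$ suffices. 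The margin $\varepsilon$ is what guarantees that a switched label remains in $[0,1]$.
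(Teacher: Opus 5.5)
There is a genuine gap, and it is more than making the label switches in step 3 consistent: no construction with deterministic labels can work.

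\textbf{Why deterministic labels fail.} Every distribution your plan uses is either baseline (label $y_j$ at $x_j$, on the whole sequence or a subsequence) or truncated (labels $y_j$ for $j<k$ and $f_k(x_j)$ for $j\ge k$). For the class of \cref{rem:BOB-lower-bound-instance}, these are in fact the only deterministic labellings on $\{x_j\}$ with $\inf_{\cF}\cR_P=0$, which your excess-risk identity requires.

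Now take the learner that outputs $f_j$ for the smallest $j$ such that $x_j$ appears in the sample with a label different from $y_j$, and otherwise outputs a measurable $g_0$ with $g_0(x_j)=y_j$ for all $j$. It has zero excess risk on every baseline distribution. On a truncated distribution it can fail only if $x_k$ is never sampled, which has probability $(1-p_k)^n$. So it learns your whole family at an exponential rate.

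The same problem shows up in your fallback. The event $E_k$ gives a two-point test between the baseline and $P_k$ at a single sample size $n_k$. When the baseline does not lose infinitely often, you only get a \emph{sequence} of distributions $P_k$, each failing at one $n_k$. That is a uniform-type statement, like the second alternative of \cref{thm:bob-general}, not a universal lower bound. For a fixed $P_k$, $\PP(E_k)=(1-\sum_{j\ge k}p_j)^n$ decays exponentially at a fixed rate. Moreover, $E_k$ is atypical under $P_k$, so knowing that $\cA$ eventually learns $P_k$ tells you nothing about its output on $E_k$.

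\textbf{What is missing.} You need a coupling under which \emph{typical} samples of each alternative still have probability $e^{-o(n)}$ under one fixed distribution. The paper builds this in three parts.
\begin{enumerate}
\item It makes the baseline labels random: $Y\mid X=x_j\sim Q_j$ with mean $y_j$, so $\inf_{\cF}\cR_{P_0}$ is still the Bayes risk by \eqref{eq:prefix-reg}. In addition, $Q_j(\{f_i(x_j)\})\ge\alpha_i=\varepsilon 2^{-i-2}$ for every $i\le j$. This is what $\varepsilon$ is for (\cref{lem:direct-Q}), not keeping a switched label inside $[0,1]$.
\item The alternative $P_i$ replaces \emph{all} labels at $x_j$, $j\ge i$, by $f_i(x_j)$. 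Then $f_i$ is Bayes optimal and the alternatives never conflict.
\item The argument splits on consistency. If $\cA$ is not consistent on some $P_i$, that single $P_i$ gives a constant lower bound infinitely often. Otherwise, consider the event that every tail label of a $P_0$-sample equals $f_i(X_t)$. This event has conditional probability at least $\alpha_i^{\widehat N}$, with $\widehat N\le 4p_in$ with high probability, and given the covariates it is independent of the prefix labels. On it, the $P_0$-sample coincides with a $P_i$-sample, on which $\cA$ predicts within $\gamma/2$ of $f_i(x_i)$, hence at least $\gamma/2$ away from $y_i$.
\end{enumerate}
Choosing $i=i_n\to\infty$ slowly gives expected excess risk at least $\frac{\gamma^2}{8}p_{i_n}\alpha_{i_n}^{4p_{i_n}n}=e^{-o(n)}$ under the single distribution $P_0$.
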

The proof of \cref{thm:nearly-exponential-rates-lower-bound} is in \cref{proof:nearly-exponential-rates-lower-bound}. At its core, the proof relies on the same idea as the proof of Theorem 19 in \cite{hanneke2026theory}. The main difference is the margin condition of \cref{thm:nearly-exponential-rates-lower-bound} due to using squared loss.

\subsection{Universal Learnability does not Imply Uniform Learnability}
\label{subsec:inf-realized}

In this section, we consider tuples $ (P,\cF)$ of distributions and hypothesis spaces such that $ \cF $ is countable and the distribution $ P $ is such that the infimum of the risk with respect to $P$ is attained in $\cF$:
\begin{equation*}
    \Thetainf = \Big\{(P,\cF)\in\Theta_{\NN}: \text{there exists } f\in\cF \text{ such that } \cR_P(f) = \inf_{f'\in\cF}\cR_P(f')\Big\}.
\end{equation*}
We show that this broad class of tuples is not learnable in the minimax sense other than trivially, but is learnable at a nearly exponential universal rate.
We start by showing that the minimax excess risk in expectation for this class of tuples is constant and equal to $ 1/4 $, attained by trivially outputting $ 1/2 $. This is a well-known no-free-lunch result, here stated for squared loss.
\begin{theorem}
\label{thm:countable-inf-realized-not-uniformly-learnable}
There exists a countable hypothesis class $ \cF \subset \cM$ such that for every deterministic learning algorithm $\cA$ and every $n\in\NN$,
\[
    \sup_{P:(P,\cF)\in\Thetainf}\EE_{\rS\sim P^{n}}\brk{\excessRisk(\cA(\rS))}\geq \frac{1}{4},
\]
and there exists a learning algorithm $\cA$ (the trivial algorithm outputting $1/2$) that achieves equality.
\end{theorem}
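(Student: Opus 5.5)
We prove the lower bound with a no-free-lunch construction, and the matching upper bound by a one-line computation for the constant predictor $1/2$. Take $\cX=\NN$ (we may assume $\cX$ contains a countably infinite set of distinct points, since $\cX$ is large enough). Let
\begin{equation*}
    \cF=\crl{x\mapsto \ind\crl{x\in A}: A\subset\NN \text{ finite}}.
\end{equation*}
This class is countable because the finite subsets of $\NN$ form a countable set. For $N\in\NN$ and $A\subseteq[N]$, let $P_{N,A}$ be the distribution with $X\sim\uniform{[N]}$ and $Y=\ind\crl{X\in A}$. Then $f_A:=\ind\crl{\cdot\in A}\in\cF$ has risk $0$, and all risks are nonnegative. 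So the infimum over $\cF$ is attained, $(P_{N,A},\cF)\in\Thetainf$, and $\excessRiskPar{P_{N,A}}{\cF}(g)=\EE_{X}\brk{(g(X)-f_A(X))^2}$ for every predictor $g$.

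For the lower bound, fix a deterministic $\cA$, $n$, and $N$. Bound the supremum over $A$ from below by the average over $A$ drawn uniformly from the subsets of $[N]$, which is the same as i.i.d.\ labels $f_A(x)\sim\Bernoulli(1/2)$ for $x\in[N]$. Condition on the covariates $X_1,\ldots,X_n$. For any point $x\in[N]$ not among them, the label $f_A(x)$ is independent of the sample $\rS$, and hence independent of $\fhat=\cA(\rS)$. Therefore
\begin{equation*}
    \EE\brk{(\fhat(x)-f_A(x))^2\mid \rS}\geq \min_{a\in[0,1]}\EE\brk{(a-B)^2}=\tfrac14, \qquad B\sim\Bernoulli(1/2).
\end{equation*}
At most $n$ of the $N$ points are observed. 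So the average over $A$ of $\EE_{\rS}\brk{\excessRiskPar{P_{N,A}}{\cF}(\fhat)}$ is at least $\frac{1}{4}\cdot\frac{N-n}{N}$. Hence
\begin{equation*}
    \sup_{P:(P,\cF)\in\Thetainf}\EE_{\rS\sim P^n}\brk{\excessRisk(\cA(\rS))}\geq \frac14\prn{1-\frac nN} \quad\text{for every } N,
\end{equation*}
and letting $N\to\infty$ gives the bound $\frac14$.

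For equality, consider the algorithm that always outputs the constant $1/2$. For any $P$, since $Y\in[0,1]$ we have $(1/2-Y)^2\le 1/4$, so $\cR_P(1/2)\le 1/4$. Since $\inf_{f\in\cF}\cR_P(f)\ge 0$, the excess risk is at most $1/4$ for every $P$. Combined with the lower bound, which applies to this algorithm too, its worst-case excess risk equals exactly $1/4$.

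The main subtlety is that the value $1/4$ is reached only as a supremum: for each fixed $N$ we only get $\frac14(1-n/N)$. We therefore need a countable class that contains the labeling functions for arbitrarily large supports, which the finite-set indicators provide. We also need the conditioning argument to be valid for a deterministic algorithm; this holds because $\fhat$ is a function of $\rS$ alone, and on unseen points the labels are independent of $\rS$.
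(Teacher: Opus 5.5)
Your proposal is correct and follows essentially the same route as the paper. Both use uniform covariates on $[N]$ (the paper writes $[m]$) with uniformly random labels, both observe that unseen labels stay fair coins given the sample, which yields $\frac14(1-n/N)$, and both then send $N\to\infty$ and match with the constant-$1/2$ predictor. The only difference is your choice of class (indicators of finite sets, equal to $0$ off the set) in place of the paper's prefix-labelled functions that equal $1$ outside $[i]$, and this is immaterial.
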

The proof of \cref{thm:countable-inf-realized-not-uniformly-learnable} is in \cref{proof:countable-inf-realized-not-uniformly-learnable}.
The next theorem shows that the same class of tuples is learnable at a nearly exponential universal rate. This result is similar to results in \cite{pabbaraju2026agnostic,HannekeKMV26} in other settings, and its proof uses their ideas. We prove \cref{thm:countable-inf-realized-exponential-rates} in \cref{proof:countable-inf-realized-exponential-rates}.
\begin{theorem}
\label{thm:countable-inf-realized-exponential-rates}
For any function $\varphi:\NN\to\RR$ with $\varphi(n)\to\infty$ and $\varphi(n)=o(n)$, there exists a learning algorithm $\cA_{\varphi}$ such that for any tuple $(P,\cF)\in\Thetainf$, there exist constants $C,c>0$ depending on $P,\cF$ (for instance through an enumeration of $\cF$) and $\varphi$ such that for any $n\in\NN$,
    \begin{align*}
        \PP_{\rS\sim P^n}\prn{\excessRisk(\cA_{\varphi}(\cF,\rS))>0}\leq Ce^{-c\varphi(n)}.
    \end{align*}
\end{theorem}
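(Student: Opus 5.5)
We will construct $\cA_\varphi$ as ERM over a growing finite prefix of a fixed enumeration of the dictionary, and then reuse the finite-class argument of \cref{prop:ERM-exponential} and \cref{lem:Hoeffding-separation}. Fix an enumeration $\cF=\crl{f_1,f_2,\ldots}$, chosen by the algorithm and independent of $P$. Set $k(n):=\max\crl{1,\floor{e^{\varphi(n)/2}}}$, capped for instance at $e^{\sqrt{n}}$ if needed, and define $\cA_\varphi(\cF,\rS)$ as any ERM over the finite class $\cF_{k(n)}:=\crl{f_1,\ldots,f_{k(n)}}$. Because $\varphi(n)\to\infty$, we get $k(n)\to\infty$. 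Because $\varphi(n)=o(n)$, the union bound over $k(n)$ functions costs only $e^{\varphi(n)/2}$, which an $e^{-cn}$ Hoeffding tail absorbs.

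Now fix $(P,\cF)\in\Thetainf$ and let $j^\star$ be the smallest index with $\excessRisk(f_{j^\star})=0$; this index exists by the definition of $\Thetainf$. Let $n_0$ be such that $k(n)\ge j^\star$ for all $n\ge n_0$. For $n\ge n_0$ the prefix $\cF_{k(n)}$ contains an optimal function, so $\inf_{\cF_{k(n)}}\cR_P=\inf_\cF\cR_P$. The goal is to show that ERM on the prefix returns some $f_j$ with $\excessRisk(f_j)=0$.

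\textbf{Main obstacle.} The gap $\Deltamin$ over $\cF_{k(n)}$ is not bounded away from zero, since infinitely many $f_j$ may have excess risks accumulating at $0$. So \cref{lem:Hoeffding-separation} cannot be applied directly.

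I would handle this with a bias-margin split. Since the exponent must be at least $c\varphi(n)$ with $c$ allowed to depend on $P$, let $\eps_n\downarrow0$ be a threshold and partition the suboptimal functions of the prefix into two groups.
\begin{itemize}
\item Functions with $\excessRisk(f_j)\ge\eps_n$. By Hoeffding and a union bound, with probability at least $1-k(n)e^{-n\eps_n^2/8}$ each such function has empirical risk strictly above that of $f_{j^\star}$.
\item Functions with $0<\excessRisk(f_j)<\eps_n$. These are exactly the problem.
\end{itemize}

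To control the second group I would change the rule from plain ERM to a penalized, index-aware selection: output the smallest index $j\le k(n)$ whose empirical risk is within $\tau_n$ of the prefix minimum, with $\tau_n\to0$ slowly, e.g. $\tau_n=\sqrt{\varphi(n)/n}$. This resembles the approach in \cite{pabbaraju2026agnostic,HannekeKMV26}. The analysis then runs as follows.
\begin{itemize}
\item Every $f_j$ with $j<j^\star$ has a strictly positive excess risk. There are finitely many such $j$, so their minimum gap $\Delta_{j^\star}>0$ is a fixed constant depending on $P$.
\item Once $\tau_n<\Delta_{j^\star}/4$, Hoeffding applied to the finitely many $j\le j^\star$ puts every $j<j^\star$ more than $\tau_n$ above $f_{j^\star}$, with probability at least $1-j^\star e^{-c\Delta_{j^\star}^2n}$.
\item Separately, $f_{j^\star}$ must lie within $\tau_n$ of the prefix minimum. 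This requires uniform concentration over $k(n)$ functions at deviation $\tau_n/2$, which fails with probability at most $2k(n)e^{-n\tau_n^2/8}\le 2e^{\varphi(n)/2-\varphi(n)/8\cdot(\cdot)}$. Tuning the constants, e.g. $\tau_n=4\sqrt{\varphi(n)/n}$, brings this to $Ce^{-c\varphi(n)}$.
\end{itemize}
On the intersection of these events, the selected index is exactly $j^\star$, so the excess risk is zero.

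Finally, summing the failure probabilities, the term $e^{-c\Delta^2 n}\le e^{-c\varphi(n)}$ for large $n$ because $\varphi=o(n)$. For $n<n_0$ we absorb everything into $C$. I expect the delicate step to be tuning $k(n)$ and $\tau_n$ so that the union bound over the growing prefix still yields the exponent $c\varphi(n)$ with $c$ independent of $n$. The capping of $k(n)$ and the index-aware tie-breaking are what make this work.
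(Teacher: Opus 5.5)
Your final rule (return the smallest index in a growing prefix whose empirical risk is within $\tau_n\asymp\sqrt{\varphi(n)/n}$ of the prefix minimum) is correct and is essentially the paper's argument. The paper scans the first $\psi(n)=\max\{1,\lceil\varphi(n)\rceil\}$ hypotheses and switches to $f_i$ only if it beats every earlier candidate by $\tau_n=2\sqrt{\psi(n)/n}$. It controls the same two events you do: a fixed-gap Hoeffding bound for the finitely many indices before the first optimal one, and a union bound at margin $\tau_n$ over the prefix.

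Your larger prefix of size $e^{\varphi(n)/2}$ also works, because its union-bound cost is absorbed by $e^{-n\tau_n^2/8}$. The plain-ERM plan, the $\varepsilon_n$ split and the cap at $e^{\sqrt n}$ are unnecessary. You should also define $\tau_n$ via $\max\{\varphi(n),1\}$ so that it is defined for the finitely many $n$ with $\varphi(n)\le 0$.
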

Recall that in \cref{thm:arbitrarily-slow-rates-inf-not-realized} the infimum was \emph{not} realized, so there is no contradiction.
This highlights that learnability in the universal sense need not imply learnability in the uniform sense and that there can be a substantial gap between the two notions.
This mirrors the point of Example 2.3 in \cite{bousquet2021theory}, but for squared loss and distributions that need not be realizable.

\subsection{Arbitrarily Slow Rates for Convex and Finite Aggregation}
\label{subsec:arbitrarily-slow-convex-and-finite}
All the methods we have studied for finite hypothesis classes (\cref{tab:minimax-vs-exponential}) output a finite convex combination of functions in the function class. In this section, we show that there exists a problem class $ \Theta $ of function classes and distributions for which any algorithm that outputs either a convex combination or a finite combination of functions in the function class can achieve only arbitrarily slow rates. For the same problem class $ \Theta $, there exists a learning algorithm that simultaneously achieves universal exponential rates and fast uniform rates. This shows that there are problem classes $ \Theta $ for which one cannot obtain either universal or uniform guarantees when restricting to algorithms that output convex or finite combinations of functions in the function class.

The function-class construction used in the following theorem is based on the Cantor-to-regression embedding
trick \cite[Examples 3 and 4]{AttiasHKKV23}; the distribution construction uses
ideas from \cite[Lemma 5.12]{bousquet2021theory}; and the adversarial choice
against convex and finite combinations is based on ideas from
\cite{hogsgaard2026interplay}, which studies the PAC/minimax setting. The upper
bounds follow from an identification argument enabled by the hypothesis class. We prove \cref{thm:arbitrarily-slow-rates-convex-finite-combination} in \cref{proof:arbitrarily-slow-rates-convex-finite-combination}.

\begin{theorem}
\label{thm:arbitrarily-slow-rates-convex-finite-combination}
Let $\cA$ be a learning rule that, for every function class
$\cF\subset\cM$, maps a sample $S\in(\cX\times[0,1])^{*}$ to a predictor
$\cA(S):\cX\to[0,1]$. Assume that $\cA$ satisfies one of the following
two conditions:
\begin{itemize}[leftmargin=*,itemsep=3pt,topsep=2pt]
    \item \emph{Convex combination:} for every reference class
    $\cF\subset\cM$ and every sample $S\in(\cX\times[0,1])^{*}$, there are
    coefficients $\alpha_{f,S}\in[0,1]$, indexed by $f\in\cF$, with
    $\sum_{f\in\cF}\alpha_{f,S}=1$ such that, for every $x\in\cX$,
    \[
        \cA(S)(x)=\sum_{f\in\cF}\alpha_{f,S}f(x).
    \]

    \item \emph{Finite combination with sample-size-dependent width:}
    there is a function
    $k:\NN\to\NN$, depending only on $\cA$, such that for
    every reference class $\cF\subset\cM$ and every sample
    $S\in(\cX\times[0,1])^{n}$, there are functions
    $f_{1,S},\ldots,f_{k(n),S}\in\cF$ such that, for every $x\in\cX$,
    \[
        \cA(S)(x)\in
        \left[
            \min_{i\in[k(n)]} f_{i,S}(x),
            \max_{i\in[k(n)]} f_{i,S}(x)
        \right].
    \]
\end{itemize}
Then there is a universal constant $C>0$ with the following property. For
every function $R:\NN\to(0,\infty)$ with $R(n)\to0$, there exists a function
class $\cF\subset\cM$ and a class of distributions $\cP$
such that, for some distribution $P\in\cP$ and infinitely many $n\in\NN$,
\begin{align*}
    \EE_{\rS\sim P^{n}}\brk{\excessRisk(\cA(\rS))}
    \geq C R(n).
\end{align*}
On the other hand, for the same classes $\cF$ and $\cP$, there exists a learning algorithm
$\cA^\star$ such that
\begin{alignat*}{2}
    \forall P\in\cP,  n\in \NN,  \delta\in (0,1):&& \qquad  \PP_{\rS\sim P^n}
    \prn{\excessRisk(\cA^\star(\rS))>\frac{\log{(1/\delta)}}{n}}
    &\leq \delta, \\
    \text{and} \qquad \forall P\in\cP, \ \exists c>0: && \PP_{\rS\sim P^n}
    \prn{\excessRisk(\cA^\star(\rS))>0}
    &\leq e^{-cn}.
\end{alignat*}
Furthermore, no algorithm can achieve better rates, up to constant factors, in either the minimax or universal sense.\footnote{The minimax rate is $ \Theta(\log(1/\delta)/n) $ when $ n>\log(1/\delta) $, and the best universal rate is $ \Theta(e^{-cn}) $ for some constant $ c>0 $.}
If $\cA$ satisfies the convex-combination condition, $\cF$ can be
chosen independently of $R$.
\end{theorem}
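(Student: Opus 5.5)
The construction has four parts: a function class in which a finite label pattern identifies the target exactly; distributions that are realizable by one member of the class; an adversarial distribution against convex or finite combinations; and an identification algorithm $\cA^\star$ for the upper bounds.

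\emph{Construction.} Following the Cantor-to-regression embedding of \cite{AttiasHKKV23}, take $\cX=\NN$. Index hypotheses by binary sequences $b\in\{0,1\}^{\NN}$; for the convex case a countable subfamily suffices, e.g.\ finitely supported sequences together with suitably chosen limits. Define
\begin{equation*}
    f_b(x)=\tfrac14+\tfrac12 b_x+\sum_{j<x} 3^{-j-2} b_j.
\end{equation*}
Then the value $f_b(x)$ encodes both the bit $b_x$ (through a gap of constant size) and the prefix $(b_j)_{j<x}$ (through a Cantor-type ternary code). Consequently a single noiseless observation $(x,f_b(x))$ determines $b_0,\dots,b_x$. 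Let $\cP$ be the family of realizable distributions $P$ with $Y=f_{b}(X)$ for some $f_b\in\cF$ and arbitrary marginal $P_X$.

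\emph{Upper bounds.} The algorithm $\cA^\star$ takes the sample point with the largest $x$, decodes the prefix of $b$ from its label, and outputs any $f\in\cF$ consistent with that prefix and with all observed labels. Its excess risk is at most $P_X(\{x>x_{\max}\})$. Two bounds follow.
\begin{itemize}
    \item \emph{Minimax.} The standard one-inclusion or threshold argument gives $\PP(P_X(x>x_{\max})>\epsilon)\le(1-\epsilon)^n$, hence the $\log(1/\delta)/n$ tail.
    \item \emph{Universal.} Let $x_0$ be the largest point in the support of $P_X$, if one exists. Once $x_0$ is observed, the excess risk is $0$, so the failure probability is at most $e^{-cn}$.
\end{itemize}
The difficulty is a support that is infinite. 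For the universal bound, I would restrict $\cP$ to distributions such that finitely many bits already determine the risk-minimizer on the support. Alternatively, I would use the decoding plus the realizable structure: once $x$ exceeds the last index at which the prefix is ambiguous, any consistent $f$ agrees with the target $P_X$-a.s. This is the Bousquet et al.\ finite-Littlestone-type argument, adapted to this class. Optimality of both rates follows from the classical $\log(1/\delta)/n$ lower bound and from \cref{lem:exponential-is-optimal}.

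\emph{Lower bound for restricted learners.} Given $R$, I would follow \cite[Lemma 5.12]{bousquet2021theory}. Choose $P_X$ with masses $p_i$ on points $x_i$, where $p_i$ is tuned to $R$ so that for a sequence $n_i$ we have $\PP(x_i\notin\rS)\gtrsim$ const while $p_i\gtrsim R(n_i)$. Choose the target bits $b_{x_i}$ adversarially and inductively, as in \cite{hogsgaard2026interplay}. Convex or finite combinations can only produce values inside the range of their components. The class is arranged so that every member agreeing with the observed prefix either has bit $1$ at unseen points or, if we use limit points, has wrong values there. Thus any convex combination must place $\Omega(1)$ weight on members that are wrong by a constant at the unseen $x_i$. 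For the finite-width case, choose $\cF$ depending on $k$ so that the $k(n)$ selected functions cannot cover the correct value at the unseen point; this is the reason $\cF$ may depend on $R$ there.

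\emph{Main obstacle.} The delicate step is the convex case. Since $\cF$ is countable, the target must itself lie in $\cF$, but it must also be unreachable as an average of the functions the learner can justify from finite data. I would first make the target a "limit" hypothesis whose value at each untouched point lies strictly outside the convex hull of the values of all finitely-supported hypotheses consistent with the sample. I would then pick the bits by diagonalization against the deterministic learner on the events where the $x_i$ are unseen.
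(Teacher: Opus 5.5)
\paragraph{The gap: your hard instances make $\cA^\star$ itself a restricted learner.}
Your class $\cP$ consists of distributions with $Y=f_b(X)$ for some $f_b\in\cF$, and your $\cA^\star$ outputs a single consistent $f\in\cF$. A proper learner of this kind satisfies the convex-combination condition (all weight on one function). It also satisfies the finite-combination condition with $k\equiv 1$. So the lower bound you want would have to hold for $\cA^\star$ itself. Your own tail bound $\PP(P_X(x>x_{\max})>\epsilon)\le(1-\epsilon)^n$ gives $\EE[\excessRisk(\cA^\star(\rS))]\lesssim 1/n$. That contradicts $\EE[\excessRisk(\cA^\star(\rS))]\ge CR(n)$ infinitely often for slow rates such as $R(n)=1/\log(n+1)$.

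The same problem sits inside your diagonalization. Choosing the bits at unseen points adversarially is an \emph{informational} obstruction: every learner, proper or not, must guess a bit whose two possible values are $1/2$ apart. Such an argument would defeat every learner. In fact it shows your class has arbitrarily slow universal rates when $P_X$ has infinite support, which is exactly where your universal upper bound stalls. The step that fails is ``since $\cF$ is countable, the target must itself lie in $\cF$''. The hard distributions must have $\inf_{f\in\cF}\cR_P(f)$ \emph{not} attained. The optimal predictor must lie outside $\cF$ and outside what convex or finite combinations can express at unseen points, while $\cA^\star$ is improper.

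\paragraph{What the paper does instead.}
The paper takes $\cX=\NN^3$. Each hypothesis $f_I$ vanishes at exactly one point $(i,s,I_{i,s})$ for each $i\le m$, $s\in[2i]$, and equals a distinct constant $\gamma_I\in[1/2,1]$ elsewhere.

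The hard distributions $P_z$ put label $0$ on the points $(i,s,z_{i,s})$, with level masses $p_i$ from \cite[Lemma 5.12]{bousquet2021theory}. Thus $\inf_{f\in\cF}\cR_{P_z}(f)=0$ is approached but not attained, and the Bayes rule $\mathbf 0$ lies outside $\cF$. Accordingly, $\cP$ is defined by zero infimal risk together with $P^n(\mathrm{Real}_n(\cF))=1$ for all $n$, not by realizability by a single member.

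The lower bound is then \emph{representational}. Take an unseen level-$i_\ell$ point with $z_{i_\ell,s}$ uniform on $[b_{i_\ell}]$, and let $\beta_{z'}$ be the total weight on hypotheses vanishing at $(i_\ell,s,z')$. A convex combination takes value at least $\tfrac12(1-\beta_{z'})$ there, and each hypothesis vanishes at only one $z'$, so $\sum_{z'}\beta_{z'}\le 1$. With $b_{i_\ell}=2$ this gives squared error at least $1/16$ on average. In the finite case, $b_{i_\ell}=2k(n_\ell)$, so the $k(n_\ell)$ chosen functions vanish at no more than half the positions, and the output is at least $1/2$ at the rest.

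Meanwhile the improper $\cA^\star$ outputs $\mathbf 0$ when all labels are zero. Otherwise it reads $\gamma_I$ off the first nonzero label and outputs the realizing $f_I$. Every $P\in\cP$ either has $Y=0$ almost surely or is realized by one $f_I$. This gives failure probability at most $(1-q)^n$ with $q=\PP(Y\neq 0)$, and the $\log(1/\delta)/n$ tail because the risk of $\cA^\star$'s output is always at most $q$. The distinct values $\gamma_I$ replace your Cantor prefix encoding, and the infinite-support difficulty never arises.
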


This concludes the structural results. We now turn to whether exponential rates ever come at the cost of uniform guarantees or whether a best-of-both-worlds algorithm generally exists.

\section{Best of Both Worlds for Infinite Hypothesis Classes?}
\label{sec:best-of-both-worlds}

Perhaps the most natural question is whether obtaining exponential rates, \emph{whenever} they are achievable, comes at the cost of uniform guarantees. In this section, we show that it does and prove matching upper and lower bounds that characterize the trade-off in this very agnostic sense.

Consider a family of distributions $\cP$ on $\cX\times [0,1]$. In this section, we make no explicit assumptions about the hypothesis space $\cF$. Instead, we assume the existence of two algorithms: one that learns $\cF$ uniformly over $\cP$ (e.g., at the minimax rate) and one that universally achieves zero excess risk with exponential probability on all distributions in $\cP$. We call this \emph{learnability in both worlds}.
\begin{definition}
\label{def:learnable-in-both-worlds}
For a family of distributions $\cP$ on $\cX\times [0,1]$ and a fixed hypothesis space $\cF\subset \cM$, the space $\Theta= \crl{(P,\cF):P\in \cP}$ is \emph{learnable in both worlds} if it is
\begin{enumerate}[leftmargin=*,itemsep=0pt,topsep=0pt]
     \item universally learnable at an exponential rate: there exists an algorithm $\Aexp$ that attains zero excess risk with exponential probability on $\Theta$ (\cref{def:exponential-rate-probability}), that is, for all $P\in\cP$, there exist constants $c,C>0$ such that for all $n\in \NN$, $\PP_{\rS\sim P^{ n}}\prn{\excessRisk(\Aexp(\rS))>0}\leq C\exp\prn{-c n}$, and \label{item:exp-universal}
    \item uniformly learnable: the minimax rate (\cref{def:minimax-along-tail}), denoted $r_n(\delta):=  \fM(\Theta,\delta,n)$, satisfies $r_n(\delta)\to 0$ as $n\to \infty$ for every fixed $\delta$, and there exist an algorithm $\Amini$ and universal constants $C,c>0$ such that for all $n\in\NN$, $P\in\cP$, and $\delta\in(0,c)$, the algorithm satisfies $\PP_{\rS\sim P^{ n}}\prn{\excessRisk(\Amini(\rS))\leq C  r_n(\delta)}\geq 1-\delta$.
    \label{item:minimax}
\end{enumerate}
\end{definition}
Learnability of $\Theta$ in both worlds is the minimal requirement for investigating whether an algorithm can achieve both uniform and exponential guarantees whenever each is separately attainable.

Given that for finite dictionaries $Q$-aggregation achieves minimax optimality along the tail \emph{and} zero excess risk with exponential probability (\cref{thm:Q-exponential}), one may be tempted to conclude that if a space $\Theta$ is learnable in both worlds, the following algorithm will achieve both guarantees:
Split the data into two equally sized parts $\rS,\rS'$, compute $\fmini = \Amini(\rS')$ and $\fexp = \Aexp(\rS')$, and aggregate the two-element dictionary $\crl{\fmini,\fexp}$ using $Q$-aggregation with a uniform prior on the sample $\rS$.
A simple calculation shows that this procedure inherits the minimax optimality of $\Amini$. It does \emph{not}, however, inherit the exponential-rate guarantee because the dictionary depends on the first split of the sample, so the gap in excess risk between the models may shrink with $n$. This effectively turns the problem from a universal problem into a uniform problem, in which exponential rates may be impossible. That is the main challenge we address in this section. We now formally prove how the infinite setting introduces a trade-off between uniform and universal guarantees, and demonstrate that the algorithm just described, which we call \emph{best-of-both-worlds $Q$-aggregation} (QBOB), is still useful in that it optimally trades off the uniform and universal guarantees.

\subsection{An Impossibility Result}

As already alluded to, in general, there exist problem spaces $\Theta$ that are learnable in both worlds, but no algorithm can achieve both guarantees simultaneously. We formalize this in the following impossibility result.
The proof of \cref{thm:bob-general} is in \cref{proof:bob-general}.

\begin{theorem}\label{thm:bob-general}
Fix $\cF\subseteq \cM$.
Assume there exist constants $\gamma\in (0,1]$, $\eps\in(0,1/2]$, distinct points $x_1,x_2,\ldots,z_1,z_2,\ldots\in\cX$,
values $y_1,y_2,\ldots\in[\eps,1-\eps]$, and functions $f_1,f_2,\ldots\in\cF$
such that
\begin{align}
\forall i\in\NN, \forall j<i,& \qquad  f_i(x_j) = y_j, \label{eq:bob-tail-prefix}\\
\forall i\in\NN,& \qquad |f_i(x_i)-y_i| \ge \gamma. \label{eq:bob-tail-gap}
\end{align}
Then there exists a family of distributions $\cP$ on $\cX\times[0,1]$ such that $\Theta=\crl{(P,\cF):P\in\cP}$ is learnable in both worlds (\cref{def:learnable-in-both-worlds}) with uniform rate $r_n(\delta)\asymp \min\{1,\log(1/\delta)/n\}$, but not at faster rates.\footnote{Here, we mean that zero excess risk with exponential probability is the best one can achieve and that the rate $r_n(\delta)$ is optimal up to constant factors: the lower bound is of order $ \log{(1/(2\delta))}/n $ and requires that $ \delta < 1/2 $ and $ n\geq \log{(1/(2\delta))} $.} Furthermore, writing
\(
c_\eps:=2+8\log\prn{1/\eps},
\)
for every function $\varphi:\NN\to[4,\infty)$ and every learning algorithm $\cA$, there exists a sequence $\crl{n_k}_{k=1}^\infty$ with $n_k\to\infty$ for which at least one of the following holds:
\begin{enumerate}
    \item There exists a distribution $P\in\cP$ such that for all $k\in\NN$,
    \[
        \PP_{\rS\sim P^{n_k}}\prn{\excessRiskPar{P}{\cF}(\cA(\rS))\ge \frac{\gamma^{2}}{16\varphi(n_k)}}
        \ge
        \frac{1}{4}\exp\prn{-c_\eps\frac{n_k}{\varphi(n_k)}}.
    \]
    \item There exists a sequence $\crl{P_k}_{k=1}^\infty\subseteq \cP$ such that for all $k\in\NN$,
    \[
        \PP_{\rS\sim P_k^{n_k}}\prn{\excessRiskPar{P_k}{\cF}(\cA(\rS))\ge \frac{\gamma^{2}}{16\varphi(n_k)}}
        \ge
        \frac{9}{20}.
    \]
\end{enumerate}
\end{theorem}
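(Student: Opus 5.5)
The plan is to build $\cP$ from two kinds of distributions, both supported on the points $x_j$ and $z_j$. The \emph{Eluder-type} distributions are as in \cref{thm:nearly-exponential-rates-lower-bound}. For a probability vector $(p_j)$ with infinite support, put $X=x_j$ with probability $p_j$ and $Y\sim\Bernoulli(y_j)$. The \emph{localized} distributions $P_{i,t}$ put mass $t$ on a single point $x_i$ (label law depending on $i$) and the rest on the points $x_j$, $j<i$, with labels $y_j$. The auxiliary points $z_j$ carry side information, placed so that one algorithm can achieve each guarantee separately.

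Since $y_j\in[\eps,1-\eps]$ and the conditional means equal $y_j$, $f_i$ matches the Bayes predictor on $x_1,\dots,x_{i-1}$. By \eqref{eq:bob-tail-gap} it incurs excess risk at least $p_i\gamma^2$ at $x_i$. I will design $\cP$ so that every $P\in\cP$ has a minimizer in $\cF$ with a strictly positive gap to every non-optimal candidate the learner must consider; for the Eluder distributions the minimizer is $f_i$ for the last relevant index $i$, and the support is arranged accordingly. This yields $\Aexp$: identify the finitely many relevant points from the sample and run ERM on the resulting finite candidate set, using \cref{lem:Hoeffding-separation} and \cref{prop:ERM-exponential}. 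The rate $r_n(\delta)\asymp\min\{1,\log(1/\delta)/n\}$ comes from $\Amini$: a Bernstein/ERM argument over the effectively one-parameter family indexed by where the mass sits. The matching lower bound follows from the standard two-point argument: a point of mass $\log(1/2\delta)/n$ goes unseen with probability $\ge 2\delta$, and the learner must then guess between two labelings.

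For the trade-off, fix $\varphi$ and $\cA$. I set $t_n:=1/\varphi(n)$ and consider, for index $i$, the pair of distributions that agree on $x_1,\dots,x_{i-1}$ and differ only in whether the mass $t_n$ at $x_i$ labels consistently with $y_i$ or with $f_i(x_i)$. This mirrors the case split in the proof of \cref{thm:online-to-batch-averaging-impossibility}. On the event that few samples land on $x_i$, the learner cannot tell the two apart, so one of them suffers excess risk $\gtrsim t_n\gamma^2=\gamma^2/\varphi(n)$, with constant $\gamma^2/16$ after halving. The sequence $n_k$ and a single limiting $P$ are built inductively by the diagonal construction of \cref{thm:nearly-exponential-rates-lower-bound}. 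At stage $k$ I ask whether $\cA$ errs with probability $\ge 9/20$ on the localized $P_k$ at sample size $n_k$. If this happens infinitely often, I pass to that subsequence and obtain the second alternative.

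Otherwise, for all large $k$, $\cA$ succeeds on $P_k$ and hence commits, on $x_{i_k}$, to the labeling of $P_k$. I then choose the next weight of the universal $P$ at $x_{i_k}$ to be of order $1/\varphi(n_k)$, with the opposite labeling. Under $P^{n_k}$, with probability $\ge\prod(\text{no sample at later points})\cdot\Pr(\text{labels look like }P_k)$, the sample is indistinguishable from a typical $P_k$-sample, so $\cA$ errs on $P$. The likelihood-ratio cost is where $c_\eps=2+8\log(1/\eps)$ enters: each sample at a mass-$1/\varphi(n_k)$ point contributes a factor at least $\eps^{O(1)}$. The expected number of such samples is $n_k/\varphi(n_k)$, so, using $\varphi\ge 4$ and a Chernoff/Markov bound on the count, the probability is at least $\tfrac14\exp(-c_\eps n_k/\varphi(n_k))$.

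The main obstacle is this diagonalization. The masses assigned to later points must be summable and small enough, and $n_{k+1}$ large enough, that the constructed $P$ keeps the lower bound at every earlier $n_k$. At the same time, $P$ must remain in $\cP$, so that $\Aexp$ still attains exponential rates on it. I would try masses $p_k\propto 1/\varphi(n_k)$ renormalized geometrically, with $n_{k+1}$ chosen so that $n_k\sum_{j>k}p_j\le 1$.
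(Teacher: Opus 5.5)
\textbf{Genuine gaps.} Your indistinguishability step fails for the distributions you define. With $Y\mid X=x_j\sim\Bernoulli(y_j)$, the label $f_i(x_i)$ has probability zero whenever $f_i(x_i)\in(0,1)$. A typical sample of your localized $P_k$ has all labels at $x_{i_k}$ equal to $f_{i_k}(x_{i_k})$. Such a sample is therefore impossible under the universal $P$ as soon as it contains a point at $x_{i_k}$. Relying on ``few samples at $x_{i_k}$'' instead does not help. The event that no sample hits $x_{i_k}$ has probability at most $e^{-n_k/\varphi(n_k)}$ under $P_k$, so it can never give the constant $9/20$ of the second alternative.

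The per-sample factor $\eps^{O(1)}$ you invoke needs a label law with mean $y_i$ that puts mass at least $\eps$ on $f_i(x_i)$. The paper uses the two-point law $Q_i$ on $\{0,f_i(x_i)\}$ or $\{f_i(x_i),1\}$, so that $\lambda_i=Q_i(\{f_i(x_i)\})\ge\eps$ because $y_i\in[\eps,1-\eps]$. The prefix of each truncated distribution must also carry the same laws $Q_j$ and proportional $X$-weights, not deterministic labels $y_j$.

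Your $\Aexp$ also fails on exactly the distribution needed for the first alternative. An infinite-support Eluder distribution has no ``last relevant index''. Its Bayes regressor is $g_0$ with $g_0(x_j)=y_j$. Moreover, $\inf_{f\in\cF}\cR_P(f)=\cR_P(g_0)$ is in general not attained in $\cF$, and every $f_i$ has excess risk at least $p_i\gamma^2>0$ at $x_i$. So ERM over any finite set of the $f_i$ never reaches zero excess risk.

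You need an improper $\Aexp$ that outputs $g_0$ (the paper's $f_\infty$, not necessarily in $\cF$) unless a certificate point is observed. The deviating distributions must then carry certificates $z_{j,\ell}$ of mass $\omega_\ell$ that can be chosen with $n\omega_\ell\le 1/100$, so they go unseen with probability at least $9/10$. This requires a family indexed by the certificate mass, which your sketch does not supply.

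The diagonalization is unnecessary, and as specified it is impossible in general. Getting excess at least $\gamma^2/(16\varphi(n_k))$ at $x_{i_k}$ requires mass at least $1/(4\varphi(n_k))$ at infinitely many distinct points. That is not summable when $\varphi$ is bounded, which is allowed since $\varphi:\NN\to[4,\infty)$. Likewise, $n_k\sum_{j>k}p_j\le 1$ would force $\varphi(n_{k+1})\gtrsim n_k$. Building $P$ adaptively also forces $\cP$ to contain Eluder distributions with arbitrary weights, and you have not shown the uniform rate for those. For example, predicting with the $f$ indexed by the largest observed point fails when that point carries large mass.

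\textbf{How the paper avoids this.} It uses a single, algorithm-independent $P_\infty$ with weights $p_i=2^{-i}$, which contains every scale. For each $n$ it takes $t_n=\lceil\log_2\varphi(n)\rceil$ and compares $P_\infty$ with $P_{t_n,\ell_n}$. The comparison uses the likelihood identity $P_\infty^n(S)=s_t^n\lambda_t^{N_t(S)}\bar P_t^{\,n}(S)$ on samples supported in $G_t$. The tail beyond $x_{t_n}$ has mass $2^{-t_n}$, so avoiding it costs only $e^{-2n2^{-t_n}}$; this is the $2$ in $c_\eps$. Markov's inequality gives $N_{t_n}\le 8n2^{-t_n}$ with probability at least $3/4$; this is the $8\log(1/\eps)$. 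A case split on whether the conditional probability $\theta_n$ of predicting within $\gamma/2$ of $y_{t_n}$ at $x_{t_n}$ is below or above $1/2$ finishes the argument. One case holds infinitely often, which yields the subsequence $n_k$ and the corresponding alternative.
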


Before discussing the proof idea, let us make a remark about the assumptions of \cref{thm:bob-general}.

\begin{remark}
The condition on the function class in \cref{thm:bob-general} is exactly the same as in \cref{thm:nearly-exponential-rates-lower-bound}. The reason that $\Aexp$ can achieve zero excess risk with exponential probability in \cref{thm:bob-general} is that the class of distributions $\cP$ excludes those distributions in \cref{thm:nearly-exponential-rates-lower-bound} that are used to prove the \emph{nearly} exponential rate lower bound. Hence, there is no contradiction between \cref{thm:nearly-exponential-rates-lower-bound,thm:bob-general}.
\end{remark}

\begin{example}
\label{rem:BOB-lower-bound-instance}
An instance of the conditions of \cref{thm:bob-general} is when we take $\cX=\ZZ$ and, for all $i\in\mathbb{N}$
\begin{equation*}
    y_i=\frac{1}{2} \qquad \text{and} \qquad f_i(x)=
    \begin{cases}
        1, & x=i,\\
        \frac{1}{2}, & x\neq i,
    \end{cases}
\end{equation*}
and choose $x_i=i$, $z_i=-i$.
We then obtain that $f_i(x_j)=\frac{1}{2}=y_j$ for $j<i$, and $\abs{f_i(x_i)-y_i}=\frac{1}{2}$.
Hence the assumptions are nonvacuous with the constants $\eps = 1/2$ and $\gamma = 1/2$.
\end{example}

\begin{figure}[h]
    \centering
    \begin{subfigure}[t]{0.49\textwidth}
        \centering
        \begin{tikzpicture}[scale=1]

        \draw[->, thick] (0,-0.5) -- (6,-0.5) node[right] {$x$};
        \draw[->, thick] (0,-0.5) -- (0,1.5) node[above] {$f(x)$};

        \draw (-0.1,0) -- (0.1,0) node[left=0.15cm] {$0$};

        \draw (-0.1,1) -- (0.1,1) node[left=0.15cm] {$1$};

        \foreach \x in {1,2,3,4,5} {
            \draw (\x,-0.6) -- (\x,-0.4) node[below=0.15cm] {$\x$};
        }

        \draw[dashed, thick, black] (0,0.5) -- (5.5,0.5) node[right, text=black] {$f_\infty$};

        \draw[blue, thick] (1,0.5) -- (1,1);
        \fill[blue] (1,1) circle (2pt) node[above] {$f_1$};

        \draw[red, thick] (2,0.5) -- (2,1);
        \fill[red] (2,1) circle (2pt) node[above] {$f_2$};

        \draw[green!60!black, thick] (3,0.5) -- (3,1);
        \fill[green!60!black] (3,1) circle (2pt) node[above] {$f_3$};

        \draw[orange, thick] (4,0.5) -- (4,1);
        \fill[orange] (4,1) circle (2pt) node[above] {$f_4$};

        \node at (5, 1.2) {$\dots$};

    \end{tikzpicture}
        \caption{Illustration of the hypothesis class $\cF$ in \cref{rem:BOB-lower-bound-instance}. Each hypothesis $f_i$ takes the value $1$ only at $x=i$, and $1/2$ everywhere else. The constant hypothesis $f_\infty$ takes the value $1/2$ everywhere. For simplicity, we omit the negative values $z_i=-i$.}
        \label{fig:hypothesis_class}
    \end{subfigure}
    \hfill
    \begin{subfigure}[t]{0.49\textwidth}
        \centering
        \begin{tikzpicture}[scale=1]

        \draw[->, thick] (0,-0.5) -- (6,-0.5) node[right] {$x$};
        \draw[->, thick] (0,-0.5) -- (0,1.5) node[above] {$y$};

        \draw (-0.1,0) -- (0.1,0) node[left=0.15cm] {$0$};
        \draw (-0.1,0.5) -- (0.1,0.5) node[left=0.15cm] {$1/2$};
        \draw (-0.1,1) -- (0.1,1) node[left=0.15cm] {$1$};

        \draw[dashed, gray!50] (0,0) -- (5.5,0);
        \draw[dashed, gray!50] (0,0.5) -- (5.5,0.5);
        \draw[dashed, gray!50] (0,1) -- (5.5,1);

        \foreach \x in {1,2,3,4,5} {
            \draw (\x,-0.6) -- (\x,-0.4) node[below=0.15cm] {$\x$};
        }

        \foreach \x/\p/\r in {1/4/0.4, 2/8/0.28, 3/16/0.2, 4/32/0.14, 5/64/0.1} {

            \fill[blue, opacity=0.5] (\x, 1) circle (\r);
            \node[above=0.35cm] at (\x, 1) {\scriptsize $1/\p$};
            \fill[red, opacity=0.5] (\x, 0) circle (\r);
        }

        \node at (5.7, 0.5) {$\dots$};

    \end{tikzpicture}
        \caption{Illustration of the distribution $P_\infty$, under which the constant-$1/2$ hypothesis $f_\infty$ is Bayes optimal. The area of each circle represents the probability mass. The marginal distribution of $x$ decays exponentially, and the conditional distribution of $y \in \{0, 1\}$ is uniform.}
        \label{fig:distribution_uniform}
    \end{subfigure}
    \begin{subfigure}[t]{0.49\textwidth}
        \centering
        \begin{tikzpicture}[scale=1]

        \draw[->, thick] (0,-0.5) -- (6,-0.5) node[right] {$x$};
        \draw[->, thick] (0,-0.5) -- (0,1.5) node[above] {$y$};

        \draw (-0.1,0) -- (0.1,0) node[left=0.15cm] {$0$};
        \draw (-0.1,0.5) -- (0.1,0.5) node[left=0.15cm] {$1/2$};
        \draw (-0.1,1) -- (0.1,1) node[left=0.15cm] {$1$};

        \draw[dashed, gray!50] (0,0) -- (5.5,0);
        \draw[dashed, gray!50] (0,0.5) -- (5.5,0.5);
        \draw[dashed, gray!50] (0,1) -- (5.5,1);

        \foreach \x in {1,2,3,4,5} {
            \draw (\x,-0.6) -- (\x,-0.4) node[below=0.15cm] {$\x$};
        }

        \foreach \x/\p/\r in {1/4/0.4, 2/8/0.28, 3/16/0.2, 4/32/0.14, 5/64/0.1} {

            \fill[blue, opacity=0.5] (\x, 1) circle (\r);

            \fill[red, opacity=0.5] (\x, 0) circle (\r);
        }

        \foreach \dx/\dy in {0/0, 0.05/0.05, -0.05/-0.05, -0.05/0.05, 0.05/-0.05} {
            \node at (4 + \dx, 1 + \dy) {$\star$};
        }

        \foreach \dx/\dy in {0/0, 0.08/0.08, -0.08/-0.08, -0.08/0.08, 0.08/-0.08, 0/0.12, 0/-0.12} {
            \node at (3 + \dx, 1 + \dy) {$\star$};
        }
        \foreach \dx/\dy in {0.05/0.05, -0.05/-0.05, -0.1/0.05, 0.08/-0.06} {
            \node at (3 + \dx, 0 + \dy) {$\star$};
        }

        \foreach \dx/\dy in {0/0, 0.1/0.1, -0.1/-0.1, 0.1/-0.1, -0.1/0.1, 0.15/0, -0.15/0, 0/0.15, 0/-0.15, 0.08/0.18, -0.08/-0.18} {
            \node at (2 + \dx, 1 + \dy) {$\star$};
        }
        \foreach \dx/\dy in {0/0, 0.1/-0.05, -0.1/0.05, -0.05/-0.1, 0.05/0.1, 0.15/0.1, -0.15/-0.05} {
            \node at (2 + \dx, 0 + \dy) {$\star$};
        }

        \foreach \dx/\dy in {0/0, 0.1/0.1, -0.1/-0.1, 0.1/-0.1, -0.1/0.1, 0.2/0, -0.2/0, 0/0.2, 0/-0.2, 0.2/0.2, -0.2/-0.2, 0.2/-0.2, -0.2/0.2, 0.3/0, -0.3/0, 0/0.3, 0/-0.3, 0.15/0.25, -0.25/-0.15} {
            \node at (1 + \dx, 1 + \dy) {$\star$};
        }
        \foreach \dx/\dy in {0.05/0.05, 0.15/0.15, -0.15/-0.15, 0.2/-0.1, -0.1/0.2, 0.05/-0.2, -0.25/0.1, 0.25/0.05, -0.1/-0.25, 0.0/0.25} {
            \node at (1 + \dx, 0 + \dy) {$\star$};
        }

        \node at (5.7, 0.5) {$\dots$};

    \end{tikzpicture}
        \caption{Illustration of a sample from $P_\infty$ in which the largest observed value is $x=4$ and this value has been observed multiple times with label $1$ (represented by black stars). Despite this evidence favoring $f_4$, the algorithm still outputs $f_\infty$ to maintain the exponential rate.}
    \label{fig:scenario_empirical}
    \end{subfigure}
    \hfill
    \begin{subfigure}[t]{0.49\textwidth}
         \begin{tikzpicture}[scale=1]

        \draw[->, thick] (0,-0.5) -- (6,-0.5) node[right] {$x$};
        \draw[->, thick] (0,-0.5) -- (0,1.5) node[above] {$y$};

        \draw (-0.1,0) -- (0.1,0) node[left=0.15cm] {$0$};
        \draw (-0.1,0.5) -- (0.1,0.5) node[left=0.15cm] {$1/2$};
        \draw (-0.1,1) -- (0.1,1) node[left=0.15cm] {$1$};

        \draw[dashed, gray!50] (0,0) -- (5.5,0);
        \draw[dashed, gray!50] (0,0.5) -- (5.5,0.5);
        \draw[dashed, gray!50] (0,1) -- (5.5,1);

        \foreach \x in {1,2,3,4,5} {
            \draw (\x,-0.6) -- (\x,-0.4) node[below=0.15cm] {$\x$};
        }

        \foreach \x/\p/\r in {1/4/0.4, 2/8/0.28, 3/16/0.2} {

            \fill[blue, opacity=0.5] (\x, 1) circle (\r);

            \fill[red, opacity=0.5] (\x, 0) circle (\r);
        }

        \fill[blue, opacity=0.6] (4, 1) circle (0.14);

        \foreach \dx/\dy in {0/0, 0.05/0.05, -0.05/-0.05, -0.05/0.05, 0.05/-0.05} {
            \node at (4 + \dx, 1 + \dy) {$\star$};
        }

        \foreach \dx/\dy in {0/0, 0.08/0.08, -0.08/-0.08, -0.08/0.08, 0.08/-0.08, 0/0.12, 0/-0.12} {
            \node at (3 + \dx, 1 + \dy) {$\star$};
        }
        \foreach \dx/\dy in {0.05/0.05, -0.05/-0.05, -0.1/0.05, 0.08/-0.06} {
            \node at (3 + \dx, 0 + \dy) {$\star$};
        }

        \foreach \dx/\dy in {0/0, 0.1/0.1, -0.1/-0.1, 0.1/-0.1, -0.1/0.1, 0.15/0, -0.15/0, 0/0.15, 0/-0.15, 0.08/0.18, -0.08/-0.18} {
            \node at (2 + \dx, 1 + \dy) {$\star$};
        }
        \foreach \dx/\dy in {0/0, 0.1/-0.05, -0.1/0.05, -0.05/-0.1, 0.05/0.1, 0.15/0.1, -0.15/-0.05} {
            \node at (2 + \dx, 0 + \dy) {$\star$};
        }

        \foreach \dx/\dy in {0/0, 0.1/0.1, -0.1/-0.1, 0.1/-0.1, -0.1/0.1, 0.2/0, -0.2/0, 0/0.2, 0/-0.2, 0.2/0.2, -0.2/-0.2, 0.2/-0.2, -0.2/0.2, 0.3/0, -0.3/0, 0/0.3, 0/-0.3, 0.15/0.25, -0.25/-0.15} {
            \node at (1 + \dx, 1 + \dy) {$\star$};
        }
        \foreach \dx/\dy in {0.05/0.05, 0.15/0.15, -0.15/-0.15, 0.2/-0.1, -0.1/0.2, 0.05/-0.2, -0.25/0.1, 0.25/0.05, -0.1/-0.25, 0.0/0.25} {
            \node at (1 + \dx, 0 + \dy) {$\star$};
        }

    \end{tikzpicture}
        \caption{Illustration of the distribution $P_4$ cut off at $x=4$. The mass at $(4,0)$ is removed, making $f_4$ the Bayes-optimal classifier. The sample shown in \cref{fig:scenario_empirical} occurs with high probability under this distribution, but the algorithm still outputs the suboptimal $f_\infty$.}
    \label{fig:distribution_cutoff}
    \end{subfigure}
    \caption{Illustration of the construction in the best-of-both-worlds impossibility result \cref{thm:bob-general}.}
\end{figure}

\paragraph{Proof Idea.}
We explain the proof using a simplified version of the instance described in \cref{rem:BOB-lower-bound-instance}.
We construct a sequence of hypotheses $ f_{1}, f_{2}, \ldots $, each taking the value $ 1/2 $ everywhere except at the point $ i $, where $f_i(i) = 1 $. We also include the constant function $f_\infty(x) = 1/2$ for all $x$. This structure is illustrated in \cref{fig:hypothesis_class}.
We construct a family of distributions.
First, we consider a distribution $P_\infty$ under which the constant-$ 1/2 $ hypothesis $ f_\infty $ is Bayes optimal. Under this distribution, the conditional distribution of $y$ given $x \in \NN$ is uniform over $ \{0,1\} $, and the marginal distribution of $ x $ decays exponentially. This base distribution is illustrated in \cref{fig:distribution_uniform}.
We use this to show that any algorithm achieving an exponential learning rate must output $ f_\infty $ with high probability, even when the sample provides evidence suggesting otherwise. For example, if $ x $ is the largest point observed so far and every observation at $x$ has the label $ 1 $, as visualized in \cref{fig:scenario_empirical}, this sample might appear more consistent with $ f_{x} $ being optimal. Yet to maintain an exponential rate on the base distribution, the algorithm must ignore this evidence and still output $ f_\infty $. Since this holds for infinitely many such points $ x $, we can selectively ``cut off'' the base distribution at $ x $ and remove the mass on $ (x,0) $ to construct a new distribution for which $f_x$ is actually optimal. With high probability, a sample from this truncated distribution will be identical to the deceptive sample from the base distribution (\cref{fig:distribution_cutoff}). Because the algorithm still outputs the suboptimal $ f_\infty $ on these samples, it incurs a large enough error and fails to achieve the minimax rate.
The proof handles details omitted from this simplified account and uses the additional points $z_i=-i$ to achieve zero excess risk with exponential probability for the distributions other than $P_\infty$.

\subsection{Best-of-Both-Worlds $Q$-aggregation}
In this section, we show that, for any $\Theta$ that is learnable in both worlds (\cref{def:learnable-in-both-worlds}), the $Q$-aggregation algorithm achieves the best possible best-of-both-worlds trade-off permitted by \cref{thm:bob-general}. Its universal and uniform guarantees are parameterized by the prior and temperature (corresponding to the function $\varphi$ in \cref{thm:bob-general}). It does so by splitting the data and aggregating the outputs of the two algorithms $\Amini,\Aexp$ as explained in the beginning of this section. The algorithm is stated formally in \cref{alg:QBOB}.
Recall that $\Rhat_\rS$ from \eqref{eqn:empirical-risk-def} denotes the risk over the sample $\rS=(X_i,Y_i)_{i=1}^n$ and, in particular, not over a different sample $\rS'$. The proof of \cref{thm:QBOB} is in \cref{proof:QBOB}.

\begin{algorithm}
\caption{Best-of-Both-Worlds $Q$-aggregation (QBOB)}\label{alg:QBOB}
\begin{algorithmic}[1]
\REQUIRE samples $\rS$ and $\rS'$, prior distribution $\pi$, temperature $\beta > 0$, access to $\Amini$ and $\Aexp$.
\STATE Compute $\fmini = \Amini(\rS')$ and $\fexp = \Aexp(\rS')$.
\STATE Compute the minimizer $\rhohatQ$ over $[0,1]$ (break ties in favor of smaller $\rho$) of the empirical objective
\begin{align*}
    \Psihat_{Q}(\rho)
    =& \frac{1}{2} \Rhat_\rS \prn{(1-\rho) \fexp + \rho \fmini}
    + \frac{1}{2}\prn{(1-\rho)\Rhat_\rS(\fexp)
    + \rho \Rhat_\rS(\fmini)}\\
    &\qquad + \frac{\beta(1-\rho)}{n}\log\prn{\tfrac{1}{\piexp}}
    + \frac{\beta\rho}{n}\log\prn{\tfrac{1}{\pimini}}.
\end{align*}
\STATE Return $\Qbob= f_{\rhohatQ}=(1-\rhohatQ)\fexp+\rhohatQ \fmini$.
\end{algorithmic}
\end{algorithm}
\begin{theorem}\label{thm:QBOB}
Let $ c_1,c_2>0 $ be universal constants, let $\cP$ be a class of distributions over $\cX\times [0,1]$, and let $ \cF \subseteq \cM$ be a class of functions mapping $\cX$ to $[0,1]$. Assume that $\Theta=\crl{(P,\cF):P\in\cP}$ is learnable in both worlds with uniform rate $r_n(\delta)$ (\cref{def:learnable-in-both-worlds}).
Let $0< \pimini \leq \piexp < 1$ with $\pimini+\piexp=1$, possibly dependent on $n$, and let $\rS,\rS'\sim P^{n}$ be independent.
Then, given $\rS,\rS'$, access to $\Amini$ and $\Aexp$, the prior $\pi=(\pimini,\piexp)$,
and $ \beta \geq c_2 $, the best-of-both-worlds $Q$-aggregation algorithm (see \cref{alg:QBOB}) outputs $\Qbob$ with the following guarantees:
\begin{enumerate}
    \item For every distribution $P \in \cP$, there exist $C,c>0$ such that for all $n\in\NN$,
    \begin{align*}
    \PP_{(\rS',\rS)\sim P^{2n}}\prn{
    \excessRisk(\Qbob)>0
    }
    &\leq \exp\left( - c_1\beta\log\prn{\frac{\piexp}{\pimini}}  \right)+C\exp\prn{-c n} .
    \end{align*}
    \item For all $n\in\NN$, $P\in \cP$, and $\delta\in(0,c)$, where $c$ is the constant from \cref{def:learnable-in-both-worlds},
    \begin{align*}
    \PP_{(\rS',\rS)\sim P^{2n}}\prn{
    \excessRisk(\Qbob)>
    r_n(\delta/2) + \frac{\beta\log\prn{1/\pimini}}{n} + \frac{2\beta\log\prn{2/\delta}}{n}}\leq \delta.
    \end{align*}
\end{enumerate}
\end{theorem}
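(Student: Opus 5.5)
We condition on $\rS'$ throughout, so that $\fexp,\fmini$ are fixed functions and $\rS$ is an independent fresh sample. The objective $\Psihat_Q$ is a convex quadratic in $\rho\in[0,1]$. Write $g:=\fmini-\fexp$ and $\ell_f(x,y)=(f(x)-y)^2$. A direct expansion gives
\begin{equation*}
\Psihat_Q(\rho)-\Psihat_Q(0)=\rho\bigl(\Rhat_\rS(\fmini)-\Rhat_\rS(\fexp)\bigr)-\frac{\rho(1-\rho)}{2}\,\widehat{\|g\|}^2+\frac{\beta\rho}{n}\log\frac{\piexp}{\pimini},
\end{equation*}
where $\widehat{\|g\|}^2=\frac1n\sum_i g(X_i)^2$. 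The analogous identity holds relative to $\rho=1$. Both parts follow from this explicit one-dimensional structure.

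\textbf{Part 1 (universal guarantee).} Let $E'$ be the event $\{\excessRisk(\fexp)=0\}$. By \cref{def:learnable-in-both-worlds}, $\PP(E'^c)\le Ce^{-cn}$. On $E'$, it suffices to show $\rhohatQ=0$ with the stated probability, since then $\Qbob=\fexp$. The derivative of $\Psihat_Q$ at $0$ is
\begin{equation*}
D:=\Rhat_\rS(\fmini)-\Rhat_\rS(\fexp)-\tfrac12\widehat{\|g\|}^2+\frac{\beta}{n}\log\frac{\piexp}{\pimini}.
\end{equation*}
By convexity and the tie-breaking rule, $D\ge0$ implies $\rhohatQ=0$. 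Write $\Rhat_\rS(\fmini)-\Rhat_\rS(\fexp)-\tfrac12\widehat{\|g\|}^2=\frac1n\sum_i Z_i$. The population mean of $Z_i$ is $\EE Z=\cR_P(\fmini)-\cR_P(\fexp)-\frac12\|g\|^2$. Since $\cR_P(\fmini)\ge\cR_P(\fexp)$ is not guaranteed when $\fexp$ is only best in $\cF$, I would instead use the identity $\cR_P(\fmini)-\cR_P(\fexp)=\|g\|^2+2\EE[g(X)(\fexp(X)-Y)]$. The quantity I need controlled is $\EE Z=\tfrac12\|g\|^2+2\EE[g(\fexp-Y)]$. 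This is not obviously nonnegative, and it is the main obstacle.

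My first attempt is to argue that if $\excessRisk(\fexp)=0$ does not already make $\fexp$ locally optimal along the segment, then the event $\{\excessRisk(\Qbob)>0\}$ is vacuous for small $\rho$. Concretely, $\excessRisk(f_\rho)\le0$ for all $\rho\in[0,\rho_0]$ whenever the directional derivative $2\EE[g(\fexp-Y)]<0$. The remaining case is $2\EE[g(\fexp-Y)]\ge0$, where $\EE Z\ge \frac12\|g\|^2\ge0$ and $Z_i$ has variance $\lesssim \EE g^2\lesssim \EE Z$. A Bernstein inequality then gives
\begin{equation*}
\PP\Bigl(\tfrac1n\sum Z_i<-\tfrac{\beta}{n}\log\tfrac{\piexp}{\pimini}\Bigr)\le\exp\bigl(-c_1\beta\log(\piexp/\pimini)\bigr),
\end{equation*}
using the Bernstein tail $\exp(-c\,nt^2/(\sigma^2+t))$ with $t\asymp\frac{\beta}{n}\log(\piexp/\pimini)$, $\sigma^2\lesssim\EE Z$, and $\beta\ge c_2$. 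The negative-derivative case requires care, because $\rhohatQ$ could be large. I would combine it with the fact that $\rhohatQ>0$ forces the sample derivative to be negative, together with the Hoeffding-type margin of \cref{lem:Hoeffding-separation} applied conditionally on $\rS'$. Summing the bad events gives the bound in Part 1.

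\textbf{Part 2 (uniform guarantee).} With probability $1-\delta/2$ over $\rS'$, we have $\excessRisk(\fmini)\le C r_n(\delta/2)$. Conditionally on $\rS'$, I apply the $Q$-aggregation deviation bound of \cite{lecue2014optimal} (\cref{thm:Qaggregationrestatementlecue2014optimal}) to the two-element dictionary $\{\fexp,\fmini\}$ with prior $\pi$ and temperature $\beta\ge c_2$. With probability $1-\delta/2$ this yields
\begin{equation*}
\cR_P(\Qbob)\le\min_{k}\Bigl\{\cR_P(f_k)+\frac{\beta\log(1/\pi_k)}{n}\Bigr\}+\frac{2\beta\log(2/\delta)}{n}.
\end{equation*}
I choose $k=\text{mini}$ and subtract $\inf_\cF\cR_P$. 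A union bound over the two events completes Part 2, after absorbing the constant $C$ into $r_n$ as in the statement.
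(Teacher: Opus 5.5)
Your Part 2 is the paper's argument (\cref{thm:Qaggregationrestatementlecue2014optimal} applied conditionally on $\rS'$ to $\{\fexp,\fmini\}$, then a union bound). Your Part 1 Bernstein step is correct when $\EE[g(X)(\fexp(X)-Y)]\ge 0$: there $\EE Z\ge\frac12\|g\|^2$ dominates the variance proxy $4\|g\|^2$, so $\rhohatQ=0$ except with probability $\exp(-c_1\beta\log(\piexp/\pimini))$. (Take $E'=\{\cR_P(\fexp)\le\cR_P^\star\}$ rather than ``excess risk $=0$'', since $\fexp$ need not lie in $\cF$.)

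The gap is the case $\EE[g(\fexp-Y)]<0$, which you leave open, and it is where the real work lies. There the target $\rhohatQ=0$ is wrong, not merely hard to reach. Write $\Delta'=\cR_P(\fmini)-\cR_P(\fexp)$, $d'^2=\|g\|^2_{L^2(P_X)}$ and $t=\frac{\beta}{n}\log(\piexp/\pimini)$. If $\Delta'-\frac12 d'^2+t<0$, the empirical derivative at $0$ concentrates around this negative value. So $\rhohatQ>0$ is the typical outcome, not a rare event you can union-bound away. Your proposed patch via \cref{lem:Hoeffding-separation} does not fix this, for two reasons. Its exponent $n\Delta'^2/8$ depends on an $\rS'$-dependent gap that can be arbitrarily small, so it cannot give the $\rS'$-uniform bound $\exp(-c_1\beta\log(\piexp/\pimini))$ after averaging over $\rS'$. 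It also only ranks the two endpoints rather than locating the minimizer of $\Psihat_Q$.

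The missing idea, which is how the paper proceeds, is to show $\rhohatQ\le\rho_+$ instead of $\rhohatQ=0$.
\begin{itemize}
\item If $\cR_P(\fmini)\le\cR_P^\star$, convexity of the risk already gives $\cR_P(\Qbob)\le\cR_P^\star$ for any $\rhohatQ$.
\item Otherwise $\Delta'>0$. Let $\rho_+\in[0,1)$ be the larger root of $\cR_P(f_\rho)=\cR_P^\star$. Since $\cR_P(f_0)\le\cR_P^\star$ and the risk is a convex quadratic in $\rho$, every $\rho\in[0,\rho_+]$ is harmless.
\end{itemize}
Because $\Psihat_Q$ is a convex quadratic, $\rhohatQ\le\rho_+$ follows from $\Psihat_Q'(\rho_+)=\Deltahat'-\gamma\dhat'^2+t\ge0$, where $\gamma=\frac12-\rho_+\in[-\frac12,\frac12]$. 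This is an empirical mean of
\[
W=(\fmini-Y)^2-(\fexp-Y)^2-\gamma g^2=g\,\bigl[(1-\gamma)\fmini+(1+\gamma)\fexp-2Y\bigr],
\]
so $|W|\le2|g|$ and $\Var(W)\le4d'^2$. The key computation is that its population counterpart
\[
\Delta'-\gamma d'^2+t=\frac{\Delta'}{2}+\frac12\sqrt{(d'^2-\Delta')^2-4d'^2\bigl(\cR_P(\fexp)-\cR_P^\star\bigr)}+t
\]
is at least a constant times $\max\{\Delta',d'^2,t\}$; split into $d'^2\le2\Delta'$ and $d'^2>2\Delta'$, using $\cR_P(\fexp)\le\cR_P^\star$ in the latter. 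So at $\rho_+$, unlike at $0$, the margin dominates the variance proxy uniformly in $\rS'$. Bernstein then bounds the conditional failure probability by $\exp(-c_1 n\max\{\Delta',d'^2,t\})\le\exp(-c_1\beta\log(\piexp/\pimini))$, which handles both signs of $\EE[g(\fexp-Y)]$ at once.
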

\Cref{thm:QBOB} provides a parameterized trade-off between the universal and uniform rates through the prior $\pi=(\pimini,\piexp)$ and temperature $\beta$, both of which may depend on $n$. For example, consider a fixed temperature and the two extreme cases of choosing the prior as a function of $n$: either $\piexp/\pimini =\Theta(e^n)$ or $\piexp=\pimini$. In the first case, \cref{alg:QBOB} achieves zero excess risk with exponential probability, but the uniform guarantee becomes vacuous. In the second case, the uniform bound is minimax optimal (as $r_n=\Omega(1/n)$), but the universal bound has constant failure probability. In the next section, we discuss this trade-off in more detail and explain how it demonstrates the tightness of \cref{thm:bob-general}.

\begin{remark}\Cref{thm:QBOB} can also be extended to the setting in which $\Aexp$ achieves zero excess risk at a rate governed by a rate function $R$. In this case, the guarantee in part~1 of the theorem would contain the term $C R(c n)$ in place of $C\exp(-c n)$.
\end{remark}

\subsection{Tightness of \texorpdfstring{\cref{thm:bob-general,thm:QBOB}}{Theorems \ref{thm:bob-general} and \ref{thm:QBOB}}}
We now explain how \cref{thm:bob-general,thm:QBOB} together establish tightness and characterize the exact trade-off between universal exponential rates and minimax rates.

Fix $\delta\in(0,9/20)$. Let $\cF$ satisfy the assumptions of
\cref{thm:bob-general}, and let $\cP$, $\Amini$, and $\Aexp$ be, respectively, the family of distributions, the minimax algorithm, and the universal-rate algorithm provided
by \cref{thm:bob-general}. \Cref{thm:QBOB} is stated for a sample size of $2n$. To account for this, we set $\tilde{n}=\left\lfloor n/2 \right \rfloor$. When the learner is given $n$ samples, it splits them into two samples of size $\tilde{n}$ and runs $\Qbob$ on the resulting $2\tilde{n}$ samples.

We now choose a target uniform rate and show that the universal rate achieved by \cref{alg:QBOB} matches the lower bound.
Let $\varphitilde(n)\equiv\varphitilde(n,\delta)$ be such that $1/\varphitilde(n)$ is a target uniform rate that is strictly decreasing
to zero and satisfies
\[
    \frac{1}{\varphitilde(n)}
    \geq c_2\frac{\log(2/\delta)}{\tilde{n} }
    \qquad\text{for all }n\in\NN,
\]
where $c_2$ is the universal lower bound on $\beta$ in \cref{thm:QBOB},
enlarged if necessary so that $c_2\geq1$.
Choose the following parameters in \cref{thm:QBOB}:
$\pimini=\delta/2$, $\piexp=1-\delta/2$, $\beta_n=\tilde{n}/(\varphitilde(n)\log(1/\pimini))=\tilde{n}/(\varphitilde(n)\log(2/\delta))$,
and choose the rate function $\varphi(n)=\max\crl{4,\gamma^2\varphitilde(n)/(16K)}$ in \cref{thm:bob-general}
for $K>27$. This yields the following upper and lower bounds; their derivations follow the discussion.

The universal and uniform \emph{upper} bounds from \cref{thm:QBOB} are given by
\begin{equation*}
    \PP_{\rS\sim P^n}\prn{\excessRiskPar{P}{\cF}(\Qbob)>0}\leq \tilde{C} \exp\prn{-\tilde{c} \frac{n}{\varphitilde(n)}} \quad \text{and} \quad \PP_{\rS\sim P^n}\prn{ \excessRiskPar{P}{\cF}(\Qbob)>\frac{27}{\varphitilde(n)} } \leq\delta,
\end{equation*}
where $\tilde{C},\tilde{c}>0$ are constants depending only on $P$ and $\delta$.
The universal and uniform \emph{lower} bounds from \cref{thm:bob-general} are given by
\begin{equation*}
    \PP_{\rS\sim P^{n_k}}\prn{\excessRiskPar{P}{\cF}(\cA(\rS))>0}\geq \check{C}\exp\prn{-\check{c}\frac{n_k}{\varphitilde(n_k)}} \quad \text{or} \quad
    \PP_{\rS\sim P_k^{n_k}}\prn{\excessRiskPar{P_k}{\cF}(\cA(\rS))\geq \frac{K}{\varphitilde(n_k)}}\geq\frac{9}{20},
\end{equation*}
where $\check{C},\check{c}>0$ are constants depending only on $\eps$ and $\gamma$, and $\crl{n_k}_{k=1}^\infty$ is a strictly increasing sequence.
Comparing the upper and lower bounds, we see that for $\Qbob$, the second alternative in the lower bound is incompatible with the uniform upper bound. Because $K>27$, its event is contained in the event that the excess risk is
greater than $27/\varphitilde(n_k)$, whose probability is at most
$\delta<9/20$. Hence, the first alternative (the lower bound on the universal rate) must hold and matches the upper bound for $\Qbob$. For any
prescribed uniform rate $ 1/\varphitilde(n)$, $\Qbob$ attains the optimal universal
exponent $n /\varphitilde(n)$, up to constants and along the subsequence in the
lower bound. In this sense, $\Qbob$ is \emph{Pareto-optimal}: improving the universal rate would come at the cost of the uniform rate. The resulting trade-off
is illustrated in \cref{fig:trade-off}.

To verify these bounds, first note that $\beta_n\geq c_2$, where $c_2$ is chosen to be at least as large as the constant from \cref{thm:QBOB}, so \cref{thm:QBOB} applies. By
\cref{thm:QBOB}, with probability at least $1-\delta$, the excess risk of $\Qbob$ is at most
\(
    24\log(2/\delta)/\tilde{n}+3/\varphitilde(n)
    \leq27/\varphitilde(n),
\)
where the factor $24$ comes from the minimax guarantee of $ \Amini $ in \cref{thm:bob-general}.
This gives, uniformly over $P\in\cP$,
\[
    \PP_{\rS\sim P^n}\prn{
        \excessRiskPar{P}{\cF}(\Qbob)>\frac{27}{\varphitilde(n)}
    }
    \leq\delta,
\]
as claimed.
The first guarantee in \cref{thm:QBOB} yields constants
$\tilde{C},\tilde{c}>0$ depending on $P$ and $\delta$ such that
\[
    \PP_{\rS\sim P^n}\prn{
        \excessRiskPar{P}{\cF}(\Qbob)>0
    }
    \leq
    \tilde{C}\exp\prn{-\tilde{c}\frac{\tilde{n}}{\varphitilde(n)}}
\]
which gives the claimed universal upper bound.
For the lower bounds, invoke \cref{thm:bob-general} with the function
$
    \varphi(n)
    =\max\crl{4,\gamma^2\varphitilde(n)/(16K)}.
$
Here $\varphi$ may be extended arbitrarily from the positive integers to
$\RR_{>0}$ while retaining the lower bound $\varphi\geq4$. Since
$1/\varphitilde(n)\to0$, after discarding finitely many terms from the resulting
sequence $\crl{n_k}_{k=1}^\infty$, we have
$\gamma^2/(16\varphi(n_k))=K/\varphitilde(n_k)$ and $n_k/\varphi(n_k)=16K n_k/(\varphitilde(n_k)\gamma^2).$
Applying \cref{thm:bob-general}, for every learning algorithm $\cA$, at least one of the
following alternatives holds. Either there exists $P\in\cP$ such that, for
every $k$,
\[
    \PP_{\rS\sim P^{n_k}}\prn{
        \excessRiskPar{P}{\cF}(\cA(\rS))>0
    }
    \geq
    \frac{1}{4}\exp\prn{
        -\frac{16Kc_\eps}{\gamma^2}
        \frac{n_k}{\varphitilde(n_k)}
    },
\]
or there exists a sequence $\crl{P_k}_{k=1}^\infty\subseteq\cP$ such that,
for every $k$,
\[
    \PP_{\rS\sim P_k^{n_k}}\prn{
        \excessRiskPar{P_k}{\cF}(\cA(\rS))
        \geq \frac{K}{\varphitilde(n_k)}
    }
    \geq\frac{9}{20}
\]
as claimed. This completes the discussion.

\section{Conclusion}

We study the compatibility of two learning paradigms for regression with squared loss: the standard \emph{uniform} (or PAC/minimax) viewpoint, where for each sample size the worst-case distribution may be chosen by nature, and the more recently introduced \emph{universal} viewpoint, where the distribution is fixed a priori and the sample-size dependence may become exponential.
In particular, we study the compatibility from an algorithmic perspective and ask whether achieving exponential rates in the universal setting must come at the cost of the stronger uniform guarantees.
By revisiting the statistical model selection aggregation setting, we answer this question in the negative when the hypothesis class is finite and show that $Q$-aggregation is optimal under both notions.
We also prove that many other common aggregation strategies, such as pruning with a threshold, online-to-batch conversions via averaging, and star estimation, fail to be optimal for at least one of the universal and uniform notions.
For (countably) infinite hypothesis classes, however, the conclusion changes: the hypothesis space and family of distributions may be such that exponential rates and uniform learning are both possible, but no algorithm can achieve both.
The resulting trade-off can always be achieved by using $Q$-aggregation to combine the minimax-optimal and exponential-rate algorithms (which are assumed to exist).
This characterizes the exact trade-off between universal exponential rates and minimax guarantees.

Several questions remain open and suggest directions for future work.
For one, we do not characterize which hypothesis spaces are learnable at which universal rates for squared loss in the agnostic setting (beyond \cref{sec:countably-infinite-hyp-spaces}). Much of the existing literature on universal learning revolves around analogous questions in different settings, so extending such characterizations to agnostic regression with squared loss is a natural direction.
We also do not provide an \emph{exact} characterization of which pairs consisting of a hypothesis space and a family of distributions exhibit a trade-off between universal exponential rates and uniform rates. \cref{thm:bob-general} already provides a class of such instances, but whether there are examples outside this class remains open. Our focus on exponential rates is motivated by the finite case, in which exponential rates are always possible but the problem is already nontrivial. It remains open to investigate the cost of general universal rates for uniform guarantees. And finally, similar investigations into the trade-off between universal and uniform rates beyond regression with squared loss, for example for classification and other loss functions, could generalize this work.

One high-level motivation is to adopt an algorithmic perspective on different notions of optimality: when must an algorithm designer choose between them\textemdash and when not? Here \emph{admissibility} in the decision-theoretic sense may be an alternative perspective to take on the aggregation problem.

\stopcontents[main]

\section*{Acknowledgements}
Tobias Wegel was supported by SNSF Grant 204439. Mikael M{\o}ller H{\o}gsgaard was supported by a Carlsberg Internationalisation Fellowship.
Patrick Rebeschini was funded by UK Research and Innovation (UKRI) under the UK government’s Horizon Europe funding guarantee [grant number EP/Y028333/1].
\paragraph{LLM usage.} The authors acknowledge the use of LLMs for improving the exposition and exploring some of the proof ideas. The authors take full responsibility for the contents and correctness of this work.

\newpage
\bibliographystyle{plain}
{
\small
\bibliography{bibliography}
}

\newpage
\appendix
\crefalias{section}{appendix}
\crefalias{subsection}{appendix}
\crefalias{subsubsection}{appendix}
\crefname{appendix}{Appendix}{Appendices}
\Crefname{appendix}{Appendix}{Appendices}

\startcontents[appendix]
\printcontents[appendix]{}{1}{\section*{Appendix Contents}}

\newpage

\newcolumntype{L}{>{\raggedright\arraybackslash}X}
\newcolumntype{C}{>{\centering\arraybackslash}m{2.8cm}}
\begin{table}[H]
    \centering
    \caption{Table of notation.}
    \label{tab:notation}
    \renewcommand{\arraystretch}{1.15}
    \setlength{\tabcolsep}{8pt}
    \begin{tabularx}{0.95\linewidth}{C L}
        \toprule
        \textbf{Symbol} & \textbf{Meaning} \\
        \midrule
        $\cX$ & Abstract covariate space \\
        $P$     & Distribution on $\cX\times [0,1]$                     \\
        $\cF$     & Hypothesis class of functions $\cX\to [0,1]$                                 \\
        $\cR_P(f)$ & Risk of $f$ with respect to $P$, $\EE_{(X,Y)\sim P}(f(X)-Y)^2$  \\
        $\excessRisk$       & Excess risk $\cR_P(f)-\inf_{f'\in\cF} \cR_P(f')$                              \\
        $\rS$ & Sample $(X_i,Y_i)_{i=1}^n$ of size $n$ i.i.d.\ from $P$  \\
        $\Rhat_\rS$ & Empirical risk on the sample $\rS$, $\frac{1}{n}\sum_{i=1}^n (f(X_i)-Y_i)^2$  \\
        $\cFstar$ & Set of optimal models in $\cF$ w.r.t.\ distribution $P$ \\
        $\fstar$ & Some arbitrary function in $\cFstar$ \\
        $\Deltamin$ & Smallest positive excess risk, $\min_{f\in\cF\setminus\cFstar}\excessRisk(f)$ \\
        $\threshold$ & Threshold of pruning-based estimators (\cref{def:pruning-based})  \\
        $\nrm{\cdot}_\rS$ & Empirical $L^2(\rS)$-norm on sample $\rS$  \\
        $\rhohat$ & Weights or distribution of an estimator on $[M]$ \\
        $f_\rho$ & Aggregate with distribution $\rho$, $f_\rho(\cdot)=\EE_{f\sim \rho} f(\cdot)$ \\
        $\triangle_M$ & Simplex in $\RR^M$: $\{\rho\in [0,1]^{M}:\sum_{k=1}^K \rho_k = 1\}$  \\
        $\blacktriangle_{M-1}$ & Simplex in $\RR^{M-1}$: $\{\rho\in [0,1]^{M-1}:\sum_{k=1}^K \rho_k \leq 1\}$ \\
        $\Esep(\rS)$ & Event where empirical risk separates optimal and suboptimal models $\{\forall f\in\cF\setminus\cFstar: \ \Rhat_{\rS}(f)-\Rhat_{\rS}(\fstar) > \frac{1}{2}\excessRisk(f)\}$ (\cref{lem:Hoeffding-separation}) \\
        $\Amini,\Aexp$ & Algorithms achieving the minimax and exponential rate (\cref{def:learnable-in-both-worlds}) \\
        $\delta_x$ & Dirac delta distribution on $x$ \\
        \bottomrule
    \end{tabularx}
\end{table}

\section{Proofs of the Preliminary Lemmata}
\label{sec:proofs-preliminaries}

\begin{proof}[Proof of \cref{lem:minimax-along-tail-implies-expectation}]
    Let $\fhat=\cA(\cF,\rS)$ and let the expectations and probabilities be with respect to $\rS$. By the assumption of minimax optimality along the tail, there exist constants $C,c>0$ such that, for all $M$, $n$, and $(P,\cF)\in\Theta_M$,
    \begin{equation*}
        \forall\delta\in(0,c):\qquad \PP\prn{\excessRisk(\fhat)>C\frac{\log(M/\delta)}{n}} \leq \delta.
    \end{equation*}
    Because this bound holds simultaneously for all $\delta\in(0,c)$, integrating the tail bound with $\delta=M\exp(-xn/C)\leq c/2$ for $x\geq C\log(2M/c)/n$ and using the layer-cake representation gives
    \begin{align*}
    \EE\brk{\excessRisk(\fhat)} &\leq \int_0^1 \PP\prn{\excessRisk(\fhat) > x} \ \d x \\
    &\leq C\frac{\log(2M/c)}{n} + \int_{C\log(2M/c)/n}^1 \PP\prn{\excessRisk(\fhat) > C\frac{\log\prn{\frac{M}{M\exp(-xn/C)}}}{n}}  \ \d x \\
    &\leq C\frac{\log(2M/c)}{n}+ \int_{C\log(2M/c)/n}^1 M\exp\prn{-\frac{xn}{C}} \ \d x \\
    &\leq C\frac{\log(2M/c)}{n} + \frac{MC}{n}\exp\prn{-\frac{(C\log(2M/c)/n)n}{C}} \\
    &\leq C\frac{\log(2M/c)}{n} + \frac{cC}{n} \leq  C'\frac{\log(M)}{n}.
    \end{align*}
    This proves minimax optimality in expectation with the constant $C'=2C(1+c+\abs{\log(c)})$.
\end{proof}

\begin{proof}[Proof of \cref{lem:exponential-implications}]
    Let $\fhat=\cA(\cF,\rS)$ and let the expectations and probabilities be with respect to $\rS$.
    The first implication is immediate. The second implication follows because, for any $\delta>0$, 
    \begin{equation*}
        \EE\brk{\excessRisk(\fhat)} \leq \int_0^\infty \PP(\excessRisk(\fhat)>\eps)\d\eps \leq \delta + \int_\delta^1 \PP(\excessRisk(\fhat)>\eps)\d\eps\leq \delta+ \PP(\excessRisk(\fhat)>\delta),
    \end{equation*}
    where the first inequality follows by applying the layer-cake representation to the positive part of the excess risk, and the second follows from $\excessRisk(\fhat)\leq 1$.
    Choosing $\delta = Ce^{-cn}$ and invoking the assumption that $\fhat$ achieves an exponential rate with exponential probability yields the bound $2Ce^{-cn}$.
\end{proof}

\begin{proof}[Proof of \cref{lem:exponential-is-optimal}]
    Let $y=f_1(x)=f_2(x)$, $y_i =f_i(x')$, and $\Delta = \abs{y_1-y_2}$. For $i\in \crl{1,2}$, let $P_i$ be a distribution such that $P_i(\crl{(x,y)})=1/2$ and $P_i(\crl{(x',y_i)})=1/2$. Then $f_i$ has zero risk under $P_i$.
    Let $E_n=\crl{X_1=\dots = X_n=x}$, which has probability $2^{-n}$ under both $P_1$ and $P_2$. On $E_n$, $a_n=\cA(\rS)(x')$ is some deterministic value, regardless of whether $\rS$ was drawn from $P_1$ or $P_2$. For every $n$, with probability at least $2^{-n}$ under at least one of $P_1$ and $P_2$, the algorithm has strictly positive risk. Thus either $P_1$ or $P_2$ satisfy this property for infinitely many $n$, proving the first claim with $C=1$ and $c=\log2$. The claim in expectation follows because for all $n\in\NN$,
    \begin{align*}
        \EE_{I\sim \uniform{1,2}} \EE_{\rS\sim P_I^n}\brk{\excessRiskPar{P_I}{\cF}(\cA(\rS))} &\geq \frac{1}{2}\PP(E_n) \prn{ \EE_{\rS\sim P_1^n}\brk{\excessRiskPar{P_1}{\cF}(\cA(\rS))\mid E_n} +\EE_{\rS\sim P_2^n}\brk{\excessRiskPar{P_2}{\cF}(\cA(\rS)) \mid E_n}} \\
        &\geq 2^{-(n+2)} \prn{\prn{a_n-y_1}^2 +\prn{a_n-y_2}^2 } \\
        &\geq 2^{-(n+3)} \Delta^2.
    \end{align*}
    Therefore, at least one of $P_1$ and $P_2$ satisfies $\EE_{\rS\sim P_i^n}\brk{\excessRiskPar{P_i}{\cF}(\cA(\rS))} \geq (\Delta^2/8) e^{-n \log 2}$ for infinitely many $n \in\NN$. The result holds for $C=\Delta^2/8$ and $c= \log 2$.
\end{proof}

\section{Proofs for Pruning with a Threshold}
\label{sec:proofs-pruning}

\subsection{Pruned-convex ERM and Midpoint Estimators}\label{sec:pruned-erm-midpoint}

\begin{algorithm}\caption{Pruned-convex ERM \cite{lecue2009aggregation}}\label{alg:pruned-convex-ERM}
\begin{algorithmic}[1]
\STATE \textbf{Input:} Dataset $\rS$ of size $n$, finite dictionary $\cF$, confidence parameter $\delta\in(0,1)$.
\STATE Split $\rS$ data-independently into parts $\rS_1,\rS_2$ with
$\abs{\rS_1}=\abs{\rS_2}=\floor{n/2}$.
\STATE On the first half $\rS_1$, compute $\erm=\operatorname{ERM}(\rS_1,\cF)$ and the pruned set
    \begin{equation*}
        \cFhat(\rS_1) = \crl{f\in\cF: \Rhat_{\rS_1}(f)\leq \Rhat_{\rS_1}(\erm)+C_1 \max\crl{\alpha\nrm{\erm-f}_{\rS_1},\alpha^2}} \ \text{with} \ \alpha = \sqrt{\tfrac{\log (2M/\delta)}{\floor{n/2}}}.
    \end{equation*}
\STATE On the second half $\rS_2$, compute $\prunedERM \in \argmin_{f\in \conv(\cFhat(\rS_1))} \Rhat_{\rS_2}(f)$.
\STATE \textbf{Return:} $\prunedERM$.
\end{algorithmic}
\end{algorithm}

\begin{algorithm}\caption{Midpoint estimator \cite{puchkin2021exponential,kanade2024exponential}}\label{alg:midpoint}
\begin{algorithmic}[1]
\STATE \textbf{Input:} Dataset $\rS$ of size $n$, finite dictionary $\cF$, confidence parameter $\delta\in(0,1)$.
\STATE Let $a>0$ be some large universal constant and compute the set
\begin{align*}
    \cFhat(\rS)&=\crl{f\in\cF:\, \Rhat_{\rS}(f)\leq \Rhat_{\rS}(\erm)+ a d_{\delta}(\erm,f)} \\
    &\qquad \text{with} \quad d_\delta(f,g)= \sqrt{\frac{\nrm{f-g}_{\rS}^2\cdot \log(2M/\delta)}{n}}+\frac{\log(2M/\delta)}{n}.
\end{align*}
\STATE  Compute $\tilde f \in \argmin_{f\in\cFhat(\rS)} \Rhat_{\rS}\prn{\frac{\erm+f}{2}}$ and set $\midpoint=(\erm+\tilde f)/2$.
\STATE \textbf{Return:} $\midpoint$.
\end{algorithmic}
\end{algorithm}

\begin{lemma}
\label{lem:pruned-convex-ERM-is-pruning-based}
    For $n\geq 2$, \cref{alg:pruned-convex-ERM} satisfies \cref{def:pruning-based} with a threshold $\threshold$ such that, for all $M\in\NN$ and $\delta\in(0,1)$, there exists $m_0\in\NN$ such that for all $m\geq m_0$,
    \begin{equation*}
        \threshold\leq \frac{C_1\sqrt{\log(2M/\delta)}}{\sqrt{m}}.
    \end{equation*}
\end{lemma}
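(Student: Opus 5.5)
We verify the three requirements of \cref{def:pruning-based} directly from the construction in \cref{alg:pruned-convex-ERM}, taking $\rS_1$ to be the first $m=\floor{n/2}$ samples and $\cFhat(\rS_1)$ to be the pruned set of the algorithm.

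\emph{Subsample size.} For $n\geq 2$ we have $m=\floor{n/2}\geq n/3$, so the subsample condition holds with $\alpha=1/3$; this is exactly why $n\geq2$ is required.

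\emph{Containment.} Since $\fhat=\prunedERM\in\conv(\cFhat(\rS_1))$ by construction, it remains to show that
\[
\cFhat(\rS_1)\subseteq\crl{f\in\cF:\ \Rhat_{\rS_1}(f)\le \Rhat_{\rS_1}(\Aerm(\rS_1))+\threshold}
\]
for a deterministic threshold. The algorithm's threshold is data dependent, namely $C_1\max\crl{\alpha_m\nrm{\erm-f}_{\rS_1},\alpha_m^2}$ with $\alpha_m=\sqrt{\log(2M/\delta)/m}$. Because all functions take values in $[0,1]$, we have $\nrm{\erm-f}_{\rS_1}\leq 1$. Hence the data-dependent threshold is at most
\[
\threshold:=C_1\max\crl{\alpha_m,\alpha_m^2},
\]
which depends only on $m$, $M$ and $\delta$. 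Therefore $\cFhat(\rS_1)$ is contained in the set required by the definition with this $\threshold$.

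\emph{Non-emptiness.} The set $\cFhat(\rS_1)$ is nonempty because it always contains $\erm$ itself: the left-hand side equals $\Rhat_{\rS_1}(\erm)$ and the added term is nonnegative.

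\emph{Rate of the threshold.} Once $m\geq m_0:=\ceil{\log(2M/\delta)}$, we have $\alpha_m\leq1$, so $\alpha_m^2\leq\alpha_m$. Then $\threshold=C_1\alpha_m=C_1\sqrt{\log(2M/\delta)}/\sqrt m$, which is the claimed bound.

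The only delicate point is that \cref{def:pruning-based} requires $\rS_1$ to be the first $m$ samples with $m\geq\alpha n$. The algorithm's data-independent split can be taken WLOG to be the first half; since $\rS$ is i.i.d., reordering does not change the distribution.
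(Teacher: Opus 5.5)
Your proof is correct and follows the paper's argument. You bound $\nrm{\erm-f}_{\rS_1}\le 1$ to replace the data-dependent pruning radius by the deterministic threshold $\threshold=C_1\max\{\alpha_m,\alpha_m^2\}$, use $\prunedERM\in\conv(\cFhat(\rS_1))$, and note that $\alpha_m\le 1$ for large $m$. The differences are only cosmetic: you take the subsample fraction $1/3$ rather than the paper's $1/4$ (both are valid for $n\ge 2$), you give the explicit $m_0=\ceil{\log(2M/\delta)}$, and you check non-emptiness via $\erm\in\cFhat(\rS_1)$, which the paper leaves implicit.
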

\begin{proof}
    Let $m=\floor{n/2}$. Since $n\geq 2$, we have $m\geq n/4$, so we may take $\alpha=1/4$. Define
    \begin{equation*}
        \threshold:=C_1 \max\crl{\alpha,\alpha^2}=C_1\max\crl{ \sqrt{\frac{\log(2M/\delta)}{m}}, \frac{\log(2M/\delta)}{m} }.
    \end{equation*}
    Since $\erm$ and $f$ take values in $[0,1]$, we have $\nrm{\erm-f}_{\rS_1}\leq 1$, so
    \begin{equation*}
        C_1 \max\crl{\alpha\nrm{\erm-f}_{\rS_1},\alpha^2}
        \leq
        \threshold .
    \end{equation*}
    This gives $\cFhat(\rS_1)
        \subseteq
        \{f\in\cF: \Rhat_{\rS_1}(f)\leq \Rhat_{\rS_1}(\erm)+\threshold\}$.
    By construction, $\prunedERM\in\conv(\cFhat(\rS_1))$, so \cref{alg:pruned-convex-ERM} satisfies, for each fixed $M$ and $\delta$, \cref{def:pruning-based} with a threshold satisfying $\limsup_{m\to\infty}\threshold=0$. Specifically, for large $m$ we have $$\max\crl{ \sqrt{\frac{\log(2M/\delta)}{m}}, \frac{\log(2M/\delta)}{m} } = \sqrt{\frac{\log(2M/\delta)}{m}},$$ and the lemma follows.
\end{proof}
\begin{lemma}
    \label{lem:midpoint-is-pruning-based}
    \cref{alg:midpoint} satisfies \cref{def:pruning-based} with a threshold $\threshold$ such that, for all $M\in\NN$ and $\delta\in(0,1)$, there exists $m_0\in\NN$ such that for all $m\geq m_0$,
    \begin{equation*}
        \threshold\leq \frac{2a\sqrt{\log(2M/\delta)}}{\sqrt{m}}.
    \end{equation*}
\end{lemma}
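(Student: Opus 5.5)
The plan mirrors the proof of \cref{lem:pruned-convex-ERM-is-pruning-based}. Here there is no sample splitting: the pruning subsample is the whole sample, $\rS_1=\rS$. So I take $m=n$ and $\alpha=1$ in \cref{def:pruning-based}.

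First, I will check that the output lies in the convex hull of a pruned set of the required form. We have $\midpoint=(\erm+\tilde f)/2$, where $\tilde f\in\cFhat(\rS)$. The ERM itself belongs to $\cFhat(\rS)$, because $\Rhat_\rS(\erm)\le\Rhat_\rS(\erm)+a\,d_\delta(\erm,\erm)$ with $d_\delta\ge 0$. Hence $\midpoint\in\conv(\cFhat(\rS))$, and $\cFhat(\rS)$ is non-empty.

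Next, I bound the data-dependent threshold uniformly in $f$. Since $\erm$ and $f$ take values in $[0,1]$, we have $\nrm{\erm-f}_\rS\le 1$. Therefore
\begin{equation*}
a\,d_\delta(\erm,f)\le a\sqrt{\frac{\log(2M/\delta)}{m}}+a\,\frac{\log(2M/\delta)}{m}=:\threshold .
\end{equation*}
This gives the inclusion $\cFhat(\rS)\subseteq\{f\in\cF:\Rhat_\rS(f)\le\Rhat_\rS(\erm)+\threshold\}$, as \cref{def:pruning-based} requires. The threshold is deterministic and tends to $0$.

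Finally, I choose $m_0$ large enough that $\log(2M/\delta)/m\le 1$ for all $m\ge m_0$. For such $m$ the linear term is dominated by the square-root term, since $x\le\sqrt{x}$ for $x\in[0,1]$. Hence $\threshold\le 2a\sqrt{\log(2M/\delta)}/\sqrt m$, which is the claimed bound. No step is a real obstacle. The only point needing care is that the pruned set must contain $\erm$ so that the midpoint is a convex combination of elements of the pruned set, and this holds trivially.
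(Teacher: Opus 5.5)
Your proposal is correct and follows essentially the same route as the paper's proof: take $\rS_1=\rS$, $m=n$, $\alpha=1$, define $\threshold=a\bigl(\sqrt{\log(2M/\delta)/m}+\log(2M/\delta)/m\bigr)$ using $\nrm{\erm-f}_\rS\le 1$, note $\erm,\tilde f\in\cFhat(\rS)$, and absorb the linear term into the square-root term for large $m$. Your explicit justification that $\erm\in\cFhat(\rS)$ (via $d_\delta\ge 0$) and your explicit choice of $m_0$ with $\log(2M/\delta)/m\le 1$ fill in details the paper leaves as ``by construction'' and ``for sufficiently large $n$''.
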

\begin{proof}
    Let $m=n$ and $\rS_1=\rS$, so we may take $\alpha=1$. Define
    \begin{equation*}
        \threshold
        :=
        a\prn{\sqrt{\frac{\log(2M/\delta)}{m}}+\frac{\log(2M/\delta)}{m}} .
    \end{equation*}
    Since $f,g\in\cF$ take values in $[0,1]$, we have $\nrm{f-g}_\rS\leq 1$, so
    \begin{equation*}
        a d_\delta(f,g)
        =
        a\prn{\sqrt{\frac{\nrm{f-g}_\rS^2\cdot \log(2M/\delta)}{n}}+\frac{\log(2M/\delta)}{n}}
        \leq
        \threshold .
    \end{equation*}
    This gives $\cFhat{\cF}(\rS_1)
        \subseteq
        \{f\in\cF:\Rhat_{\rS_1}(f)\leq \Rhat_{\rS_1}(\erm)+\threshold\}.$
    By construction, $\erm\in\cFhat(\rS)$ and $\tilde f\in\cFhat(\rS)$, so
    $\midpoint=(\erm+\tilde f)/2\in\conv(\cFhat(\rS_1))$.
    Thus \cref{alg:midpoint} satisfies \cref{def:pruning-based} with a threshold satisfying $\limsup_{m\to\infty}\threshold=0$ for any fixed $M$ and $\delta$. Moreover, for sufficiently large $n=m$, we have
    \begin{equation*}
        \threshold\leq 2a\sqrt{\frac{\log(2M/\delta)}{m}},
    \end{equation*}
    which concludes the proof.
\end{proof}

In particular, for pruned-convex ERM and the midpoint estimator, $\limsup_{m\to\infty}\threshold=0$ for any fixed $M$ and $\delta$, and we may apply \cref{thm:pruning-exponential,thm:pruning-subexp-tail-strong,thm:pruning-no-along-the-tail} with $B_M(\delta)=C_1\sqrt{\log(2M/\delta)}$ and $B_M(\delta)=2a\sqrt{\log(2M/\delta)}$, respectively; see \cref{cor:pruned-erm-midpoint-exponential,cor:pc-ERM-midpoint-no-along-the-tail}.

\subsection{Proof of \cref{thm:pruning-subexp-tail-strong}}
\label{proof:pruning-subexp-tail-strong}

The proof of \cref{thm:pruning-subexp-tail-strong} uses the following lemma, which we prove after the main proof.
\begin{lemma}[Binomial upper-tail lower bound]
\label{lem:binomial-tail-lb}
There exist constants $\kappa_1,\kappa_2>0$ such that the following holds.
Let $S_m\sim \Binomial(m,p)$ and suppose $p,q\in(1/4,3/4)$ with $q>p$.
Then
\begin{equation*}
    \PP\prn{\frac{S_m}{m}> q}\ge \frac{\kappa_1}{1+\sqrt m\,(q-p)} \exp\!\prn{-\kappa_2\,m(q-p)^2}.
\end{equation*}
\end{lemma}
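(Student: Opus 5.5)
We reduce to a single-term lower bound on the binomial point masses and sum over a window of length about $\sqrt m$ above $mq$, where the point masses are comparable. Write $t=q-p>0$ and $\sigma^2=p(1-p)\in(3/16,1/4]$. Let $k_0=\lfloor mq\rfloor+1$, the smallest integer with $k/m>q$. Then
\[
\PP(S_m/m>q)\ge\sum_{k=k_0}^{k_0+L}\PP(S_m=k),
\]
where we take $L=\lfloor \sqrt m/(1+\sqrt m t)\rfloor$. Equivalently, $L$ is of order $\min\{\sqrt m,1/t\}$, which is exactly the scale producing the prefactor $1/(1+\sqrt m t)$. We also need all such $k$ to satisfy $k\le m$. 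Since $q<3/4$, this holds once $m$ exceeds a universal constant; small $m$ is handled separately below.

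The main step is a lower bound on the point mass. By Stirling's formula with explicit error bounds (Robbins' version), for $k/m=:r$ with $r\in[1/8,7/8]$,
\[
\PP(S_m=k)\ge \frac{c}{\sqrt m}\exp\bigl(-m\,\kl(r\,\|\,p)\bigr),
\]
where $\kl$ is the binary KL divergence and $c>0$ is universal. Since $p\in(1/4,3/4)$ and $r\in[1/8,7/8]$, the chi-square bound gives
\[
\kl(r\,\|\,p)\le \frac{(r-p)^2}{p(1-p)}\le \frac{16}{3}(r-p)^2 .
\]
For $k\le k_0+L$ we have $r-p\le t+(L+1)/m$. Hence
\[
m(r-p)^2\le 2mt^2+\frac{2(L+1)^2}{m}\le 2mt^2+8,
\]
because $L+1\le 2\sqrt m$. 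Each of the $L+1$ terms is therefore at least $\frac{c}{\sqrt m}e^{-\kappa_2 m t^2-C}$. The range condition $r\le 7/8$ holds for large $m$, since $r\le q+(L+1)/m\le 3/4+2/\sqrt m$.

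Summing, we get
\[
\PP(S_m/m>q)\ge (L+1)\frac{c}{\sqrt m}e^{-\kappa_2 m t^2-C}\gtrsim \frac{1}{1+\sqrt m\,t}\,e^{-\kappa_2 m t^2},
\]
because $(L+1)/\sqrt m\ge 1/(1+\sqrt m t)$. The bound $(L+1)/\sqrt m\ge 1/(1+\sqrt m t)$ holds since $L+1\ge \sqrt m/(1+\sqrt m t)$ by the definition of $L$.

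For $m$ below the universal threshold $m_1$, we use a cruder bound. Whenever $k_0\le m$, we have $\PP(S_m/m>q)\ge \PP(S_m=m)=p^m\ge 4^{-m_1}$, which is a universal constant. The case $k_0>m$ occurs only if $q\ge1-1/m$, which is impossible once $q<3/4$ and $m\ge 4$; smaller $m$ with $q<3/4$ still have $k_0\le m$ whenever $mq<m-1$, i.e. $m\ge 4$. For $m\le 3$ the claimed bound can fail when $q$ is close to $3/4$, so I would state the lemma for $m\ge m_1$, or absorb these cases into constants, noting that the downstream theorem is applied only with $m$ large.

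The main obstacle is the uniform Stirling lower bound on the binomial point mass with universal constants. I would prove it directly from $\binom{m}{k}\ge \frac{1}{\sqrt{8k(1-k/m)}}\,e^{mH(k/m)}$, the standard bound valid for $1\le k\le m-1$. This gives $\PP(S_m=k)\ge (8m r(1-r))^{-1/2}e^{-m\kl(r\|p)}\ge \frac{1}{\sqrt{8m}}\cdot 2\,e^{-m\kl(r\|p)}$, using $r(1-r)\le 1/4$.
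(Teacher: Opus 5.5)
Your main argument is correct, and it takes a genuinely different, more elementary route than the paper.

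\textbf{The paper's route.} It applies McKay's inequality at $k=\lceil mq\rceil$. This bounds $\PP(S_m\ge k)$ below by a (shifted) point mass times $\sigma\,(1-\Phi(x))/\phi(x)$. The paper then combines four further steps:
\begin{itemize}
\item the Mills-ratio bound $(1-\Phi(x))/\phi(x)\ge 1/(1+x)$;
\item a cited lower bound on the point mass of order $e^{-m\kl(q,p)}/\sqrt m$;
\item a compactness argument giving $\kl(q,p)\le\kappa_2(q-p)^2$;
\item a perturbation $q\mapsto q+\eps$ to upgrade $\ge q$ to $>q$.
\end{itemize}

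\textbf{Your route.} You build the effective window by hand. McKay's factor $\sigma(1-\Phi(x))/\phi(x)\gtrsim\sqrt m/(1+\sqrt m(q-p))$ plays exactly the role of your window length $L+1$. You control each of the $L+1$ point masses using the entropy bound $\binom{m}{k}\ge(8k(1-k/m))^{-1/2}e^{mH(k/m)}$ together with $\kl\le\chi^2$. This has three advantages:
\begin{itemize}
\item it avoids McKay's theorem and the Mills-ratio estimate altogether;
\item it gives explicit constants (e.g.\ $\kappa_2=32/3$);
\item it delivers the strict inequality for free, since you start at $k_0=\lfloor mq\rfloor+1$.
\end{itemize}
The restriction $r\in[1/8,7/8]$ and Robbins' form of Stirling are unnecessary. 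The entropy bound holds for all $1\le k\le m-1$, and the $\chi^2$ bound holds for all $r\in[0,1]$. So you only need $k_0+L\le m-1$, which holds for instance for all $m\ge 36$. The paper's route is shorter given its cited results; yours is self-contained.

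\textbf{One correction to your small-$m$ discussion.} The case $k_0>m$ never occurs: $q<1$ gives $\lfloor mq\rfloor\le m-1$ for every $m$. Hence $\PP(S_m/m>q)\ge\PP(S_m=m)=p^m>4^{-m}$ for all $m$, and choosing $\kappa_1\le 4^{-m_1}$ covers every $m<m_1$. So the lemma holds as stated for all $m\ge1$. Your assertion that it can fail for $m\le 3$ when $q$ is near $3/4$ is wrong, and there is no need to restrict the statement to large $m$. This is exactly how the paper handles $m\le 6$.
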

\begin{proof}[Proof of \cref{thm:pruning-subexp-tail-strong}]
Since $ m
\geq \alpha n $, any claim requiring $m$ to be sufficiently large can be ensured by taking $n$ sufficiently large. We use ``$m$ sufficiently large'' to mean that $n$ is sufficiently large. We now construct, for each $n$, a distribution $P$ satisfying the claim of the theorem.
Take $\cF=\{f_1,f_2\}$, with $f_1\equiv 0$ and $f_2\equiv 1$, let $X$ be deterministic and $Y\sim \Bernoulli\prn{\frac12-\mu}$ with $\mu$ chosen below.
Writing $p:=\frac12-\mu$, we get $\cR_P(f_1)=p$, $\cR_P(f_2)=1-p$, $\excessRisk(f_1)=0$, and $\excessRisk(f_2)=2\mu$,
and, on the pruning subsample $\rS_1$,
\begin{equation*}
    \Rhat_{\rS_1}(f_1)=\Ybar_1,\qquad\Rhat_{\rS_1}(f_2)=1-\Ybar_1,
\end{equation*}
where $\Ybar_1:=\frac1m\sum_{(X_i,Y_i)\in \rS_1} Y_i$.
Let $ \kappa_{2} $ be the universal constant from \cref{lem:binomial-tail-lb}, and define
\begin{equation*}
    \lambda_m:=\sqrt{\frac{\log m}{m}},\qquad d_m:=\max\crl{\threshold, \frac{\lambda_m}{2\sqrt{\kappa_{2}}}}, \qquad \text{and}\quad \mu:=d_m-\frac{\threshold}{2} \geq 0.
\end{equation*}
Since $\threshold\to0$ by assumption and $\lambda_m\to0$ as $m\to\infty$, we have $\mu\in(0,1/4]$ for all sufficiently large $m$.
Consider the event
\begin{equation*}
E:=\crl{\Ybar_1>\frac12+\frac{\threshold}{2}}.
\end{equation*}
On $E$, we have $\Rhat_{\rS_1}(f_1)-\Rhat_{\rS_1}(f_2)=2\Ybar_1-1>\threshold$, so $f_1\notin \cFhat(\rS_1)$.
Since $\cF=\{f_1,f_2\}$, this implies $\cFhat(\rS_1)=\{f_2\}$ and since $ \fhat\in\conv(\cFhat(\rS_1)) $, we have $\fhat=f_2$.
Therefore, on $E$,
\begin{equation*}
    \excessRisk(\fhat)=2\mu=2d_m-\threshold\ge d_m.
\end{equation*}

We now lower bound $\PP(E)$.
Let $S_m:=m\Ybar_1\sim \Binomial(m,p)$ and $q:=\frac12+\frac{\threshold}{2}$.
Substituting the definitions gives
\begin{equation*}
    q-p=\mu+\frac{\threshold}{2}=d_m
\end{equation*}
where the latter equality holds by choice of $\mu$.
Since $p,q\to 1/2$, we have $p,q\in(1/4,3/4)$ for all sufficiently large $m$.
Applying \cref{lem:binomial-tail-lb}, we obtain
\begin{equation*}
    \PP(E)=\PP\prn{\frac{S_m}{m}>q}\ge \frac{\kappa_1}{1+\sqrt m\,d_m}\exp\!\prn{-\kappa_2 m d_m^2},
\end{equation*}
where $\kappa_1,\kappa_2>0$ are universal constants.
We distinguish the two possible values of $d_m$.

\paragraph{Case 1: $d_m=\threshold$.}
Since $\threshold\le 1$ for all sufficiently large $m$, $1+\sqrt m\,\threshold\le 2 \sqrt m$,
so for sufficiently large $m$,
\begin{equation*}
\PP(E) \geq \frac{\kappa_1}{1+\sqrt m\,\threshold}\exp\prn{-\kappa_2 m\threshold^2}\ge c_{2} m^{-1/2}\exp\prn{-\kappa_2 m\threshold^2}=\eta,
\end{equation*}
with $ c_{2}=\min\{\kappa_1/2,1\} $.
Moreover, in this case we have that $\threshold\ge \sqrt{\log m/(4m\kappa_{2})}$, so $\log m\le 4\kappa_{2}m\threshold^2$.
Hence for $ m $ large enough such that $1\leq \log m\le 4\kappa_{2}m\threshold^2 $, we have
\begin{equation*}
    \log\frac1{\eta} =-\log (c_{2})+\frac12\log m + \kappa_2 m\threshold^2 \le (1-\log(c_{2}))4\kappa_{2}m\threshold^2.
\end{equation*}
Letting $ C_A:=4\kappa_2(1-\log(c_{2})) $, this implies, using $m\le n$, that for any $c_1\leq C_A^{-1/2}$,
\begin{equation*}
    c_1\sqrt{\frac{\log(1/\eta)}{n}} \leq \threshold = d_m \le \excessRisk(\fhat),
\end{equation*}
where the last inequality holds on $E$.

\paragraph{Case 2: $d_m=\sqrt{\log m/(4m\kappa_{2})}>\threshold$.}
In this case, we can bound
\begin{equation*}
\PP(E)\ge \frac{\kappa_1}{1+\sqrt{\log m/(4\kappa_{2})}}\,\exp\prn{-\log m/4} \ge\frac{\kappa_1}{1+\sqrt{\log m/(4\kappa_{2})}}\,m^{-1/4}.
\end{equation*}
Then, for all sufficiently large $m$, $\frac{\kappa_1}{1+\sqrt{\log m}/(2\sqrt{\kappa_{2}})}\,m^{-1/4}
\ge m^{-1/2}$.
Since $\exp\prn{-\kappa_2 m\threshold^2}\le 1$ and $ 0< c_{2} \leq 1 $, it holds for all sufficiently large $m$ that
\begin{equation*}
\PP(E)\ge c_2\,m^{-1/2}\exp\prn{-\kappa_2 m\threshold^2}=\eta.
\end{equation*}
In this case, $\threshold< \sqrt{\log m/(4m\kappa_{2})}$, so $m\threshold^2\le\log m/(4\kappa_{2})$.
Therefore, for sufficiently large $m$ with $ \log(m)\geq 1 $,
\begin{equation*}
\log\frac1{\eta}=-\log c_2+\frac12\log m + \kappa_2 m\threshold^2\le \prn{\frac{3}{4}-\log(c_{2})}\log m\leq C_B\log m,
\end{equation*}
where $ C_B:=1-\log(c_{2}) $.
Using $m\leq n$, we obtain again that for any $c_1\leq (4\kappa_2 C_B)^{-1/2}$, on $E$,
\begin{equation*}
    c_1\sqrt{\frac{\log(1/\eta)}{n}}\le \frac{1}{\sqrt{4\kappa_{2}}}\sqrt{\frac{\log m}{n}}\le \sqrt{\frac{\log m}{4m\kappa_{2}}}= d_m \le\excessRisk(\fhat).
\end{equation*}

Finally, letting $ c_{1}=\min\{C_A^{-1/2},(4\kappa_{2}C_B)^{-1/2}\} $ and combining the two cases, we obtain for all sufficiently large $m$,
\begin{equation*}
    \PP_{\rS\sim P^n}\prn{\excessRisk(\fhat)\ge c_1\sqrt{\frac{\log(1/\eta)}{n}}} \ge \eta.
\end{equation*}
Taking the supremum over $P$ proves the theorem with $c_3=\kappa_2$.
\end{proof}

It remains to prove \cref{lem:binomial-tail-lb}.
To that end, we use the following tail bound for binomial distributions.
\begin{lemma}[Simplified Theorem 2 by McKay \cite{mckay1989littlewood}]
\label{lem:McKay}
    Let $p\in(0,1)$, $m\geq 1$, and $mp< k\leq m$. Define $x=(k-mp)/\sigma$, where $\sigma^2 = mp(1-p)$. Then, for $S_m\sim \Binomial(m,p)$,
    \begin{equation*}
        \PP\prn{S_m\geq k} \geq \sigma \binom{m-1}{k-1}p^{k-1}(1-p)^{m-k} \frac{1-\Phi(x)}{\phi(x)},
    \end{equation*}
    where $\Phi$ and $\phi$ denote the standard normal CDF and PDF.
\end{lemma}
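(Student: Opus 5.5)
This is the special case of Theorem~2 of McKay~\cite{mckay1989littlewood} obtained by discarding a nonnegative correction factor, so the plan is to quote that theorem and only check that the notation matches. McKay shows that for $p\in(0,1)$, $m\ge1$ and $mp<k\le m$,
\begin{equation*}
    \PP\prn{S_m\ge k}=\sigma\binom{m-1}{k-1}p^{k-1}(1-p)^{m-k}\,\frac{1-\Phi(x)}{\phi(x)}\,\exp\prn{\frac{E(k;m,p)}{\sigma}},
\end{equation*}
where $x=(k-mp)/\sigma$ and $\sigma^2=mp(1-p)$ are as in the statement. His error term satisfies $0\le E(k;m,p)\le\min\crl{\sqrt{\pi/8},\,1/x}$.

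The steps are as follows.
\begin{enumerate}
    \item Match McKay's normalization to ours. His binomial weight $b(k-1;m-1,p)=\binom{m-1}{k-1}p^{k-1}(1-p)^{m-k}$ is the factor in the statement, and $Y(x)=(1-\Phi(x))/\phi(x)$ is the Mills ratio.
    \item Use only the lower bound $E\ge0$. This gives $\exp(E/\sigma)\ge1$, which is exactly the claimed inequality.
    \item Check the hypotheses. Since $mp<k$, we have $x>0$, so the Mills ratio is finite and positive. If $p\in(0,1)$, then $\sigma>0$. No further conditions are needed.
\end{enumerate}

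If a self-contained argument is preferred, I would start from the beta-integral identity
\begin{equation*}
    \PP\prn{S_m\ge k}=m\binom{m-1}{k-1}\int_0^p t^{k-1}(1-t)^{m-k}\,\d t .
\end{equation*}
Factor out $p^{k-1}(1-p)^{m-k}$. It then suffices to show
\begin{equation*}
    \int_0^p \exp\prn{g(t)-g(p)}\,\d t\ \ge\ \frac{\sigma}{m}\,\frac{1-\Phi(x)}{\phi(x)},
    \qquad\text{where } g(t)=(k-1)\log t+(m-k)\log(1-t).
\end{equation*}

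This is the main obstacle. The function $g$ is concave, so tangent-line bounds push in the wrong direction and only yield upper bounds on the integral. A lower bound instead needs a comparison argument. McKay's route is to view both sides as functions of the continuous parameter and show that their ratio satisfies a differential inequality, which forces $E\ge0$.

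I would not reproduce that analysis. Instead I would cite the theorem as above, since the lemma is explicitly labelled a simplification of McKay's result.
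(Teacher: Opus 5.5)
Your proposal is correct and matches the paper. The paper gives no proof of its own: it imports McKay's Theorem 2 and discards the factor $\exp(E/\sigma)\geq 1$, which is nonnegative-exponent because $E\geq 0$, exactly as you do. Your self-contained beta-integral sketch is unfinished, but you do not need it for this citation-based argument.
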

\begin{proof}[Proof of \cref{lem:binomial-tail-lb}]
For $ m\in\crl{1,\ldots,6 }$, the claim follows for $ \kappa_{1}\leq 1/4^6 $ because $ p\in(1/4,3/4) $. We may therefore assume that $ m\geq 7 $.
We first prove the claim with a nonstrict inequality in the probability and then show how to obtain the strict inequality.
Fix $p,q\in[1/4,3/4]$ with $q>p$, and set $k:=\ceil{mq}$.
Then $k\ge mq>mp$, so $x:=(k-mp)/\sigma>0$, and McKay's theorem applies.
As shown in \citep[bottom of page 15]{gasull2014approximating}, it holds that
$\frac{1-\Phi(x)}{\phi(x)} \geq \frac{1}{1+x}$ for $x> 0$.
Thus, by \cref{lem:McKay},
\begin{equation}
\label{eqn:McKay-applied}
    \PP\prn{\frac{S_m}{m}\ge q}
\ge \frac{\sigma}{1+x} \binom{m-1}{k-1}p^{k-1}(1-p)^{m-k}.
\end{equation}
As we assumed that $p\in[1/4,3/4]$ and $\sigma=\sqrt{mp(1-p)}$, we have
\begin{equation*}
    x=\frac{k-mp}{\sigma} = \frac{\ceil{mq}-mp}{\sqrt{mp(1-p)}} \le \frac{m(q-p)+1}{\sqrt{3m/16}} \le 3\prn{1+\sqrt m\,(q-p)}.
\end{equation*}
This gives
\begin{equation}
\label{eqn:lower-bound-1/(1+x)}
    \frac{1}{1+x}\ge \frac{1}{4+3\sqrt m\,(q-p)}.
\end{equation}
Because $m\geq 7$ and $p,q\in[1/4,3/4]$, we have $k=\ceil{mq}\leq \ceil{3m/4}\leq m-1$.
Therefore, by Proposition 5.4, Equation (111), in \cite{zhu2022nearly}, gives, uniformly for $p,q\in[1/4,3/4]$,
\begin{align}
    \binom{m-1}{k-1}p^{k-1}(1-p)^{m-k}&\ge \frac{1}{\sqrt{2(m-1)}}\exp\prn{-(m-1)\kl\prn{\frac{k-1}{m-1},p}} \nonumber \\
    &\geq c\,m^{-1/2}\exp\!\prn{-m\,\kl(q,p)}, \label{eqn:lower-bound-binompdf}
\end{align}
where
$
\kl(q,p):=q\log\frac{q}{p}+(1-q)\log\frac{1-q}{1-p},
$
and the last inequality is justified below in \cref{eqn:binomial-tail-lb} by noting that for $ m \geq 7 $, $(m-1)\kl\prn{\frac{k-1}{m-1},p}\le m\kl(q,p)+C$ for some constant $C>0$.
Since $\sigma= \sqrt{mp(1-p)}$, plugging \cref{eqn:lower-bound-1/(1+x),eqn:lower-bound-binompdf} into \cref{eqn:McKay-applied} gives
\begin{align*}
    \PP\prn{\frac{S_m}{m}\ge q} &\geq c\frac{\sqrt{mp(1-p)}}{4+3\sqrt m\,(q-p)} \frac{1}{\sqrt{m}}\exp\!\prn{-m\,\kl(q,p)} \\
    &\geq \frac{\kappa_1}{1+\sqrt{m}(q-p)}\exp\!\prn{-m\,\kl(q,p)}.
\end{align*}
Since $(p,q)\mapsto \kl(q,p)$ is $C^2$ on $[1/4,3/4]^2$ and vanishes on the diagonal, there exists $\kappa_2>0$ such that $\kl(q,p)\le \kappa_2(q-p)^2$ for all $p,q\in[1/4,3/4]$.
For any $p,q\in(1/4,3/4)$ with $q>p$, this yields
\begin{align*}
    \PP\prn{\frac{S_m}{m}\ge q} &\geq \frac{\kappa_1}{1+\sqrt m\,(q-p)} \exp\!\prn{-\kappa_2\,m(q-p)^2}.
\end{align*}
We can replace the nonstrict inequality by a strict inequality because we can always choose $ \eps > 0 $, such that $ \eps \leq 1/m $ and $ q+\eps<3/4 $ where by invoking the above bound with $ q+\eps $ in place of $ q $ gives
\begin{align*}
    \PP\prn{\frac{S_m}{m}> q} &\geq \PP\prn{\frac{S_m}{m}\geq q+\eps}\geq \frac{\kappa_1}{1+\sqrt m\,(q+\eps-p)} \exp\!\prn{-\kappa_2\,m(q+\eps-p)^2}
    \\
    &\geq \frac{\kappa_1}{2(1+\sqrt m\,(q-p))} \exp\!\prn{-4\kappa_2\,m((q-p)^2+\eps^2)} \tag{By $ (a+b)^{2}\leq 4(a^2+b^2) $ and $ \eps \leq 1/m $}
    \\
    &\geq \frac{\kappa_1}{2(1+\sqrt m\,(q-p))} \exp{(-4\kappa_{2} )}  \exp\!\prn{-4\kappa_2\,m(q-p)^2} \tag{By $ \eps \leq 1/m $ }.
\end{align*}
The claim follows by absorbing constants, i.e., replacing $\kappa_1$ by $\kappa_1 e^{-4\kappa_2}/2$ and $\kappa_2$ by $4\kappa_2$.

It remains to justify that, for $m\geq 7$, we have
\begin{equation}
\label{eqn:binomial-tail-lb}
(m-1)\kl\prn{\frac{k-1}{m-1},p}\le m\kl(q,p)+C
\end{equation}
for a universal constant $C>0$. Let $r:=(k-1)/(m-1)=(\ceil{mq}-1)/(m-1)$. Since
\begin{equation*}
    r-q=\frac{\ceil{mq}-mq+q-1}{m-1},
\end{equation*}
and $0\leq \ceil{mq}-mq<1$, we have
\begin{equation*}
    \abs{r-q}\leq \frac{\max\{q,1-q\}}{m-1}\leq \frac{3}{4(m-1)}.
\end{equation*}
Moreover, for $m\geq 7$ this implies $r\in[1/8,7/8]$. For $a\in[1/8,7/8]$ and $p\in[1/4,3/4]$,
\begin{equation*}
    \abs{\frac{\d}{\d a}\kl(a,p)}
    =
    \abs{\log\frac{a(1-p)}{p(1-a)}}
    \leq \log(21).
\end{equation*}
Hence, we get that
\begin{align*}
    |\kl(r,p)- \kl(q,p)| &= \abs{\int_{q}^{r}\frac{\d}{\d a}\kl(a,p)\d a}\leq\log(21)\abs{r-q}\leq \frac{3\log(21)}{4(m-1)},
    \\
    \implies \kl\prn{\frac{k-1}{m-1},p}&=\kl(r,p) \leq \kl(q,p) + \frac{3\log(21)}{4(m-1)}.
\end{align*}
Multiplying by $m-1$ and using $\kl(q,p)\geq 0$ gives \cref{eqn:binomial-tail-lb} with $C=3\log(21)/4$.
\end{proof}

\subsection{Proof of \cref{thm:pruning-no-along-the-tail}}
\label{proof:pruning-no-along-the-tail}

Fix $\delta\in(0,1)$ and $ M=2 $. By assumption, there exist $B_M(\delta)>0$ and $m_0\in\NN$ such that
\[
\threshold\le \frac{B_M(\delta)}{\sqrt m}
\qquad\text{for all } m\ge m_0.
\]

Define the dictionary $\cF=\crl{f_1,f_2}$ (hence $M=2$) with $f_1\equiv 0$ and $f_2\equiv 1$, and define the family of distributions $P_{\mu}$ via $\mu\in[0,1/4]$, using fixed $X$ and $Y\sim \Bernoulli(\frac{1}{2}-\mu)$.
Let $\rS_1\subseteq \rS$ be the pruning subsample of size $m\in[\alpha n, n]$, and let $\Ybar_1$ denotethe sample mean over $ \rS_{1} $. Then
\begin{equation*}
    \excessRisk(f_1) =0,\quad\excessRisk(f_2) = 2\mu,
    \quad  \Rhat_{\rS_1}(f_1) = \Ybar_1, \quad \Rhat_{\rS_1}(f_2) = 1-\Ybar_1.
\end{equation*}

For $m\ge m_0$, define $\mu:=\frac{B_M(\delta)}{4\sqrt m}$, and work under $P_{\mu}$. Consider the event
\begin{equation*}
    E:=\crl{\Ybar_1>\frac12+\frac{\threshold}{2}}.
\end{equation*}
On $E$, we have $\Rhat_{\rS_1}(f_1)-\Rhat_{\rS_1}(f_2)=2\Ybar_1-1>\threshold$, so $\erm(\rS_1)=f_2$ and $\Rhat_{\rS_1}(f_1)>\Rhat_{\rS_1}(\erm(\rS_1))+\threshold$.
This implies $f_1\notin \cFhat(\rS_1)$ and it follows that $\cFhat(\rS_1)=\{f_2\}=\conv(\cFhat(\rS_1))$ and since $ \fhat\in\conv(\cFhat(\rS_1)) $  we have $\fhat=f_2$.
Therefore, on $E$, since $m\le n$,
\begin{equation*}
    \excessRisk(\fhat)=2\mu=\frac{B_M(\delta)}{2\sqrt m}\geq \frac{B_M(\delta)}{2\sqrt n}.
\end{equation*}

It remains to lower bound $\PP(E)$. Let $p=\frac12-\mu$ and $\sigma^2:=p(1-p)$.
For sufficiently large $m$, we have $\mu\in[0,1/4]$, so $\sigma\in[\sqrt3/4,1/2]$. Also, because $\mu=B_M(\delta)/(4\sqrt m)$ and $\threshold\le B_M(\delta)/\sqrt m$,
\begin{equation*}
    E=\crl{\Ybar_1>p+\mu+\frac{\threshold}{2}}
\supseteq
\crl{\Ybar_1>p+\frac{3B_M(\delta)}{4\sqrt m}}.
\end{equation*}
We can now apply a version of the Berry--Esseen Theorem (see \citep{petrov1975sums} for an overview):
\begin{lemma}[Berry--Esseen \cite{shevtsova2011absolute}]
\label{lem:Berry-Esseen}
    Suppose $X_1,\ldots,X_n $ are i.i.d.\ real-valued random variables with mean $\mu$, variance $\sigma^2>0$, and third moment $\rho = \EE[\abs{X-\mu}^3]$. Then
    \begin{equation*}
        \sup_{x\in\RR}\abs{\PP\prn{\frac{\sum_{i=1}^n X_i -n\mu}{\sigma \sqrt{n}}\leq x}-\Phi(x)} \leq \frac{\rho}{2\sigma^3\sqrt{n}},
    \end{equation*}
    where $\Phi$ is the CDF of the standard normal distribution.
\end{lemma}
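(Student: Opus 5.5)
This lemma is a classical result rather than something proved from the paper's own tools, so the plan is to reduce it to the sharp form of the Berry--Esseen inequality and use only the numerical value of the constant. Shevtsova \cite{shevtsova2011absolute} proves that for i.i.d.\ summands with finite third absolute moment,
\[
\sup_x\abs{F_n(x)-\Phi(x)}\le C_0\,\frac{\rho}{\sigma^3\sqrt n}\qquad\text{with } C_0\le 0.4748,
\]
where $F_n$ is the distribution function of the normalized sum $(\sum_i X_i-n\mu)/(\sigma\sqrt n)$. Since $0.4748<1/2$, the stated bound with $\rho/(2\sigma^3\sqrt n)$ follows at once. The proof would first normalize by passing to $Z_i=(X_i-\mu)/\sigma$. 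These have mean $0$, variance $1$ and third absolute moment $\rho/\sigma^3$. Because the statement is invariant under this affine change, it suffices to treat standardized variables.

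For completeness I would also outline the classical route that yields an absolute constant, and then explain where the sharp value $1/2$ comes from. The classical argument has three steps.

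\begin{itemize}[leftmargin=*,itemsep=0pt,topsep=2pt]
\item \emph{Step 1 (smoothing).} Apply Esseen's smoothing inequality. For any $T>0$,
\[
\sup_x\abs{F_n(x)-\Phi(x)}\le \frac1\pi\int_{-T}^{T}\abs{\frac{\psi(t/\sqrt n)^n-e^{-t^2/2}}{t}}\,\d t+\frac{24}{\pi\sqrt{2\pi}\,T},
\]
where $\psi$ is the characteristic function of $Z_1$.
\item \emph{Step 2 (characteristic-function bound).} Use the Taylor bound $\abs{\psi(s)-1+s^2/2}\le \rho s^3/(6\sigma^3)$. Choose $T\asymp \sqrt n\,\sigma^3/\rho$. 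For $\abs t\le T$ this gives
\[
\abs{\psi(t/\sqrt n)^n-e^{-t^2/2}}\lesssim \frac{\rho}{\sigma^3\sqrt n}\,\abs t^3e^{-t^2/4},
\]
obtained by telescoping $a^n-b^n$ with $\abs{a},\abs{b}\le e^{-t^2/4}$ on this range.
\item \emph{Step 3 (integration).} Integrating the bound from Step 2 against $1/\abs t$ makes the first term of Step 1 at most a constant times $\rho/(\sigma^3\sqrt n)$. With $T\asymp\sqrt n\,\sigma^3/\rho$, the boundary term $24/(\pi\sqrt{2\pi}\,T)$ is of the same order.
\end{itemize}

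Together these give the inequality with some absolute constant, which is enough for how the lemma is used in the proof of \cref{thm:pruning-no-along-the-tail}. There a constant only rescales the $\sqrt m$-threshold.

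The main obstacle is the specific constant $1/2$. The crude smoothing argument above yields roughly $C_0\approx 3$ or worse. Reaching $C_0<1/2$ needs Shevtsova's refined smoothing kernels, sharper characteristic-function estimates, and numerical optimization, so for that step I would invoke \cite{shevtsova2011absolute} directly instead of reproducing it. If one wants to avoid relying on the sharp constant, \cref{thm:pruning-no-along-the-tail} still holds with a modified absolute constant. In its application the bound becomes $O(1/\sqrt m)$ because $\rho/\sigma^3$ is bounded for $\sigma\in[\sqrt3/4,1/2]$, and this term is absorbed once $n\ge n_0(\delta)$.
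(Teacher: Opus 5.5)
Your proposal is correct and matches the paper, which also does not prove this lemma but imports it from \cite{shevtsova2011absolute}: the i.i.d.\ Berry--Esseen constant there satisfies $C_0\le 0.4748<1/2$, and after standardizing to $Z_i=(X_i-\mu)/\sigma$ this gives exactly the stated bound. Your smoothing-inequality sketch is not needed for the lemma and, as you note, would only give a worse absolute constant; your remark that any absolute constant suffices for the application in \cref{thm:pruning-no-along-the-tail} is accurate.
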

We apply this lemma with $Y_i \sim \Bernoulli(\frac{1}{2}-\mu)$ in $ \rS_{1} $, which yields for $p=1/2-\mu$ and $\mu\in[0,1/4]$ that
\begin{equation*}
    \frac{\rho}{\sigma^3} = \frac{p^2+(1-p)^2}{\sqrt{p(1-p)}} \leq \frac{5}{2\sqrt{3}}
    \tag{by $ \rho=p(1-p)((1-p)^2 + p^2)$ and $\sigma^2=p(1-p)$}
\end{equation*}
so \cref{lem:Berry-Esseen} yields
\begin{equation*}
    \sup_{x\in\RR} \abs{\PP\prn{ \Ybar_1  \leq p+ \frac{\sigma x}{\sqrt{m}}}-\Phi(x)}=\sup_{x\in\RR} \abs{\PP\prn{\sqrt{m} \frac{\Ybar_1 -p}{\sigma} \leq x}-\Phi(x)} \leq \frac{5}{4\sqrt{3}\sqrt{m}}.
\end{equation*}
Hence we can lower bound $\PP(E)$ as
\begin{equation*}
    \PP(E)\geq 1-\Phi\prn{\frac{3B_M(\delta)}{4\sigma}}-\frac{5}{4\sqrt3\sqrt m}\ge 1-\Phi(\sqrt3 B_M(\delta))-\frac{5}{4\sqrt3 \sqrt m} \geq \frac12\prn{1-\Phi(\sqrt3 B_M(\delta))}>0.
\end{equation*}
The last inequality holds for all sufficiently large $m$.

Combining the above estimates, for all sufficiently large $n$, $m\geq \alpha n$ is also sufficiently large, so
\[
\sup_{P} \PP_{\rS\sim P^n}\prn{\excessRisk(\fhat)\geq \frac{B_M(\delta)}{2\sqrt n}}
\ge \frac12\prn{1-\Phi(\sqrt3 B_M(\delta))}.
\]
This concludes the proof.

\section{Proof of \cref{thm:Q-exponential}}
\label{proof:Q-exponential}

\subsection{Summary of Relevant Quantities}
\label{subsec:summary-quantities}
For the remainder of this section, let $(P,\cF)\in\Theta_M$ be fixed. To streamline the analysis, we assume without loss of generality that the hypotheses are ordered according to their risks, so that $ \cR_P(f_1) \geq \cR_P(f_2) \geq \cdots \geq \cR_P(f_M) $. We use the following geometric quantities relative to the optimal hypothesis $f_M$:
\begin{itemize}
    \item $g_j := f_j - f_M$,
    \item $\Delta_j := \cR_P(f_j) - \cR_P(f_M)$,
    \item $d_{jM}^2 := \EE_{X\sim P_X}[(f_j(X)-f_M(X))^2]$,
    \item $G_{ij} := \EE_{X\sim P_X}[g_i(X)g_j(X)]$, forming the positive semidefinite Gram matrix $G \in \mathbb{R}^{(M-1)\times(M-1)}$.
\end{itemize}

Given an i.i.d.\ sample $\rS = \prn{(X_1, Y_1), \dots, (X_n, Y_n)}$, we define the corresponding empirical quantities and denote the empirical risk by $\Rhat_{\rS}(f)$:
\begin{itemize}
    \item $\hat{\Delta}_j := \Rhat_{\rS}(f_j) - \Rhat_{\rS}(f_M)$,
    \item $\hat{d}_{jM}^2 := \frac{1}{n}\sum_{m=1}^n (f_j(X_m)-f_M(X_m))^2$,
    \item $\hat{G}_{ij} := \frac{1}{n}\sum_{m=1}^n g_i(X_m)g_j(X_m)$, forming the empirical positive semidefinite Gram matrix $\hat{G} \in \mathbb{R}^{(M-1)\times(M-1)}$.
\end{itemize}
Let $ \triangle_M=\bigl\{  \rho\in[0,1]^{M}: \sum_{i=1}^M \rho_i = 1 \bigr\}  $ and $\blacktriangle_{M-1}=\bigl\{  \rho\in[0,1]^{M-1}: \sum_{i=1}^{M-1} \rho_i \le 1 \bigr\}  $ denote the standard probability simplex and the simplex with an inequality constraint, respectively.

\subsection{Rewriting the Minimization Problems}
\label{subsec:rewriting}
The $Q$-aggregation estimator $\rhohatQ$ from \cref{eqn:Q-aggregation-definition} (with $\phi\equiv 1$ and a flat prior $\pi$, implying that the regularization term can be omitted) is defined as a solution to the following minimization problem over the simplex $\triangle_M$:
\begin{align*}
\text{minimize over $\rho\in\mathbb{R}^{M}$}&:\quad  \frac{1}{2} \Rhat_{\rS} \!\left(\sum_{i=1}^{M} \rho_i f_i\right) + \frac{1}{2}\sum_{i=1}^{M} \rho_i \Rhat_{\rS}(f_i)
\\
\text{subject to}&:\quad \rho_{1}\geq 0, \ldots, \rho_{M}\geq 0, \quad \sum_{i=1}^{M} \rho_i = 1.
\end{align*}
In the analysis below, it will be convenient to consider the following minimization problem over the inequality-constrained simplex $\blacktriangle_{M-1}$:
\begin{align*}
    \text{minimize over $\rho\in\mathbb{R}^{M-1}$}&:\quad  \frac{1}{2} \Rhat_{\rS} \!\left(f_M + \sum_{i=1}^{M-1} \rho_i g_i\right) + \frac{1}{2} \Rhat_{\rS}(f_M) + \frac{1}{2}\sum_{j=1}^{M-1} \rho_j \hat{\Delta}_j
\\
\text{subject to}&:\quad  \rho_{1}\geq 0, \ldots, \rho_{M-1}\geq 0, \quad \sum_{i=1}^{M-1} \rho_i \leq 1.
\end{align*}

The two problems are equivalent in the following sense: any solution $\rhohatQ=(\rhohat_{Q,1}, \dots, \rhohat_{Q,M})\in \triangle_M$ to the first problem can be mapped to a valid vector $\rhohatQ' = (\rhohat_{Q,1}, \dots, \rhohat_{Q,M-1})\in \blacktriangle_{M-1}$ in the second problem. The objective values agree under this mapping, so the optimal objective value of the second problem is at most that of the first. Conversely, any solution $\rhohatQ'=(\rhohat_{Q,1}, \dots, \rhohat_{Q,M-1})\in \blacktriangle_{M-1}$ to the second problem can be mapped to a valid vector $\rhohatQ = (\rhohat_{Q,1}, \dots, \rhohat_{Q,M-1}, 1-\sum_{i=1}^{M-1} \rhohat_{Q,i})\in \triangle_M$ in the first problem. The objective values again agree under this mapping, so the optimal objective value of the first problem is at most that of the second. Therefore, the two problems share the same optimal objective value, and their optimal solutions can be mapped to one another in a one-to-one manner. From this point forward, we work primarily with the second problem and identify the solution $\rhohatQ$ found by the $Q$-aggregation algorithm with the solution to the second problem via the mapping $ (\rhohat_{Q,1}, \dots, \rhohat_{Q,M-1}, 1-\sum_{i=1}^{M-1} \rhohat_{Q,i}) $. For any $ \rho\in \blacktriangle_{M-1} $, let $f_\rho = f_M + \sum_{j=1}^{M-1} \rho_j g_j$ denote the corresponding convex combination of the base functions. Then the second objective function above can be written compactly as $\hat\Psi(\rho)=\frac{1}{2}\Rhat_{\rS}(f_\rho)+\frac{1}{2}\Rhat_{\rS}(f_M)+\frac{1}{2}\sum_{j=1}^{M-1}\rho_j\hat\Delta_j$.

We record the following properties of the two problems:

\paragraph{The Population Problem (Simplex-Constrained).}

    Define the functional $\Psi:\mathbb{R}^{M-1}\to \mathbb{R}$ as
\begin{align*}
        \Psi(\rho)
        &:=  \frac{1}{2} \cR_P \!\left(f_\rho\right) + \frac{1}{2} \cR_P(f_M) + \frac{1}{2}\sum_{j=1}^{M-1} \rho_j \Delta_j,
\end{align*}
    and define the constraint functions $ h_{i} : \mathbb{R}^{M-1}\to \mathbb{R} $ by $h_i(\rho) = -\rho_i$ for $i=1,\dots,M-1$, and $h_M(\rho) = \sum_{i=1}^{M-1} \rho_i - 1$.
    Consider the following minimization problem:
    \begin{align}
    \text{minimize:} \quad & \Psi(\rho) \nonumber\\
    \text{subject to:} \quad & \rho_{i} \geq 0 \text{ for all } i=1,\dots,M-1 \text{ and } \sum_{i=1}^{M-1} \rho_i \leq 1, \label{eq:Qgeneral:population_problem}
    \end{align}
    which is equivalently expressed as
    \begin{align*}
    \text{minimize:} \quad & \Psi(\rho) \\
    \text{subject to:} \quad & h_i(\rho) \leq 0 \text{ for all } i=1,\dots,M.
    \end{align*}
    The objective $\Psi$ is convex in $\rho$ because it is a sum of convex functions. Moreover, $\Psi$ is differentiable. Thus, the formulation constitutes a convex minimization problem with inequality constraints. The $i$-th constraint function $ h_i $ is convex and differentiable for all $ i=1,\dots,M $. Furthermore, the point $ \rho = (\frac{1}{M},\dots,\frac{1}{M}) $ is strictly feasible and lies in the relative interior of the domains of $\Psi$ and the $h_i$, namely $ \mathbb{R}^{M-1} $. This verifies Slater's condition and ensures that strong duality holds (see, e.g., \cite[page 226]{convexoptimizationbook}). We can therefore characterize any solution using the Karush--Kuhn--Tucker (KKT) conditions (see, e.g., \cite[page 244]{convexoptimizationbook}).

    The first KKT condition below is derived by differentiating the Lagrangian. By an expansion similar to that in \cref{lem:Qgeneral:mixture_risk}, for $\rho\in\blacktriangle_{M-1}$, we have
    \begin{align*}
        \Psi(\rho)=\cR_P(f_M)+\sum_{i=1}^{M-1}\rho_i\left(\Delta_i-\frac{1}{2}d_{iM}^2\right)+\frac{1}{2}\rho^\top G\rho.
    \end{align*}
    By the symmetry of $G$, the gradient takes the form
    \begin{align*}
        \frac{\partial \Psi}{\partial \rho_j}(\rho)
        =
        (G\rho)_j+\Delta_j-\frac{1}{2}d_{jM}^2.
    \end{align*}
    Since the constraint functions satisfy $\nabla h_j(\rho)=-e_j$ for $j=1,\dots,M-1$ and $\nabla h_M(\rho)=\mathbf{1}$, setting the gradient of the Lagrangian to zero yields the first KKT condition. Specifically, $ \rho_{Q}\in \blacktriangle_{M-1} $ is a solution to the minimization problem if and only if there exist Lagrange multipliers $\mu_j \ge 0$ for $j=1,\dots,M$ such that
        \begin{align}
        (G\rho_Q)_j + \Delta_j - \frac{1}{2}d_{jM}^2 + \mu_M - \mu_j &= 0, \quad j=1,\dots,M-1, \label{eq:Qgeneral:rhonegKL_KKT} \\
        \mu_j \rho_{Q,j} &= 0, \quad j=1,\dots,M-1, \nonumber \\
        \mu_M \Bigg(\sum_{i=1}^{M-1} \rho_{Q,i}-1 \Bigg) &= 0. \nonumber
        \end{align}

\paragraph{The Empirical Problem (Simplex-Constrained).}
            Define the empirical functional $\hat{\Psi}:\mathbb{R}^{M-1}\to \mathbb{R}$ as
            \begin{align*}
            \hat{\Psi}(\rho)
            &:=  \frac{1}{2} \Rhat_{\rS} \!\left(f_\rho\right) + \frac{1}{2} \Rhat_{\rS}(f_M) + \frac{1}{2}\sum_{j=1}^{M-1} \rho_j \hat{\Delta}_j.
            \end{align*}
            Consider the following minimization problem:
                \begin{align*}
    \text{minimize:} \quad & \hat{\Psi}(\rho) \\
    \text{subject to:} \quad & h_i(\rho) \leq 0 \text{ for all } i=1,\dots,M.
    \end{align*}
            By an argument formally identical to the one used for the population problem, we can characterize any solution $\rhohatQ\in \blacktriangle_{M-1}$ to the empirical problem using KKT conditions. A vector $\rhohatQ\in\blacktriangle_{M-1}$ is a solution to the empirical optimization problem if and only if there exist Lagrange multipliers $\hat{\mu}_j \ge 0$ for $j=1,\dots,M$ such that
        \begin{equation}
        \label{eq:Qgeneral:empirical-KKT}
            \begin{aligned}
                (\hat{G}\rhohatQ)_j + \hat{\Delta}_j - \frac{1}{2}\hat{d}_{jM}^2 + \hat{\mu}_M - \hat{\mu}_j &= 0, \quad j=1,\dots,M-1, \\
        \hat{\mu}_j \rhohat_{Q,j} &= 0, \quad j=1,\dots,M-1, \\
        \hat{\mu}_M \Bigg(\sum_{i=1}^{M-1} \rhohat_{Q,i}- 1\Bigg) &= 0.
            \end{aligned}
        \end{equation}
        The $Q$-aggregation estimator $\rhohatQ$ is defined as a solution to the empirical problem and therefore satisfies these empirical KKT conditions.

\begin{figure}
    \centering
    \begin{tikzpicture}[
  >=Latex,
  node distance=10mm and 10mm,
  every node/.style={
    draw,
    rounded corners,
    align=center,
    minimum width=22mm,
    minimum height=8mm
  }
]

\node (obs2) {\cref{lem:Qgeneral:obs2}};
\node (convergence) [right=of obs2] {\cref{lem:Qgeneral:convergence}: \\ exponential-probability event};
\node (mixture) [right=of convergence] {\cref{lem:Qgeneral:mixture_risk}};
\node (obs4) [right=of mixture] {\cref{lem:Qgeneral:obs4}};
\node (case1) [below=of $(obs2)!0.5!(convergence)$] {\cref{lem:Qgeneral:case1}: \\ case $\Psi(\rho_Q)<\cR_P(\fstar)$};
\node (case2) [right=28mm of case1] {\cref{lem:Qgeneral:case2}:\\ case $\Psi(\rho)\geq \cR_P(\fstar)$};
\node (Thm) [below=of $(case1)!0.5!(case2)$] {\cref{thm:Q-exponential}};

\draw[->] (obs2) -- (case1);
\draw[->] (convergence) -- (case1);
\draw[->] (convergence) -- (case2);
\draw[->] (mixture) -- (case2);
\draw[->] (obs4) -- (case2);
\draw[->] (case1) -- (Thm);
\draw[->] (case2) -- (Thm);

\end{tikzpicture}
    \caption{Proof structure of \cref{thm:Q-exponential}. We distinguish between two cases, each treated in \cref{lem:Qgeneral:case1,lem:Qgeneral:case2}, respectively. The same concentration result, \cref{lem:Qgeneral:convergence}, yields the result in both cases.}
    \label{fig:Q-proof}
\end{figure}

\subsection{Proof of \cref{thm:Q-exponential}}
In the sequel, we use the fact that $G$ is a symmetric positive semidefinite matrix. Consequently, it admits a decomposition $G=\sqrt{G}\sqrt{G}$, where $\sqrt{G}$ is the unique symmetric positive semidefinite square root of $G$ (see, e.g., \cite{Horn_Johnson_2012}).
We also use the established characterizations of the solutions to both the empirical and population problems via the KKT conditions. Because the domain $\blacktriangle_{M-1}$ is compact and the objectives are continuous, the infima of these problems are attained. We begin by stating two central lemmas from which \cref{thm:Q-exponential} directly follows. We provide their proofs in the following subsections. The overall proof structure is illustrated in \cref{fig:Q-proof}.
\begin{lemma}\label{lem:Qgeneral:case1}
If there exists a solution $ \rho_Q \in \blacktriangle_{M-1} $  to the population problem $ \Psi $ from \Cref{eq:Qgeneral:population_problem} such that $\Psi(\rho_Q) < \cR_P(f_M)$, then with probability at least $1-  CM^{2}\exp{(-c\frac{n}{M^{2}} )}  $,  $\cR_P(f_{\rhohatQ}) < \cR_P(f_M)$ for all sufficiently large $n$, where $c,C>0$ depend only on $\cF$, $P$, and an arbitrary but fixed choice of $\rho_Q \in \blacktriangle_{M-1} $ satisfying the condition $\Psi(\rho_Q) < \cR_P(f_M)$.
\end{lemma}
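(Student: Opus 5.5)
The plan is to turn the strict population gap into a fixed margin and then show that the empirical minimizer inherits this margin, using uniform concentration of the finitely many quadratic-form coefficients. Set $\eta:=\cR_P(f_M)-\Psi(\rho_Q)>0$. I will first record the identity $\Psi(\rho)=\cR_P(f_\rho)+\tfrac12\sum_j\rho_j\cR_P(f_j)+\tfrac12(1-\sum_j\rho_j)\cR_P(f_M)-\tfrac12\cR_P(f_\rho)$, i.e. $\Psi(\rho)=\tfrac12\cR_P(f_\rho)+\tfrac12\EE_{k\sim\rho}\cR_P(f_k)$. Since every $\cR_P(f_k)\ge\cR_P(f_M)$, this gives $\Psi(\rho)\ge\tfrac12\cR_P(f_\rho)+\tfrac12\cR_P(f_M)$ for all $\rho\in\blacktriangle_{M-1}$. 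Applied at $\rho_Q$, it yields $\cR_P(f_{\rho_Q})\le 2\Psi(\rho_Q)-\cR_P(f_M)=\cR_P(f_M)-2\eta$. This is the statement that the population $Q$-minimizer beats $f_M$ by a fixed margin, and I believe it is the content of \cref{lem:Qgeneral:obs2}.

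Next I transfer the margin to $\rhohatQ$. Both $\Psi$ and $\hat\Psi$ are quadratics in $\rho$ of the form $c_0+\sum_j\rho_j(\Delta_j-\tfrac12 d_{jM}^2)+\tfrac12\rho^\top G\rho$, with $\cR_P(f_M)$ replaced by $\Rhat_\rS(f_M)$ in the empirical version. Their coefficients are empirical means of bounded variables: the entries $\hat G_{ij}$, $\hat d_{jM}^2$, $\hat\Delta_j$ and $\Rhat_\rS(f_M)$. Applying Hoeffding to these $O(M^2)$ quantities with a union bound, each deviation is at most $\eps/M^2$ with probability at least $1-CM^2\exp(-c\eps^2 n/M^4)$. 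I expect this is what \cref{lem:Qgeneral:convergence} provides, with the rate stated as $M^2$ in the exponent after normalisation. On this event, $\sup_{\rho\in\blacktriangle_{M-1}}|\hat\Psi(\rho)-\Psi(\rho)|\lesssim\eps$, because $\|\rho\|_1\le1$.

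I then bound $\Psi(\rhohatQ)$:
$$\Psi(\rhohatQ)\le\hat\Psi(\rhohatQ)+\eps\le\hat\Psi(\rho_Q)+\eps\le\Psi(\rho_Q)+2\eps=\cR_P(f_M)-\eta+2\eps.$$
Combined with the first-paragraph inequality $\cR_P(f_{\rhohatQ})\le2\Psi(\rhohatQ)-\cR_P(f_M)$, this gives $\cR_P(f_{\rhohatQ})\le\cR_P(f_M)-2\eta+4\eps$. Choosing $\eps=\eta/4$ makes the right side strictly below $\cR_P(f_M)$. The constants then depend only on $\eta$, hence on $P$, $\cF$ and the fixed $\rho_Q$, and "sufficiently large $n$" only absorbs the constant $C$.

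The main obstacle is to make the uniform approximation of the objective clean, in particular to get $M^2$ rather than a worse power of $M$ in the exponent. I would first try to avoid this by not needing uniformity over $\rho$. The chain above only uses closeness at the two points $\rho_Q$ and $\rhohatQ$, but $\rhohatQ$ is data-dependent, so uniformity over the simplex is genuinely needed. The cheapest route is the coefficient-wise concentration described above. The slightly lossy dependence on $M$ in the exponent is acceptable, since the lemma allows $c$ to depend on $\cF$.
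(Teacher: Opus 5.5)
Your proof is correct. It differs from the paper's in how the margin is transferred to $\rhohatQ$.

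\textbf{The paper's route.} The paper uses \cref{lem:Qgeneral:obs2} only qualitatively, setting $c_{\mathrm{gap}}=\cR_P(f_M)-\cR_P(f_{\rho_Q})$. It then invokes \cref{lem:Qgeneral:convergence}, whose proof subtracts the population and empirical KKT systems to obtain $\|\sqrt{G}(\rhohatQ-\rho_Q)\|_2^2\le\eps$. This is closeness of $f_{\rhohatQ}$ to $f_{\rho_Q}$ in $L_2(P_X)$, and a Cauchy--Schwarz expansion of $\cR_P(f_{\rhohatQ})$ around $f_{\rho_Q}$ finishes the argument.

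\textbf{Your route.} You note that $\cR_P(f_\rho)\le 2\Psi(\rho)-\cR_P(f_M)$ holds on all of $\blacktriangle_{M-1}$, because $\rho_j\Delta_j\ge 0$. So it suffices that $\rhohatQ$ be nearly optimal \emph{in value} for $\Psi$. This follows from uniform closeness of $\hat\Psi$ and $\Psi$ on the simplex together with the optimality of $\rhohatQ$ for $\hat\Psi$; the two minimizers never need to be compared. The argument is more elementary (no Slater or KKT) and yields an explicit margin $\cR_P(f_{\rhohatQ})\le\cR_P(f_M)-\eta$. Its constants do not even depend on the choice of $\rho_Q$, since $\eta$ involves only $\min\Psi$. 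Your guess about what \cref{lem:Qgeneral:convergence} states is not accurate, but this is harmless because you prove the event you need directly by Hoeffding.

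\textbf{What the paper's heavier lemma buys.} It is reusable. In \cref{lem:Qgeneral:case2} there is no margin, and your inequality would give only $\cR_P(f_{\rhohatQ})\le\cR_P(f_M)+4\eps$. A first-order argument showing that the coordinates in $I_{>}$ vanish is therefore unavoidable there. The paper sets up one lemma that serves both cases and also \cref{prop:Q-aggregation-sparsity}.

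\textbf{The obstacle you flag is not one.} Since $\|\rho\|_1\le 1$ on $\blacktriangle_{M-1}$, you have $|\rho^\top(\hat G-G)\rho|\le\max_{i,j}|\hat G_{ij}-G_{ij}|$, and likewise for the linear and constant terms. So entrywise accuracy of order $\eps$, rather than $\eps/M^2$, suffices. This gives probability at least $1-CM^2e^{-c\eps^2 n}$ with no $M$ in the exponent. In any case the lemma allows $c$ to depend on $\cF$, hence on $M$.
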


\begin{lemma}\label{lem:Qgeneral:case2}
If there does not exist a solution $ \rho_Q \in \blacktriangle_{M-1} $  to the population problem $ \Psi $ from \Cref{eq:Qgeneral:population_problem} such that $\Psi(\rho_Q) < \cR_P(f_M)$, then with probability at least $1-CM^{2}\exp{(-c\frac{n}{M^{2}} )}  $,  $\cR_P(f_{\rhohatQ}) \le \cR_P(f_M)$ for all sufficiently large $n$, where $c,C>0$ depend only on $\cF$ and $P$.
\end{lemma}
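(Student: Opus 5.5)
The plan is to reduce the hypothesis of \cref{lem:Qgeneral:case2} to a sign structure on the linear coefficients of $\Psi$. Then I use the concentration event of \cref{lem:Qgeneral:convergence} (on which $\hat G,\hat\Delta_j,\hat d_{jM}^2$ are within a fixed small $\eps>0$ of $G,\Delta_j,d_{jM}^2$, with probability $1-CM^2e^{-cn/M^2}$) to transfer that structure to $\rhohatQ$.

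\textbf{Population structure.} Write $a_j:=\Delta_j-\tfrac12 d_{jM}^2$, so that $\Psi(\rho)=\cR_P(f_M)+\sum_j\rho_j a_j+\tfrac12\rho^\top G\rho$ and $\Psi(0)=\cR_P(f_M)$. The hypothesis then gives $\min\Psi=\cR_P(f_M)$, and $\rho=0$ is a minimizer. At $\rho=0$ the constraint $\sum_i\rho_i\le1$ is inactive, so $\mu_M=0$, and the KKT system \eqref{eq:Qgeneral:rhonegKL_KKT} yields $a_j=\mu_j\ge0$ for all $j$. I split the indices into three groups:
\begin{itemize}[leftmargin=*,itemsep=0pt,topsep=2pt]
\item $J_+=\{j:a_j>0\}$, with $a_{\min}:=\min_{J_+}a_j>0$;
\item $J_0=\{j:a_j=0,\ d_{jM}>0\}$, with $d_{\min}:=\min_{J_0}d_{jM}>0$;
\item $J_{\mathrm{deg}}=\{j:a_j=0,\ d_{jM}=0\}$, for which $g_j=0$ $P_X$-a.s.
\end{itemize}
I will also use the expansion (as in \cref{lem:Qgeneral:mixture_risk})
\[
\cR_P(f_\rho)-\cR_P(f_M)=\sum_j\rho_j(\Delta_j-d_{jM}^2)+\rho^\top G\rho=\sum_j\rho_j a_j-\tfrac12\sum_j\rho_j d_{jM}^2+\|g_\rho\|^2_{L_2(P_X)},
\qquad g_\rho:=\textstyle\sum_j\rho_j g_j .
\]

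\textbf{Step 1: near-optimality of $\rhohatQ$.} The empirical objective satisfies $\hat\Psi(\rhohatQ)\le\hat\Psi(0)=\Rhat_\rS(f_M)$, that is, $\sum_j\rhohat_{Q,j}\hat a_j+\tfrac12\rhohatQ^\top\hat G\rhohatQ\le0$ with $\hat a_j:=\hat\Delta_j-\tfrac12\hat d_{jM}^2$. On the concentration event I have $\hat a_j\ge a_{\min}-\eps$ on $J_+$ and $\hat a_j\ge-\eps$ otherwise. This gives $\|\hat g_{\rhohatQ}\|_\rS^2\le 2\eps$, and, using the Gram concentration once more, $\|g_{\rhohatQ}\|^2_{L_2(P_X)}\le 3\eps$ (up to an $M^2$-dependent factor absorbed in $\eps$). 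It also shows that the mass on $J_+$ is at most $O(\eps/a_{\min})$, so the constraint $\sum_i\rhohat_{Q,i}\le 1$ stays inactive in the relevant case.

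\textbf{Step 2: KKT pruning of $J_+$.} Suppose $j\in J_+$ and $\rhohat_{Q,j}>0$. Then $\hat\mu_j=0$ in \eqref{eq:Qgeneral:empirical-KKT}. By Cauchy--Schwarz, $(\hat G\rhohatQ)_j=\langle \hat g_j,\hat g_{\rhohatQ}\rangle_\rS\ge-\|\hat g_{\rhohatQ}\|_\rS\ge-\sqrt{2\eps}$, and $\hat\mu_M\ge0$. The left-hand side of \eqref{eq:Qgeneral:empirical-KKT} is therefore at least $a_{\min}-\eps-\sqrt{2\eps}>0$ once $\eps$ is small, a contradiction. Hence $\operatorname{supp}(\rhohatQ)\subseteq J_0\cup J_{\mathrm{deg}}$.

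\textbf{Step 3: the sign of the risk.} By Step 2 all $a_j$-terms vanish and degenerate indices do not contribute to $g_\rho$. So I need $\|g_\rho\|^2\le\tfrac12\sum_{j\in J_0}\rho_j d_{jM}^2$ for $\rho=\rhohatQ$; write $s:=\sum_{j\in J_0}\rho_j$.
\begin{itemize}[leftmargin=*,itemsep=0pt,topsep=2pt]
\item If $s\le\tfrac12$, Cauchy--Schwarz gives $\|g_\rho\|^2\le s\sum_{J_0}\rho_j d_{jM}^2\le\tfrac12\sum_{J_0}\rho_j d_{jM}^2$.
\item If $s>\tfrac12$, then $\tfrac12\sum_{J_0}\rho_jd_{jM}^2\ge d_{\min}^2/4$, while $\|g_\rho\|^2\le3\eps$ by Step 1; choosing $\eps<d_{\min}^2/12$ suffices.
\end{itemize}
Either way $\cR_P(f_{\rhohatQ})\le\cR_P(f_M)$ on the event. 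Fixing $\eps$ in terms of $a_{\min}$ and $d_{\min}$ (quantities depending only on $P,\cF$) then gives the stated probability bound, and "sufficiently large $n$" only enters through \cref{lem:Qgeneral:convergence}.

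\textbf{Main obstacle.} I expect Step 3 to be the crux, because it needs an exact inequality $\le0$ rather than an approximate one. The dichotomy on the mass $s$ resolves this: small mass is handled by the Jensen-type bound, and large mass by the fixed gap $d_{\min}$ beating $\eps$. A second point to check carefully is the passage from empirical to population $L_2$ norms of $g_{\rhohatQ}$. This should follow from entrywise concentration of $\hat G$ together with $\sum_i\rhohat_{Q,i}\le1$, but it is where the $M^2$ factors in the exponent originate.
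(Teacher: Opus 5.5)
Your proof is correct and has the same skeleton as the paper's. In both, $\rho_Q=0$ is a population minimizer, all $a_j\ge0$, a fixed-$\eps$ concentration event is used, KKT forces $\rhohat_{Q,j}=0$ whenever $a_j>0$, and the conclusion comes from the expansion of \cref{lem:Qgeneral:mixture_risk}. Two sub-steps are argued differently.

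\emph{Localization.} You get $\|g_{\rhohatQ}\|^2\lesssim\eps$ from the basic inequality $\hat\Psi(\rhohatQ)\le\hat\Psi(0)$ plus entrywise Gram concentration. The paper instead invokes \cref{lem:Qgeneral:convergence}, whose proof subtracts the empirical and population KKT systems. The paper's route works for an arbitrary population minimizer $\rho_Q$, which is why it can be reused in \cref{lem:Qgeneral:case1}. Yours is more elementary but specific to $\rho_Q=0$.

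\emph{Final sign.} The paper argues coordinatewise. For $i\in I_{=,\not=}$ (your $J_0$), one has $\Delta_i-d_{iM}^2=-\Delta_i$ and $(G\rhohatQ)_i\le\sqrt{\eps}\le\Delta_i/2$. Hence every summand of $\sum_i\rhohat_{Q,i}\bigl(\Delta_i-d_{iM}^2+(G\rhohatQ)_i\bigr)$ is nonpositive, with no case distinction. Your dichotomy on $s$ is equally valid. It needs only $\eps<d_{\min}^2/12$ rather than $\sqrt{\eps}\le d_{\min}^2/4$, so the exponent depends slightly better on $d_{\min}$. You can also drop the case split: $\|g_\rho\|^2\le\|g_\rho\|\sum_{J_0}\rho_j d_{jM}\le(\sqrt{3\eps}/d_{\min})\sum_{J_0}\rho_j d_{jM}^2$.

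One aside in your Step 1 is not correct as stated: the constraint $\sum_i\rhohat_{Q,i}\le1$ need not be inactive. If $f_j=f_M$ $P_X$-a.s.\ for some $j\in J_{\mathrm{deg}}$, the empirical objective is flat in $\rho_j$, and a minimizer may put all its mass there. Nothing in your argument depends on this, since Step 2 only uses $\hat\mu_M\ge0$.
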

\begin{proof}[Proof of \cref{thm:Q-exponential}]
By \cref{lem:Qgeneral:case1,lem:Qgeneral:case2}, with probability at least $1- CM^{2}\exp{(-c\frac{n}{M^{2}})}  $, we have $\cR_P(f_{\rhohatQ}) \le \cR_P(f_M)$ for all sufficiently large $n$. Since $\cR_P(f_M) = \min_{j} \cR_P(f_j)$, this gives $\cR_P(f_{\rhohatQ}) \le \min_{j} \cR_P(f_j)$, which concludes the proof.
\end{proof}
We now prove the lemmas in the following two subsections.

\subsection{Proof of \cref{lem:Qgeneral:case1}}
To prove \cref{lem:Qgeneral:case1}, we use the following two lemmas, which we state here and prove after showing how they imply the lemma. Notice that in the following, the constants $c,C>0$ depend on $M$ and $\rho_Q$.
\begin{lemma}\label{lem:Qgeneral:obs2}
Let $ \rho_Q \in \blacktriangle_{M-1}$ be any solution to the population problem $ \Psi $ in \Cref{eq:Qgeneral:population_problem}.
If $\Psi(\rho_Q) < \cR_P(f_M)$, then $\cR_P(f_{\rho_Q}) < \cR_P(f_M)$.
\end{lemma}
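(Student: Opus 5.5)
The plan is to compare $\Psi(\rho_Q)$ directly with $\cR_P(f_{\rho_Q})$ through the Jensen-gap identity \eqref{eqn:Jensen-gap}. By definition,
\begin{equation*}
\Psi(\rho) = \tfrac12 \cR_P(f_\rho) + \tfrac12\Big(\cR_P(f_M) + \sum_{j=1}^{M-1}\rho_j\Delta_j\Big).
\end{equation*}
Write $\bar\rho\in\triangle_M$ for the lift of $\rho$, with last coordinate $1-\sum_{j<M}\rho_j$. The second bracket then equals $\EE_{k\sim\bar\rho}\cR_P(f_k)$, because $\Delta_j = \cR_P(f_j)-\cR_P(f_M)$. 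Applying \eqref{eqn:Jensen-gap} gives $\EE_{k\sim\bar\rho}\cR_P(f_k) = \cR_P(f_\rho) + \EE_{X}\Var_{k\sim\bar\rho}[f_k(X)] \ge \cR_P(f_\rho)$. Hence $\Psi(\rho)\ge \cR_P(f_\rho)$ for every $\rho\in\blacktriangle_{M-1}$.

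Applying this at $\rho=\rho_Q$ and using the hypothesis $\Psi(\rho_Q)<\cR_P(f_M)$ gives the chain
\begin{equation*}
\cR_P(f_{\rho_Q}) \le \Psi(\rho_Q) < \cR_P(f_M).
\end{equation*}
This is exactly the claim. Nothing in this argument uses optimality of $\rho_Q$; the statement holds for any feasible $\rho$.

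An alternative route, closer to the paper's notation, uses the expansion $\Psi(\rho)=\cR_P(f_M)+\sum_i\rho_i(\Delta_i-\tfrac12 d_{iM}^2)+\tfrac12\rho^\top G\rho$ together with $\cR_P(f_\rho)=\cR_P(f_M)+2\sum_i\rho_i\langle g_i, f_M-Y\rangle+\rho^\top G\rho$. Checking the resulting inequality this way is equivalent to nonnegativity of the variance, $\sum_i\rho_i d_{iM}^2-\rho^\top G\rho\ge0$, so it reduces to the same fact.

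The only point needing care is the bookkeeping that identifies the linear term $\cR_P(f_M)+\sum_j\rho_j\Delta_j$ with $\EE_{k\sim\bar\rho}\cR_P(f_k)$ on the reduced simplex $\blacktriangle_{M-1}$. I expect this to be routine rather than a genuine obstacle.
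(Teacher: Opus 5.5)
Your proof is correct, but it takes a different route from the paper's. The paper never uses convexity. It doubles the hypothesis and subtracts $\cR_P(f_M)$ to get $\cR_P(f_{\rho_Q})+\sum_j(\rho_Q)_j\Delta_j<\cR_P(f_M)$. It then discards the sum, because $\Delta_j\ge 0$ (optimality of $f_M$) and $(\rho_Q)_j\ge 0$.

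You instead discard the Jensen gap, proving $\Psi(\rho)\ge\cR_P(f_\rho)$ on all of $\blacktriangle_{M-1}$. This uses convexity of the squared loss but not the optimality of $f_M$.

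Each route is an exact identity minus one nonnegative term. The paper's is $\cR_P(f_M)-\cR_P(f_\rho)=2\bigl(\cR_P(f_M)-\Psi(\rho)\bigr)+\sum_j\rho_j\Delta_j$. Yours is $\cR_P(f_M)-\cR_P(f_\rho)=\bigl(\cR_P(f_M)-\Psi(\rho)\bigr)+\tfrac12\EE_X\Var_{k\sim\bar\rho}[f_k(X)]$.

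The two routes buy different things:
\begin{itemize}
\item Under the hypothesis $\Psi(\rho_Q)<\cR_P(f_M)$, the paper's argument gives a lower bound on the risk gap twice as large as yours. This is irrelevant downstream, since only $c_{\mathrm{gap}}>0$ matters in the proof of \cref{lem:Qgeneral:case1}.
\item Your argument gives the structural fact $\Psi\ge\cR_P(f_\rho)$, which does not depend on which index is the reference. It is also exactly the Jensen-gap intuition from the introduction.
\item The paper's version is slightly more elementary, since it needs neither the lift to $\triangle_M$ nor the variance identity.
\end{itemize}

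Your alternative reduction to $\sum_i\rho_i d_{iM}^2-\rho^\top G\rho\ge 0$ is also correct. This quantity is $\EE_X\Var_{k\sim\bar\rho}[g_k(X)]$ with $g_M\equiv 0$.
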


\begin{lemma}\label{lem:Qgeneral:convergence}
There exist universal constants $c,C>0$ such that the following holds.
Let $ \rho_Q \in \blacktriangle_{M-1}$ be any solution to the population problem $ \Psi $ in \Cref{eq:Qgeneral:population_problem}. With probability at least $1- CM^{2}\exp{(-c\frac{\eps^{2}}{M^{2}}n )}  $, the following hold for all $j=1,\dots,M-1$:
\begin{align*}
 |(\hat{G}\rhohatQ)_j + \hat{\Delta}_j - \frac{1}{2}\hat{d}_{jM}^2- ((G\rhohatQ)_j + \Delta_j - \frac{1}{2}d_{jM}^2 )| \le &\eps, \\
\text{and} \quad \|\sqrt{G}( \rhohatQ - \rho_Q)\|^2_2=\ee_{X\sim P_X}\left[ \left( f_{\rhohatQ}(X) - f_{\rho_Q}(X) \right)^2 \right]
\leq &\eps.
\end{align*}
\end{lemma}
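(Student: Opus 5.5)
The plan is to reduce both claims to entrywise concentration of finitely many empirical averages, and then to use the fact that both $\Psi$ and $\hat\Psi$ are \emph{exact} quadratics in $\rho$ whose coefficients are exactly these averages. Since $\rhohatQ$ is data-dependent, pointwise concentration at a fixed $\rho$ is useless. Uniformity over $\blacktriangle_{M-1}$ comes for free, however, because any $\rho$ there has $\nrm{\rho}_1\le 1$.

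\textbf{Step 1 (empirical quadratic form).} As in the population computation preceding \cref{eq:Qgeneral:rhonegKL_KKT}, I will expand
\[
\Rhat_\rS(f_M+\textstyle\sum_j\rho_jg_j)=\Rhat_\rS(f_M)+2\sum_j\rho_j\frac1n\sum_m g_j(X_m)(f_M(X_m)-Y_m)+\rho^\top\hat G\rho .
\]
I will also use the identity $\hat\Delta_j=\frac2n\sum_m g_j(X_m)(f_M(X_m)-Y_m)+\hat d_{jM}^2$. Together these give
\[
\hat\Psi(\rho)=\Rhat_\rS(f_M)+\hat b^\top\rho+\tfrac12\rho^\top\hat G\rho, \qquad \hat b_j:=\hat\Delta_j-\tfrac12\hat d_{jM}^2 .
\]
The analogous identity holds for $\Psi$, with $b_j:=\Delta_j-\tfrac12 d_{jM}^2$ and $G$.

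\textbf{Step 2 (concentration).} Each of $\hat G_{ij}$, $\hat\Delta_j$, $\hat d^2_{jM}$ is an average of i.i.d.\ variables bounded in $[-1,1]$, because all functions and labels lie in $[0,1]$. Their means are $G_{ij}$, $\Delta_j$, $d^2_{jM}$ respectively. Hoeffding's inequality at accuracy $\eps'$ ($=\eps/8$, say), together with a union bound over the at most $(M-1)^2+2(M-1)\le 2M^2$ quantities, gives an event $E$ of probability at least $1-4M^2\exp(-c\eps^2 n)$. On $E$ we have $\max_{ij}|\hat G_{ij}-G_{ij}|\le\eps'$ and $\max_j|\hat b_j-b_j|\le 2\eps'$. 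This is even stronger than the claimed $\exp(-c\eps^2 n/M^2)$.

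On $E$, for every $\rho\in\blacktriangle_{M-1}$, Hölder's inequality gives
\[
|((\hat G-G)\rho)_j|\le\max_i|\hat G_{ij}-G_{ij}|\,\nrm{\rho}_1\le \eps'
\quad\text{and}\quad
|\rho^\top(\hat G-G)\rho|\le \eps' .
\]
Applying the first bound at $\rho=\rhohatQ$ and adding the $\hat b_j-b_j$ error gives the first claim, since $3\eps'\le\eps$.

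\textbf{Step 3 (closeness in the $G$-seminorm).} Since $\Psi$ is an exact quadratic with Hessian $G$ and $\rho_Q$ minimizes it over the convex set $\blacktriangle_{M-1}$, the first-order optimality condition $\nabla\Psi(\rho_Q)^\top(\rho-\rho_Q)\ge0$ yields
\[
\Psi(\rho)-\Psi(\rho_Q)=\nabla\Psi(\rho_Q)^\top(\rho-\rho_Q)+\tfrac12\|\sqrt G(\rho-\rho_Q)\|_2^2\ge\tfrac12\|\sqrt G(\rho-\rho_Q)\|_2^2 .
\]
Define $D(\rho):=(\hat\Psi(\rho)-\Rhat_\rS(f_M))-(\Psi(\rho)-\cR_P(f_M))$. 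By Step 2, $\sup_{\rho}|D(\rho)|\le 2\eps'+\eps'/2$ on $E$. The minimality $\hat\Psi(\rhohatQ)\le\hat\Psi(\rho_Q)$ then gives
\[
\Psi(\rhohatQ)-\Psi(\rho_Q)\le D(\rho_Q)-D(\rhohatQ)\le 5\eps' .
\]
The constant offsets $\Rhat_\rS(f_M)$ and $\cR_P(f_M)$ cancel in this difference. It follows that $\|\sqrt G(\rhohatQ-\rho_Q)\|^2\le10\eps'\le\eps$ with $\eps'=\eps/10$. Finally, $\|\sqrt G v\|^2=v^\top Gv=\EE[(\sum_j v_jg_j(X))^2]$, which identifies this quantity with $\EE[(f_{\rhohatQ}-f_{\rho_Q})^2]$.

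\textbf{Main obstacle.} The only delicate point is the data dependence of $\rhohatQ$. I handle it entirely through the $\ell_1$-ball argument of Step 2, which makes every deviation bound uniform over $\blacktriangle_{M-1}$. The remaining steps are routine bookkeeping of constants.
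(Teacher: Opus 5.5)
Your proof is correct, and for the second claim it takes a genuinely different route from the paper.

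\textbf{The paper's route.} It subtracts the population KKT system \cref{eq:Qgeneral:rhonegKL_KKT} from the empirical one \cref{eq:Qgeneral:empirical-KKT}. It then multiplies by $\rhohat_{Q,j}-\rho_{Q,j}$ and uses complementary slackness to get $(\nabla\hat\Psi(\rhohatQ)-\nabla\Psi(\rho_Q))^\top(\rhohatQ-\rho_Q)\le 0$. This amounts to summing the two variational inequalities, i.e.\ a gradient-level monotonicity argument. It yields $\|\sqrt G(\rhohatQ-\rho_Q)\|_2^2\le \|\nabla\hat\Psi(\rhohatQ)-\nabla\Psi(\rhohatQ)\|_\infty\,\|\rhohatQ-\rho_Q\|_1$. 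Its advantage is that it only uses the gradient deviation at $\rhohatQ$, which is exactly the quantity in the first claim. So the second claim follows directly from the first.

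\textbf{Your route.} You use the function-value ``basic inequality'' of M-estimation instead. The population variational inequality gives the quadratic growth $\Psi(\rho)-\Psi(\rho_Q)\ge\frac12\|\sqrt G(\rho-\rho_Q)\|_2^2$. For the empirical side you only use that $\rhohatQ$ minimizes $\hat\Psi$. No Lagrange multipliers, Slater condition, or empirical KKT system are needed. The price is that you need the objective deviation $D$ uniformly over $\blacktriangle_{M-1}$. Your $\ell_1$/H\"older bound provides this for free, because both objectives are exact quadratics whose coefficients you concentrate entrywise.

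\textbf{The $M^2$ in the exponent.} Your H\"older observation also shows this factor is an artifact. The paper bounds $\sum_i|\hat G_{ji}-G_{ji}|\rhohat_{Q,i}$ by $(M-1)\max_i|\hat G_{ji}-G_{ji}|$, so it needs accuracy $\eps/(5M)$ on $\hat G$. Using $\nrm{\rhohatQ}_1\le 1$ instead gives failure probability $O(M^2e^{-c\eps^2 n})$. This implies the stated bound and would work equally well inside the paper's KKT argument.

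\textbf{Minor bookkeeping.} Fix $\eps'=\eps/10$ throughout; your Step~2 says $\eps/8$, but $\eps/10$ satisfies both $3\eps'\le\eps$ and $10\eps'\le\eps$.
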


\begin{proof}[Proof of \cref{lem:Qgeneral:case1}] Let $\rho_Q$ be a solution to the population problem satisfying
$\Psi(\rho_Q)<\cR_P(f_M)$.
By \cref{lem:Qgeneral:obs2}, $\Psi(\rho_Q) < \cR_P(f_M)$ implies $\cR_P(f_{\rho_Q}) < \cR_P(f_M)$. Let $ c_{\mathrm{gap}}:= \cR_P(f_M) - \cR_P(f_{\rho_Q}) > 0 $. Using the Cauchy--Schwarz inequality in the final step, we bound the risk of the empirical minimizer $ \rhohatQ $ as
\begin{align*}
 \cR_P(f_{\rhohatQ})
 &= \ee_{(X,Y)\sim P}\left[ \left( f_{\rhohatQ}(X)  -Y \right)^2 \right] \tag{By definition of risk} \\
 &= \ee_{(X,Y)\sim P}\left[ \left( \left(f_{\rhohatQ}(X) - f_{\rho_Q}(X)\right) + \left(f_{\rho_Q}(X) - Y\right) \right)^2 \right] \tag{Adding and subtracting $f_{\rho_Q}$} \\
 &= \ee_{X\sim P_X}\left[ \left( f_{\rhohatQ}(X) - f_{\rho_Q}(X) \right)^2 \right] + \cR_P(f_{\rho_Q}) + 2\ee_{(X,Y)\sim P}\left[ \left( f_{\rhohatQ}(X) - f_{\rho_Q}(X) \right)(f_{\rho_Q}(X) - Y) \right] \tag{Expanding the square} \\
 &\leq \ee_{X\sim P_X}\left[ \left( f_{\rhohatQ}(X) - f_{\rho_Q}(X) \right)^2 \right] + \cR_P(f_{\rho_Q}) + 2\sqrt{\ee_{X\sim P_X}\left[ \left( f_{\rhohatQ}(X) - f_{\rho_Q}(X) \right)^2 \right] \cR_P(f_{\rho_Q})}. \tag{By Cauchy--Schwarz}
\end{align*}
Apply \cref{lem:Qgeneral:convergence} with $ \eps=c_{\mathrm{gap}}^{2}/16 $. For all sufficiently large $n$, with probability at least $ 1-CM^{2}\exp{(-c\frac{n}{M^{2}} )}  $ (where $c$ absorbs the constant $ \eps $, which depends on $ P $, $ \cF$, and the choice of $ \rho_{Q} $), the distance term satisfies $ \ee_{X\sim P_X}\left[ \left( f_{\rhohatQ}(X) - f_{\rho_Q}(X) \right)^2 \right] \le \eps $. Thus, on this event,
\begin{align*}
 \cR_P(f_{\rhohatQ})
 &\leq \eps + \cR_P(f_{\rho_Q}) + 2\sqrt{\eps \cR_P(f_{\rho_Q})} \tag{By \cref{lem:Qgeneral:convergence}} \\
 &\leq c_{\mathrm{gap}}^{2}/16 + \cR_P(f_{\rho_Q}) + 2\sqrt{\left(c_{\mathrm{gap}}^{2}/16\right) \cR_P(f_{\rho_Q})} \tag{By substituting $ \eps=c_{\mathrm{gap}}^{2}/16 $} \\
 &\leq \cR_P(f_{\rho_Q}) + c_{\mathrm{gap}}^{2}/16 + c_{\mathrm{gap}}/2 \tag{By $f_j,Y\in[0,1]$, implying $\cR_P(f_{\rho_Q})\leq 1$} \\
 &< \cR_P(f_M),
\end{align*}
where the last inequality again uses $f_j,Y\in[0,1]$, implying $c_{\mathrm{gap}} \le 1$, so $c_{\mathrm{gap}}^{2}/16 + c_{\mathrm{gap}}/2 < c_{\mathrm{gap}}$ and $ \cR_P(f_{\rho_Q}) = \cR_P(f_M) - c_{\mathrm{gap}} $.
\end{proof}
\begin{proof}[Proof of \cref{lem:Qgeneral:obs2}]
Recall the definition of the population objective $ \Psi $  evaluated at $\rho_Q$ (\cref{eq:Qgeneral:population_problem}):
\[
\Psi(\rho_Q) = \frac{1}{2} \cR_P(f_{\rho_Q}) + \frac{1}{2} \cR_P(f_M) + \frac{1}{2}\sum_{j=1}^{M-1} (\rho_Q)_j \Delta_j.
\]
Under the assumption $\Psi(\rho_Q) < \cR_P(f_M)$, substituting the definition gives
\[
\frac{1}{2} \cR_P(f_{\rho_Q}) + \frac{1}{2} \cR_P(f_M) + \frac{1}{2}\sum_{j=1}^{M-1} (\rho_Q)_j \Delta_j < \cR_P(f_M).
\]
Multiplying by $2$ and subtracting $\cR_P(f_M)$ from both sides yields
\[
\cR_P(f_{\rho_Q}) + \sum_{j=1}^{M-1} (\rho_Q)_j \Delta_j < \cR_P(f_M).
\]
Because $f_M$ is optimal, the excess risk $\Delta_j = \cR_P(f_j) - \cR_P(f_M) \ge 0$ is nonnegative for all $j=1, \dots, M-1$. Because $\rho_Q$ is simplex-constrained, all components satisfy $(\rho_Q)_j \ge 0$. The penalty term is therefore nonnegative: $\sum_{j=1}^{M-1} (\rho_Q)_j \Delta_j \ge 0$.

Using this nonnegativity gives
\[
\cR_P(f_{\rho_Q}) \le \cR_P(f_{\rho_Q}) + \sum_{j=1}^{M-1} (\rho_Q)_j \Delta_j < \cR_P(f_M).
\]
This proves that $\cR_P(f_{\rho_Q}) < \cR_P(f_M)$.
\end{proof}

\begin{proof}[Proof of \cref{lem:Qgeneral:convergence}]
  Consider the following events bounding the empirical quantities:
  \begin{align*}
  E_{1}&=\{ \forall j\in \{1,\dots,M-1\} : |\hat{\Delta}_{j}-\Delta_j| \le \eps/5 \} \\
  E_{2}&=\{ \forall i,j\in \{1,\dots,M-1\} : |\hat{G}_{i,j} - G_{i,j}| \le \frac{\eps}{5M} \} \\
  E_{3}&=\{ \forall j\in \{1,\dots,M-1\} : |\hat{d}_{jM}^2 - d_{jM}^2| \le \eps/5 \}
  \end{align*}
  Hoeffding's inequality and a union bound, using $f_j,Y\in[0,1]$, give
  \begin{align*}
    \PP(E_1\cap E_2\cap E_3)\ge 1- 2((M-1)^2+2(M-1))\exp\prn{-\frac{n\eps^2}{50M^2}}
\ge 1- 6M^2\exp\prn{-\frac{n\eps^2}{50M^2}}.
  \end{align*}
  We condition on the intersection of these events from now on.

  For any $j=1,\dots,M-1$:
  \begin{align}\label{eq:Qgeneral:convergence2}
  \left|(\hat{G}\rhohatQ)_j + \hat{\Delta}_j - \frac{1}{2}\hat{d}_{jM}^2- \left((G\rhohatQ)_j + \Delta_j - \frac{1}{2}d_{jM}^2 \right)\right| &\le \sum_{i=1}^{M-1} |\hat{G}_{ji} - G_{ji}| \rhohat_{Q,i} + |\hat{\Delta}_j - \Delta_j| + \frac{1}{2}|\hat{d}_{jM}^2 - d_{jM}^2| \nonumber \\
  &\le \frac{\eps}{5M}(M-1) + \frac{\eps}{5} + \frac{\eps}{10} < \frac{\eps}{2} < \eps,
  \end{align}
  which proves the first claim of the lemma.

 For convenience, define
  \[
  \rho_{Q,M} := 1-\sum_{j=1}^{M-1} (\rho_Q)_j,
  \qquad
  \rhohat_{Q,M} := 1-\sum_{j=1}^{M-1} \rhohat_{Q,j}.
  \]
  By the KKT optimality conditions, we have $ \mu_{M}\rho_{Q,M}=0 $ and $ \hat{\mu}_M\rhohat_{Q,M}=0 $. Since $ \rho_{Q},\rhohatQ\in \blacktriangle_{M-1} $, we also have $ \rho_{Q,M},\rhohat_{Q,M}\geq 0 $.
  The KKT optimality conditions (\cref{eq:Qgeneral:rhonegKL_KKT}) give:
  \begin{align*}
      ( G\rho_Q )_j + \Delta_j - \frac{1}{2}d_{jM}^2 &= \mu_j - \mu_M,
  \end{align*}
 and
  \begin{align*}
      ( \hat{G}\rhohatQ )_j + \hat{\Delta}_j - \frac{1}{2}\hat{d}_{jM}^2 &= \hat{\mu}_j - \hat{\mu}_M.
  \end{align*}
  Subtracting the two equations yields
  \begin{align*}
      ( \hat{G}\rhohatQ )_j - ( G\rho_Q )_j + \hat{\Delta}_j - \Delta_j - \frac{1}{2}(\hat{d}_{jM}^2 - d_{jM}^2) = -(\hat{\mu}_M-\mu_M) + \hat{\mu}_j - \mu_j.
  \end{align*}
  Multiplying both sides by $ (\rhohat_{Q,j} - \rho_{Q,j}) $ and summing over $ j=1,\dots,M-1 $, we obtain
  \begin{align*}
&\sum_{j=1}^{M-1} (\rhohat_{Q,j} - \rho_{Q,j}) \left[ ( \hat{G}\rhohatQ )_j - ( G\rho_Q )_j + \hat{\Delta}_j - \Delta_j - \frac{1}{2}(\hat{d}_{jM}^2 - d_{jM}^2) \right]
\\
    &=\sum_{j=1}^{M-1} (\rhohat_{Q,j} - \rho_{Q,j}) \left[ -(\hat{\mu}_M-\mu_M) + \hat{\mu}_j - \mu_j \right]
    \\
      &= -(\hat{\mu}_M-\mu_M)\sum_{j=1}^{M-1} (\rhohat_{Q,j} - \rho_{Q,j}) + \sum_{j=1}^{M-1} (\rhohat_{Q,j} - \rho_{Q,j})(\hat{\mu}_j - \mu_j)
      \\
      &= (\hat{\mu}_M-\mu_M)\sum_{j=1}^{M-1} (\rho_{Q,j}-\rhohat_{Q,j} ) + \sum_{j=1}^{M-1} (\rhohat_{Q,j} \hat{\mu}_j - \rhohat_{Q,j} \mu_j - \rho_{Q,j} \hat{\mu}_j + \rho_{Q,j} \mu_j)
      \\
      &= (\hat{\mu}_M-\mu_M)(\rhohat_{Q,M} - \rho_{Q,M}) + \sum_{j=1}^{M-1} (\rhohat_{Q,j} \hat{\mu}_j - \rhohat_{Q,j} \mu_j - \rho_{Q,j} \hat{\mu}_j + \rho_{Q,j} \mu_j)
      \tag{By $\sum_{j=1}^{M-1} \rho_{Q,j} = 1 - \rho_{Q,M}$ and $\sum_{j=1}^{M-1} \rhohat_{Q,j} = 1 - \rhohat_{Q,M}$}
      \\
      &= -\hat{\mu}_M\rho_{Q,M} - \mu_M\rhohat_{Q,M} - \sum_{j=1}^{M-1} (\rhohat_{Q,j} \mu_j + \rho_{Q,j} \hat{\mu}_j)
      \tag{By $ \hat{\mu}_M\rhohat_{Q,M} = 0 $, $ \mu_M\rho_{Q,M} = 0 $, $ \rhohat_{Q,j}\hat{\mu}_j = 0 $, and $ \rho_{Q,j}\mu_j = 0 $}
      \\
      &\le 0,
      \tag{By $ \hat{\mu}_M,\rhohat_{Q,j},\mu_M,\rho_{Q,j},\hat{\mu}_j ,\mu_j \geq 0 $}
  \end{align*}
  where the second to last equality uses the KKT conditions again.
  Using $( \hat{G}\rhohatQ )_j - ( G\rho_Q )_j = (G(\rhohatQ-\rho_Q))_j + ((\hat{G}-G)\rhohatQ)_j$ and the preceding inequality, we obtain
  \begin{align*}
      \sum_{j=1}^{M-1} (\rhohat_{Q,j} - \rho_{Q,j})& (G(\rhohatQ-\rho_Q))_j + \sum_{j=1}^{M-1} (\rhohat_{Q,j} - \rho_{Q,j}) \left[ ((\hat{G}-G)\rhohatQ)_j + \hat{\Delta}_j - \Delta_j - \frac{1}{2}(\hat{d}_{jM}^2 - d_{jM}^2) \right]
      \\
       &=\sum_{j=1}^{M-1} (\rhohat_{Q,j} - \rho_{Q,j}) \left[ ( \hat{G}\rhohatQ )_j - ( G\rho_Q )_j + \hat{\Delta}_j - \Delta_j - \frac{1}{2}(\hat{d}_{jM}^2 - d_{jM}^2) \right]\leq0
       \\
      \implies \|\sqrt{G}(\rhohatQ - \rho_Q)\|^2_2 &\le \sum_{j=1}^{M-1} (\rho_{Q,j} - \rhohat_{Q,j}) \left[ (\hat{G}\rhohatQ)_j - (G\rhohatQ)_j + \hat{\Delta}_j - \Delta_j - \frac{1}{2}(\hat{d}_{jM}^2 - d_{jM}^2) \right] \\
      &\le \sum_{j=1}^{M-1} |\rho_{Q,j} - \rhohat_{Q,j}| \cdot \left| (\hat{G}\rhohatQ)_j - (G\rhohatQ)_j + \hat{\Delta}_j - \Delta_j - \frac{1}{2}(\hat{d}_{jM}^2 - d_{jM}^2) \right|.
  \end{align*}
  The absolute value in the sum is the empirical deviation bounded in \cref{eq:Qgeneral:convergence2}. Thus,
  \begin{align*}
      \|\sqrt{G}(\rhohatQ - \rho_Q)\|^2_2 &\le \sum_{j=1}^{M-1} |\rho_{Q,j} - \rhohat_{Q,j}| \cdot \frac{\eps}{2} \\
      &\le \frac{\eps}{2} \sum_{j=1}^{M-1} (\rho_{Q,j} + \rhohat_{Q,j}) \le \eps,
  \end{align*}
  where the final inequality uses $ \rhohatQ, \rho_Q \in \blacktriangle_{M-1} $, which ensures that the sum of their coordinates is at most $ 2 $. This completes the proof of the lemma.
\end{proof}

\subsection{Proof of \cref{lem:Qgeneral:case2}}
To prove \cref{lem:Qgeneral:case2}, we use \cref{lem:Qgeneral:convergence} from the previous section and the following two lemmas, which we state here and prove after showing how they imply \cref{lem:Qgeneral:case2}.

\begin{lemma}\label{lem:Qgeneral:mixture_risk}
For any $ \rho\in \blacktriangle_{M-1} $, we have
\begin{align*}
\cR_P(f_\rho) = \cR_P(f_M) + \sum_{i =1}^{M-1} \rho_i (\Delta_i - d_{iM}^2) + \sum_{i =1}^{M-1} \sum_{j=1}^{M-1} \rho_i \rho_j G_{ij}.
\end{align*}
\end{lemma}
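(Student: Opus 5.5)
The plan is to expand the squared loss around $f_M$ and take expectations term by term. For $\rho\in\blacktriangle_{M-1}$ we have $f_\rho=f_M+\sum_{i=1}^{M-1}\rho_i g_i$. Writing $f_\rho(X)-Y=(f_M(X)-Y)+\sum_i\rho_i g_i(X)$ and squaring gives
\begin{align*}
\cR_P(f_\rho)=\cR_P(f_M)+2\sum_{i=1}^{M-1}\rho_i\,\EE\brk{g_i(X)(f_M(X)-Y)}+\sum_{i=1}^{M-1}\sum_{j=1}^{M-1}\rho_i\rho_j G_{ij}.
\end{align*}
The quadratic term is already in the required form by the definition of $G$.

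It remains to identify the cross term. This is the same expansion applied to a single vertex: take $\rho=e_i$, so that $f_\rho=f_i$. The previous display then yields
\begin{align*}
\cR_P(f_i)=\cR_P(f_M)+2\EE\brk{g_i(X)(f_M(X)-Y)}+G_{ii}.
\end{align*}
Since $G_{ii}=d_{iM}^2$ and $\cR_P(f_i)-\cR_P(f_M)=\Delta_i$, it follows that $2\EE\brk{g_i(X)(f_M(X)-Y)}=\Delta_i-d_{iM}^2$. Substituting this back into the first display gives the claimed identity.

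All expectations are finite because $f_j$ and $Y$ take values in $[0,1]$. No step is a real obstacle; the only care needed is bookkeeping the factor $2$ in the cross term. As a sanity check, the identity combined with $\Psi=\tfrac12\cR_P(f_\rho)+\tfrac12\cR_P(f_M)+\tfrac12\sum_j\rho_j\Delta_j$ reproduces the expansion of $\Psi$ used for the KKT conditions.
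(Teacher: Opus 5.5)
Your proposal is correct and follows essentially the same route as the paper: expand the square around $f_M$, then identify the cross term $2\EE\brk{(f_M(X)-Y)g_i(X)}=\Delta_i-d_{iM}^2$ by applying the same expansion to the single hypothesis $f_i=f_M+g_i$. Your sanity check that the identity reproduces the expression for $\Psi$ used in the KKT conditions is also correct.
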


\begin{lemma}\label{lem:Qgeneral:obs4}
    There exists $ \rho\in \blacktriangle_{M-1} $ such that $\sum_{j=1}^{M-1} \rho_j d_{jM}^2 > 2 \sum_{j=1}^{M-1} \rho_j \Delta_j$ if and only if there exists a solution $ \rho_Q \in \blacktriangle_{M-1} $ to the population problem $ \Psi $ in \cref{eq:Qgeneral:population_problem} such that $\Psi(\rho_Q) < \cR_P(f_M)$.
\end{lemma}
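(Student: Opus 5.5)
The plan is to work directly with the quadratic expansion of the population objective recorded in \cref{subsec:rewriting}. For every $\rho\in\blacktriangle_{M-1}$,
\begin{equation*}
    \Psi(\rho)=\cR_P(f_M)+L(\rho)+\tfrac12\rho^\top G\rho,
    \qquad\text{where}\qquad
    L(\rho):=\sum_{j=1}^{M-1}\rho_j\prn{\Delta_j-\tfrac12 d_{jM}^2}.
\end{equation*}
This follows from \cref{lem:Qgeneral:mixture_risk} by adding $\tfrac12\cR_P(f_M)+\tfrac12\sum_j\rho_j\Delta_j$ and halving. The condition $\sum_j\rho_jd_{jM}^2>2\sum_j\rho_j\Delta_j$ is exactly $L(\rho)<0$. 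Two further facts do the reduction. First, $\Psi(0)=\cR_P(f_M)$, since $0\in\blacktriangle_{M-1}$ corresponds to $f_M$. Second, all minimizers of $\Psi$ over the compact set $\blacktriangle_{M-1}$ share the same optimal value, and minimizers exist. Hence the right-hand side of the equivalence holds if and only if $\inf_{\blacktriangle_{M-1}}\Psi<\cR_P(f_M)$, that is, if and only if some feasible $\rho$ has $\Psi(\rho)<\cR_P(f_M)$.

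For the direction ($\Leftarrow$), take $\rho_Q$ with $\Psi(\rho_Q)<\cR_P(f_M)$. The expansion gives $L(\rho_Q)<-\tfrac12\rho_Q^\top G\rho_Q\le 0$, because $G$ is positive semidefinite. So $\rho=\rho_Q$ itself witnesses the left-hand side.

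For the direction ($\Rightarrow$), take $\rho\in\blacktriangle_{M-1}$ with $L(\rho)<0$. I would scale it: for $t\in(0,1]$ the point $t\rho$ stays in $\blacktriangle_{M-1}$, and
\begin{equation*}
    \Psi(t\rho)-\cR_P(f_M)=tL(\rho)+\tfrac{t^2}{2}\rho^\top G\rho .
\end{equation*}
This is negative as soon as $t<2\abs{L(\rho)}/\max\crl{\rho^\top G\rho,\,\eps_0}$ for any small $\eps_0>0$; if $\rho^\top G\rho=0$, any $t\in(0,1]$ works. Therefore $\min_{\blacktriangle_{M-1}}\Psi\le\Psi(t\rho)<\cR_P(f_M)$. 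Any minimizer $\rho_Q$, which exists by compactness and continuity, then satisfies $\Psi(\rho_Q)<\cR_P(f_M)$.

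There is no real obstacle here. The only points needing care are keeping the feasible set a star-shaped set around $0$ (so that $t\rho$ remains feasible) and remembering that the value at $0$ equals $\cR_P(f_M)$. Both are immediate from the reformulation over $\blacktriangle_{M-1}$.
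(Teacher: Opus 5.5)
Your proof is correct and follows essentially the same route as the paper. Both arguments rest on the quadratic expansion $\Psi(\rho)=\cR_P(f_M)+\sum_j\rho_j(\Delta_j-\frac12 d_{jM}^2)+\frac12\rho^\top G\rho$: positive semidefiniteness of $G$ gives ($\Leftarrow$), and scaling $\rho\mapsto t\rho$ with small $t$ gives ($\Rightarrow$), where your threshold $2\abs{L(\rho)}/\rho^\top G\rho$ is exactly the paper's $a/b$. Your explicit remark that minimizers exist by compactness and all share the optimal value makes precise a step the paper uses only implicitly.
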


\begin{proof}[Proof of \cref{lem:Qgeneral:case2}]
Since there is no solution $ \rho_Q \in\blacktriangle_{M-1}$ to the population problem $ \Psi $ such that $\Psi(\rho_Q) < \cR_P(f_M)$, and $ \Psi(0)=\cR_P(f_M) $ (where $0 \in \mathbb{R}^{M-1}$ denotes the zero vector, meaning all weight is on the optimal function $f_M$), the zero vector $0 \in \mathbb{R}^{M-1}$ is a solution to the population problem. For the remainder of the proof, let $ \rho_Q=0 $. Define
\begin{align*}
I_{>} &= \left\{j \in \{1, \ldots, M-1\} : \Delta_j - \frac{1}{2}d_{jM}^2 > 0\right\}, \\
I_{=,=} &= \left\{j \in \{1, \ldots, M-1\} : \Delta_j - \frac{1}{2}d_{jM}^2 = 0, \Delta_j = 0\right\}, \\
I_{=,\not=} &= \left\{j \in \{1, \ldots, M-1\} : \Delta_j - \frac{1}{2}d_{jM}^2 = 0, \Delta_j \neq 0\right\}.
\end{align*}

We condition on the high-probability event of \cref{lem:Qgeneral:convergence} with $ \rho_Q=0 $ and any fixed $\eps\in(0,1]$ satisfying the following inequalities (if the sets $I_{>}$ and $I_{=,\not=}$ are empty, we can choose any $\eps\in(0,1]$):
\begin{alignat}{2}
 \eps+\sqrt{\eps} &\leq (\Delta_j - \frac{1}{2}d_{jM}^2)/4 \quad &&\text{ for all } j\in I_{>},\label{eq:Qgeneral:case2eps1}
 \\
    \sqrt{\eps} &\leq \Delta_j/2 &&\text{ for all } j\in I_{=,\not=}.\label{eq:Qgeneral:case2eps2}
\end{alignat}
Such an $\eps$ exists because $I_{>}$ and $I_{=,\not=}$ are finite, and the margins $\Delta_j - \frac{1}{2}d_{jM}^2$ for $j\in I_{>}$ and $\Delta_j$ for $j\in I_{=,\not=}$ are strictly positive by definition of these sets. For the latter, $ \Delta_{j}\neq0 $ implies $\Delta_j > 0$ because $f_M$ attains the minimal risk. This choice of $ \eps $ depends only on $ \cF $ and $P$.
Thus, the event from \cref{lem:Qgeneral:convergence} occurs with probability at least $ 1-CM^{2}\exp{(-c\frac{n}{M^{2}} )}  $ (where $c$ absorbs the dependence on $ \eps $, which depends only on $ \cF $ and $ P $): For all $j=1,\dots,M-1$,
\begin{align}
 |(\hat{G}\rhohatQ)_j + \hat{\Delta}_j - \frac{1}{2}\hat{d}_{jM}^2- ((G\rhohatQ)_j + \Delta_j - \frac{1}{2}d_{jM}^2 )| &\le \eps \label{eq:Qgeneral:case21} \\
    \text{and} \quad \|\sqrt{G} \rhohatQ\|^{2}_2=\|\sqrt{G}( \rhohatQ - \rho_Q)\|^2_2=\ee_{X}\left[ \left( f_{\rhohatQ}(X) - f_M(X) \right)^2 \right]
&\leq  \eps \label{eq:Qgeneral:case22}
\end{align}
because $\rho_Q=0$.

\paragraph{Bounding the coordinates of $ G\rhohatQ $.}
The squared Euclidean norm of the $j$-th row of $\sqrt{G}$ (denoted $\sqrt{G}_{j,\cdot}$) satisfies $\|\sqrt{G}_{j,\cdot}\|_2^2 = \sum_{i} (\sqrt{G})_{ji}(\sqrt{G})_{ij} = (\sqrt{G}\sqrt{G})_{jj} = G_{jj} = d_{jM}^2 \leq 1$. Cauchy--Schwarz inequality gives
\begin{align}
 &|(G\rhohatQ)_j|\label{eq:Qgeneral:case23}
 \\
 &= |(\sqrt{G}_{j,\cdot})^{\top} (\sqrt{G}\rhohatQ)| \tag{Since $ G $ is positive semidefinite} \\
 &\leq \|\sqrt{G}_{j,\cdot}\|_2 \|\sqrt{G} \rhohatQ\|_2 \tag{By the Cauchy--Schwarz inequality} \\
 &\leq \|\sqrt{G} \rhohatQ\|_2 \tag{Since $\|\sqrt{G}_{j,\cdot}\|_2 \le 1$} \\
 &\leq \sqrt{\eps}. \tag{By \cref{eq:Qgeneral:case22}}
\end{align}

\paragraph{Coordinates in $ I_{>} $ vanish for $ \eps $ sufficiently small.}
By the KKT characterization \eqref{eq:Qgeneral:empirical-KKT} of the empirical problem, there exist $\hat{\mu}_{j}\geq 0$ for $j=1,\dots,M$ such that
\begin{align*}
        (\hat{G}\rhohatQ)_j + \hat{\Delta}_j - \frac{1}{2}\hat{d}_{jM}^2 + \hat{\mu}_M - \hat{\mu}_j &= 0, \qquad \text{for all }j\in\crl{1,\dots,M-1}, \\
        \hat{\mu}_j \rhohat_{Q,j} &= 0, \qquad \text{for all } j\in\crl{1,\dots,M-1}.
\end{align*}
Fix $j\in I_{>}$. By \cref{eq:Qgeneral:case21} and the first KKT equation above, we can replace the empirical quantities by their population counterparts with an additive $ \eps $ error:
\begin{align*}
  (G\rhohatQ)_j + \Delta_j - \frac{1}{2}d_{jM}^2 + \hat{\mu}_M - \hat{\mu}_j \in(-\eps,\eps). \tag{Substituting empirical quantities}
\end{align*}
By \cref{eq:Qgeneral:case23}, $ (G\rhohatQ)_j $ lies in $ (-\sqrt{\eps},\sqrt{\eps}) $. We obtain
\begin{align}\label{eq:Qgeneral:case24}
      \Delta_j - \frac{1}{2}d_{jM}^2 + \hat{\mu}_M - \hat{\mu}_j \in(-\eps-\sqrt{\eps},\eps + \sqrt{\eps}).
\end{align}
Since $ \hat{\mu}_M\geq 0 $ by the KKT characterization \eqref{eq:Qgeneral:empirical-KKT},
\begin{align*}
      \Delta_j - \frac{1}{2}d_{jM}^2 - \hat{\mu}_j \leq \eps+\sqrt{\eps}.
\end{align*}
Because $j\in I_{>}$ so $\Delta_j - \frac{1}{2}d_{jM}^2>0$ and $ \eps $ was chosen so that $ \eps+\sqrt{\eps} \leq (\Delta_j - \frac{1}{2}d_{jM}^2)/4 $ (\cref{eq:Qgeneral:case2eps1}), we obtain
\[
\hat{\mu}_j \geq \frac{3}{4}\left(\Delta_j - \frac{1}{2}d_{jM}^2\right) > 0.
\]
The second condition in \eqref{eq:Qgeneral:empirical-KKT} now gives $ \rhohat_{Q,j} = 0 $.

\paragraph{Conclusion.}
By the hypothesis of this case and the equivalence in \cref{lem:Qgeneral:obs4}, every $ \rho\in \blacktriangle_{M-1} $ satisfies $\sum_{j=1}^{M-1} \rho_j d_{jM}^2 \leq 2 \sum_{j=1}^{M-1} \rho_j \Delta_j$. Setting $\rho = e_j$ (the unit vector for the $j$-th component) gives $d_{jM}^2 \leq 2 \Delta_j$, so $ \Delta_{j}-\frac{1}{2}d_{jM}^2 \geq 0 $ for all $ j\in\{  1,\ldots,M-1\}$. It follows that $ I_{>}, I_{=,=} $, and $ I_{=,\not=} $ form a partition of $ \{  1,\ldots,M-1\} $.
By \cref{lem:Qgeneral:mixture_risk}, we can write the risk of the empirical estimator $f_{\rhohatQ}$ as
\begin{align*}
\cR_P(f_{\rhohatQ}) &= \cR_P(f_M) + \sum_{i =1}^{M-1} \rhohat_{Q,i} (\Delta_i - d_{iM}^2) + \sum_{i =1}^{M-1} \sum_{j=1}^{M-1} \rhohat_{Q,i} \rhohat_{Q,j} G_{ij} \tag{By \cref{lem:Qgeneral:mixture_risk}} \\
&= \cR_P(f_M) + \sum_{i =1}^{M-1} \rhohat_{Q,i} (\Delta_i - d_{iM}^2) + \sum_{i =1}^{M-1} \rhohat_{Q,i} (G\rhohatQ)_{i} \tag{By definition of the Gram matrix $ G $} \\
&= \cR_P(f_M) + \sum_{i =1}^{M-1} \rhohat_{Q,i} \left(\Delta_i - d_{iM}^2+ (G\rhohatQ)_{i} \right) \tag{Factoring out $\rhohat_{Q,i}$} \\
&= \cR_P(f_M) + \sum_{i\in I_{>} } \rhohat_{Q,i} \left(\Delta_i - d_{iM}^2+ (G\rhohatQ)_{i} \right) \\
&\quad + \sum_{i\in I_{=,=} } \rhohat_{Q,i} \left(\Delta_i - d_{iM}^2+ (G\rhohatQ)_{i} \right) + \sum_{i\in I_{=,\not=} } \rhohat_{Q,i} \left(\Delta_i - d_{iM}^2+ (G\rhohatQ)_{i} \right) \tag{Splitting the sum}
\end{align*}
We now bound each sum above by $0$; this implies $\cR_P(f_{\rhohatQ}) \leq \cR_P(f_M)$
and concludes the proof.
For the first sum over $ I_{>} $, we proved above that $ \rhohat_{Q,i}=0  $, so the entire sum equals zero:
\begin{align*}
 \sum_{i\in I_{>} } \rhohat_{Q,i} (\Delta_i - d_{iM}^2+ (G\rhohatQ)_{i} ) = 0.
\end{align*}
For the second sum over $ I_{=,=} $, we have $ \Delta_i - \frac{1}{2}d_{iM}^2 = 0 $ and $ \Delta_i = 0 $, which imply $ d_{iM}^{2}=0 $. The latter implies $ f_i = f_M $ $P$-almost everywhere. This gives $ g_i = 0 $ $P$-almost everywhere and $ G_{ij}=\ee_{X\sim P_X}[g_{i}g_{j}]=0 $ for every $ j\in\{  1,\ldots,M-1\} $. This further implies $ (G\rhohatQ)_{i}=\sum_{j=1}^{M-1} G_{ij} \rhohat_{Q,j} = 0 $, so the second sum equals $ 0 $:
\begin{align*}
 \sum_{i\in I_{=,=} } \rhohat_{Q,i} (\Delta_i - d_{iM}^2+ (G\rhohatQ)_{i} ) = 0.
\end{align*}
Finally, for the sum over $ I_{=,\not=} $, we have $ \Delta_i - \frac{1}{2}d_{iM}^2 = 0 $ and $ \Delta_i \neq 0 $, implying that $ \Delta_i-d_{iM}^2 = -\Delta_{i} < 0 $. By \cref{eq:Qgeneral:case23}, $ (G\rhohatQ)_{i}\leq \sqrt{\eps} $. By the choice of $\eps$ (\cref{eq:Qgeneral:case2eps2}), we have $ (G\rhohatQ)_{i}\leq \Delta_{i}/2 $. Since $ \rhohat_{Q,i} \geq 0 $ and $ \Delta_{i}>0 $,
\begin{align*}
 \sum_{i\in I_{=,\not=} } \rhohat_{Q,i} (\Delta_i - d_{iM}^2+ (G\rhohatQ)_{i} ) &= \sum_{i\in I_{=,\not=} } \rhohat_{Q,i} (-\Delta_i + (G\rhohatQ)_{i} ) \\
 &\leq \sum_{i\in I_{=,\not=} } \rhohat_{Q,i} (-\Delta_i/2) \\
 &\leq 0.
\end{align*}
This concludes the proof.
\end{proof}

\begin{proof}[Proof of \cref{lem:Qgeneral:mixture_risk}]
 Let $f_\rho = f_M + \sum_{i =1}^{M-1} \rho_i g_i$. The risk of this combination expands as
\begin{align*}
    \cR_P(f_\rho) &= \ee_{(X,Y)\sim P}\left[\left(f_M(X) - Y + \sum_{i =1}^{M-1} \rho_i g_i(X)\right)^2\right] \tag{By definition of the expected risk $\cR_P(\cdot)$} \\
    &= \ee_{(X,Y)\sim P}\left[\left(f_M(X) - Y\right)^2\right] + 2 \sum_{i =1}^{M-1} \rho_i \ee_{(X,Y)\sim P}\left[\left(f_M(X) - Y\right)g_i(X)\right] + \ee_{X\sim P_X}\left[\left(\sum_{i =1}^{M-1} \rho_i g_i(X)\right)^2\right] \tag{By expanding the square} \\
    &= \cR_P(f_M) + 2 \sum_{i =1}^{M-1} \rho_i \ee_{(X,Y)\sim P}\left[\left(f_M(X) - Y\right)g_i(X)\right] + \sum_{i =1}^{M-1} \sum_{j=1}^{M-1} \rho_i \rho_j G_{ij}. \tag{By definition of $\cR_P(f_M)$ and the Gram matrix $G$}
\end{align*}
To evaluate the cross-term, we analyze the risk of an individual hypothesis $f_i$ for $i \in \{1,\dots,M-1\}$:
\begin{align*}
    \cR_P(f_i) &= \ee_{(X,Y)\sim P}\left[\left(f_M(X) + g_i(X) - Y\right)^2\right] \tag{By substituting $f_i = f_M + g_i$} \\
    &= \cR_P(f_M) + 2\ee_{(X,Y)\sim P}\left[\left(f_M(X) - Y\right)g_i(X)\right] + \ee_{X\sim P_X}\left[g_i(X)^2\right] \tag{By expanding the square} \\
    &= \cR_P(f_M) + 2\ee_{(X,Y)\sim P}\left[\left(f_M(X) - Y\right)g_i(X)\right] + d_{iM}^2. \tag{Using $d_{iM}^2 = \ee_{X\sim P_X}[g_i(X)^2]$}
\end{align*}
Rearranging terms yields an expression for the expected cross-term:
\begin{align*}
    2\ee_{(X,Y)\sim P}\left[\left(f_M(X) - Y\right)g_i(X)\right] &= \cR_P(f_i) - \cR_P(f_M) - d_{iM}^2 \tag{By isolating the cross-term} \\
    &= \Delta_i - d_{iM}^2. \tag{By substituting the excess risk $\Delta_i = \cR_P(f_i) - \cR_P(f_M)$}
\end{align*}
Substituting this back into the expression for $\cR_P(f_\rho)$ gives
\begin{align*}
    \cR_P(f_\rho) &= \cR_P(f_M) + \sum_{i =1}^{M-1} \rho_i (\Delta_i - d_{iM}^2) + \sum_{i =1}^{M-1} \sum_{j=1}^{M-1} \rho_i \rho_j G_{ij}
\end{align*}
as claimed.
\end{proof}

\begin{proof}[Proof of \cref{lem:Qgeneral:obs4}]
    ($\Rightarrow$) Assume there exists a $\rho\in \blacktriangle_{M-1}$ such that $\sum_{j=1}^{M-1} \rho_j \left(\Delta_j - \frac{1}{2}d_{jM}^2\right) < 0$. Consider the convex combination moving from the optimal function $f_M$ towards the mixture $f_{\rho}$. Let $\alpha \in (0, 1]$ and define the weight vector $ \alpha \rho$. Since $\rho \in \blacktriangle_{M-1}$ and $\alpha \le 1$, we have $\alpha \rho \in \blacktriangle_{M-1}$.

    Evaluating the objective $\Psi$ of the population problem at this mixture yields
    \begin{align*}
        \Psi(\alpha \rho)
        &= \frac{1}{2} \cR_P\left(f_M + \alpha \sum_{j=1}^{M-1} \rho_j g_j\right) + \frac{1}{2} \cR_P(f_M) + \frac{\alpha}{2} \sum_{j=1}^{M-1} \rho_j \Delta_j \tag{By definition of $\Psi$} \\
        &= \frac{1}{2} \left[ \cR_P(f_M) + \alpha \sum_{j=1}^{M-1} \rho_j (\Delta_j - d_{jM}^2) + \alpha^2 \rho^\top G \rho \right] + \frac{1}{2} \cR_P(f_M) + \frac{\alpha}{2} \sum_{j=1}^{M-1} \rho_j \Delta_j \tag{By \cref{lem:Qgeneral:mixture_risk}} \\
        &= \cR_P(f_M) + \alpha \sum_{j=1}^{M-1} \rho_j \left(\Delta_j - \frac{1}{2}d_{jM}^2\right) + \frac{1}{2}\alpha^2 \rho^\top G \rho. \tag{Simplifying terms}
    \end{align*}
    We view this as a quadratic function in $\alpha$ with linear coefficient $\sum_{j=1}^{M-1} \rho_j \left(\Delta_j - \frac{1}{2}d_{jM}^2\right)$ and  quadratic coefficient $\frac{1}{2} \rho^\top G \rho$.
    By assumption, the linear coefficient is strictly negative:
    \begin{align*}
        \sum_{j=1}^{M-1} \rho_j \left(\Delta_j - \frac{1}{2}d_{jM}^2\right) < 0.
    \end{align*}
    Let this strictly negative constant be $-a$ (where $a > 0$), and let $b = \frac{1}{2} \rho^\top G \rho \ge 0$. The objective becomes
    \begin{align*}
        \Psi(\alpha \rho) = \cR_P(f_M) - \alpha a + \alpha^2 b.
    \end{align*}
    If $b=0$, then $\Psi(\alpha \rho)=\cR_P(f_M)-\alpha a<\cR_P(f_M)$ for every $\alpha\in(0,1]$. If $b>0$, then for any sufficiently small $\alpha \in \left(0, \min\crl{1, \frac{a}{b}}\right)$, we have $-\alpha a + \alpha^2 b< 0$. In either case, there exists $\alpha\in(0,1]$ such that
    \begin{align*}
        \Psi(\alpha \rho) < \cR_P(f_M).
    \end{align*}
    Let $\rho_Q$ be a minimizer of $\Psi$ over the simplex $\blacktriangle_{M-1}$. Then $\Psi(\rho_Q) \le \Psi(\alpha \rho) < \cR_P(f_M)$.

    ($\Leftarrow$) Conversely, assume $\Psi(\rho_Q) < \cR_P(f_M)$. Evaluating the objective $\Psi$ at $\rho_Q$ gives
    \begin{align*}
        \Psi(\rho_Q) &= \cR_P(f_M) + \sum_{j=1}^{M-1} (\rho_Q)_j \left(\Delta_j - \frac{1}{2}d_{jM}^2\right) + \frac{1}{2} \rho_Q^\top G \rho_Q < \cR_P(f_M).
    \end{align*}
    Subtracting $\cR_P(f_M)$ from both sides yields
    \begin{align*}
        \sum_{j=1}^{M-1} (\rho_Q)_j \left(\Delta_j - \frac{1}{2}d_{jM}^2\right) + \frac{1}{2} \rho_Q^\top G \rho_Q < 0.
    \end{align*}
    Since $G$ is a positive semidefinite Gram matrix, the quadratic term is nonnegative ($\frac{1}{2} \rho_Q^\top G \rho_Q \ge 0$). For the entire expression to be strictly negative, the linear term must be strictly negative:
    \begin{align*}
        \sum_{j=1}^{M-1} (\rho_Q)_j \left(\Delta_j - \frac{1}{2}d_{jM}^2\right) < 0.
    \end{align*}
    Rearranging this inequality gives
    \begin{align*}
        \sum_{j=1}^{M-1} (\rho_Q)_j d_{jM}^2 > 2 \sum_{j=1}^{M-1} (\rho_Q)_j \Delta_j.
    \end{align*}
    Since $\rho_Q \in \blacktriangle_{M-1}$, we have found a valid weight vector $\rho$ that satisfies the condition, completing the proof.
\end{proof}

\subsection{Proof of \texorpdfstring{\cref{prop:Q-aggregation-sparsity}}{Proposition \ref{prop:Q-aggregation-sparsity}}}
\label{proof:Q-aggregation-sparsity}

Again, without loss of generality, assume that $\cR_P(f_M)\leq \ldots\leq \cR_P(f_1)$. Recall the notation and reduction to $\blacktriangle_{M-1}$ from \cref{subsec:summary-quantities,subsec:rewriting}.

On $\blacktriangle_{M-1}$, the KKT conditions of the empirical objective, as derived in \cref{eq:Qgeneral:empirical-KKT}, imply the existence of multipliers $\hat\mu_j\geq 0$, $j=1,\ldots,M$, such that
\begin{equation*}
        \begin{aligned}
            (\hat{G}\rhohatQ)_j + \hat{\Delta}_j - \frac{1}{2}\hat{d}_{jM}^2 + \hat{\mu}_M - \hat{\mu}_j &= 0, \quad j=1,\dots,M-1, \\
    \hat{\mu}_j \rhohat_{Q,j} &= 0, \quad j=1,\dots,M-1, \\
    \hat{\mu}_M \Bigg(\sum_{i=1}^{M-1} \rhohat_{Q,i}- 1\Bigg) &= 0.
        \end{aligned}
    \end{equation*}
Combining the conditions gives
\begin{equation*}
    (\hat{G}\rhohatQ)_j + \hat{\Delta}_j - \frac{1}{2}\hat{d}_{jM}^2 >0 \implies \hat{\mu}_j = (\hat{G}\rhohatQ)_j + \hat{\Delta}_j - \frac{1}{2}\hat{d}_{jM}^2 + \hat{\mu}_M>0 \implies \rhohat_{Q,j}=0.
\end{equation*}
By \cref{lem:Qgeneral:convergence}, with probability at least $1-CM^2\exp(-c\frac{\eps^2n}{M^2})$, the following inequalities hold:
\begin{align*}
 |(\hat{G}\rhohatQ)_j + \hat{\Delta}_j - \frac{1}{2}\hat{d}_{jM}^2- ((G\rhohatQ)_j + \Delta_j - \frac{1}{2}d_{jM}^2 )| \le &\eps, \\
\text{and} \quad \|\sqrt{G}( \rhohatQ - \rho_Q)\|^2_2=\ee_{X\sim P_X}\left[ \left( f_{\rhohatQ}(X) - f_{\rho_Q}(X) \right)^2 \right]
\leq &\eps.
\end{align*}
Therefore, on the same event, we have that
\begin{align*}
    (\hat{G}\rhohatQ)_j + \hat{\Delta}_j - \frac{1}{2}\hat{d}_{jM}^2 &\geq (G\rhohatQ)_j + \Delta_j - \frac{1}{2}d_{jM}^2 -\eps \\
    &= (G\rho_Q)_j+ \Delta_j - \frac{1}{2}d_{jM}^2 +(G(\rhohatQ-\rho_Q))_j-\eps \\
    &\geq (G\rho_Q)_j+ \Delta_j - \frac{1}{2}d_{jM}^2-\sqrt{\eps}-\eps
\end{align*}
where the last inequality follows from the same argument as in \cref{eq:Qgeneral:case23}, which gives $|(G(\rhohatQ-\rho_Q))_j|\leq \sqrt{\eps}$. Therefore, if $(G\rho_Q)_j+ \Delta_j - \frac{1}{2}d_{jM}^2>\sqrt{\eps}+\eps$, then $\rhohat_{Q,j}=0$.

Finally, recall that $M$ was chosen arbitrarily as the index of one of the best functions. Fix any $\fstar\in\cFstar$, and recall from \cref{subsec:summary-quantities} that $\Delta_j=\cR_P(f_j)-\cR_P(\fstar)$, $d_{jM}^2=\EE_{X\sim P_X}[(f_j(X)-\fstar(X))^2]$, and $(G\rho_Q)_j=\EE_{X\sim P_X}[(f_j(X)-\fstar(X))(f_{\rho_Q}(X)-\fstar(X))]$.
The polarization identity gives
\begin{equation*}
    (G\rho_Q)_j = \frac{1}{2} \EE_{X\sim P_X}\brk{(f_j(X)-\fstar(X))^2+(f_{\rho_Q}(X)-\fstar(X))^2-(f_{\rho_Q}(X)-f_j(X))^2}.
\end{equation*}
Since the reduction to $\blacktriangle_{M-1}$ identifies the first $M-1$ coordinates with the corresponding full-simplex coordinates, this implies that on $\triangle_M$ we have that on the event from above,
\begin{align*}
    &\cR_P(f_j)-\cR_P(\fstar)+\frac{1}{2}\EE_{X\sim P_X}\brk{(f_{\rho_Q}(X)-\fstar(X))^2-(f_j(X)-f_{\rho_Q}(X))^2}\\
    &= (G\rho_Q)_j+ \Delta_j - \frac{1}{2}d_{jM}^2 >\sqrt{\eps}+\eps
\end{align*}
implies $\rhohat_{Q,j}=0$. This concludes the proof of \cref{prop:Q-aggregation-sparsity}.

\section{Proofs for Countably Infinite Hypothesis Spaces}

\subsection{Proof of \cref{thm:arbitrarily-slow-rates-inf-not-realized}}
\label{proof:arbitrarily-slow-rates-inf-not-realized}

Without loss of generality, let $\NN\subseteq \cX$.
Define, for any $i\in\NN$ and $I\in\{0,1\}^{i}$, the hypothesis
\begin{align*}
 f_I(x)=
 \begin{cases}
    I_x, & \text{if } x\in[i],\\
    0, & \text{otherwise}.
 \end{cases}
\end{align*}
Let the hypothesis space $\cF$ be the collection of all such $f_I$:
\begin{align*}
    \cF=\{f_I:I\in\{0,1\}^i \text{ for some } i\in\NN\} = \bigcup_{i\in\NN}\{f_I:I\in\{0,1\}^i\}.
\end{align*}
This class is countable because it is a countable union of finite sets.

We next show that $\cF$ admits an infinite VCL tree. Recall the formal
definition from \cite{bousquet2021theory}.
\begin{definition}[VCL tree, Definition~1.8 in \cite{bousquet2021theory}]
Let $\cH\subseteq\{0,1\}^{\cX}$ and let
$d\in\NN\cup\{\infty\}$. A VC-Littlestone tree, or \emph{VCL tree}, of depth $d$ for
$\cH$ is a collection
\[
    \Big\{x_u=(x_u^0,\ldots,x_u^k)\in\cX^{k+1}:
    0\le k<d,\
    u\in\prod_{\ell=1}^{k}\{0,1\}^{\ell}
    \Big\},
\]
where the empty product for $k=0$ is interpreted as a singleton, such that for
every $n<d$ and every
\[
    y=(y_1,\ldots,y_{n+1})\in\prod_{\ell=1}^{n+1}\{0,1\}^{\ell},
    \qquad
    y_\ell=(y_\ell^0,\ldots,y_\ell^{\ell-1})\in\crl{0,1}^\ell,
\]
there exists $h\in\cH$ satisfying $h\big(x_{y_{\le k}}^i\big)=y_{k+1}^i$ for all $0\le k\le n$ and $0\le i\le k$,
where $y_{\le k}=(y_1,\ldots,y_k)$ and $y_{\le0}$ is the empty tuple. The class
$\cH$ admits an infinite VCL tree if this holds with $d=\infty$.
\end{definition}

We now construct such a tree explicitly for $\cF$. For $k\in\NN\cup\{0\}$, set
\[
    \mathcal{U}_k=\prod_{\ell=1}^{k}\{0,1\}^{\ell},
\]
where $\mathcal{U}_0$ consists of the empty tuple. The set
\[
    \mathcal{T}=\{(k,u,r):k\in\NN\cup\{0\},\ u\in\mathcal{U}_k,\ r\in\{0,\ldots,k\}\}
\]
is countable because each $\mathcal{U}_k$ is finite. Fix an arbitrary injective map $\psi:\mathcal{T}\to\NN$.
For every $k\in\NN\cup\{0\}$ and every $u\in\mathcal{U}_k$, define
\[
    x_u=(x_u^0,\ldots,x_u^k)\in\NN^{k+1},
    \qquad
    x_u^r=\psi(k,u,r),\quad r=0,\ldots,k.
\]
This gives a candidate infinite VCL tree. We verify the realization condition.
Fix $n\in\NN\cup\{0\}$ and fix
\[
    y=(y_1,\ldots,y_{n+1})\in\prod_{\ell=1}^{n+1}\{0,1\}^{\ell},
    \qquad
    y_\ell=(y_\ell^0,\ldots,y_\ell^{\ell-1})\in\crl{0,1}^\ell.
\]
Recalling that $y_{\leq k}=(y_1^0,(y_2^0,y_2^1),\ldots,(y_k^0,\ldots,y_k^{k-1}))$, define the set
\[
    S_y=\{x_{y_{\le k}}^r\in \NN:0\le k\le n,\ 0\le r\le k\}\subset\NN.
\]
The map $\psi$ is injective, so the points in $S_y$ are all distinct, and the
following definition is well-defined. Let $m_y=\max S_y$, and choose
$I^{(y)}\in\{0,1\}^{m_y}$ such that
\[
    I^{(y)}_{x_{y_{\le k}}^r}=y_{k+1}^r
    \qquad
    \text{for all }0\le k\le n\text{ and }0\le r\le k.
\]
This is possible because the points $x_{y_{\le k}}^r$ are distinct; on coordinates in $[m_y]\setminus S_y$, set the values arbitrarily, say to zero. The corresponding function
$f_{I^{(y)}}$ belongs to $\cF$ and, by construction,
\[
    f_{I^{(y)}}\big(x_{y_{\le k}}^r\big)
    =
    y_{k+1}^r
    \qquad
    \text{for all }0\le k\le n\text{ and }0\le r\le k.
\]
This is precisely the VCL realization condition for the finite path
$y=(y_1,\ldots,y_{n+1})$. Since $n$ and $y$ were arbitrary, the collection
$\{x_u:u\in\mathcal{U}_k,\ k\in\NN\cup\{0\}\}$ is an infinite VCL tree
for $\cF$.

We now use the existence of the VCL tree to prove the result.
We first observe that if $R(n_0)=0$ for some $n_0\in\NN$, then $R(n)=0$ for all $n\ge n_0$, since $R$ is decreasing and nonnegative. Since the right-hand side is then zero for infinitely many $n$, the claim of the theorem follows once we exhibit a distribution for which the infimum
over $\cF$ is zero, as the excess risk then is bounded below by zero, and the infimum is not attained for this distribution. To produce such a distribution, let $P$ be the distribution on $\NN\times\{0,1\}$ given by
$P(\{(j,1)\})=2^{-j}$ for $j\in\NN$. Then, for every $f_I\in\cF$ with
$I\in\{0,1\}^i$, the function $f_I$ is zero outside $[i]$, while $P$ assigns
positive mass to $(j,1)$ for every $j>i$, so
$\cR_P(f_I)>0$. On the other hand, if $I^{(m)}=(1,\ldots,1)\in\{0,1\}^m$,
then $\cR_P(f_{I^{(m)}})\le P(\{(m+1,1),(m+2,1),\ldots\})\to0$. Thus
$\inf_{f\in\cF}\cR_P(f)=0$, but no element of $\cF$ attains this infimum. We
may assume from now on that $R(n)>0$ for all $n\in\NN$.

Fix a learning algorithm $\cA$ for the squared loss. Given a sample $\rS$, define the thresholded classifier
\begin{align*}
    \bar f_{\rS}(x)=\ind\{\cA(\rS)(x)\ge 1/2\}.
\end{align*}
Since $\cF$ has an infinite VCL tree, the proof of \cite[Theorem~5.11]{bousquet2021theory}
implies that there is a universal constant $c_0>0$ such that, for every rate function $R$ with $R(n)\to0$, there exists a binary distribution $P$ on
$\NN\times\{0,1\}$ satisfying
\begin{align}\label{eq:infimum-zero-not-attained}
    \forall h\in\cF,\ \PP_{(X,Y)\sim P}\big(h(X)\ne Y\big)>0 \quad \text{and} \quad \inf_{h\in\cF} \PP_{(X,Y)\sim P}\big(h(X)\ne Y\big)=0
\end{align}
and
\begin{align}\label{eq:lower-bound-classification-error}
    \EE_{\rS\sim P^n}
    \brk{
    \PP_{(X,Y)\sim P}\big(\bar f_{\rS}(X)\ne Y\big)
    }
    \ge c_0 R(n)
\end{align}
for infinitely many $n\in\NN$. We show below, by inspecting the proof \cite[Theorem~5.11]{bousquet2021theory}, that the
distribution can be chosen to have these properties.
Assume this for the moment.
For such a binary distribution, the squared loss of a binary-valued $h\in\cF$
equals its classification error, so
\[
    \inf_{h\in\cF}\cR_P(h)=0.
\]
For any $a\in[0,1]$ and $y\in\{0,1\}$,
\[
    (a-y)^2
    \ge
    \frac14 \ind\{\ind\{a\ge1/2\}\ne y\}.
\]
Therefore, for the same $P$,
\begin{align*}
    \EE_{\rS\sim P^n}\brk{\excessRisk(\cA(\rS))}
    &=
    \EE_{\rS\sim P^n}\brk{
    \EE_{(X,Y)\sim P}\big[(\cA(\rS)(X)-Y)^2\big]-\inf_{h\in\cF}\cR_P(h)
    }\\
    &=
    \EE_{\rS\sim P^n}\brk{
    \EE_{(X,Y)\sim P}\big[(\cA(\rS)(X)-Y)^2\big]
    }\\
    &\ge\frac14\EE_{\rS\sim P^n}
    \brk{
    \PP_{(X,Y)\sim P}\big(\bar f_{\rS}(X)\ne Y\big)
    }\\
    &\ge\frac{c_0}{4}R(n)
\end{align*}
for infinitely many $n\in\NN$. Setting $c=c_0/4$ gives the desired lower bound.

It remains to verify that the distribution supplied
by the construction in the proof of Theorem~5.11 in \cite{bousquet2021theory} satisfies \cref{eq:infimum-zero-not-attained,eq:lower-bound-classification-error}. Let $ y=(y_{1},y_{2},\ldots) $, where the random vectors $ y_{i}\in\{0,1\}^{i} $ are independent and have i.i.d.\ Bernoulli coordinates with parameter $1/2$. Define the random measure
\begin{align*}
P_{y}\big((x^r_{y_{\le k-1}},y_k^r)\big)=\frac{p_k}{k},\qquad \text{for } 0\leq r\leq k-1 \text{ and } k\geq 1,
\end{align*}
where $(p_k)_{k\ge1}$ is a probability distribution on $\NN$ that depends only on $R$ and has infinite support contained in $\{n\in\NN: R(n)>0\}$; this is the case considered here.
By the construction in the proof of Theorem~5.11 in \cite{bousquet2021theory}, there exist a universal constant $c_0'>0$, two strictly increasing sequences
$(n_i)_{i=1}^{\infty}$ and $(k_{i})_{i=1}^{\infty}$, and measurable events $E_i^y
=
\left\{
(X,Y)=
\left(x^r_{y_{\le k_i-1}},y_{k_i}^r\right)
\text{ for some }0\leq r<k_i
\right\}$
such that
\[
\EE_{y}\left[\limsup_{i\to\infty}
\frac{1}{R(n_i)}
\EE_{\rS\sim P_y^{n_i}}\left[
\PP_{(X,Y)\sim P_{y}}(\bar f_{\rS}(X)\ne Y, E_i^y)
\right]\right]\ge c_0',
\]
and, for every realization $y$,
\[
\limsup_{i\to\infty}
\frac{1}{R(n_i)}
\EE_{\rS\sim P_y^{n_i}}\left[
\PP_{(X,Y)\sim P_{y}}(\bar f_{\rS}(X)\ne Y, E_i^y)
\right]
\le C,
\]
for some universal constant $C>0$.
For the random sequence $y$, define the event
\[
E=\left\{
\sup\left\{
x_{y_{\le k-1}}^r:
p_k>0,\ 0\le r\le k-1,\ y_k^r=1
\right\}
=\infty
\right\}.
\]
This event means that $y$ assigns label one to infinitely many points with strictly positive mass under the $X$-marginal of $P_y$. We claim that $\PP(E)=1$. For every $k$ with $p_k>0$, the probability that level $k$
contains no label equal to one is $2^{-k}$. Denote this event by $E_k=\{y:  \forall 0\le r\le k-1,  y_k^r=0\}$. Since the support of $(p_k)$
is infinite and $\sum_{k\ge 1, p_k>0} \PP(E_k) \leq \sum_{k=1}^\infty 2^{-k}<\infty$, the Borel--Cantelli lemma implies that $\p(\cap_{n\ge 1} \cup_{k\ge n,p_k>0} E_k) = 0 $, which is equivalent to the claim that
all but finitely many supported levels contain a one. Since there are infinitely many supported levels, $\PP(E)=1$.
Since the random variable inside the following expectation is bounded by $C$ and
$\PP(E)=1$, multiplying it by $\ind\{E\}$ does not change its
expectation:
\begin{align*}
c_0'
&\le
\EE_{y}\left[\limsup_{i\to\infty} \frac{1}{R(n_i)}
\EE_{\rS\sim P_y^{n_i}}
\brk{\PP_{(X,Y)\sim P_{y}}(\bar f_{\rS}(X)\ne Y, E_i^y)}
\right]
\\
&=
\EE_{y}\left[\limsup_{i\to\infty} \frac{1}{R(n_i)}
\EE_{\rS\sim P_y^{n_i}}
\brk{\PP_{(X,Y)\sim P_{y}}(\bar f_{\rS}(X)\ne Y, E_i^y)}
\ind\{E\}
\right]
\\
&\le
\EE_{y}\left[\limsup_{i\to\infty} \frac{1}{R(n_i)}
\EE_{\rS\sim P_y^{n_i}}
\brk{\PP_{(X,Y)\sim P_{y}}(\bar f_{\rS}(X)\ne Y)}
\ind\{E\}
\right].
\end{align*}
Therefore, there exists a realization $y$ for which
\begin{align*}
    \limsup_{i\rightarrow \infty} \frac{1}{R(n_{i})}\EE_{\rS\sim P_{y}^{n_{i}}}
    \brk{
    \PP_{(X,Y)\sim P_{y}}\big(\bar f_{\rS}(X)\ne Y\big)
    }\ind\{E\}
    \geq c_0'.
\end{align*}
Choose such a realization $y$, set $P=P_{y}$, and let $P_X$ denote its
$X$-marginal. Since the displayed quantity is positive and contains the factor
$\ind\{E\}$, this realization belongs to $E$. The same display implies the classification lower bound in \cref{eq:lower-bound-classification-error}, with
any constant $c_0<c_0'$; for definiteness, take $c_0=c_0'/2$. Because $\psi$ is injective, each point in the support of $P_X$ is associated with a unique label under $P$. Thus, under $P$, there are
infinitely many points of strictly positive $P_X$-mass whose label is one. Let $f_I\in\cF$ be
arbitrary, with $I\in\{0,1\}^{i}$. Because $y\in E$, there exists
$j>i$ such that $P_X(\{j\})>0$ and the label corresponding to $j$ is one. Since $j\notin[i]$,
$f_I(j)=0$, by the definition of the class, and the error at this positive-mass point gives
\[
    \cR_P(f_I)
    \ge
    P_X(\{j\})>0.
\]
Thus, every hypothesis in $\cF$ has strictly positive risk, showing the first claim in \cref{eq:infimum-zero-not-attained}.
For each $m\in\NN$, choose $I^{(m)}\in\{0,1\}^m$ to agree
with the unique label assigned by $P$ to each support point in $[m]$, and define it
arbitrarily at points of $[m]$ outside the support. The function
$f_{I^{(m)}}$ can make a mistake only on $\{m+1,m+2,\ldots\}$, so
\[
    \cR_P(f_{I^{(m)}})
    \le
    P_X(\{m+1,m+2,\ldots\})
    \to 0 \quad \text{as } m\to\infty.
\]
Thus $\inf_{f\in\cF}\cR_P(f)=0$, showing the second claim in \cref{eq:infimum-zero-not-attained}.

\subsection{Proof of \cref{thm:nearly-exponential-rates-lower-bound}}
\label{proof:nearly-exponential-rates-lower-bound}
The main ingredient of the proof is that we can construct probability distributions of the following kind.
\begin{lemma}
\label{lem:direct-Q}
Fix $\eps\in(0,1/2]$. Let $y\in[\eps,1-\eps]$, let
$a_1,\dots,a_m\in[0,1]$, and let $\alpha_1,\dots,\alpha_m\ge 0$ satisfy $\sum_{i=1}^m \alpha_i \le \eps$.
Then there exists a probability measure $Q$ on $[0,1]$, supported on $\{a_1,\dots,a_m,0,1\}$, such that $\EE_{z\sim Q}\brk{z}=y$ and $Q(\{a_i\})\ge \alpha_i$ for all $i=1,\dots,m$.
\end{lemma}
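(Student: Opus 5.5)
The plan is an explicit construction: first place the prescribed masses on the points $a_i$, then complete the measure using only the two endpoints $0$ and $1$ so that the mean comes out to $y$. Set $A:=\sum_{i=1}^m \alpha_i\le\eps$ and $s:=\sum_{i=1}^m \alpha_i a_i\in[0,A]$. We look for a measure of the form
\begin{equation*}
    Q=\sum_{i=1}^m \alpha_i\delta_{a_i}+w_0\,\delta_0+w_1\,\delta_1,\qquad w_0,w_1\ge 0,\quad w_0+w_1=1-A .
\end{equation*}
If some $a_i$ coincide with each other or with $0$ or $1$, the masses simply add at that atom. This only increases $Q(\{a_i\})$, so the requirement $Q(\{a_i\})\ge\alpha_i$ is preserved.

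The mean condition $\EE_{z\sim Q}[z]=y$ becomes $s+w_1=y$. This forces $w_1:=y-s$ and $w_0:=1-A-y+s$. The whole proof then reduces to checking that both weights are nonnegative.

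\begin{itemize}
\item For $w_1$: since $s\le A\le\eps\le y$, we get $w_1\ge 0$.
\item For $w_0$: since $s\ge 0$ and $y\le 1-\eps$, we get $w_0\ge 1-A-(1-\eps)=\eps-A\ge 0$.
\end{itemize}

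The total mass is $A+w_0+w_1=1$ by construction, the support is contained in $\{a_1,\dots,a_m,0,1\}$, and the mean equals $s+w_1=y$. This gives the required $Q$.

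There is no serious obstacle here. The only points needing care are the nonnegativity of $w_0$ and $w_1$, which is exactly where the assumptions $y\in[\eps,1-\eps]$ and $\sum_i\alpha_i\le\eps$ are used, and the remark above about merged atoms when some $a_i$ coincide.
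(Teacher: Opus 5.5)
Your proposal is correct and follows the paper's proof essentially verbatim: the paper also places mass $\alpha_i$ at each $a_i$ and then puts $y-s$ on $\delta_1$ and $1-\beta-y+s$ on $\delta_0$, with nonnegativity of these two weights being the only check. Your remark that coinciding atoms merge and only increase $Q(\{a_i\})$ is a small point the paper leaves implicit.
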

\begin{proof}
Let $\beta:=\sum_{i=1}^m \alpha_i$ and $s:=\sum_{i=1}^m \alpha_i a_i$. Since $a_i\in[0,1]$ for all $i$, we have $0\le s\le \beta\le \eps$.
Set $q_1:=y-s$ and $q_0:=1-\beta-y+s$. Then $0\leq q_1\leq 1$ because $s\le \beta\le \eps\le y\leq 1-\eps$.
Since $ y\leq1-\eps $, we also have $0\leq q_0\leq 1$: indeed, $q_0=1-\beta-y+s\ge 1-\beta-y\ge 1-\eps-(1-\eps)=0$, while $q_0=1-\beta-y+s\leq 1-\beta+s\leq 1 $ since $s\leq \beta$.
Using Dirac measure notation, define
\begin{equation*}
Q:=\sum_{i=1}^m \alpha_i \delta_{a_i}+q_0\delta_0+q_1\delta_1.
\end{equation*}
Its total mass is $Q([0,1])=\beta+q_0+q_1=\beta+(1-\beta-y+s)+(y-s)=1$,
so $Q$ is a probability measure on $[0,1]$, supported on $\{a_1,\dots,a_m,0,1\}$.
For each $i$, we have $Q(\{a_i\})\ge \alpha_i$. Its mean is
\begin{equation*}
\int z\,dQ(z)=\sum_{i=1}^m \alpha_i a_i+q_1=s+(y-s)=y.
\end{equation*}
This proves that $Q$ has the required properties.
\end{proof}

\begin{proof}[Proof of \cref{thm:nearly-exponential-rates-lower-bound}]
Fix any learning algorithm $\cA$, and for each $j\in\NN$ let
$
p_j:=2^{-j}.
$
We begin by constructing distributions $P_0$ and $P_i$, $i\in\NN$, using \cref{lem:direct-Q}.
For each $i\in\NN$, let
$
\alpha_i:=\eps\,2^{-i-2},
$
so that
$
\sum_{i\ge 1}\alpha_i=\eps\sum_{i\ge 1}2^{-i-2}=\eps/4\le \eps.
$
For each $j\in\NN$, apply \cref{lem:direct-Q} with
$
m=j,\ y=y_j,\ a_i=f_i(x_j).
$
This yields a probability measure $Q_j$ on $[0,1]$, supported on
$
\{f_1(x_j),\dots,f_j(x_j),0,1\},
$
such that
$
\int z\,dQ_j(z)=y_j
$
and, for every $i\le j$,
$
Q_j\big(\{f_i(x_j)\}\big)\ge \alpha_i.
$
Define distributions $P_0$ and $P_i$, $i\in\NN$, on $\mathcal{X}\times[0,1]$ by
$
P_0(X=x_j)=P_i(X=x_j)=p_j,
$
and
$$
P_0:\quad Y\,|\,X=x_j \sim Q_j \qquad \text{and} \qquad  P_i,\ i>0:\quad Y\,|\,X=x_j \sim
\begin{cases}
Q_j, & j<i,\\
\delta_{f_i(x_j)}, & j\ge i.
\end{cases}
$$

We can now identify the Bayes regressors.
Let $g_0:\mathcal{X}\to[0,1]$ be any measurable function such that
$
g_0(x_j)=y_j$ for all $j\in\NN$.
The conditional mean of $Y$ under $P_0$ is $g_0$ on the support points $x_j$.
Hence, $g_0$ is a Bayes regressor for $P_0$, and for every measurable $f$,
$
\cR_{P_0}(f)-\cR_{P_0}(g_0)=\EE_{(X,Y)\sim P_0}\big[(f(X)-g_0(X))^2\big].
$
For each $i\in\NN$, the conditional mean of $Y$ under $P_i$ is exactly $f_i$: if $j<i$, then
$
\EE_{P_i}[Y\mid X=x_j]=\int z\,dQ_j(z)=y_j=f_i(x_j)
$
by \eqref{eq:prefix-reg}, while if $j\ge i$, then $Y=f_i(x_j)$ almost surely under $P_i$.
Hence, $f_i$ is a Bayes regressor for $P_i$.
Moreover, for every $m\in\NN$,
\begin{equation}
\label{eq:inf-under-P0}0\le \cR_{P_0}(f_{m+1})-\cR_{P_0}(g_0)
=
\sum_{j>m} p_j\,(f_{m+1}(x_j)-y_j)^2
\le
\sum_{j>m}p_j
=
2^{-m},
\end{equation}
where we used \eqref{eq:prefix-reg}. Therefore $\inf_{f\in\cF}\cR_{P_0}(f)=\cR_{P_0}(g_0)$.

Let $(X_1,Y^{(0)}_1),(X_2,Y^{(0)}_2),\dots$ be independent $P_0$-distributed random variables.
For each $i\in\NN$ and $t\in\NN$, define
$$
Y_t^{(i)}:=
\begin{cases}
Y_t^{(0)}, & X_t\in\{x_1,\dots,x_{i-1}\},\\
f_i(X_t), & X_t\in\{x_j:j\ge i\}.
\end{cases}
$$
Then $(X_t,Y_t^{(i)})_{t\ge1}$ is a sequence of independent $P_i$-distributed random variables. For $ i\in\NN\cup \{0\} $, let $\fhat^{(i)}=\cA\prn{(X_t,Y_t^{(i)})_{t=1}^n}$.
For $i\in\NN\cup\{0\}$ and $n\in\NN$, define
$$
\varepsilon_n^{(i)}
:=
\EE\!\left[\cR_{P_i}(\fhat^{(i)})\right]-\inf_{f\in\mathcal{F}}\cR_{P_i}(f).
$$
There are two cases.

\paragraph{Case 1: There exists an $i\in\NN\cup\{0\}$ such that $\limsup_{n\to\infty}\varepsilon_n^{(i)}>0$.}
The claimed lower bound is immediate by taking $P=P_i$ and any bounded function
$\psi(n)$ satisfying $\psi(n)>-\log\!\big(\tfrac12\limsup_n \varepsilon_n^{(i)}\big)$.

\paragraph{Case 2: For every $i\in\NN\cup\crl{0}$, it holds $\limsup_{n\to\infty}\varepsilon_n^{(i)}= 0$.}
We prove the lower bound with $P=P_0$.
Fix $n,i\in\NN$, and define the events
$$
A_{n,i}:=
\left\{
\left|\fhat^{(i)}(x_i)-f_i(x_i)\right|<\frac{\gamma}{2}
\right\}.
$$
Since $f_i$ is the Bayes regressor under $P_i$, we have
$
\cR_{P_i}(\fhat^{(i)})-\cR_{P_i}(f_i)
=
\EE_{(X,Y)\sim P_i}\!\left[\big(\fhat^{(i)}(X)-f_i(X)\big)^2\right].
$
On the event $A_{n,i}^c$, the contribution of the single point $x_i$ to the
excess risk is at least $p_i\gamma^2/4$, so Markov's inequality gives
\begin{equation}
\mathbb{P}(A_{n,i}^c)
\le
\frac{4}{\gamma^2 p_i}\,\varepsilon_n^{(i)}.
\label{eq:reg-markov}
\end{equation}
Under $P_0$, since $g_0(x_i)=y_i$ and
$|f_i(x_i)-y_i|\ge \gamma$ by \eqref{eq:gap-reg}, whenever
$
\left|\fhat^{(0)}(x_i)-f_i(x_i)\right|<\frac{\gamma}{2},
$
we must have
$
\left|\fhat^{(0)}(x_i)-g_0(x_i)\right|\ge \frac{\gamma}{2}.
$
Since $\inf_{f\in\mathcal{F}}\cR_{P_0}(f)=\cR_{P_0}(g_0)$, as shown in \cref{eq:inf-under-P0}, it follows that
\begin{equation}
\EE\!\left[\cR_{P_0}(\fhat^{(0)})\right]-\inf_{f\in\mathcal{F}}\cR_{P_0}(f)
\ge
\frac{\gamma^2}{4}\,p_i\,
\mathbb{P}\!\left(
\left|\fhat^{(0)}(x_i)-f_i(x_i)\right|<\frac{\gamma}{2}
\right).
\label{eq:reg-p0}
\end{equation}

We now relate the probabilities in \cref{eq:reg-markov,eq:reg-p0}.
Define
\[
I_{n,i}^{<}:=\{t\le n:X_t\in\{x_1,\dots,x_{i-1}\}\},
\qquad
I_{n,i}^{\geq}:=\{t\le n:X_t\in\{x_j:j\geq i\}\},
\]
and set $\widehat N_{n,i}:=|I_{n,i}^{\geq}|$.
Let $E_{n,i}$ be the event
$$
E_{n,i}:=
\left\{
Y^{(0)}_t=f_i(X_t)\text{ for every }t\in I_{n,i}^{\geq}
\right\}.
$$
Recall that
$
\fhat^{(i)}=\cA\big((X_t,Y_t^{(i)})_{t=1}^n\big)
$
and that, for $ i\in\NN $ and $ t\in I_{n,i}^{\geq} $, we have $ Y_{t}^{(i)}=f_i(X_t) $. Thus, on $E_{n,i}$ we have
$
(X_{\le n},Y^{(0)}_{\le n})=(X_{\le n},Y^{(i)}_{\le n}),
$
which implies
$
\fhat^{(0)}=\fhat^{(i)}.
$
Conditional on $X_{\le n}$, the labels $\{Y_t^{(0)}:t\in I_{n,i}^{\geq}\}$
are independent, and each satisfies
$
\mathbb{P}\!\left(
Y^{(0)}_t=f_i(X_t)\,\middle|\,X_{\le n}
\right)\ge \alpha_i,
$
for $t\in I_{n,i}^{\geq}$, because if $X_t=x_j$ with $j\geq i$, then
$Q_j(\{f_i(x_j)\})\ge \alpha_i$.
Therefore,
\begin{equation}
\mathbb{P}(E_{n,i}\mid X_{\le n})
\ge \alpha_i^{|I_{n,i}^{\geq}|}
= \alpha_i^{\widehat N_{n,i}}.
\label{eq:reg-E}
\end{equation}

Recall that for $ i\in\mathbb{N} $,
$
\fhat^{(i)}=\cA\big((X_t,Y_t^{(i)})_{t=1}^n\big)
$
and that
$
Y_t^{(i)}=Y_t^{(0)}
$
when $X_t\in\{x_1,\dots,x_{i-1}\}$ (or $ t\in I_{n,i}^{<} $), while
$
Y_t^{(i)}=f_i(X_t)
$
when $X_t\in\{x_j:j\geq i\}$ (or $ t\in I_{n,i}^{\geq} $). Conditionally on $X_{\le n}$, the two index sets
$I_{n,i}^{<}$ and $I_{n,i}^{\geq}$ are fixed. Moreover, the sample
$(X_t,Y_t^{(i)})_{t=1}^n$ is determined by $X_{\le n}$, the prefix labels
$\{Y_t^{(0)}:t\in I_{n,i}^{<}\}$, and the deterministic tail values
$\{f_i(X_t):t\in I_{n,i}^{\geq}\}$. Thus $\fhat^{(i)}$, and hence also $A_{n,i}$,
is measurable with respect to $X_{\le n}$ and the prefix labels
$\{Y_t^{(0)}:t\in I_{n,i}^{<}\}$.

On the other hand, $E_{n,i}$ is the event that
$
Y_t^{(0)}=f_i(X_t)
$
for every $t\in I_{n,i}^{\geq}$, so it is measurable with respect to $X_{\le n}$
and the tail labels $\{Y_t^{(0)}:t\in I_{n,i}^{\geq}\}$. Since the labels
$Y_t^{(0)}$ are conditionally independent given $X_{\le n}$, the prefix labels
and tail labels are conditionally independent given $X_{\le n}$. Hence $A_{n,i}$
and $E_{n,i}$ are conditionally independent given $X_{\le n}$. This conditional independence combined with previous observations gives
\begin{align}
\mathbb{P}\!\left(
\left|\fhat^{(0)}(x_i)-f_i(x_i)\right|<\frac{\gamma}{2}
\right)
&\ge
\mathbb{P}(A_{n,i}\cap E_{n,i}) \notag\\
&=
\EE\!\left[\mathbb{P}(A_{n,i}\cap E_{n,i}\mid X_{\le n})\right]
\notag\\
&=
\EE\!\left[\mathbb{P}(A_{n,i}\mid X_{\le n})\mathbb{P}(E_{n,i}\mid X_{\le n})\right]
\notag\\
&\ge
\EE\!\left[\mathbb{P}(A_{n,i}\mid X_{\le n})\alpha_i^{\widehat N_{n,i}}\right]
\notag\\
&\ge
\EE\!\left[\mathbb{P}(A_{n,i}\mid X_{\le n})\mathbf{1}\{\widehat N_{n,i}\le 4p_i n\}\right]
\alpha_i^{4p_i n}
\notag\\
&\ge
\Big(\mathbb{P}(A_{n,i})-\mathbb{P}(\widehat N_{n,i}>4p_i n)\Big)\alpha_i^{4p_i n}\notag
\\
&\ge
\Big(1-\mathbb{P}(A_{n,i}^{c})-\mathbb{P}(\widehat N_{n,i}>4p_i n)\Big)\alpha_i^{4p_i n}
\label{eq:reg-couple}
\end{align}

Now
$
\mathbb{P}(X_t\in\{x_j:j\geq i\})=\sum_{j\geq i}p_j=2p_i,
$
so $\widehat N_{n,i}$ is binomial with mean $\mu=2p_i n$. The multiplicative
Chernoff bound with threshold $2\mu=4p_i n$ gives
\begin{equation}
\mathbb{P}(\widehat N_{n,i}>4p_i n)\le e^{-2p_i n/3}.
\label{eq:reg-chernoff}
\end{equation}
Combining \eqref{eq:reg-markov}, \eqref{eq:reg-couple}, and \eqref{eq:reg-chernoff}, we obtain
$
\mathbb{P}\!\left(
\left|\fhat^{(0)}(x_i)-f_i(x_i)\right|<\frac{\gamma}{2}
\right)
\ge
\left(
1-\frac{4}{\gamma^2 p_i}\varepsilon_n^{(i)}-e^{-2p_i n/3}
\right)\alpha_i^{4p_i n}.
$
Substituting this into \eqref{eq:reg-p0} yields, for every $n,i\in\NN$,
\begin{equation}
\EE\!\left[\cR_{P_0}(\fhat^{(0)})\right]-\inf_{f\in\mathcal{F}}\cR_{P_0}(f)
\ge
\frac{\gamma^2}{4}\,p_i
\left(
1-\frac{4}{\gamma^2 p_i}\varepsilon_n^{(i)}-e^{-2p_i n/3}
\right)\alpha_i^{4p_i n}.
\label{eq:reg-main}
\end{equation}
For each $i\in\NN$, define
\[
n_i:=
\min\left\{
m\in\NN:
\frac{4}{\gamma^2 p_i}\varepsilon_n^{(i)}
+e^{-2p_i n/3}
\le \frac12
\text{ for every } n\ge m
\right\}.
\]
This is well-defined because $\varepsilon_n^{(i)}\to0$ and $e^{-2p_i n/3}\to0$
for each fixed $i$.
For $n\ge n_1$, the following is well-defined:
$
i_n:=\max\{i\in\NN: n\ge \max\{n_i,i^2\}\}.
$
Furthermore, $i_n\to_{n \to \infty }\infty$, and $i_n\le \sqrt n$.
Applying \eqref{eq:reg-main} with $n\ge n_1$ and $i=i_n$, gives:
$$
\EE\!\left[\cR_{P_0}(\fhat^{(0)})\right]-\inf_{f\in\mathcal{F}}\cR_{P_0}(f)
\ge
\frac{\gamma^2}{8}\,p_{i_n}\alpha_{i_n}^{4p_{i_n}n}.
$$
This means we can choose the function
$$
\psi(n):=
\log\!\left(\frac{8}{\gamma^2}\right)
+\log\!\left(\frac{1}{p_{i_n}}\right)
+4p_{i_n}n\log\!\left(\frac{1}{\alpha_{i_n}}\right).
$$
Then the right-hand side above is exactly $e^{-\psi(n)}$.
Finally,
$
\log\!\left(\frac{1}{p_{i_n}}\right)=i_n\log 2=o(n),
$
because $i_n\le \sqrt n$, and also
$
4p_{i_n}n\log\!\left(\frac{1}{\alpha_{i_n}}\right)
=
4\cdot 2^{-i_n}n\Big((i_n+2)\log 2+\log(1/\eps)\Big)
=o(n),
$
since $i_n\to\infty$ implies $4\cdot 2^{-i_n}\Big((i_n+2)\log 2+\log(1/\eps)\Big)\to 0$.
Therefore $\psi(n)=o(n)$, and
$
\EE\!\left[\cR_{P_0}(\fhat^{(0)})\right]-\inf_{f\in\mathcal{F}}\cR_{P_0}(f)
\ge e^{-\psi(n)}
$
for all sufficiently large $n$, hence in particular for infinitely many $n$.
This completes the proof.
\end{proof}

\subsection{Proof of \cref{thm:arbitrarily-slow-rates-convex-finite-combination}}
\label{proof:arbitrarily-slow-rates-convex-finite-combination}

To prove the theorem, we use the following lemma.
\begin{lemma}[Lemma 5.12 in \cite{bousquet2021theory}]\label{lem:sequencebousquet}
Let $R(t)\to 0$ be any function. Then there exist probabilities
$p_1,p_2,\ldots \ge 0$ such that $\sum_{i\ge 1} p_i = 1$, two increasing sequences
of integers $(n_j)_{j\ge 1}$ and $(i_j)_{j\ge 1}$, and a constant
$\tfrac12 \le c_0 \le 1$ such that the following hold for all $j>1$:
\begin{enumerate}[label=(\alph*)]
\item $\sum_{i>i_j} p_i \le \frac{1}{n_j}$.
\item $n_j\,p_{i_j} \le i_j$.
\item $p_{i_j} = c_0\,R(n_j)$.
\end{enumerate}
\end{lemma}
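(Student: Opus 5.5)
The plan is an explicit recursive construction in which all mass outside a single ``filler'' atom sits on the indices $i_j$. I take $c_0:=1$ (or any value in $[\tfrac12,1]$; the argument is the same), and assume, as in the application in \cref{thm:arbitrarily-slow-rates-convex-finite-combination}, that $R$ takes values in $(0,\infty)$. The probabilities will be
\begin{equation*}
p_{i_j}:=c_0R(n_j)\ \text{for all } j,\qquad p_1:=1-c_0\sum_{j\ge1}R(n_j),\qquad p_i:=0 \ \text{otherwise},
\end{equation*}
where the indices satisfy $2\le i_1<i_2<\cdots$. With these choices (c) holds by definition, so the work is to choose $(n_j)$ and $(i_j)$ so that (a), (b) and the normalization hold.

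\emph{Choice of $(n_j)$.} Since $R(t)\to0$, I choose $n_1$ with $R(n_1)\le 1/2$. Then, recursively, I choose $n_{j+1}>n_j$ with
\begin{equation*}
R(n_{j+1})\le \frac{2^{-(j+1)}}{n_j}.
\end{equation*}
This gives $\sum_{j}R(n_j)\le \tfrac12+\sum_{k\ge2}2^{-k}\le 1$, so $p_1\ge0$ and $\sum_i p_i=1$. It also gives the tail estimate
\begin{equation*}
\sum_{k>j}R(n_k)\le \sum_{k>j}\frac{2^{-k}}{n_{k-1}}\le \frac{1}{n_j}\sum_{k>j}2^{-k}\le\frac1{n_j}.
\end{equation*}
Here I used that $n_{k-1}\ge n_j$ for $k>j$.

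\emph{Choice of $(i_j)$.} I set $i_j:=\max\{i_{j-1}+1,\,n_j,\,2\}$ (with $i_0:=1$), so the $i_j$ are strictly increasing and at least $2$. Then (b) is immediate:
\begin{equation*}
n_jp_{i_j}=c_0\,n_jR(n_j)\le n_j\le i_j,
\end{equation*}
using $R(n_j)\le1$ and $c_0\le1$.

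\emph{Condition (a).} The only indices above $i_j$ carrying mass are $i_k$ with $k>j$, because $p_1$ sits at index $1<i_j$. Hence
\begin{equation*}
\sum_{i>i_j}p_i=c_0\sum_{k>j}R(n_k)\le \frac1{n_j}
\end{equation*}
by the tail estimate.

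The only delicate step is the recursive choice of $n_{j+1}$. It has to make the total tail mass beyond $i_j$ at most $1/n_j$, which the geometric factor $2^{-(j+1)}/n_j$ handles. It also has to keep the total mass at most $1$, which is where $R(n_1)\le 1/2$ and the geometric decay are needed, and this is also what prevents the filler $p_1$ from disturbing (a). If the application requires $p_i>0$ on infinitely many points, this already holds, since every $p_{i_j}>0$.
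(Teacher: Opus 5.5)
Your construction is correct and gives a complete proof. The paper does not prove this lemma itself; it imports it from \cite{bousquet2021theory}.

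The skeleton is the same as in the cited argument. The recursive choice of $n_{j+1}$ makes $R(n_{j+1})$ a geometrically small multiple of $1/n_j$, which controls the tail in (a). The indices $i_j$ are taken large enough relative to $n_jR(n_j)$ to give (b).

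Where you differ is in the normalization. The cited proof puts all of the mass on the indices $i_j$, so $c_0$ is forced to be the normalizing constant $\bigl(\sum_j R(n_j)\bigr)^{-1}$. To land this in $[\tfrac12,1]$, $R$ is first renormalized at the first index so that $R(n_1)=1$ dominates the geometrically decaying remainder. That renormalization is why the statement only asserts (a)--(c) for $j>1$.

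Your filler atom $p_1=1-c_0\sum_j R(n_j)$, placed below every $i_j$, removes the need for any rescaling. You can fix $c_0$ in advance (e.g.\ $c_0=1$), you never modify $R$, and (a)--(c) hold for every $j\ge 1$. The price is an atom whose mass is not tied to $R$, so the support is no longer exactly $\{i_j\}$. This is harmless in \cref{thm:arbitrarily-slow-rates-convex-finite-combination}: only the levels $i_\ell$ enter the lower bound, through (b) and (c), and the infimum-zero and realizability checks do not care about level $1$.

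Two small remarks. First, your choice $i_j\ge n_j$ is cruder than needed, since any increasing integers with $i_j\ge n_jR(n_j)$ give (b); but larger $i_\ell$ only helps in that application. Second, your assumption $R>0$ is only used for your closing remark. Nonnegativity of $R$, which the statement implicitly requires for $p_{i_j}=c_0R(n_j)\ge 0$, suffices for the construction itself.
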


\begin{proof}[Proof of \cref{thm:arbitrarily-slow-rates-convex-finite-combination}]
Fix a rate function $R:\NN\to(0,\infty)$ with $R(n)\to0$, and apply
\cref{lem:sequencebousquet} to obtain $p_1,p_2,\ldots$, increasing sequences
$(n_\ell)_{\ell\ge1}$ and $(i_\ell)_{\ell\ge1}$, and a constant
$c_0\in[1/2,1]$.
Let $\cA$ be a learning rule satisfying one of the two conditions in the
theorem statement. We use the convex construction whenever the convex
combination condition is available; in that case, set
\[
    b_i:=2
    \qquad\text{for every }i\in\NN.
\]
Otherwise, $\cA$ satisfies the finite-combination condition. Let
$k:\NN\to\NN$ be the corresponding sample-size-dependent width, and define
\[
    b_{i_\ell}:=2k(n_\ell)
    \qquad\text{for every }\ell\in\NN,
    \qquad
    b_i:=2
    \quad\text{if } i\notin\{i_\ell:\ell\in\NN\}.
\]
This is well-defined because the sequence $(i_\ell)_{\ell\ge1}$ is strictly
increasing. The proof below is identical in the two cases except for the local
averaging argument in \cref{eq:lowerboundtwo2}; there, in the finite case, the
only width used at sample size $n_\ell$ is $k(n_\ell)$.

We construct a class $\cF$ on the input space $\cX=\mathbb{N}^{3}$. For every
$m\in\mathbb{N}$ and every
$I=(I_1,\ldots,I_m)$ with $I_i\in[b_i]^{2i}$ for $i\in[m]$, define
\begin{align*}
    f_I((x_1,x_2,x_3))
    &=
    \begin{cases}
        0
        & \text{if } x_1\in[m],\ x_2\in[2x_1],
        \text{ and } x_3=I_{x_1,x_2},\\
        \gamma_I
        & \text{otherwise},
    \end{cases}
\end{align*}
where the values $\gamma_I\in[1/2,1]$ are chosen to be distinct. Such values
exist because the set of all valid finite indices $I$ given by
\[
    \bigcup_{m=1}^{\infty}
    \prod_{i=1}^{m}[b_i]^{2i},
\]
is countable. The latter follows since it is a countable union of finite sets. Let
\[
    \cF:=
    \left\{
        f_I:
        I\in
        \bigcup_{m=1}^{\infty}
        \prod_{i=1}^{m}[b_i]^{2i}
    \right\}.
\]
In the convex-combination case, $b_i=2$ for every $i$, so this class is fixed
independently of $R$. In the finite-combination case, the class may depend on
$R$ and $\cA$ through the sequence $(b_i)$, which is determined by $k$ and the increasing sequences $(n_\ell)$ and $(i_\ell)$.
For $n\in\NN$, write
\[
    \mathrm{Real}_{n}(\cF):=
    \left\{((x_1,y_1),\ldots,(x_n,y_n))\in(\cX\times \cY)^n:
    \exists f\in\cF\ \forall r\in[n],\ f(x_r)=y_r\right\}.
\]
Define the set of distributions
\[
    \cP_{\cF}:=
    \left\{P\text{ over }\cX\times[0,1]:
    \inf_{f\in\cF}\EE_{(X,Y)\sim P}\brk{(f(X)-Y)^2}=0
    \text{ and } \forall n\in\NN,\ P^n(\mathrm{Real}_{n}(\cF))=1
    \right\}.
\]
We will construct a distribution $P$ such that the excess risk of
$\cA$ is at least a universal constant times $R(n)$ for infinitely many
$n$.
To this end, define, for any $z=(z_1,z_2,\ldots)$ with
$z_i\in[b_i]^{2i}$, a distribution $P_z$ over $\cX\times[0,1]$ by
\begin{align*}
    \PP_{(X,Y)\sim P_z}(X=(i,s,z_{i,s}),Y=0)
    =
    \frac{p_i}{2i},
    \qquad i\in\NN,\ s\in[2i].
\end{align*}
The distribution $P_z$ assigns no mass elsewhere.
By the definition of $P_z$, for every $ m\in\mathbb{N}$, the hypothesis
$f_{(z_1,\ldots,z_m)}\in\cF$ satisfies
\begin{align*}
    \EE_{(X,Y)\sim P_z}
    \brk{
        \left(f_{(z_1,\ldots,z_m)}(X)-Y\right)^2
    }
    &=\sum_{i=1}^{\infty}\sum_{s=1}^{2i}\frac{p_i}{2i}\left(f_{(z_1,\ldots,z_m)}((i,s,z_{i,s}))-0\right)^2
    \\
    &= \sum_{i=1}^{\infty}\sum_{s=1}^{2i} \frac{p_i}{2i} \gamma_{(z_1,\ldots,z_m)}^{2}(1-\ind\{(z_{i,s}  =z_{i,s})\wedge(i\in[m])\wedge(s\in[2i])\})
    \leq
    \sum_{i>m}p_i,
\end{align*}
which goes to zero as $m\to\infty$, since the sequence $p_i$ forms a probability
distribution. This shows that
\[
    \inf_{f\in\cF}
    \EE_{(X,Y)\sim P_z}
    \brk{(f(X)-Y)^2}=0.
\]
We also check the exact finite-sample realizability condition. Fix $n\in\NN$ and draw
\[
    S=((x_1,0),\ldots,(x_n,0))\sim P_z^n.
\]
Almost surely, for every $r\in[n]$, there exist $a_r\in\NN$ and $b_r\in[2a_r]$ such that
\[
    x_r=(a_r,b_r,z_{a_r,b_r}).
\]
Choose $m\geq\max_{r\in[n]}a_r$. For this choice, $f_{(z_1,\ldots,z_m)}\in\cF$ and
\[
    f_{(z_1,\ldots,z_m)}(x_r)=0
    \qquad\text{for every }r\in[n].
\]
Thus, the sample $S$ belongs to $\mathrm{Real}_{n}(\cF)$ almost surely. Since $n$ was arbitrary, $P_z^n(\mathrm{Real}_{n}(\cF))=1$ for every $n$, so $P_z\in\cP_{\cF}$.
Finally, no single $f_I\in\cF$ achieves zero risk under $P_z$. Indeed, if
$I$ has length $m$, choose $\ell$ with $i_\ell>m$; then $P_z$ puts mass
$p_{i_\ell}=c_0R(n_\ell)>0$ on level $i_\ell$, while $f_I$ equals $\gamma_I$
on that entire level. Thus, its risk satisfies
\[
    \EE_{(X,Y)\sim P_z}\brk{(f_I(X)-Y)^2}
    \geq
    p_{i_\ell}\gamma_I^2>0.
\]
Because the infimum risk is zero,
\begin{align}\label{eq:lowerboundtwo1}
 &
     \limsup_{n\to\infty}\left\{\frac{1}{R(n)}\EE_{\rS\sim P_{z}^{n}}\brk{\excessRisk(\cA(\rS))}\right\}
    \nonumber\\
    &=
   \limsup_{n\to\infty} \left\{ \frac{1}{R(n)}  \left(\EE_{\rS\sim P_{z}^{n}}\brk{\EE_{(X,Y)\sim P_{z}}\brk{(\cA(\rS)(X)-Y)^{2}} } - \inf_{f\in\cF}\EE_{(X,Y)\sim P_{z}}\brk{(f(X)-Y)^{2}}\right)\right\}\nonumber
   \\
   &=\limsup_{n\to\infty} \left\{\frac{1}{R(n)}\EE_{\rS\sim P_{z}^{n}}\brk{\EE_{(X,Y)\sim P_{z}}\brk{(\cA(\rS)(X)-Y)^{2}}}\right\},
\end{align}
so it suffices to bound the latter.
We have
\begin{align*}
 \EE_{\rS\sim P_{z}^{n}}\brk{\EE_{(X,Y)\sim P_{z}}\brk{(\cA(\rS)(X)-Y)^{2}}}
 = \EE_{\rS\sim P_{z}^{n}}\brk{\sum_{i=1}^{\infty} \frac{p_{i}}{2i}\sum_{s=1}^{2i} \cA(\rS)( (i,s,z_{i,s}) )^{2}},
\end{align*}
since $\ry=0$ almost surely.
For any $i\in\mathbb{N}$, the right-hand side is bounded below by
\begin{align*}
   \frac{p_i}{2i}\sum_{s=1}^{2i}
   \EE_{\rS\sim P_z^n}
   \left[
       \cA(\rS)((i,s,z_{i,s}))^2
       \ind\{(i,s,z_{i,s})\notin\rS\}
   \right].
\end{align*}
Let $Q$ be the distribution on pairs $(i,s)$ given by
$Q(i,s)=p_i/(2i)$ for $i\in\NN$ and $s\in[2i]$. We may sample
$\rS\sim P_z^n$ by first drawing
$\rX=((X_{1,1},X_{1,2}),\ldots,(X_{n,1},X_{n,2}))\sim Q^n$
and then setting
\[
    \rS
    =
    (((X_{1,1},X_{1,2},z_{X_{1,1},X_{1,2}}),0),\ldots,
    ((X_{n,1},X_{n,2},z_{X_{n,1},X_{n,2}}),0)).
\]
We write this labelled sample as $(\rX,z_{\rX})$. Using this representation, for any
$i,n\in\NN$,
\begin{align*}
 &\EE_{\rS\sim P_z^n}
 \brk{
     \EE_{(X,Y)\sim P_z}
     \brk{
         \left(\cA(\rS)(X)-Y\right)^2
     }
 }\\
 &\geq
 \frac{p_i}{2i}\sum_{s=1}^{2i}
 \EE_{\rX\sim Q^n}
 \brk{
     \cA((\rX,z_{\rX}))((i,s,z_{i,s}))^2
     \ind\{(i,s)\notin\rX\}
 },
\end{align*}
where we used that $(i,s,z_{i,s})\notin\rS$ if and only if $(i,s)\notin\rX$.
In particular, this bound holds for $i=i_\ell$ and $n=n_\ell$. Combining it
with \cref{eq:lowerboundtwo1}, we obtain
\begin{align*}
   &\limsup_{n\to\infty} \left\{ \frac{1}{R(n)}  \left(\EE_{\rS\sim P_{z}^{n}}\brk{\EE_{(X,Y)\sim P_{z}}\brk{(\cA(\rS)(X)-Y)^{2}} }- \inf_{f\in\cF}\EE_{(X,Y)\sim P_{z}}\brk{(f(X)-Y)^{2}}\right)\right\}
   \\
   &\geq
   \limsup_{\ell\to\infty} \left\{ \frac{1}{R(n_{\ell})}  \frac{p_{i_{\ell}}}{2i_{\ell}}\sum_{s=1}^{2i_{\ell}}\EE_{\rX\sim Q^{n_{\ell}}}\brk{ \cA((\rX,z_{\rX}))( (i_{\ell},s,z_{i_{\ell},s}) )^{2}\ind\{ (i_{\ell},s) \not\in \rX \}}\right\}
   \\
   &\geq
   \limsup_{\ell\to\infty} \left\{ \frac{1}{4i_{\ell}}  \sum_{s=1}^{2i_{\ell}}\EE_{\rX\sim Q^{n_{\ell}}}\brk{ \cA((\rX,z_{\rX}))( (i_{\ell},s,z_{i_{\ell},s}) )^{2}\ind\{ (i_{\ell},s) \not\in \rX \}}\right\}.
\end{align*}
The first inequality uses the subsequence $n=n_\ell$, and the last
inequality follows from $p_{i_\ell}=c_0R(n_\ell)$ with $c_0\ge1/2$.
Now let $Z$ be a random sequence with independent coordinates, where each
$Z_i$ has $2i$ independent entries uniformly distributed over $[b_i]$.
Using
\begin{align*}
0\leq  \frac{1}{4i_{\ell}}  \sum_{s=1}^{2i_{\ell}}\EE_{\rX\sim Q^{n_{\ell}}}\brk{ \cA((\rX,Z_{\rX}))( (i_{\ell},s,Z_{i_{\ell},s}) )^{2}\ind\{ (i_{\ell},s) \not\in \rX \}}\leq 1,
\end{align*}
reverse Fatou's lemma, with majorant $1$, gives
\begin{align*}
 &\EE_{Z} \brk{\limsup_{\ell\to\infty} \left\{\frac{1}{4i_{\ell}}  \sum_{s=1}^{2i_{\ell}}\EE_{\rX\sim Q^{n_{\ell}}}\brk{ \cA((\rX,Z_{\rX}))( (i_{\ell},s,Z_{i_{\ell},s}) )^{2}\ind\{ (i_{\ell},s) \not\in \rX \}}\right\} }
 \\
 &\geq \limsup_{\ell\to\infty} \frac{1}{4i_{\ell}}  \sum_{s=1}^{2i_{\ell}} \left\{  \EE_{Z} \brk{\EE_{\rX\sim Q^{n_{\ell}}}\brk{ \cA((\rX,Z_{\rX}))( (i_{\ell},s,Z_{i_{\ell},s}) )^{2}\ind\{ (i_{\ell},s) \not\in \rX \}}} \right\}
 \\
 &=\limsup_{\ell\to\infty} \frac{1}{4i_{\ell}}  \sum_{s=1}^{2i_{\ell}} \left\{  \EE_{\rX\sim Q^{n_{\ell}}}\brk{\EE_{Z}\brk{ \cA((\rX,Z_{\rX}))( (i_{\ell},s,Z_{i_{\ell},s}) )^{2}}\ind\{ (i_{\ell},s) \not\in \rX \}} \right\}
 \\
  &=\limsup_{\ell\to\infty} \frac{1}{4i_{\ell}}  \sum_{s=1}^{2i_{\ell}} \left\{  \EE_{\rX\sim Q^{n_{\ell}}}\brk{\EE_{Z_{\backslash i_{\ell},s}}\brk{ \EE_{Z_{i_{\ell},s}}\brk{\cA((\rX,Z_{\rX}))( (i_{\ell},s,Z_{i_{\ell},s}) )^{2}}}\ind\{ (i_{\ell},s) \not\in \rX \}} \right\},
\end{align*}
where the last equality uses the independence of $Z$ and $\rX$ to change the
order of expectation. The notation $Z_{\backslash i_\ell,s}$ denotes the sequence
$Z$ with the entry $Z_{i_\ell,s}$ removed. We also used that
$\ind\{(i_\ell,s)\notin\rX\}$ depends only on $\rX$, so it may be taken outside the expectation over $Z$. We claim that, for every
realization $(\rx,z_{\backslash i_\ell,s})$ of
$(\rX,Z_{\backslash i_\ell,s})$ with $(i_\ell,s)\notin \rx$ (so $Z_{\rX}$ is
fixed at $z_{\rx}$), the following bound holds under the relevant aggregation
restriction on $\cA$:
\begin{align}\label{eq:lowerboundtwo2}
    \EE_{Z_{i_\ell,s}}
    \brk{
        \cA((\rx,z_\rx))((i_\ell,s,Z_{i_\ell,s}))^2
    }
    \geq
    \frac{1}{16}.
\end{align}

Combining this bound with the preceding inequalities, we obtain
\begin{align*}
    &\EE_{Z} \brk{ \limsup_{n\to\infty} \left\{ \frac{1}{R(n)}  \left(\EE_{\rS\sim P_{Z}^{n}}\brk{\EE_{(X,Y)\sim P_{Z}}\brk{(\cA(\rS)(X)-Y)^{2}} }- \inf_{f\in\cF}\EE_{(X,Y)\sim P_{Z}}\brk{(f(X)-Y)^{2}}\right)\right\}}
    \\
&\geq
 \EE_{Z} \brk{\limsup_{\ell\to\infty} \left\{\frac{1}{4i_{\ell}}  \sum_{s=1}^{2i_{\ell}}\EE_{\rX\sim Q^{n_{\ell}}}\brk{ \cA((\rX,Z_{\rX}))( (i_{\ell},s,Z_{i_{\ell},s}) )^{2}\ind\{ (i_{\ell},s) \not\in \rX \}}\right\} }
 \\
 &\geq \frac{1}{32}
    \limsup_{\ell\to\infty}  \left\{ \frac{1}{2i_{\ell}} \sum_{s=1}^{2i_{\ell}}   \EE_{\rX\sim Q^{n_{\ell}}}\brk{\ind\{ (i_{\ell},s) \not\in \rX \}} \right\}
    \\
    &= \frac{1}{32}
    \limsup_{\ell\to\infty} \frac{1}{2i_{\ell}}  \sum_{s=1}^{2i_{\ell}} \prn{1-\frac{p_{i_{\ell}}}{2i_{\ell}}}^{n_{\ell}}
    \\
    &\geq \frac{1}{32}
    \limsup_{\ell\to\infty}  \prn{1-\frac{n_{\ell}p_{i_{\ell}}}{2i_{\ell}}}\tag{Bernoulli's inequality}
    \\
    &\geq \frac{1}{64}. \tag{by \cref{lem:sequencebousquet}, which gives $n_\ell p_{i_\ell}\le i_\ell$}
\end{align*}
Thus, there exists a sequence $z\in\prod_{i=1}^{\infty}[b_i]^{2i}$ such that
\begin{align*}
       \limsup_{n\to\infty} \left\{ \frac{1}{R(n)}  \left(\EE_{\rS\sim P_{z}^{n}}\brk{\EE_{(X,Y)\sim P_{z}}\brk{(\cA(\rS)(X)-Y)^{2}} }- \inf_{f\in\cF}\EE_{(X,Y)\sim P_{z}}\brk{(f(X)-Y)^{2}}\right)\right\}
         \geq \frac{1}{64}.
\end{align*}
This implies the claimed lower-bound statement for any constant $C<\frac{1}{64}$, for instance $C=\frac{1}{65}$.
It remains to prove \cref{eq:lowerboundtwo2}. Fix $\ell$,
$s\in[2i_\ell]$, and a realization $(\rx,z_{\backslash i_\ell,s})$ with
$(i_\ell,s)\notin \rx$. In this case, $Z_{\rx}$ is fixed at $z_\rx$, so
$\cA((\rx,z_\rx))$ is fixed. In particular, the coefficients in the convex
combination, or the functions in the finite combination at sample size
$n_\ell$, are fixed independently of $Z_{i_\ell,s}$.

\paragraph{Convex combination.} In this case, $b_{i_\ell}=2$. For
$z'\in[b_{i_\ell}]$,
\[
    \cA((\rx,z_\rx))((i_\ell,s,z'))
    =
    \sum_{f\in\cF}\alpha_f f((i_\ell,s,z'))
\]
for some fixed $\alpha_f\in[0,1]$, where $\sum_{f\in\cF}\alpha_f=1$. For
$z'\in[b_{i_\ell}]$, write
\[
    \beta_{z'}
    :=
    \sum_{f\in\cF:f((i_\ell,s,z'))=0}\alpha_f.
\]
Since each $f\in\cF$ has at most one value $z'\in[b_{i_\ell}]$ such that
$f((i_\ell,s,z'))=0$, we have $\sum_{z'\in[b_{i_\ell}]}\beta_{z'}\le1$.
Moreover, all non-zero values of hypotheses in $\cF$ are at least $1/2$, so
\[
    \cA((\rx,z_\rx))((i_\ell,s,z'))
    \geq
    \frac{1}{2}(1-\beta_{z'}).
\]
Therefore, by Jensen's inequality, and $ Z_{i_\ell,s} $ being uniform over $ [b_{i_{\ell}}] $
\begin{align*}
     \EE_{Z_{i_\ell,s}}
     \brk{
        \cA((\rx,z_\rx))((i_\ell,s,Z_{i_\ell,s}))^2
     }
     &\geq
     \frac{1}{b_{i_\ell}}\sum_{z'\in[b_{i_\ell}]}\frac{1}{4}(1-\beta_{z'})^2\\
     &\geq
     \frac{1}{4}\left(1-\frac{1}{b_{i_\ell}}\sum_{z'\in[b_{i_\ell}]}\beta_{z'}\right)^2
     \geq
     \frac{1}{16}.
\end{align*}

\paragraph{Finite combination.} Set $K_\ell:=k(n_\ell)$, so
$b_{i_\ell}=2K_\ell$. Since the fixed sample $(\rx,z_\rx)$ has size $n_\ell$,
the finite-combination condition gives functions
$f_1,\ldots,f_{K_\ell}\in\cF$ such that, for every $z'\in[b_{i_\ell}]$,
\begin{align*}
    \cA((\rx,z_\rx))((i_\ell,s,z'))
    \in
    \left[
        \min_{q\in[K_\ell]} f_q((i_\ell,s,z')),
        \max_{q\in[K_\ell]} f_q((i_\ell,s,z'))
    \right].
\end{align*}
Define the set
\[
    B:=\{z'\in[b_{i_\ell}]:\exists q\in[K_\ell]\text{ such that }f_q((i_\ell,s,z'))=0\}.
\]
Since each $f_q\in\cF$ has at most one value $z'\in[b_{i_\ell}]$ such that
$f_q((i_\ell,s,z'))=0$, we have $|B|\le K_\ell$. If $z'\notin B$, then
$f_q((i_\ell,s,z'))\ge1/2$ for every $q\in[K_\ell]$, and the finite-combination condition yields
$\cA((\rx,z_\rx))((i_\ell,s,z'))\ge1/2$. Since $Z_{i_\ell,s}$ is uniform over $[b_{i_\ell}]$,
\begin{align*}
     \EE_{Z_{i_\ell,s}}
     \left[
        \cA((\rx,z_\rx))((i_\ell,s,Z_{i_\ell,s}))^2
     \right]
     \geq
     \frac{2K_\ell-K_\ell}{2K_\ell}\cdot\frac{1}{4}
     =
     \frac{1}{8}.
\end{align*}

\paragraph{Optimal algorithm $\cA^\star$.}
We now construct the learner achieving the upper bound in the theorem
statement. Let $\mathbf{0}:\cX\to[0,1]$ denote the identically zero predictor.
This predictor need not belong to $\cF$.
Given a sample
$S=((x_1,y_1),\ldots,(x_n,y_n))$, the learner $\cA^\star$ is defined as
follows. If all labels in $S$ are zero, it outputs $\mathbf{0}$. Otherwise,
let $r$ be the smallest index such that $y_r\neq0$. If $y_r=\gamma_I$ for a
valid index $I$, the learner outputs $f_I$; if there is no such index, it
outputs $\mathbf{0}$. Since the values $\gamma_I$ are all distinct, this
defines $\cA^\star$ unambiguously on every sample.

Fix $P\in\cP_{\cF}$ and write
$
    q:=\PP_{(X,Y)\sim P}(Y\neq0).
$
We first show that either $q=0$ or $P$ is realized by a single hypothesis
$f_I\in\cF$. Since $P(\mathrm{Real}_1(\cF))=1$, we have
$
    \PP_{(X,Y)\sim P}\bigl(Y\in\{0\}\cup\{\gamma_I:I\text{ is a valid index}\}\bigr)=1.
$
The collection of valid indices is countable. If $q>0$, countability implies that there is an
index $I$ such that
$
   \PP_{(X,Y)\sim P}(Y=\gamma_I)>0.
$
We claim that this $f_I$ realizes $P$. Otherwise,
$\PP_{(X,Y)\sim P}(f_I(X)\neq Y)>0$. For two independent observations
$(X_1,Y_1),(X_2,Y_2)\sim P$, the event
$
    \{Y_1=\gamma_I\}
    \cap
    \{f_I(X_2)\neq Y_2\}
$
would have probability
\[
    \PP_{(X_{1},Y_{1})\sim P}(Y_{1}=\gamma_I)\PP_{(X_{2},Y_{2})\sim P}(f_I(X_{2})\neq Y_{2})=\PP_{(X,Y)\sim P}(Y=\gamma_I)\PP_{(X,Y)\sim P}(f_I(X)\neq Y)>0
\]
by independence. Suppose that $((X_{1},Y_{1}),(X_{2},Y_{2}))$ belongs to the event $ \{Y_1=\gamma_I\} \cap \{f_I(X_2)\neq Y_2\}$ and to
$\mathrm{Real}_2(\cF)$. By definition of $\mathrm{Real}_2(\cF)$, there would exist
$f_{I'}\in\cF$ such that $Y_1=f_{I'}(X_1)$ and $Y_2=f_{I'}(X_2)$.
Since $Y_1=\gamma_I$ on $ \{Y_1=\gamma_I\} \cap \{f_I(X_2)\neq Y_2\}$ and $f_{I'}$ takes values only in $\{0,\gamma_{I'}\}$, the first equality gives $\gamma_I=\gamma_{I'}$. The values $\gamma_{I'}$ are distinct, so $I'=I$.
This gives $Y_2=f_I(X_2)$, contradicting the definition of the event $ \{Y_1=\gamma_I\} \cap \{f_I(X_2)\neq Y_2\}$. Thus, the $ \{Y_1=\gamma_I\} \cap \{f_I(X_2)\neq Y_2\}$ is disjoint from
$\mathrm{Real}_2(\cF)$. Since it has positive $P^2$-probability, this
contradicts $P^2(\mathrm{Real}_2(\cF))=1$. Consequently, we conclude that
$\PP_{(X,Y)\sim P}(f_I(X)=Y)=1$.
In particular, under $P$, every non-zero label equals $\gamma_I$ almost surely.

If $q=0$, then $Y=0$ almost surely, so $\cA^\star$ outputs
$\mathbf{0}$ and has zero excess risk. Now suppose that $q>0$. Whenever the
sample contains a non-zero label, $\cA^\star$ identifies and outputs the
realizing hypothesis $f_I$ and has zero excess risk. The failure probability satisfies
\begin{align*}
    \PP_{\rS\sim P^n}
    \prn{\excessRisk(\cA^\star(\rS))>0}
    &\leq
    (1-q)^n\leq
    \exp(-qn),
\end{align*}
whereas for $q=0$ the excess risk is identically zero. Thus,
$\cA^\star$ achieves zero excess risk with exponential probability on
$\cP_{\cF}$, with exponent constant $ q $ when $ q>0 $; when $q=0$, the bound holds with, for example, exponent constant $1$.

To establish the uniform rate of this algorithm, we consider the two cases $ q\leq \log{(1/\delta)}/n $ and $ q> \log{(1/\delta)}/n $. Since the algorithm outputs either the predictor $ \textbf{0} $ or a function $ f_{I} $ realizing $ P $, its risk is at most $ q=P(\crl{(x,y):y\not=0}) $ in either case. When $ q\leq \log{(1/\delta)}/n$, both its risk and its excess risk are bounded by $q\leq \log{(1/\delta)}/n$. If instead $ q> \log{(1/\delta)}/n $, the probability of positive excess risk is at most $ \exp(-qn) < \delta $. Combining these cases, with probability at least $ 1-\delta $, the excess risk is therefore zero. In both cases, with probability at least $ 1-\delta $, the excess risk is bounded by $ \log{(1/\delta)}/n $.
This completes the proof.

\paragraph{Lower bounds on universal and minimax rates.}
Since $ b_i\ge 2 $ in both cases, the following two hypotheses are always available.
Consider two hypotheses \(f_I\) and \(f_{I'}\) that are both zero at \((1,1,1)\), while at \((1,2,1)\), \(f_I=\gamma_I\) and \(f_{I'}=0\).
The two hypotheses differ by at least \(1/2\) at \((1,2,1)\), since \(\gamma_I\in[1/2,1]\).
Let $ 0\leq q \leq 1 $ be a parameter. Define $ P_{q} $ to assign probability $ q $ to $ ((1,1,1),0) $ and probability $ 1-q$ to $ ((1,2,1),\gamma_{I}) $, and define $ P_{q}' $ to assign probability $ q $ to $ ((1,1,1),0) $ and probability $ 1-q$ to $ ((1,2,1),0) $.
The two distributions are realized by $ f_{I} $ and $ f_{I'} $, respectively, so they belong to $ \cP_{\cF} $.
Under either distribution, the probability that all $n$ samples equal \(((1,1,1),0)\) is \(q^n\).
Now let $ \cA $ be any deterministic algorithm, and define $ a:=\cA(((1,1,1),0)_{i=1}^n)(1,2,1) $, the prediction of the algorithm at $ (1,2,1) $ given a sample consisting of $n$ copies of $ ((1,1,1),0) $. Since $ \gamma_{I}\in[1/2,1] $, either $ |a|=|a-0|\geq 1/4 $ or $ |a-\gamma_{I}|\geq 1/4 $.
In the former case, with probability at least $ q^{n} $ under $ P_{q}' $, the algorithm outputs a hypothesis with excess risk at least $ (1-q)/16 $. In the latter case, with probability at least $ q^{n} $ under $ P_{q} $, the algorithm outputs a hypothesis with excess risk at least $ (1-q)/16 $.
For the universal rate lower bound, choose $ q=1/2 $. For each $ n $  one of the distributions is such that, with probability at least $ 1/2^{n}=\exp(-n\log(2)) $, the algorithm outputs a hypothesis with excess risk at least $ 1/32 $.
Since there are only two distributions, one of them must have the property that the algorithm outputs a hypothesis with excess risk at least $ 1/32 $ with probability at least $\exp(-n\log(2)) $ for infinitely many $ n $, which establishes the claimed universal-rate lower bound.
For the minimax rate, when $ n> \log(1/\delta) $, choose $ q=1-\log(1/\delta)/(2n) $. Since $ \log(1-x)> -2x $ for $ x\in(0,1/2] $, we have $ q^{n}= \exp{(n\log(q) )}>  \exp{(-\log{(1/\delta)} )} =\delta  $.
Thus, with probability at least $ \delta $, the algorithm outputs a hypothesis with excess risk at least $ (1-q)/16\geq \log{(1/\delta)}/(32n)>\log{(1/\delta)}/(64n) $.
This establishes the universal and minimax lower bounds and completes the proof of the theorem.
\end{proof}

\subsection{Proof of \cref{thm:countable-inf-realized-not-uniformly-learnable}}
\label{proof:countable-inf-realized-not-uniformly-learnable}
Let $\cX=\NN$. Write $[m]=\{1,\ldots,m\}$. For $i\in\NN$ and $I=(I_1,\ldots,I_i)\in\{0,1\}^{i}$, define the predictor $f_{I}\in\cM$ by
\begin{align*}
    f_{I}(x) = \begin{cases}
        I_x, & x\in[i],\\
        1, & \text{otherwise}.
    \end{cases}
\end{align*}
Let the function space be
\(
    \cF=\crl{f_{I}:i\in\NN,\ I\in\{0,1\}^{i}}.
\)
This class is countable, since it is a countable union of finite sets.

We first prove the lower bound. Fix a deterministic learning algorithm $\cA$ and a sample size $n\in\NN$. Let $m\in\NN$ and let $z=(z_1,\ldots,z_m)\in\{0,1\}^{m}$. Define $P_{z}$ as the distribution of $(X,Y)$ obtained by drawing $J\sim\uniform{[m]}$ and setting
$
    X=J$ and $  Y=z_J.
$
We have $f_{z}\in\cF$ and $\cR_{P_z}(f_{z})=0$, so $(P_z,\cF)\in\Thetainf$ and $\inf_{f\in\cF}\cR_{P_z}(f)=0$.

For $u=(u_1,\ldots,u_n)\in[m]^n$, define
$
    O(u)=\{u_1,\ldots,u_n\},$ $
    z_u=(z_{u_1},\ldots,z_{u_n}),
$
and define the labeled sample generated by $u$ and $z$ as
$
    S(u,z_u)=\bigl((u_1,z_{u_1}),\ldots,(u_n,z_{u_n})\bigr).
$
If $\rU\sim\uniform{[m]}^{n}$, then $S(\rU,z_{\rU})$ has distribution $P_z^n$. Let $Z$ be uniformly distributed on $\{0,1\}^{m}$, independently of $\rU$. For $B\subseteq[m]$, write $Z_B=(Z_j)_{j\in B}$, in any fixed order. We have
\begin{align*}
    &\ee_{Z}\left[
        \ee_{\rS\sim P_{Z}^{n}}
        \brk{\excessRiskPar{P_Z}{\cF}(\cA(\rS))}
    \right] \\
    &=\ \frac{1}{m}\sum_{j=1}^{m}
    \ee_{\rU\sim\uniform{[m]}^{n}}\left[
        \ee_{Z}\brk{
            \bigl(\cA(S(\rU,Z_{\rU}))(j)-Z_j\bigr)^2
        }
    \right] \tag{because $\cR_{P_Z}(f_Z)=0$} \\
    &=\ \frac{1}{m}\sum_{j=1}^{m}
    \ee_{\rU\sim\uniform{[m]}^{n}}\left[
        \ee_{Z_{O(\rU)}}\left[
            \ee_{Z_{[m]\setminus O(\rU)}}\brk{
                \bigl(\cA(S(\rU,Z_{\rU}))(j)-Z_j\bigr)^2
            }
        \right]
    \right] \\
    &\geq\ \frac{1}{m}
    \ee_{\rU\sim\uniform{[m]}^{n}}\left[
        \sum_{j\in [m]\setminus O(\rU)}
        \ee_{Z_{O(\rU)}}\left[
            \ee_{Z_{[m]\setminus O(\rU)}}\brk{
                \bigl(\cA(S(\rU,Z_{\rU}))(j)-Z_j\bigr)^2
            }
        \right]
    \right]  \\
    &\stackrel{(i)}{=}\ \frac{1}{2m}
    \ee_{\rU\sim\uniform{[m]}^{n}}\left[
        \sum_{j\in [m]\setminus O(\rU)}
        \ee_{Z_{O(\rU)}}\left[
            \bigl(\cA(S(\rU,Z_{\rU}))(j)\bigr)^2
            +\bigl(\cA(S(\rU,Z_{\rU}))(j)-1\bigr)^2
        \right]
    \right]\\
    &\stackrel{(ii)}{\geq}\ \frac{1}{4m}\ee_{\rU\sim\uniform{[m]}^{n}}
    \brk{m-\abs{O(\rU)}} \\
    &\stackrel{(iii)}{\geq}\ \frac{1}{4}\left(1-\frac{n}{m}\right),
\end{align*}
where $(i)$ uses the fact that, on $[m]\setminus O(\rU)$, the prediction of $\cA(S(\rU,Z_\rU))$ is fixed while $Z_j\sim \uniform{\crl{0,1}}$ remains independent; $(ii)$ uses $a^2+(a-1)^2\geq 1/2$ for all $a\in\RR$; and $(iii)$ uses $\abs{O(\rU)}\leq n$.

Since the left-hand side is an average over $z\in\{0,1\}^{m}$, there exists a $z^{(m)}\in\{0,1\}^{m}$ such that
\[
    \ee_{\rS\sim P_{z^{(m)}}^{n}}
    \brk{\excessRiskPar{P_{z^{(m)}}}{\cF}(\cA(\rS))}
    \geq \frac{1}{4}\left(1-\frac{n}{m}\right).
\]
Because $(P_{z^{(m)}},\cF)\in\Thetainf$ and $m$ can be chosen arbitrarily large, it follows that for every $\cA$ and every $n\in\NN$,
\[
    \sup_{P:(P,\cF)\in\Thetainf}
    \ee_{\rS\sim P^n}\brk{\excessRisk(\cA(\rS))}
    \geq \frac{1}{4}.
\]

It remains to note that the constant learning algorithm $\cA_{1/2}$, defined by $\cA_{1/2}(\rS)(x)=1/2$ for all samples $\rS$ and all $x\in\cX$, gives the matching upper bound. Indeed, for every distribution $P$ on $\cX\times[0,1]$ and every function class $\cF\subseteq\cM$,
\[
    \excessRisk(\cA_{1/2}(\rS))
    \leq \ee_{(X,Y)\sim P}\brk{(1/2-Y)^2}
    \leq \frac{1}{4}.
\]
Combining the lower bound with this upper bound proves the claim.

\subsection{Proof of \cref{thm:countable-inf-realized-exponential-rates}}
\label{proof:countable-inf-realized-exponential-rates}

At its core, the proof of this result resembles \cref{thm:pruning-exponential}. In contrast to that proof, however, some care is needed to avoid a union bound over the infinite hypothesis class.

Fix a function $\varphi:\NN\to\RR$ such that $\varphi(n)\to\infty$ and
$\varphi(n)=o(n)$, and write
$
    \psi(n):=\max\{1,\lceil \varphi(n)\rceil\}$ and $\tau_n:=2\sqrt{\frac{\psi(n)}{n}}.
$
For the countable class $\cF$, fix once and for all an enumeration $f_1,f_2,\ldots$ of its elements.

The algorithm $\cA_{\varphi}(\cF,\rS)$ is defined as follows. It scans the first $\psi(n)$ hypotheses, keeping track of an index $\widehat i$. It
starts with $\widehat i=1$, and for $i=2,\ldots,\psi(n)$ replaces $\widehat i$ by
$i$ if for all $j< i$
$
    \tau_{n} \leq \Rhat_{\rS}(f_{j})-\Rhat_{\rS}(f_i) .
$
It returns $\cA_{\varphi}(\cF,\rS)=f_{\widehat i}$.

Now fix $(P,\cF)\in\Thetainf$. Recall the notation
$
    \cFstar=\crl{f\in\cF:\cR_P(f)=\inf_{g\in\cF}\cR_P(g)}
$
and let $i_\star$ be the first index in the chosen enumeration such that
$f_{i_\star}\in\cFstar$. Write $\fstar=f_{i_\star}$ and
$
    \Delta_i:=\cR_P(f_i)-\cR_P(\fstar)\geq 0.
$
Since $i_\star$ is the first optimal index, $\Delta_i>0$ for every
$i<i_\star$. If $i_\star>1$, set
$
    \gamma:=\min_{i<i_\star}\Delta_i>0.
$
If $i_\star=1$, the estimates involving $\gamma$ below are simply omitted.
Because $\psi(n)\to\infty$ and $\tau_n\to0$, for all sufficiently large $n$ we
have $\psi(n)\geq i_\star$ and, when $i_\star>1$, $\tau_n\leq \gamma/2$. From now on, we assume that $n$ is sufficiently large for these conditions to hold.

Consider the events
\begin{align*}
    E_n^{-}
    &=\crl{\forall j<i_\star:\ \tau_n\leq \Rhat_{\rS}(f_j)-\Rhat_{\rS}(\fstar)},\\
    E_n^{+}
    &=\crl{\forall j \text{ with } i_\star< j\leq \psi(n):\
    \tau_n>\Rhat_{\rS}(\fstar)-\Rhat_{\rS}(f_j)}.
\end{align*}
On $E_n^{-}\cap E_n^{+}$, the algorithm returns an element of $\cFstar$, specifically $\fstar$.
Indeed, before the scan reaches $i_\star$, the current candidate is one of
$f_1,\ldots,f_{i_\star-1}$, and $E_n^{-}$ implies that $\fstar$ is empirically better than any of them by a margin of at least $\tau_n$. The scan switches to
$\fstar$ at step $i_\star$. After the scan reaches $i_\star$, the current candidate is $\fstar$, and $E_n^{+}$ ensures that no
$f_j$ replaces it, because the empirical risk of $\fstar$ is never greater than that of any $f_j$ with $i_\star <j\leq \psi(n)$ by at least $ \tau_{n} $. Thus, the final output is $ f_\star $ which is optimal.

It remains to bound the probability of the complement. For any fixed $i$, the
random variable
$
    \xi_i(X,Y):=(f_i(X)-Y)^2-(\fstar(X)-Y)^2
$
takes values in $[-1,1]$ and has expectation $\Delta_i$. Hoeffding's
inequality gives, for all sufficiently large $n$,
\[
    \PP_{\rS\sim P^n}\big((E_n^{-})^c\big)
    \\
    =
    \PP_{\rS\sim P^n}\big(\exists j<i_\star:\ \tau_n> \Rhat_{\rS}(f_j)-\Rhat_{\rS}(\fstar)\big)
    \\
    \leq (i_\star-1)\exp\prn{-\frac{\gamma^2 n}{8}},
\]
where we used the union bound and the assumption that $ n $ is large enough for $\tau_n\leq \gamma/2$. When $i_\star=1$, the right-hand side is understood to be zero.
For every $ j $ with $  i_\star< j\leq \psi(n) $, we have $\Delta_j\geq 0$, so Hoeffding's inequality and a union bound over $j$ give
\begin{align*}
    \PP_{\rS\sim P^n}\big((E_n^{+})^c\big)=\PP_{\rS\sim P^n}\prn{\exists j \text{ with } i_\star< j\leq \psi(n):\
    \tau_n\leq \Rhat_{\rS}(\fstar)-\Rhat_{\rS}(f_j)}
    &\leq \psi(n)\exp\prn{-\frac{n\tau_n^2}{2}} \\
    &=\psi(n)\exp(-2\psi(n))
    \\
    &\leq \exp(-\psi(n)),
\end{align*}
where the last inequality uses $\psi(n)\geq 1$, so $ \ln{(\psi(n))}-2\psi(n) \leq -\psi(n) $.
For all sufficiently large $n$,
\[
    \PP_{\rS\sim P^n}\prn{\excessRisk(\cA_{\varphi}(\cF,\rS))>0}
    \leq
    (i_\star-1)\exp\prn{-\frac{\gamma^2 n}{8}}
    +\exp(-\psi(n))\leq C \exp(-c\psi(n)),
\]
since $\psi(n)=o(n)$ and, for sufficiently large $n$, $ \psi(n)\geq \varphi(n)/2 $. Increasing $C$ and decreasing $c$ if necessary ensures that the bound holds for all $n\in\NN$,
which proves the claim.

\section{Proof of \cref{thm:bob-general}}
\label{proof:bob-general}

The proof first constructs the dictionary and the distributions (\cref{subsec:construction-counter}), then establishes learnability in both worlds (\cref{subsec:learnability-both-worlds-counter}), and finally proves the lower bound for any learner (\cref{subsec:lower-any-counter}).

\subsection{Construction of the Distributions}
\label{subsec:construction-counter}
Let $f_\infty:\cX \to [0,1]$ be any measurable function such that $y_i=f_\infty(x_i)$ for all $i$, with $f_\infty$ defined arbitrarily elsewhere on $\cX$.
Fix an injection $\iota:\NN\times\NN\to\NN$ and write
\(
z_{j,\ell}:=z_{\iota(j,\ell)}.
\)
For each $i,\ell\in\NN$, define
\[
y_i:=f_\infty(x_i),
\qquad
p_i:=2^{-i},
\qquad
\omega_\ell:=\frac{2^{-\ell}}{100}, \quad \text{and}\quad Z_{j,\ell}:=\sum_{i=1}^{j}p_i+\omega_\ell.
\]
We then have $1/2\le Z_{j,\ell}<2$.
For each $i\in\NN$, define the distribution $Q_i$ by
\[
Q_i:=
\begin{cases}
\prn{1-\frac{y_i}{f_i(x_i)}}\delta_0
+\frac{y_i}{f_i(x_i)}\delta_{f_i(x_i)},
& y_i<f_i(x_i),\\[0.5em]
\frac{1-y_i}{1-f_i(x_i)}\delta_{f_i(x_i)}
+\frac{y_i-f_i(x_i)}{1-f_i(x_i)}\delta_1,
& y_i>f_i(x_i),
\end{cases}
\]
where $\delta_y$ denotes the Dirac measure at $y$.
By \cref{eq:bob-tail-gap}, these are the only two cases.
Since $f_i(x_i)\in[0,1]$ and $y_i\in[\varepsilon,1-\varepsilon]$, the denominators in the corresponding cases are positive, and the coefficients are nonnegative and sum to one. So $Q_i$ is a well-defined probability distribution.
In either case,
$
\int z\,dQ_i(z)=y_i.
$
Writing
\(
\lambda_i:=Q_i\prn{\crl{f_i(x_i)}},
\)
we have $\lambda_i\ge\varepsilon$.

We let the family of distributions be
\(
\cP:=\{P_\infty\}\cup\{P_{j,\ell}:j,\ell\in\NN\},
\)
where $P_\infty$ and $P_{j,\ell}$ are defined as follows.
Define the distribution $P_\infty$ by
\[
\PP_{(X,Y)\sim P_\infty}\prn{X=x_i}=p_i
\quad  \text{and} \quad
Y\mid X=x_i\sim Q_i.
\]
For $j,\ell\in\NN$, define the distribution $P_{j,\ell}$ by
\[
\PP_{(X,Y)\sim P_{j,\ell}}\prn{X=x_i}=\frac{p_i}{Z_{j,\ell}},
\quad \text{and} \quad
Y\mid X=x_i\sim Q_i,
\qquad \text{if } i<j,
\]
with the remaining mass distributed as
\[
\PP_{(X,Y)\sim P_{j,\ell}}\prn{X=x_j,Y=f_j(x_j)}=\frac{p_j}{Z_{j,\ell}} \quad \text{and} \quad \PP_{(X,Y)\sim P_{j,\ell}}\prn{X=z_{j,\ell},Y=f_j(z_{j,\ell})}
=
\frac{\omega_\ell}{Z_{j,\ell}}.
\]

\paragraph{Bayes rules.}
Under $P_\infty$, the conditional mean at $x_i$ is, by construction of $Q_i$,
$
\EE_{P_\infty}\brk{Y\mid X=x_i}=y_i=f_\infty(x_i),
$
so $f_\infty$ is Bayes optimal for squared loss.
The bound
$0\le \cR_{P_\infty}(f_k)-\cR_{P_\infty}(f_\infty)
=\sum_{r\ge k}2^{-r}\bigl(f_k(x_r)-y_r\bigr)^2
\le 2^{1-k}$
implies that $\inf_{f\in\cF}\cR_{P_\infty}(f)
=\cR_{P_\infty}(f_\infty).$
Now fix $j,\ell\in\NN$.
Under $P_{j,\ell}$, if $i<j$, then
$
\EE_{P_{j,\ell}}\brk{Y\mid X=x_i}=y_i=f_j(x_i)
$
by the prefix condition \cref{eq:bob-tail-prefix}.
At $x_j$ and $z_{j,\ell}$,
$
\EE_{P_{j,\ell}}\brk{Y\mid X=x_j}=f_j(x_j)$ and $\EE_{P_{j,\ell}}\brk{Y\mid X=z_{j,\ell}}=f_j(z_{j,\ell}).
$
Thus $f_j$ agrees with the conditional mean on the support of $P_{j,\ell}$, so $f_j$ is Bayes optimal for $P_{j,\ell}$.

\subsection{Learnability in Both Worlds}
\label{subsec:learnability-both-worlds-counter}

\paragraph{Algorithm $\Aexp$.}
Let $\Aexp$ output $f_j$ for the largest $j$ such that some sample point has $X=z_{j,\ell}$ for some $\ell\in\NN$, and otherwise output $f_\infty$.
Under $P_\infty$, no point $z_{j,\ell}$ appears, so $\Aexp$ always outputs the Bayes rule $f_\infty$.
Now fix $j,\ell\in\NN$.
Under $P_{j,\ell}$, whenever a sample point has $X=z_{j,\ell}$, the algorithm outputs the Bayes rule $f_j$.
Therefore its failure probability satisfies
\begin{align*}
\PP_{\rS\sim P_{j,\ell}^n}\prn{
    \cR_{P_{j,\ell}}\prn{\Aexp(\rS)}>\inf_{f\in\cF}\cR_{P_{j,\ell}}\prn{f}
}
&\le
\PP_{\rS\sim P_{j,\ell}^n}\prn{\text{no sample point has }X=z_{j,\ell}}\\
&=
\prn{1-\frac{\omega_\ell}{Z_{j,\ell}}}^n\\
&\le
\exp\prn{-\frac{\omega_\ell n}{Z_{j,\ell}}}.
\end{align*}
This establishes the exponential universal rate with constant $ \omega_{\ell}/Z_{j,\ell} $, which depends only on the fixed distribution.

\paragraph{Algorithm $\Amini$.}
Let $\Amini$ output $f_j$ for the largest $j$ such that some sample point has $X=z_{j,\ell}$ for some $\ell\in\NN$, and otherwise output $f_{I_{\max}}$, where $I_{\max}$ is the largest index such that some sample point has $X=x_{I_{\max}}$ (if no such point exists, let $ I_{\max}=1 $).
Fix $n\in\NN$ and $\delta\in(0,1)$.
If $n<4\log(1/\delta)$, the desired upper bound follows for $C_{\mathrm{mini}}\ge 4$, because the excess risk is at most $1$.
We choose $ C_{\mathrm{mini}} $ larger than $ 4 $, so this case is complete.
Now assume that $n\ge 4\log(1/\delta)$ and define
$
A:=\frac{n}{2\log(1/\delta)},
$ and $
m:=\floor{\log_2(A)}\ge 1.
$
These definitions imply
\begin{equation}
\exp\prn{-n2^{-m-1}}\le \delta,
\qquad
2^{-m}<4\frac{\log(1/\delta)}{n}.
\label{eq:bob-general-m}
\end{equation}

Under $P_\infty$, no $z$-point appears.
On the event $\{I_{\max}\ge m\}$, the prefix condition \cref{eq:bob-tail-prefix} implies that $f_{I_{\max}}$ and $f_\infty$ agree at $x_1,\ldots,x_{I_{\max}-1}$.
Since squared-loss excess at a support point is at most $1$,
\[
\cR_{P_\infty}\prn{f_{I_{\max}}}-\cR_{P_\infty}\prn{f_\infty}
\le
\sum_{r\ge I_{\max}}p_r
=2^{1-I_{\max}}
\le 2^{1-m}
\le 8\frac{\log(1/\delta)}{n}.
\]
The complementary event $\{I_{\max}<m\}$ has probability
\[
\PP_{\rS\sim P_\infty^n}\prn{I_{\max}<m}
=
\prn{1-\sum_{r\ge m}p_r}^n
\le
\exp\prn{-n2^{1-m}}
\le \delta.
\]

Now fix $j,\ell\in\NN$ and put $I:=\min\{j,m\}$.
First suppose that $\omega_\ell\le 2^{-m}$.
Let
\[
G_{j,\ell}:=
\crl{\text{some sample point has }X=z_{j,\ell}}
\cup
\crl{I_{\max}\ge I}.
\]
If $I=j$, then on $G_{j,\ell}$ either the certificate, $ z_{j,\ell} $,  is observed and $\Amini$ outputs $f_j$, or else $I_{\max}=j$ and again $\Amini$ outputs $f_j$.
Thus the excess risk is zero on $G_{j,\ell}$.
If $I=m<j$, the only nontrivial case on $G_{j,\ell}$ is $m\le I_{\max}<j$, in which the functions $f_{I_{\max}}$ and $f_j$ can differ only at $x_{I_{\max}},\ldots,x_j$ and at $z_{j,\ell}$.
In this case, the excess risk is bounded by
\begin{align*}
\cR_{P_{j,\ell}}\prn{f_{I_{\max}}}-\cR_{P_{j,\ell}}\prn{f_j}
&\le
\frac{1}{Z_{j,\ell}}\prn{\sum_{r=I_{\max}}^j p_r+\omega_\ell}\\
&\le
2\prn{\sum_{r=m}^\infty 2^{-r}+2^{-m}} \tag{by $ Z_{j,\ell}\geq 1/2 $, $ p_{r}=2^{-r} $, the assumption $ \omega_{\ell}\leq 2^{-m} $, and $ m \le I_{\max} $}\\
&=
6\cdot 2^{-m}
\le
24\frac{\log(1/\delta)}{n} \tag{by \cref{eq:bob-general-m}}.
\end{align*}
The complement of $G_{j,\ell}$ has probability
\begin{align*}
\PP_{\rS\sim P_{j,\ell}^n}\prn{G_{j,\ell}^c}
&=
\prn{1-\frac{\omega_\ell+\sum_{r=I}^j p_r}{Z_{j,\ell}}}^n\\
&\le
\exp\prn{-\frac{n}{Z_{j,\ell}}\sum_{r=I}^j p_r}\\
&\le
\exp\prn{-\frac{n2^{-I}}{2}}
\le
\exp\prn{-\frac{n2^{-m}}{2}}
\le \delta,
\end{align*}
where we used $Z_{j,\ell}<2$ and \cref{eq:bob-general-m}.

It remains to consider $\omega_\ell>2^{-m}$.
If the certificate $z_{j,\ell}$ appears, then $\Amini$ outputs $f_j$.
By \cref{eq:bob-general-m}, the probability that this does not happen is at most
\[
\prn{1-\frac{\omega_\ell}{Z_{j,\ell}}}^n
\le
\exp\prn{-\frac{n\omega_\ell}{Z_{j,\ell}}}
\le
\exp\prn{-\frac{n2^{-m}}{2}}
\le \delta.
\]
These bounds establish the uniform minimax rate with, for example,
$
24\log(1/\delta)/n.
$

\paragraph{Optimality of the exponential scale.}
The exponential probability scale above is unavoidable for zero excess risk.
Fix any learning algorithm $\cA$ and let
\[
E_n^{\mathrm{exp}}:=
\crl{\rS:\text{ every sample point has }X=x_1}.
\]
Given $E_n^{\mathrm{exp}}$, the conditional law of the sample is the same under $P_\infty$ and under $P_{2,1}$, namely $Q_1^n$ on the labels.
Under $P_\infty$,
$
\PP_{\rS\sim P_\infty^n}\prn{E_n^{\mathrm{exp}}}=2^{-n}.
$
Under $P_{2,1}$,
\[
\PP_{\rS\sim P_{2,1}^n}\prn{E_n^{\mathrm{exp}}}
=
\prn{\frac{p_1}{Z_{2,1}}}^n
=
\prn{\frac{1/2}{1/2+1/4+1/200}}^n
=
\prn{\frac{100}{151}}^n
\ge 2^{-n}.
\]
Let $\theta_n$ be the common conditional probability, given $E_n^{\mathrm{exp}}$, of
\[
\abs{\cA(\rS)(x_2)-y_2}<\frac{\gamma}{2}.
\]
If $\theta_n<1/2$, then under $P_\infty$ the learner's prediction at $x_2$ is at distance at least $\gamma/2$ from the Bayes value $y_2$ with conditional probability at least $1/2$, so the zero-excess failure probability under $P_\infty$ is at least $\frac12 2^{-n}$.
If instead $\theta_n\ge 1/2$, then under $P_{2,1}$ the learner's prediction at $x_2$ is within
$\gamma/2$ of $y_2$ with conditional probability at least $1/2$.
The inequality $|f_2(x_2)-y_2|\ge\gamma$ places the learner's prediction at distance at least
$\gamma/2$ from the Bayes value $f_2(x_2)$. That is, for infinitely many $n$, the learner fails to achieve zero excess risk with probability of exponential order under either $P_\infty$ or $P_{2,1}$.
Thus, no learner can improve the exponential scale uniformly over $\cP$.

\paragraph{Optimality of the minimax scale.}
We next show that the rate $\log(1/\delta)/n$ is unavoidable up to constants.
Fix a learning algorithm $\cA$, let $\delta\in(0,1/2)$, put $L:=\log(1/(2\delta))$, and assume $n\ge L$.
Choose
$
t:=\ceil{\log_2\prn{\frac{12n}{L}}}.
$
The definition of $t$ gives
\begin{equation}
\frac{L}{24n}<p_t=2^{-t}\le \frac{L}{12n}.
\label{eq:bob-general-minimax-critical-t}
\end{equation}
Choose $\ell\in\NN$ such that $\omega_\ell\le p_t/2$.
Let
\[
G_t^{\operatorname{adj}}:=
\bigcup_{i=1}^{t-1}\crl{x_i}\times\operatorname{supp}(Q_i),
\qquad
E_{t,n}^{\operatorname{adj}}
:=
\crl{\rS:\text{ every sample point belongs to }G_t^{\operatorname{adj}}}.
\]
The conditional law of $\rS$ given $E_{t,n}^{\operatorname{adj}}$ is the same under $P_{t,\ell}$ and $P_{t+1,\ell}$.
Under $P_{t,\ell}$, the event excludes $x_t$ and $z_{t,\ell}$.
Under $P_{t+1,\ell}$, it excludes $x_t$, $x_{t+1}$, and $z_{t+1,\ell}$.
Since $n\ge L$, we have $t\ge4$, and hence $p_t\le 1/16$.
Furthermore, $ Z_{t,\ell},Z_{t+1,\ell}\geq 1/2 $. Thus, the two excluded one-sample masses are at most
\[
\frac{p_t+\omega_\ell}{Z_{t,\ell}}
\le
3p_t
\le \frac{3}{16},
\qquad
\frac{p_t+p_{t+1}+\omega_\ell}{Z_{t+1,\ell}}
\le
4p_t
\le \frac{1}{4}.
\]
The inequality $\log(1-u)\ge -2u$, which holds for $ u\in(0,1/2] $, can be applied to both masses.By \cref{eq:bob-general-minimax-critical-t},
\[
2n\frac{p_t+\omega_\ell}{Z_{t,\ell}}
\le
6np_t
\le
\frac{L}{2},
\qquad
2n\frac{p_t+p_{t+1}+\omega_\ell}{Z_{t+1,\ell}}
\le
8np_t
\le
\frac{2L}{3}.
\]
Consequently, we obtain
$
\PP_{\rS\sim P_{t,\ell}^n}\prn{E_{t,n}^{\operatorname{adj}}}
\ge
e^{-L/2}
\ge 2\delta,
$ and $
\PP_{\rS\sim P_{t+1,\ell}^n}\prn{E_{t,n}^{\operatorname{adj}}}
\ge
e^{-2L/3}
\ge 2\delta.
$
Let $\theta_{t,n}$ be the common conditional probability, given $E_{t,n}^{\operatorname{adj}}$, of
\[
\abs{\cA(\rS)(x_t)-y_t}<\frac{\gamma}{2}.
\]
If $\theta_{t,n}\ge 1/2$, then under $P_{t,\ell}$ the Bayes value at $ x_{t} $ is $ f_t(x_t) $, and with probability at least $\delta$,
\[
\cR_{P_{t,\ell}}\prn{\cA(\rS)}
-\inf_{f\in\cF}\cR_{P_{t,\ell}}\prn{f}
\ge
\frac{p_t}{Z_{t,\ell}}\frac{\gamma^2}{4}
\ge
\frac{\gamma^2}{192}\frac{L}{n}.
\]
If $\theta_{t,n}<1/2$, then under $P_{t+1,\ell}$ the Bayes value at $x_t$ is $y_t$, and with probability at least $\delta$,
\[
\cR_{P_{t+1,\ell}}\prn{\cA(\rS)}
-\inf_{f\in\cF}\cR_{P_{t+1,\ell}}\prn{f}
\ge
\frac{p_t}{Z_{t+1,\ell}}\frac{\gamma^2}{4}
\ge
\frac{\gamma^2}{192}\frac{L}{n}.
\]
Thus, the minimax rate $\log(1/\delta)/n$ cannot be improved, up to constants.

\subsection{Lower Bound for Any Learner}
\label{subsec:lower-any-counter}
Fix any learning algorithm $\cA$.
For $t\in\NN$, put
\[
G_t:=
\prn{\bigcup_{i=1}^{t-1}\crl{x_i}\times\operatorname{supp}(Q_i)}
\cup
\crl{(x_t,f_t(x_t))},
\]
and define
\[
E_{t,n}:=
\crl{
S=\prn{(X_r,Y_r)}_{r=1}^n:
\forall r\in\crl{1,\ldots,n},\
(X_r,Y_r)\in G_t
}.
\]
Let $\bar P_t$ be the distribution of one draw from $P_{t,\ell}$ conditional on belonging to $G_t$; it does not depend on $\ell$.
Under $\bar P_t$, the point $x_i$ has probability $p_i/s_t$ for $i\le t$, where
$
s_t:=\sum_{i=1}^t p_i=1-2^{-t},
$
the label law is $Q_i$ for $i<t$, and the label at $x_t$ is deterministically $f_t(x_t)$.
Let
$
N_t(S):=\#\{r:S_r=(x_t,f_t(x_t))\}.
$
Under $\bar P_t^{\,n}$, $N_t$ is binomial with parameters $n$ and $p_t/s_t$.
Recall that $\lambda_t=Q_t(\{f_t(x_t)\})\ge\varepsilon$.
For $S\in E_{t,n}$, the likelihood comparison with $P_\infty$ is
$
\PP_{P_\infty^n}\prn{\rS=S}
=
s_t^n\lambda_t^{N_t(S)}
\bar P_t^{\,n}(\{S\}).
$
This holds since $P_\infty((x_i,y))=s_t\bar P_t((x_i,y))$ for $i<t$ and $y\in\operatorname{supp}(Q_i)$, while
$
P_\infty((x_t,f_t(x_t)))
=p_t\lambda_t
=s_t\lambda_t\,\bar P_t((x_t,f_t(x_t))).
$

For the lower bound, choose
$
t_n:=\ceil{\log_2(\varphi(n))},
$ and
$
\ell_n:=\max\crl{1,\ceil{\log_2 n}}.
$
These choices ensure $t_n\ge 2$, $2^{t_n}\ge\varphi(n)$, $2^{t_n}\le 2\varphi(n)$, and $n\omega_{\ell_n}\le 1/100$.
For $P_{t_n,\ell_n}$, the event $E_{t_n,n}$ fails only if the certificate point $z_{t_n,\ell_n}$ appears, giving
\[
\PP_{P_{t_n,\ell_n}^{n}}\prn{E_{t_n,n}}
=
\prn{1-\frac{\omega_{\ell_n}}{Z_{t_n,\ell_n}}}^n
\ge
\exp\prn{-\frac{2n\omega_{\ell_n}}{Z_{t_n,\ell_n}}}
\ge
e^{-1/25}
\ge
\frac{9}{10},
\]
where we used $ Z_{t_{n},\ell_{n}} \ge 1/2$ and $ w_{\ell_{n}}/Z_{t_{n},\ell_{n}}\leq 1/50  $, so $ \log{(1-u)}\geq -2u $, which holds for $ u\in(0,1/2] $, is applicable.
Define
$
\theta_n:=
\bar P_{t_n}^{\,n}\prn{
\abs{\cA(\rS)(x_{t_n})-y_{t_n}}<\frac{\gamma}{2}
}.
$
At least one of the inequalities $\theta_n<1/2$ or $\theta_n\ge 1/2$ holds for infinitely many values of $n$.

\paragraph{Case $\theta_n<1/2$ infinitely often.}
Choose a subsequence $(n_k)_{k=1}^\infty$ such that $\theta_{n_k}<1/2$.
Let
$
C_k:=\{S:\abs{\cA(S)(x_{t_{n_k}})-y_{t_{n_k}}}\ge \frac{\gamma}{2}\}.
$
By case analysis and construction, $\bar P_{t_{n_k}}^{\,n_k}(C_k)>1/2$.
Define the sets
$
H_k:=\{S:N_{t_{n_k}}(S)\le 8n_kp_{t_{n_k}}\}.
$
Since $\EE_{\bar P_{t_{n_k}}^{\,n_k}}[N_{t_{n_k}}]=n_kp_{t_{n_k}}/s_{t_{n_{k}}}\le 2n_kp_{t_{n_k}}$, Markov's inequality gives $\bar P_{t_{n_k}}^{\,n_k}(H_k)\ge 3/4$, so
$
\bar P_{t_{n_k}}^{\,n_k}(C_k\cap H_k)\ge \frac14.
$
The Bayes value at $x_{t_{n_{k}}}$ under $ P_\infty $  is $ y_{t_{n_{k}}} $, so for $ S\in C_k $ the excess risk under $P_\infty$ of the algorithm is at least
\[
p_{t_{n_k}}\frac{\gamma^2}{4}=2^{-t_{n_{k}}}\frac{\gamma^2}{4}
\ge
\frac{\gamma^2}{8\varphi(n_k)}
\ge
\frac{\gamma^2}{16\varphi(n_k)}.
\]
Using the likelihood comparison, the bound $s_{t_{n_k}}^{n_k}\ge\exp(-2n_kp_{t_{n_k}})$, which follows from $ s_{t_{n_{k}}}=1-p_{t_{n_{k}}}$, $ 1/2 \ge p_{t_{n_{k}}} $, and $ \log{(1-u)}\geq -2u $ for $ u\in(0,1/2] $, the bound $ \lambda_{t_{n_{k}}} \ge
 \eps $  and the inequality $N_{t_{n_k}}(S)\le 8n_kp_{t_{n_k}}$ on $H_k$, we obtain
\begin{align*}
\PP_{P_\infty^{n_k}}\prn{
\excessRiskPar{P_\infty}{\cF}(\cA(\rS))
\ge
\frac{\gamma^2}{16\varphi(n_k)}
}
&\ge
\PP_{P_\infty^{n_k}}\prn{C_k\cap H_k\cap E_{t_{n_k},n_k}}\\
&\ge
\frac14
\exp\prn{-2n_kp_{t_{n_k}}}
\varepsilon^{8n_kp_{t_{n_k}}}\\
&=
\frac14
\exp\prn{-\prn{2+8\log\prn{\frac{1}{\varepsilon}}}n_kp_{t_{n_k}}}\\
&\ge
\frac14
\exp\prn{-c_\varepsilon\frac{n_k}{\varphi(n_k)}}.
\end{align*}
This is the first alternative, with $P=P_\infty$.

\paragraph{Case $\theta_n\ge 1/2$ infinitely often.}
Choose a subsequence $(n_k)_{k=1}^\infty$ such that $\theta_{n_k}\ge 1/2$.
Let
$
B_k:=\crl{S:\abs{\cA(S)(x_{t_{n_k}})-y_{t_{n_k}}}<\frac{\gamma}{2}}.
$
Since $P_{t_{n_k},\ell_{n_{k}}}^{n_k}(\cdot\mid E_{t_{n_k},n_k})=\bar P_{t_{n_k}}^{\,n_k}$,
\[
\PP_{P_{t_{n_k},\ell_{n_{k}}}^{n_k}}\prn{B_k}
\ge
\PP_{P_{t_{n_k},\ell_{n_{k}}}^{n_k}}\prn{B_k\mid E_{t_{n_k},n_k}}
\PP_{P_{t_{n_k},\ell_{n_{k}}}^{n_k}}\prn{E_{t_{n_k},n_k}}
\ge
\frac12\cdot\frac{9}{10}
=
\frac{9}{20}.
\]
On $B_k$, \cref{eq:bob-tail-gap} gives
$
\abs{\cA(S)(x_{t_{n_k}})-f_{t_{n_k}}(x_{t_{n_k}})}
\ge
\frac{\gamma}{2}.
$
Under $P_{t_{n_k},\ell_{n_{k}}}$, the Bayes value at $x_{t_{n_k}}$ is $f_{t_{n_k}}(x_{t_{n_k}})$, so
\begin{align*}
\excessRiskPar{P_{t_{n_k},\ell_{n_{k}}}}{\cF}(\cA(S))
\ge
\frac{p_{t_{n_k}}}{Z_{t_{n_k},\ell_{n_{k}}}}\frac{\gamma^2}{4}
\ge
\frac{\gamma^2}{16\varphi(n_k)},
\end{align*}
where we used $Z_{t_{n_k},\ell_{n_{k}}}<2$ and $p_{t_{n_k}}\ge 1/(2\varphi(n_k))$.
The second alternative holds with $P_k:=P_{t_{n_k},\ell_{n_{k}}}$.

\section{Proof of \cref{thm:QBOB}}
\label{proof:QBOB}

We begin by restating the exact result by \cite{lecue2014optimal} on the minimax optimality of $Q$-aggregation, as we will use this later in the proof of \cref{thm:QBOB}.
In particular, \cite[Theorem A]{lecue2014optimal} show that the estimator of \cref{eqn:Q-aggregation-definition} has the following guarantee.
\begin{theorem}[Theorem A in \cite{lecue2014optimal} for squared loss]\label{thm:Qaggregationrestatementlecue2014optimal}
Let $\rhohatQ$ denote any minimizer of \cref{eqn:Q-aggregation-definition} with $\phi\equiv 1$, and ties resolved arbitrarily.
There exists a universal constant $ c > 0 $ such that, for any function class $ \cF\subseteq \cM $ of cardinality $ M $, and any distribution $P$ over $ \cX\times [0,1] $, any prior distribution $\pi$ over $ [M] $ with $\pi_i>0$ for every $i\in[M]$, any $ \beta\geq c $, and any $\delta\in(0,1)$, it holds that with probability at least $ 1-\delta $ over the draw of a sample $ \rS\sim P^{n} $,
\begin{align*}
    \cR_P(f_{\rhohatQ}) \leq \min_{i\in [M]}\left\{ \cR_P(f_{i}) + \frac{\beta\log\prn{1/\pi_i}}{n} + \frac{2\beta\log\prn{1/\delta}}{n} \right\}.
\end{align*}
\end{theorem}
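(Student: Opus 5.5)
The plan is to compare the minimizer $\rhohatQ$ with an arbitrary fixed vertex $e_i$ and to exploit the quadratic structure of the objective in \cref{eqn:Q-aggregation-definition}. Fix $i\in[M]$ and write
\[
\widehat{\Phi}(\rho)=\tfrac12\Rhat_\rS(f_\rho)+\tfrac12\sum_k\rho_k\Rhat_\rS(f_k)+\tfrac{\beta}{n}\sum_k\rho_k\log(1/\pi_k),
\]
which is the objective with $\phi\equiv1$. Let $\Phi$ be its population analogue. The first step is a deterministic inequality from optimality. Since $\widehat\Phi$ is a convex quadratic in $\rho$ whose quadratic part is $\tfrac12\nrm{f_\rho-f_M}_\rS^2$, first-order optimality of $\rhohatQ$ in the direction $e_i-\rhohatQ$ gives
\[
\widehat\Phi(\rhohatQ)\le\widehat\Phi(e_i)-\tfrac12\nrm{f_{\rhohatQ}-f_i}_\rS^2 .
\]
This curvature gain is the key ingredient.

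The second step rewrites the gap $\cR_P(f_{\rhohatQ})-\cR_P(f_i)$ as the empirical gap plus centred empirical-process terms. The tool is the squared-loss identity
\[
\ell(f_\rho)-\ell(f_i)=\sum_k\rho_k\bigl(\ell(f_k)-\ell(f_i)\bigr)-\sum_k\rho_k(f_k-f_\rho)^2,
\]
which is the Jensen gap \eqref{eqn:Jensen-gap} applied pointwise. Together with $\nrm{f_\rho-f_i}^2=\sum_k\rho_k\nrm{f_k-f_i}^2-\sum_k\rho_k\nrm{f_k-f_\rho}^2$, it turns every relevant quantity into either a linear function of $\rho$ over vertex-indexed quantities, or a nonnegative "diversity" term $\sum_k\rho_k\nrm{f_k-f_\rho}^2$ (empirical or population). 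Combining with step one, I expect to reach
\[
\cR_P(f_{\rhohatQ})-\cR_P(f_i)\le\frac{\beta\log(1/\pi_i)}{n}+\sum_k\rhohat_{Q,k}\Bigl[Z_k-c'\nrm{f_k-f_i}^2-\frac{\beta\log(1/\pi_k)}{n}\Bigr]+(\text{diversity terms}),
\]
where $Z_k=(P-P_n)\bigl(\ell(f_k)-\ell(f_i)\bigr)$ plus analogous centred quadratic terms.

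The third step is probabilistic. For each fixed pair $(k,i)$, the variables $\ell(f_k)-\ell(f_i)$ and $(f_k-f_i)^2$ are bounded by $1$, and their variances are at most a constant times $\nrm{f_k-f_i}^2$. Bernstein's inequality then gives, with probability at least $1-\pi_k\delta$,
\[
Z_k\le c'\nrm{f_k-f_i}^2+\frac{C\log(1/(\pi_k\delta))}{n}.
\]
A union bound weighted by the prior, $\sum_k\pi_k\delta=\delta$, makes this hold simultaneously for all $k$. The bracket is linear in $\rhohatQ$, so it suffices to control vertices, and choosing $\beta\ge c$ large enough makes each bracket at most $2\beta\log(1/\delta)/n$. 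Taking the minimum over $i$ then yields the claim. Since $i$ is arbitrary and the event may depend on $i$, I would either union bound over $i$ too (which costs only $\log M$ and is absorbed by letting the Bernstein event depend on the pair), or fix $i$ as the minimizer of the right-hand side before drawing the sample; I would use the latter.

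The main obstacle is the sign bookkeeping in step two. The empirical diversity term $\sum_k\rhohat_{Q,k}\nrm{f_k-f_{\rhohatQ}}_\rS^2$ comes from the $\tfrac12\sum_k\rho_k\Rhat_\rS(f_k)$ half of the objective, and it must absorb the deviation between the empirical and population quadratic terms $\nrm{f_{\rhohatQ}-f_i}^2$. This deviation is not linear in $\rhohatQ$, so a naive union bound fails. My first attempt would be to expand $\nrm{f_{\rhohatQ}-f_i}_\rS^2-\nrm{f_{\rhohatQ}-f_i}^2$ using the identity above into vertex terms $(P_n-P)(f_k-f_i)^2$, which are handled by Bernstein, and a cross term $(P_n-P)\sum_k\rhohat_{Q,k}(f_k-f_{\rhohatQ})^2$. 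I would bound the cross term by $\tfrac12$ of the two diversity terms plus vertex terms, which is exactly where the factor $\tfrac12$ weighting in the definition of $Q$-aggregation and the constant lower bound on $\beta$ should be needed.
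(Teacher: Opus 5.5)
The paper gives no proof of this statement; it quotes it from Lecu\'e and Rigollet. So your argument has to stand on its own. Steps one and three are fine, as is fixing $i$ before drawing the sample. The step you flag as the main obstacle, however, is a genuine gap, and the fix you sketch does not close it.

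Write $\hat\rho:=\rhohatQ$ and let $\nrm{\cdot}$ be the $L_2(P_X)$ norm. Set $V_n:=\sum_k\hat\rho_k\nrm{f_k-f_{\hat\rho}}_\rS^2$ and $V:=\sum_k\hat\rho_k\nrm{f_k-f_{\hat\rho}}^2$. Carrying out your step two exactly gives
\[
\cR_P(f_{\hat\rho})-\cR_P(f_i)\le\frac{\beta\log(1/\pi_i)}{n}+\sum_k\hat\rho_k\Bigl[(P-P_n)\bigl(\ell(f_k)-\ell(f_i)\bigr)-\tfrac12\nrm{f_k-f_i}_\rS^2-\frac{\beta\log(1/\pi_k)}{n}\Bigr]+V_n-V .
\]
The empirical diversity enters with a plus sign twice:
\begin{itemize}
\item $-\widehat\Phi(\hat\rho)$ contributes $+\tfrac12V_n$ through the empirical Jensen gap of the $\tfrac12\Rhat_\rS(f_\rho)$ half, because the objective rewards empirical diversity;
\item the curvature gain $-\tfrac12\nrm{f_{\hat\rho}-f_i}_\rS^2=-\tfrac12\sum_k\hat\rho_k\nrm{f_k-f_i}_\rS^2+\tfrac12V_n$ contributes another $+\tfrac12V_n$.
\end{itemize}
The population Jensen gap contributes $-V$. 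So the leftover is exactly your cross term $V_n-V$, and no spare diversity term remains to absorb anything. $V_n$ is the quantity to be controlled, not the absorber.

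The only favourable quadratic term left is $-\tfrac12\sum_k\hat\rho_k\nrm{f_k-f_i}^2$. Since $\sum_k\hat\rho_k\nrm{f_k-f_i}^2=V+\nrm{f_{\hat\rho}-f_i}^2$, it can absorb less than $\tfrac12V$ once the Bernstein losses are paid. You therefore need $V_n-V\le\eps V+(\text{small})$ uniformly over the data-dependent $\hat\rho$, with $\eps$ well below $\tfrac12$. Neither of your proposed tools gives this:
\begin{itemize}
\item A bound of the form ``cross term $\le\tfrac12$ of the two diversity terms'' amounts to $V_n\le3V+\dots$, leaving $+2V$, which is too large.
\item Bernstein bounds for the fixed functions $(f_k-f_i)^2$ cannot give uniformity in $\hat\rho$. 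The natural estimate $V_n\le\sum_k\hat\rho_k\nrm{f_k-f_i}_\rS^2$ leaves $+\tfrac12\nrm{f_{\hat\rho}-f_i}^2$, which is of constant order (take $\hat\rho=e_k$).
\end{itemize}

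The missing idea is to write $V_n-V=\tfrac12\sum_{k,l}\hat\rho_k\hat\rho_l(P_n-P)(f_k-f_l)^2$. This is linear in the product weights $\hat\rho\otimes\hat\rho$. Apply Bernstein to each fixed pair $(k,l)$ at confidence $\pi_k\pi_l\delta$; the product prior makes the union bound sum to $\delta$. Summing against $\hat\rho\otimes\hat\rho$ gives
\[
V_n-V\le\tfrac18V+\frac{C}{n}\sum_k\hat\rho_k\log(1/\pi_k)+\frac{C\log(1/\delta)}{n}.
\]
Then $\tfrac18V\le\tfrac18\sum_k\hat\rho_k\nrm{f_k-f_i}^2$ is absorbed by the curvature term together with your vertex Bernstein losses. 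The term $\frac{C}{n}\sum_k\hat\rho_k\log(1/\pi_k)$ is absorbed by the penalty, and this, not the factor $\tfrac12$, is where $\beta\ge c$ is needed.

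A smaller point concerns the confidence term. Combining several separately union-bounded families gives failure probability $c\delta$, hence $\log(c/\delta)$. The additive $\log c$ cannot be absorbed into $2\beta\log(1/\delta)/n$ as $\delta\to1$, nor into $\beta\log(1/\pi_i)/n$ when $\pi_i$ is near $1$. So you would not get the statement for every $\delta\in(0,1)$. To get it:
\begin{itemize}
\item rewrite all residual terms as a single functional of $\hat\rho\otimes\hat\rho$, using $\sum_k\hat\rho_ka_k=\sum_{k,l}\hat\rho_k\hat\rho_l\tfrac{a_k+a_l}{2}$ and $\tfrac{\beta}{n}\sum_k\hat\rho_k\log\tfrac1{\pi_k}=\sum_{k,l}\hat\rho_k\hat\rho_l\tfrac{\beta}{2n}\log\tfrac{1}{\pi_k\pi_l}$;
\item bound it by its largest entry;
\item apply one Chernoff bound with weights $\pi_k\pi_l$ and $\lambda\asymp n/\beta$, so that the relevant exponential moment is at most one.
\end{itemize}
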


 For brevity, throughout the proof of \cref{thm:QBOB}, we denote $\cR_P^\star = \inf_{f\in\cF}\cR_P(f)$ but do not require the infimum to be attained.
We first state some preliminary facts.
\paragraph{Preliminaries.}
For the proof, it is convenient to work with the corresponding unconstrained regularized problem; that is, we allow the variable $\rho$ to range over $\mathbb{R}$. Write
\begin{align*}
\fexp := \Aexp(\rS'),
\qquad
\fmini := \Amini(\rS'),
\end{align*}
so these functions are \emph{random} with respect to $\rS'$.

For a realization of $\rS'$, define the conditional population quantities
\begin{align*}
\Delta' &= \cR_P(\fmini)-\cR_P(\fexp), \\
d'^2 &= \EE_{X\sim P_X}\!\left[\left(\fmini(X)-\fexp(X)\right)^2\right].
\end{align*}
Define the empirical quantities computed on $\rS$ (which also depend on $\rS'$) by
\begin{align*}
\Deltahat' &= \Rhat_\rS(\fmini) - \Rhat_\rS(\fexp), \\
\dhat'^2 &= \frac{1}{n}\sum_{i=1}^n \left(\fmini(X_i)-\fexp(X_i)\right)^2.
\end{align*}
The objective in \cref{alg:QBOB} can be rewritten as
\begin{align}
    \Psihat_{Q}(\rho) =& \frac{1}{2} \Rhat_\rS \!\left((1-\rho) \fexp + \rho \fmini\right) + \frac{1}{2}\left( (1-\rho)\Rhat_\rS \!\left( \fexp\right) +  \rho \Rhat_\rS \!\left( \fmini\right)\right) \nonumber
    \\&+ \frac{\beta(1-\rho)}{n}\log\prn{\tfrac{1}{\piexp}} + \frac{\beta\rho}{n} \log\prn{\tfrac{1}{\pimini}} \tag{Original objective} \\
    =& \frac{1}{2} \left( (1-\rho)\Rhat_\rS(\fexp) + \rho \Rhat_\rS(\fmini) - \rho(1-\rho)\dhat'^2 \right) + \frac{1}{2} (1-\rho)\Rhat_\rS(\fexp) + \frac{1}{2} \rho \Rhat_\rS(\fmini) \nonumber \\
    &+ \frac{\beta(1-\rho)}{n}\log\prn{\tfrac{1}{\piexp}} + \frac{\beta\rho}{n} \log\prn{\tfrac{1}{\pimini}} \tag{Expand squared risk} \\
    =& (1-\rho) \Rhat_\rS(\fexp) + \rho \Rhat_\rS(\fmini) - \frac{1}{2}\rho(1-\rho)\dhat'^2 + \frac{\beta(1-\rho)}{n}\log\prn{\tfrac{1}{\piexp}}
    + \frac{\beta\rho}{n} \log\prn{\tfrac{1}{\pimini}} \tag{Combine risk terms} \\
    =& \Rhat_\rS(\fexp) + \rho \Deltahat' - \frac{1}{2}(\rho - \rho^2)\dhat'^2+ \frac{\beta}{n}\log\prn{\tfrac{1}{\piexp}} + \rho \frac{\beta}{n} \log\prn{\tfrac{\piexp}{\pimini}} \tag{Substitute $\Deltahat'$ and rearrange} \\
    =& \frac{\dhat'^2}{2} \rho^2 + \left( \Deltahat' - \frac{\dhat'^2}{2} + \frac{\beta}{n}\log\prn{\tfrac{\piexp}{\pimini}} \right) \rho + \Rhat_\rS(\fexp) + \frac{\beta}{n}\log\prn{\tfrac{1}{\piexp}}. \label{eq:qbob-objective}
\end{align}
If $\dhat'^2 > 0$, the objective is strictly convex and its unique unconstrained minimizer over $\RR$ is
\begin{align}
\rhohatQun
=
\frac{1}{2} - \frac{\Deltahat' + \frac{\beta}{n}\log\prn{\frac{\piexp}{\pimini}}}{\dhat'^2}.
\label{eq:qbob-rhoun}
\end{align}
Since \cref{eq:qbob-objective} is a convex quadratic in $\rho$ with positive leading coefficient, the constrained minimizer is the projection of $\rhohatQun$ onto $[0,1]$:
\begin{align}
\rhohatQ
=
\Pi_{[0,1]}(\rhohatQun)
=
\min\{\max\{\rhohatQun,0\},1\}.
\label{eq:qbob-projection}
\end{align}
If $\dhat'^2 = 0$, then $\Deltahat' = 0$ and \cref{eq:qbob-objective} is affine with nonnegative slope $\frac{\beta}{n}\log\prn{\frac{\piexp}{\pimini}}$, so it keeps decreasing as $\rho\to -\infty$, and we interpret $ \rhohatQun=-\infty $. If $\pimini=\piexp$ the algorithm breaks ties in favor of the smaller $\rho$, and we again obtain $\rhohat_Q = 0$. Thus, \cref{eq:qbob-projection} remains valid in all cases.

We start with the universal rates guarantee of $ f_{\rhohatQ} $.

\subsection{Universal Exponential-Rate Guarantee for QBOB}
We will show that for any realization $ \rS' $ such that $ \cR_P(\Aexp(\rS'))\leq \cR_P^\star $ we have that
\begin{align*}
\PP_{\rS\sim P^n}\prn{\cR_P(f_{\rhohatQ}) > \cR_P^\star\mid \rS'} \leq \exp\left( - c_1n \max{\left\{\Delta', d'^2, \frac{\beta}{n}\log\prn{\frac{\piexp}{\pimini}} \right\}} \right).
\end{align*}
Define the event $ E= \{\cR_P(\Aexp(\rS'))\leq \cR_P^\star\} $. Having the above conditional bound then implies:
\begin{align*}
 &\PP_{(\rS',\rS)\sim P^{2n}}\prn{\cR_P(f_{\rhohatQ}) > \cR_P^\star} \\
 &= \EE_{\rS'\sim P^{n}}\brk{\PP_{\rS\sim P^{n}}\prn{\cR_P(f_{\rhohatQ}) > \cR_P^\star\mid \rS'}} \\
 &= \EE_{\rS'\sim P^{n}}\brk{\PP_{\rS\sim P^{n}}\prn{\cR_P(f_{\rhohatQ}) > \cR_P^\star\mid \rS'} \one_{E}}
 +
 \EE_{\rS'\sim P^{n}}\brk{\PP_{\rS\sim P^{n}}\prn{\cR_P(f_{\rhohatQ}) > \cR_P^\star\mid \rS'} \one_{E^{c}}} \\
 &\leq \EE_{\rS'\sim P^{n}}\left[\exp\left( - c_1n \max{\left\{\Delta', d'^2, \frac{\beta}{n}\log\prn{\frac{\piexp}{\pimini}} \right\}} \right)\right] + \PP_{\rS'\sim P^{n}}[E^{c}] \\
 &\leq \EE_{\rS'\sim P^{n}}\left[\exp\left( - c_1n \max{\left\{\Delta', d'^2, \frac{\beta}{n}\log\prn{\frac{\piexp}{\pimini}} \right\}} \right)\right] + C\exp{\prn{-c n}},
\end{align*}
which is the claimed bound in \cref{thm:QBOB} as the maximum is at least $\frac{\beta}{n}\log\prn{\frac{\piexp}{\pimini}}$.
Accordingly, assume for now that we have a realization of $ \rS' $ such that $ E $  holds.

\paragraph{Case $\cR_P(\fmini)\leq \cR_P^\star$:}
By convexity we have that $\cR_P(f_{\rhohatQ})\leq (1-\rhohatQ) \cR_P(\fexp) + \rhohatQ \cR_P(\fmini) \leq \cR_P^\star$, since the event $E$ implies that $ \cR_P(\fexp) \leq \cR_P^\star $ and the case considered implies $ \cR_P(\fmini)\leq \cR_P^\star $, completing the proof in this case.

\paragraph{Case $\cR_P(\fmini) >   \cR_P^\star$:}
This case, in combination with the event $E$, implies that $\cR_P(\fexp)\leq \cR_P^\star <\cR_P(\fmini)$. By continuity, there must exist a $\rho\in[0,1]$ such that $\cR_P(f_\rho)= \cR_P^\star$. Hence, we may solve for $\cR_P(f_{\rho})=\cR_P^\star$. For $f_\rho = (1-\rho) \fexp + \rho \fmini$, expanding the squared-loss gives:
\begin{align*}
    \cR_P(f_\rho) = (1-\rho) \cR_P(\fexp) + \rho \cR_P(\fmini) - \rho(1-\rho) d'^2.
\end{align*}
Here $ d'^{2}\neq 0 $, as $ d'^{2}=0 $ would imply $ \fexp=\fmini $ ($P$-almost surely) and  $ \cR_P(\fexp) = \cR_P(\fmini) $, contradicting the case assumption that $ \cR_P(\fmini) > \cR_P^\star \geq \cR_P(\fexp) $.
Solving $\cR_P(f_\rho) = \cR_P^\star$ gives:
\begin{align*}
    (1-\rho) \cR_P(\fexp) + \rho \cR_P(\fmini) - \rho(1-\rho) d'^2 = \cR_P^\star.
\end{align*}
Rearranging and grouping by powers of $\rho$ yields a quadratic equation:
\begin{align*}
    d'^2 \rho^2 + (\Delta' - d'^2) \rho + (\cR_P(\fexp) - \cR_P^\star) = 0,
\end{align*}
where we have used $\Delta' = \cR_P(\fmini) - \cR_P(\fexp)$.

Solving this quadratic equation for $\rho$ yields two solutions:
\begin{align*}
    \rho = \frac{d'^2 - \Delta' \pm \sqrt{(\Delta' - d'^2)^2 - 4 d'^2 (\cR_P(\fexp) - \cR_P^\star)}}{2d'^2}.
\end{align*}
We are interested in the rightmost solution (the $+$ branch), which can be simplified to:
\begin{align*}
    \rho_+ &= \frac{1}{2} - \frac{\Delta'}{2d'^2} + \sqrt{ \left( \frac{\Delta' - d'^2}{2d'^2} \right)^2 - \frac{\cR_P(\fexp) - \cR_P^\star}{d'^2} } \\
           &= \frac{1}{2} - \frac{\Delta'}{2d'^2} + \sqrt{ \left( \frac{1}{2} - \frac{\Delta'}{2d'^2} \right)^2 - \frac{\cR_P(\fexp) - \cR_P^\star}{d'^2} }.
\end{align*}
Since $\cR_P(\fexp) \leq \cR_P^\star$ by the definition of the event $E$, the term under the square root is guaranteed to be non-negative, ensuring that $0\leq \rho_+$ is a real number. As $ \cR_P^\star<\cR_P(\fmini)=\cR_P(f_{1}) $, we have $ \rho_+<1 $.
Furthermore, since $\cR_P(f_0)=\cR_P(\fexp) \leq \cR_P^\star=\cR_P(f_{\rho_{+}})$ and $ \cR_P(f_\rho) $ is a quadratic with positive leading coefficient, we have that for any $ \rho \in [0, \rho_{+}] $ it holds that $ \cR_P(f_\rho) \leq \cR_P^\star $.
We also know that $ \rhohatQ = \min\{\max\{\rhohatQun,0\},1\} $. It therefore suffices to prove that $\rhohatQun\leq\rho_+$, as it implies $\rhohatQ\in[0,\rho_{+}]$.
Indeed, if $ \rhohatQun\leq 0$, then $ \rhohatQ=0\leq \rho_{+} $ as $ \rho_{+}\geq0 $ , and if $ \rhohatQun> 0$, $\rhohatQ\leq \rhohatQun\leq \rho_{+} $.
We will show that this condition is satisfied with probability at least $ 1 - \exp( - c_1n \max{\{\Delta', d'^2, \frac{\beta}{n}\log\prn{\piexp/\pimini} \}} ) $.

To this end, we will show that the following condition implies $\rhohatQun \leq \rho_{+}$:
\begin{align*}
 \Deltahat' - \dhat'^2 \left( \frac{1}{2} - \rho_{+} \right) + \frac{\beta}{n}\log\prn{\frac{\piexp}{\pimini}} &\geq 0,
\end{align*}
and that it holds with probability at least $ 1 - \exp\prn{-nc_1\max{\left\{\Delta', \frac{\beta}{n}\log\prn{\frac{\piexp}{\pimini}}, d'^2 \right\}} } $.

To see that this event implies $\rhohatQun \leq \rho_{+}$, we recall that when $ \dhat'^{2}\neq 0 $, we have $ \rhohatQun = \frac{1}{2} - \prn{\Deltahat' + \frac{\beta}{n}\log\prn{\frac{\piexp}{\pimini}}}/\dhat'^2 $. Rearranging the above, we can derive the condition $\rhohatQun \leq \rho_{+}$ as follows:
\begin{align*}
    \Deltahat' - \dhat'^2 \left( \frac{1}{2} - \rho_{+} \right) + \frac{\beta}{n}\log\prn{\frac{\piexp}{\pimini}} \geq 0
    &\implies \dhat'^2 \left( \frac{1}{2} - \rho_{+} \right) \leq \Deltahat' + \frac{\beta}{n}\log\prn{\frac{\piexp}{\pimini}} \\
    &\implies \frac{1}{2} - \rho_{+} \leq \frac{\Deltahat' + \frac{\beta}{n}\log\prn{\frac{\piexp}{\pimini}}}{\dhat'^2} \\
    &\implies \rhohatQun=\frac{1}{2} - \frac{\Deltahat' + \frac{\beta}{n}\log\prn{\frac{\piexp}{\pimini}}}{\dhat'^2} \leq \rho_{+}.
\end{align*}
Recall that when $ \dhat'^{2}=0 $, we have $ \rhohatQun = -\infty $ if $\piexp > \pimini$,  $\rhohatQun = \infty$ if $\piexp < \pimini$, and any value if $\piexp = \pimini$; in the latter case we choose $\rhohatQun = -\infty$. Since we assume that the prior is such that $ \piexp\geq \pimini $, we have $ \rhohatQun \leq 0 $ when $ \dhat'^{2}=0 $, implying that $ \rhohatQ = 0 \leq \rho_{+} $, as concluded earlier. From these observations, we conclude that the condition $ \Deltahat' - \dhat'^2 \prn{ \frac{1}{2} - \rho_{+} } + \frac{\beta}{n}\log\prn{\frac{\piexp}{\pimini}} \geq 0 $ implies that $ \rhohatQ \leq \rho_{+} $, which in turn implies that $ \cR_P(f_{\rhohatQ}) \leq \cR_P^\star $, so it suffices to show that the above condition holds with probability at least $ 1 - \exp\prn{ - c_1n \max{\crl{\Delta', d'^2, \frac{\beta}{n}\log\prn{\frac{\piexp}{\pimini}} }} } $, which is what we do next.

By substituting the explicit definition of $\rho_{+}$, the subtracted term simplifies by canceling the leading $\frac{1}{2}$:
\begin{align*}
    \gamma := \frac{1}{2} - \rho_{+} = \frac{\Delta'}{2d'^2} - \sqrt{ \left( \frac{1}{2} - \frac{\Delta'}{2d'^2} \right)^2 - \frac{\cR_P(\fexp) - \cR_P^\star}{d'^2} }.
\end{align*}
Inserting this back into the inequality, the fully expanded empirical event becomes:
\begin{align*}
    \Deltahat' - \dhat'^2 \gamma + \frac{\beta}{n}\log\prn{\frac{\piexp}{\pimini}} &\geq 0.
\end{align*}
We recall that $ \rho_{+} $ in the case we consider is in $ [0,1] $, which implies that $ \gamma \in [-1/2, 1/2] $.
Define the combined random variable $W = Z - \gamma U$, where:
\begin{align*}
    Z(X, Y) &= (\fmini(X) - Y)^2 - (\fexp(X) - Y)^2, \\
    U(X) &= (\fmini(X) - \fexp(X))^2.
\end{align*}
Respectively, $\Deltahat'$ and $\dhat'^2$ are empirical averages of these variables. Hence, the empirical average of $ W $ is $ \Deltahat' - \gamma\dhat'^2$. We bound the range and variance of $W$. By expansion:
\begin{align*}
    W(X, Y) &= Z(X, Y) - \gamma U(X) \\
    &= \left( (\fmini(X) - Y)^2 - (\fexp(X) - Y)^2 \right) - \gamma(\fmini(X) - \fexp(X))^2 \\
    &= (\fmini(X) - Y - (\fexp(X) - Y))(\fmini(X) - Y + \fexp(X) - Y) - \gamma(\fmini(X) - \fexp(X))^2 \\
    &= (\fmini(X) - \fexp(X))\brk{\fmini(X) + \fexp(X) - 2Y} - \gamma(\fmini(X) - \fexp(X))^2 \\
    &= (\fmini(X) - \fexp(X)) \left[ (\fmini(X) + \fexp(X) - 2Y) - \gamma(\fmini(X) - \fexp(X)) \right] \\
    &= (\fmini(X) - \fexp(X)) \left[ (1-\gamma)\fmini(X) + (1+\gamma)\fexp(X) - 2Y \right].
\end{align*}
Because $\fmini, \fexp, Y \in [0,1]$ and $-1/2\leq \gamma \leq 1/2$, the magnitude of the second factor is bounded by 2, so
\begin{align*}
    |W| &\leq 2 |\fmini(X) - \fexp(X)| \\
    W^2 &\leq 4 (\fmini(X) - \fexp(X))^2 = 4 U(X).
\end{align*}
Therefore, the variance and the sup norm satisfy
\begin{align*}
    \operatorname{Var}(W) &\leq \EE[W^2] \leq 4\EE[U] = 4d'^2 \\
   \|W\|_{\infty} &\leq 2.
\end{align*}

We want to bound the failure probability $\PP\left( \Deltahat' - \gamma\dhat'^2 < -\frac{\beta}{n}\log\prn{\frac{\piexp}{\pimini}}\right)$. By subtracting the empirical mean $  \Deltahat' - \gamma\dhat'^2 $  from the true mean $ \EE[W]= \Delta' - d'^2\gamma  $ , we cast this into the one-sided bound form:
\begin{align*}
    \PP\left( \Deltahat' - \gamma\dhat'^2 < -\frac{\beta}{n}\log\prn{\frac{\piexp}{\pimini}} \right) &= \PP\left(\Delta' - d'^2\gamma   - (\Deltahat' - \gamma\dhat'^2) > \Delta' - d'^2\gamma  + \frac{\beta}{n}\log\prn{\frac{\piexp}{\pimini}} \right).
\end{align*}
The following calculation shows that we may apply Bernstein's inequality:
\begin{align}
 &\Delta' - d'^2\gamma  + \frac{\beta}{n}\log\prn{\frac{\piexp}{\pimini}} \nonumber
 \\
 &= \Delta' - d'^2 \left( \frac{\Delta'}{2d'^2} - \sqrt{ \left( \frac{1}{2} - \frac{\Delta'}{2d'^2} \right)^2 - \frac{\cR_P(\fexp) - \cR_P^\star}{d'^2} } \right) + \frac{\beta}{n}\log\prn{\frac{\piexp}{\pimini}} \nonumber \\
 &= \Delta' - \frac{\Delta'}{2} + \sqrt{ d'^4 \left( \frac{(d'^2 - \Delta')^2}{4d'^4} - \frac{\cR_P(\fexp) - \cR_P^\star}{d'^2} \right) } + \frac{\beta}{n}\log\prn{\frac{\piexp}{\pimini}} \nonumber \\
 &= \frac{\Delta'}{2} + \frac{1}{2}\sqrt{ (d'^2 - \Delta')^2 - 4d'^2 (\cR_P(\fexp) - \cR_P^\star) } + \frac{\beta}{n}\log\prn{\frac{\piexp}{\pimini}}> 0.\label{eq:QBOB1}
\end{align}
The last step holds because $\Delta'>0$ in the case we consider.
Applying Bernstein's inequality for a zero mean random variable $X$, yields
$$
\PP(X > t) \leq \exp\left(-\frac{t^2}{2\operatorname{Var}(X) + \frac{2}{3}\|X\|_{\infty}t}\right),
$$
implying that
\begin{align*}
      \PP\left( \Deltahat' - \gamma\dhat'^2 < -\frac{\beta}{n}\log\prn{\frac{\piexp}{\pimini}} \right) &= \PP\left(\Delta' - d'^2\gamma   - (\Deltahat' - \gamma\dhat'^2) > \Delta' - d'^2\gamma  + \frac{\beta}{n}\log\prn{\frac{\piexp}{\pimini}} \right)\\
    &\leq \exp\left( - n\frac{ \left(\Delta' - d'^2\gamma  + \frac{\beta}{n}\log\prn{\frac{\piexp}{\pimini}} \right)^2}{8d'^2 + \frac{8}{3}\left( \Delta' - d'^2\gamma  + \frac{\beta}{n}\log\prn{\frac{\piexp}{\pimini}} \right)} \right). \tag{By $\operatorname{Var}(W-\EE[W])=\operatorname{Var}(W)$ and $\|W-\EE[W]\|_{\infty}\leq 2\|W\|_{\infty}$}
\end{align*}
We now claim that regardless of the relative values of $\Delta'$ and $d'^2$, $  \left( \Delta' - d'^2\gamma  + \frac{\beta}{n}\log\prn{\frac{\piexp}{\pimini}} \right) $  and $ 8d'^2 + \frac{8}{3}\left( \Delta' - d'^2\gamma  + \frac{\beta}{n}\log\prn{\frac{\piexp}{\pimini}} \right) $ are comparable up to a universal constant, so the term inside the exponential can be chosen as the latter multiplied by a constant and $-n$. Since  $ 8d'^2 + \frac{8}{3}\left( \Delta' - d'^2\gamma  + \frac{\beta}{n}\log\prn{\frac{\piexp}{\pimini}} \right)\geq \max\{\Delta', \frac{\beta}{n}\log\prn{\frac{\piexp}{\pimini}}, d'^{2} \} $, we can conclude that the probability of the above event for a sufficiently small $ c_1 $ is at most $ \exp\prn{-nc_1\max{\left\{\Delta', \frac{\beta}{n}\log\prn{\frac{\piexp}{\pimini}}, d'^2 \right\}} } $, as claimed and conclude the proof.

We consider two cases.
In the case that $d'^2 \leq 2 \Delta'$, we have that
\begin{align*}
 \Delta' - d'^2\gamma  + \frac{\beta}{n}\log\prn{\frac{\piexp}{\pimini}}
 &\leq 8d'^2 +  \frac{8}{3}\left(\Delta' - d'^2\gamma  + \frac{\beta}{n}\log\prn{\frac{\piexp}{\pimini}}\right)
 \\
 &\leq \frac{8+4\cdot8\cdot 3}{3}\left(\Delta' - d'^2\gamma  + \frac{\beta}{n}\log\prn{\frac{\piexp}{\pimini}}\right)
 \\
 &= \frac{104}{3}\left( \Delta' - d'^2\gamma  + \frac{\beta}{n}\log\prn{\frac{\piexp}{\pimini}} \right).
\end{align*}
In the case that $d'^2 > 2 \Delta'$, we have that
\begin{align*}
 d'^2-\Delta' \geq d'^2/2
& \implies (d'^2 - \Delta')^2 \geq d'^4/4
 \\
 &\implies \sqrt{ (d'^2 - \Delta')^2 - 4d'^2 (\cR_P(\fexp) - \cR_P^\star) } \geq d'^2/2,
\end{align*}
which implies the required comparability of the numerator and denominator:
\begin{align*}
 &\Delta' - d'^2\gamma  + \frac{\beta}{n}\log\prn{\frac{\piexp}{\pimini}}\\
 &\leq 8d'^2 +  \frac{8}{3}\left(\Delta' - d'^2\gamma  + \frac{\beta}{n}\log\prn{\frac{\piexp}{\pimini}}\right)
 \\
 &\leq 8d'^2 + \frac{8}{3}\left( \frac{\Delta'}{2} + \frac{1}{2}\sqrt{ (d'^2 - \Delta')^2 - 4d'^2 (\cR_P(\fexp) - \cR_P^\star) } + \frac{\beta}{n}\log\prn{\frac{\piexp}{\pimini}} \right) \tag{by \cref{eq:QBOB1}}
 \\
 &\leq \frac{8+4\cdot 8\cdot 3}{3}\left( \frac{\Delta'}{2} + \frac{1}{2}\sqrt{ (d'^2 - \Delta')^2 - 4d'^2 (\cR_P(\fexp) - \cR_P^\star) } + \frac{\beta}{n}\log\prn{\frac{\piexp}{\pimini}} \right)
 \\
&=\frac{104}{3}\left( \Delta' - d'^2\gamma  + \frac{\beta}{n}\log\prn{\frac{\piexp}{\pimini}}  \right) \tag{by \cref{eq:QBOB1}}.
\end{align*}
These two cases prove the claimed constant-factor comparison between the denominator and numerator, and concludes the proof of the exponential rate guarantee of $ f_{\rhohatQ} $.

\subsection{Minimax Guarantee for QBOB}
We know that for $\delta\in(0,c)$ with probability at least $1-\delta/2$ over $ \rS' $ we have that
\begin{align*}
    \cR_P(\fmini) \leq \cR_P^\star + r_{n}(\delta/2).
\end{align*}
Furthermore, for any realization $ S' $ of $ \rS' $, we have that $ f_{\rhohatQ} $ is the output of the $ Q $-aggregation procedure with sample $ \rS $ over the hypothesis class $\{\fexp,\fmini\}$ and $ \beta \geq c_2 $. Thus \cref{thm:Qaggregationrestatementlecue2014optimal} implies that with probability at least $ 1-\delta/2 $ over $ \rS\sim P^{n} $ it holds that
\begin{align*}
    \cR_P(f_{\rhohatQ}) &\leq \min_{\vartheta\in\{\exp,\operatorname{mini}\}}\left\{ \cR_P(f_\vartheta) + \frac{\beta\log\prn{1/\pi_{\vartheta}}}{n} + \frac{2\beta\log\prn{2/\delta}}{n} \right\}
    \\
    &\leq  \cR_P(\fmini) + \frac{\beta\log\prn{1/\pimini}}{n} + \frac{2\beta\log\prn{2/\delta}}{n}.
\end{align*}
By independence of $ \rS' $ and $ \rS $, we can apply a union bound to conclude that with probability at least $ 1-\delta $ over the draw of both $ \rS' $ and $ \rS $ both of the above events occur. On their intersection, we have that
\begin{align*}
    \cR_P(f_{\rhohatQ}) \leq \cR_P^\star + r_{n}(\delta/2) + \frac{\beta\log\prn{1/\pimini}}{n} + \frac{2\beta\log\prn{2/\delta}}{n}.
\end{align*}
This completes the proof of the minimax guarantee for $ f_{\rhohatQ}$.

\stopcontents[appendix]

\end{document}